\DeclareMathOperator{\Hom}{Hom}
\DeclareMathOperator{\Pic}{Pic}
\DeclareMathOperator{\Sym}{Sym}
\DeclareMathOperator{\codim}{codim}
\DeclareMathOperator{\rank}{rank}
\DeclareMathOperator{\Spec}{Spec}
\DeclareMathOperator{\id}{Id}
\DeclareMathOperator{\val}{val}
\DeclareMathOperator{\TV}{TV}
\DeclareMathOperator{\trop}{trop}
\DeclareMathOperator{\Mult}{Mult}
\DeclareMathOperator{\vir}{vir}
\DeclareMathOperator{\ev}{ev}
\DeclareMathOperator{\Aut}{Aut}
\DeclareMathOperator{\sat}{sat}
\DeclareMathOperator{\Sing}{Sing}
\DeclareMathOperator{\ns}{ns}
\DeclareMathOperator{\LC}{LC}
\DeclareMathOperator{\C}{C}
\DeclareMathOperator{\LL}{L}
\DeclareMathOperator{\ov}{ov}
\DeclareMathOperator{\forget}{Forget}
\DeclareMathOperator{\pt}{pt}
\DeclareMathOperator{\GW}{GW}
\DeclareMathOperator{\codomain}{codomain}
\DeclareMathOperator{\inde}{index}
\DeclareMathOperator{\domain}{domain}
\DeclareMathOperator{\coker}{coker}
\DeclareMathOperator{\vdim}{vdim}
\DeclareMathOperator{\Stab}{Stab}
\let\bb=\mathbb
\let\rar=\rightarrow
\let\ra=\rightarrow
\let\f=\mathfrak
\let\s=\mathcal
\let\op=\operatorname
\let\wh=\widehat
\let\wt=\widetilde
\def\risom{\buildrel\sim\over{\smashedlongrightarrow}}
 \def\smashedlongrightarrow{\setbox0=\hbox{$\longrightarrow$}\ht0=1.25pt\box0}
\newcommand {\kk} {\Bbbk}
\newcommand {\A} {{\bf A}}
\newcommand {\PP} {{\bf P}}
\newcommand {\GG} {{\mathbb{G}}}
\newcommand {\QQ} {{\mathbb{Q}}}
\newcommand {\RR} {{\mathbb{R}}}
\newcommand {\ZZ} {{\mathbb{Z}}}
\newcommand {\NN} {{\mathbb{N}}}
\newcommand {\shM} {{\mathcal{M}}}
\theoremstyle{plain}
 \newtheorem{thm}{Theorem}[section]
 \newtheorem{lem}[thm]{Lemma}
  \newtheorem{prop}[thm]{Proposition}
   \newtheorem{cor}[thm]{Corollary}
	 \newtheorem{obs}[thm]{Observation}
\theoremstyle{definition}
 \newtheorem{dfn}[thm]{Definition}
 \newtheorem{eg}[thm]{Example}
 \newtheorem{lemdfn}[thm]{Lemma/Definition}
\theoremstyle{remark} 
 \newtheorem{rmk}[thm]{Remark}
  \newtheorem{ass}[thm]{Assumption}
\newenvironment{myproof}[1][\proofname]{\proof[#1]}{\endproof}
\newglossaryentry{kk}{
name={\ensuremath{\kk}},
symbol={\ensuremath{\kk}},
description={Fixed algebraically closed field of characteristic zero}}
\newglossaryentry{N}{
name={\ensuremath{N}},
description={Free Abelian group of finite rank $n$}}
\newglossaryentry{LA}{
name={\ensuremath{\LL(A)}},
description={Linear subspace of $N_{\bb{Q}}$, translate of the affine subspace $A\subset N_{\bb{Q}}$ to the origin}}
\newglossaryentry{LNA}{
name={\ensuremath{\LL_N(A)}},
description={$\LL(A)\cap N$}}
\newglossaryentry{M}{
name={\ensuremath{M}},
description={Dual lattice of $N$}}
\newglossaryentry{N'}{
name={\ensuremath{N'}},
description={$N\oplus \bb{Z}$}}
\newglossaryentry{M'}{
name={\ensuremath{M'}},
description={Dual lattice of $N'$}}
\newglossaryentry{CA}{
name={\ensuremath{\C(A)}},
description={Closed cone $\?{\{(ta,t)|a\in A,t\in \bb{Q}_{\geq 0}\}}\subset N'_{\bb{Q}}$ over $A$}}
\newglossaryentry{LCA}{
name={\ensuremath{\LC(A)}},
description={$\LL(\C(A))$}}
\newglossaryentry{Gammabar}{
name={\ensuremath{\overline{\Gamma}}},
description={Topological realization of a finite connected graph}}
\newglossaryentry{Gamma}{
name={\ensuremath{\Gamma}},
description={Complement of some subset of the $1$-valent vertices of $\?{\Gamma}$}}
\newglossaryentry{Gamma0}{
name={\ensuremath{\Gamma^{[0]}}},
description={Vertices of $\Gamma$}}
\newglossaryentry{Gamma1}{
name={\ensuremath{\Gamma^{[1]}}},
description={Edges of $\Gamma$}}
\newglossaryentry{Gamma1inf}{
name={\ensuremath{\Gamma^{[1]}_{\infty}}},
description={Non-compact edges of $\Gamma$}}
\newglossaryentry{Gamma1c}{
name={\ensuremath{\Gamma^{[1]}_c}},
description={Compact edges of $\Gamma$}}
\newglossaryentry{w}{
name={\ensuremath{w}},
description={Weight-function $\Gamma^{[1]}\rar \bb{Z}_{\geq 0}$}}
\newglossaryentry{g}{
name={\ensuremath{g}},
description={Genus-function $\Gamma^{[0]}\rar \bb{Z}_{\geq 0}$. We typically write $g(v)$ as $g_V$}}
\newglossaryentry{eps}{
name={\ensuremath{\epsilon}},
description={Marking $I\risom \Gamma^{[1]}_{\infty}$ for some index-set $I$.  We often write $\epsilon(i)$ as $E_i$}}
\newglossaryentry{Icirc}{
name={\ensuremath{I^{\circ}}},
description={Set of $i\in I$ for which $w(E_i)=0$.  I.e., the interior markings}}
\newglossaryentry{Ip}{
name={\ensuremath{I^{\partial}}},
description={Set of $i\in I$ for which $w(E_i)\neq 0$.  I.e., the boundary markings}}
\newglossaryentry{Gammaeps}{
name={\ensuremath{(\Gamma,\epsilon)}},
description={Data of $\Gamma$, the weight-function $w$ and genus-function $g$, and the marking $\epsilon$}}
\newglossaryentry{einf}{
name={\ensuremath{e_{\infty}}},
description={$\#I$, or equivalently, $\#\Gamma^{[1]}_{\infty}$}}
\newglossaryentry{IVcirc}{
name={\ensuremath{I_V^{\circ}}},
description={Set of $i\in I^{\circ}$ such that $E_i$ contains the vertex $V$}}
\newglossaryentry{mV}{
name={\ensuremath{m_V}},
description={$\# I_V^{\circ}$, i.e., the number of weight-zero non-compact edges containing $V$}}
\newglossaryentry{ptc}{
name={\ensuremath{(\Gamma,\epsilon,h)}},
description={Parametrized marked tropical curve, or the (marked) tropical curve it represents}}
\newglossaryentry{h}{
name={\ensuremath{h}},
description={Map $\Gamma \rar N_{\bb{R}}$ from the data of a parametrized marked tropical curve}}
\newglossaryentry{uVE}{
name={\ensuremath{u_{(V,E)}}},
description={Primitive integral vector emanating from $h(V)$ into $h(E)$, or $0$ if $h(E)$ is a point}}
\newglossaryentry{ui}{
name={\ensuremath{u_i}},
description={Simplified notation for $u_{(V,E_i)}$, where $V$ is the unique vertex of the non-compact edge $E_i$
}}
\newglossaryentry{uE}{
name={\ensuremath{u_{E}}},
description={Simplified notation for $u_{(V,E)}$ when the choice of $V\in E$ is clear from context or unimportant
}}
\newglossaryentry{AutG}{
name={\ensuremath{\Aut(\Gamma)}},
description={Automorphism group of the tropical curve $(\Gamma,\epsilon,h)$
}}
\newglossaryentry{gGamma}{
name={\ensuremath{g(\Gamma)}},
description={Genus of $\Gamma$, defined as $b_1(\Gamma)+\sum_{V\in \Gamma^{[0]}} g_V$
}}
\newglossaryentry{ebar}{
name={\ensuremath{\overline{e}}},
description={$\# \Gamma^{[1]}_c$, i.e., the number of compact edges of $\Gamma$
}}
\newglossaryentry{valV}{
name={\ensuremath{\val(V)}},
description={Valence of a vertex $V$, counting self-adjacent edges twice
}}
\newglossaryentry{ovV}{
name={\ensuremath{\ov(V)}},
description={Over-valence $\val(V)-3$ of a vertex $V$
}}
\newglossaryentry{ovG}{
name={\ensuremath{\ov(\Gamma)}},
description={Over-valence $\sum_{V\in \Gamma^{[0]}} \ov(V)$ of $\Gamma$
}}
\newglossaryentry{flags}{
name={\ensuremath{F\Gamma}},
description={Set of flags $(V,E)$ of $\Gamma$
}}
\newglossaryentry{typespace}{
name={\ensuremath{\f{T}_{(\Gamma,\epsilon,u)}}},
description={Space of marked tropical curves of type $(\Gamma,\epsilon,u)$
}}
\newglossaryentry{Delta}{
name={\ensuremath{\Delta}},
description={Degree of a type $(\Gamma,\epsilon,u)$, i.e., the map taking $i\in I$ to $\Delta(i):=w(E_i)u_i\in N$
}}
\newglossaryentry{AA}{
name={\ensuremath{\A}},
description={An affine constraint, i.e., a tuple $(A_i)_{i\in I}$ of affine subspaces of $N_{\bb{Q}}$
}}
\newglossaryentry{Psi}{
name={\ensuremath{\Psi}},
description={A tuple $(s_i)_{i\in I^{\circ}}\in \bb{Z}_{\geq 0}^{|I^{\circ}|}$ indicating the $\psi$-class conditions
}}
\newglossaryentry{TAPsi}{
name={\ensuremath{\f{T}_{g,\Delta}(\A,\Psi)}},
description={Space of tropical curves of genus $g$ and degree $\Delta$ satisfying $\A$ and $\Psi$
}}
\newglossaryentry{<V>}{
name={\ensuremath{\langle V \rangle}},
description={$\ov(V)!/(\prod_{i\in I_V^{\circ}} s_i!)$
}}
\newglossaryentry{PhiQ}{
name={\ensuremath{\Phi_{\bb{Q}}}},
description={Map given in \eqref{D0}
}}
\newglossaryentry{dtrop}{
name={\ensuremath{d^{\trop}_{g,\Delta}}},
description={Expected tropical dimension $e_{\infty}+(n-3)(1-g)$
}}
\newglossaryentry{Tns}{
name={\ensuremath{\f{T}^{\ns}_{g,\Delta}(\A,\Psi)}},
description={Non-superabundant elements of $\f{T}_{g,\Delta}(\A,\Psi)$}
}
\newglossaryentry{Phi}{
name={\ensuremath{\Phi}},
description={Map of lattices given in \eqref{D}}
}
\newglossaryentry{DGamma}{
name={\ensuremath{\f{D}_{\Gamma}}},
description={$\inde(\Phi) \prod_{V\in \Gamma^{[0]}} \langle V \rangle$}
}
\newglossaryentry{Mult}{
name={\ensuremath{\Mult(\Gamma)}},
description={$\frac{\f{D}_{\Gamma}}{|\Aut(\Gamma)|} \prod_{E\in \Gamma^{[1]}_c} w(E)$}
}
\newglossaryentry{GWtrop}{
name={\ensuremath{{}^{\ns}\GW_{g,N_{\bb{Q}},\Delta}^{\trop}(\A,\Psi)}},
description={$\sum_{(\Gamma,\epsilon,h)\in \f{T}^{\ns}_{g,\Delta}(\A,\Psi)} \Mult(\Gamma)$, denoted $\GW_{g,N_{\bb{Q}},\Delta}^{\trop}(\A,\Psi)$ if $\f{T}_{g,\Delta}(\A,\Psi)$ has no superabundant curves}
}
\newglossaryentry{P}{
name={\ensuremath{\s{P}}},
description={A polyhedral decomposition of $N_{\bb{Q}}$, typically a ``good'' one $\s{P}_{g,\Delta}(\A,\Psi)$ as in Lemma \ref{lem-good-subd}}
}
\newglossaryentry{SigmaPprime}{
name={\ensuremath{\Sigma'_{\s{P}}}},
description={Fan given as the cone over $\s{P}$}
}
\newglossaryentry{sX}{
name={\ensuremath{\s{X}}},
description={Toric family over $\bb{A}^1$ with fan $\Sigma'_{\s{P}}$}
}
\newglossaryentry{Xt}{
name={\ensuremath{\s{X}_t}},
description={Fiber of $\s{X}$ over $t\in \bb{A}^1$}
}
\newglossaryentry{SigmaP}{
name={\ensuremath{\Sigma_{\s{P}}}},
description={Asymptotic fan of $\s{P}$}
}
\newglossaryentry{X}{
name={\ensuremath{X}},
description={Toric variety with fan $\Sigma_{\s{P}}$}
}
\newglossaryentry{MYS}{
name={\ensuremath{\s{M}(Y^{\dagger}/S^{\dagger},\beta)}},
description={Moduli of basic stable log maps to a log scheme $Y^{\dagger}$ over another log scheme $S^{\dagger}$, satisfying a combinatorially finite collection of conditions $\beta$}
}
\newglossaryentry{[Delta]}{
name={\ensuremath{[\Delta]}},
description={Curve class determined by $\Delta$}
}
\newglossaryentry{xi}{
name={\ensuremath{x_i}},
description={Interior marked point}
}
\newglossaryentry{yi}{
name={\ensuremath{y_i}},
description={Boundary marked point}
}
\newglossaryentry{Mtt}{
name={\ensuremath{\s{M}^{tt}_{g}(\s{X}_t^{\dagger},\Delta)}},
description={Moduli of torically transverse curves}
}
\newglossaryentry{ZA}{
name={\ensuremath{Z_{A}}},
description={Subvariety representing the incidence condition corresponding to the tropical condition $A$}
}
\newglossaryentry{ZAu}{
name={\ensuremath{Z_{A,u}}},
description={$Z_A\cap D_u$ for $D_u$ the boundary divisor corresponding to $u\in N$.  Represents a boundary incidence condition}
}
\newglossaryentry{psii}{
name={\ensuremath{\psi_i}},
description={Psi-class condition $c_1(\sigma_{x_i}^* \omega_{\pi})$}
}
\newglossaryentry{psiibar}{
name={\ensuremath{\?{\psi}_{i}}},
description={Psi-class condition on $\?{\s{M}}_{g,e_{\infty}}$}
}
\newglossaryentry{psiihat}{
name={\ensuremath{\wh{\psi}_{i}}},
description={$\forget^*\?{\psi}_i$}
}
\newglossaryentry{gamma}{
name={\ensuremath{\gamma}},
description={Gromov-Witten cycle associated to $\A$ and $\Psi$}
}
\newglossaryentry{GWlog}{
name={\ensuremath{\GW^{\log}_{g,\s{X}^{\dagger}_t,\Delta}(\A,\Psi)}},
description={Log Gromov-Witten number}
}
\newglossaryentry{nsGWlog}{
name={\ensuremath{{}^{\ns}\GW^{\log}_{g,\s{X}^{\dagger}_0,\Delta}(\A,\Psi)}},
description={Non-superabundant contribution to the log Gromov-Witten number}
}
\newglossaryentry{kdag}{
name={\ensuremath{\Spec \kk^{\dagger}}},
description={Standard log point}
}
\newglossaryentry{Q}{
name={\ensuremath{Q}},
description={Basic monoid}
}
\newglossaryentry{kbas}{
name={\ensuremath{\Spec \kk_{\varphi}^{\dagger}}},
description={Basic log point associated to $\varphi$}
}
\newglossaryentry{Peta}{
name={\ensuremath{P_{\eta}}},
description={ghost sheaf stalk at the image under $\varphi$ of the generic point $\eta$ of a curve component}
}
\newglossaryentry{Tphi}{
name={\ensuremath{\f{T}_{\varphi}}},
description={The space of primitive tropical curves associated to $\varphi$}
}
\newglossaryentry{wtMG}{
name={\ensuremath{\wt{\s{M}}(\Gamma)}},
description={$\prod_{V\in \Gamma^{[0]}} \?{\s{M}}_{g_V,\val(V)}$}
}
\newglossaryentry{MGam}{
name={\ensuremath{\s{M}(\Gamma)}},
description={$[\wt{\s{M}}(\Gamma)/\Aut(\Gamma)]$}
}
\newglossaryentry{Mprelog}{
name={\ensuremath{\s{M}^{\mathrm{pre-log}}_{g}(\s{X}_0^{\dagger},\Delta,\Gamma)}},
description={Space of pre-log curves}
}
\title[Descendant log Gromov-Witten invariants for toric varieties and tropical curves]{Descendant log Gromov-Witten invariants for toric varieties and tropical curves}
\author{Travis Mandel}
\address{School of Mathematics\\
University of Edinburgh\\
Edinburgh EH9 3FD\\
UK}\email{Travis.Mandel{\char'100}ed.ac.uk}
\author{Helge Ruddat}
\address{JGU Mainz, Institut f\"ur Mathematik, Staudingerweg 9, 55128 Mainz, Germany}
\thanks{\tiny The first author was supported by the Center of Excellence Grant ``Centre for Quantum Geometry of Moduli Spaces'' from the Danish National Research Foundation (DNRF95), and later by the National Science Foundation RTG Grant DMS-1246989 and the Starter Grant “Categorified Donaldson-Thomas Theory” no.~759967 of the European Research Council.  The second author was partially supported by the DFG Emmy Noether grant RU 1629/4-1 and DFG SFB TR 45 and is grateful for hospitality at the IAS in Princeton}
\email{ruddat{\char'100}uni-mainz.de}
\date{\today}
\begin{document}

\maketitle
\begin{abstract}
Using degeneration techniques, we prove the correspondence of tropical curve counts and log Gromov-Witten invariants with general incidence and psi-class conditions in toric varieties for genus zero curves. For higher-genus situations, we prove the correspondence for the non-superabundant part of the invariant.  We also relate the log invariants to the ordinary ones, in particular explaining the appearance of negative multiplicities in the descendant correspondence result of Mark Gross.
\end{abstract}

\setcounter{tocdepth}{1}
\tableofcontents  
\vspace{-.9cm} 
\section{Introduction}
In a pioneering work \cite{Mi}, Mikhalkin proved a correspondence of counts of algebraic curves in a toric surface with analogous counts of certain piecewise linear graphs called \emph{tropical curves}.
Around the same time, Siebert and Nishinou \cite{NS} used very different groundbreaking techniques (toric degenerations and log geometry) to prove a genus $0$ correspondence theorem in any dimension.  Building on this second approach, this article generalizes both these results in the following directions:
\begin{itemize}
\item to allow $\psi$-conditions on the curves (i.e.~to consider descendant invariants) because such conditions are particularly useful in various applications,
\item to generalize to all non-superabundant situations (no a priori genus or dimension restrictions),
\item to allow incidence conditions in the toric boundary for applications to non-toric situations like non-toric blow-ups or Calabi-Yau degenerations,
\item to allow arbitrary tropical cycles as incidence conditions, not just affine linear ones.
\end{itemize}
To clarify: genus zero situations are always non-superabundant, but superabundant curves may contribute when $g>0$, especially if we also have $\dim \geq 3$, and also for $\dim =2$ if $\psi$-classes are present (see Remark~\ref{SuperExamples}).  Even in these cases though, our result still gives the correspondence for the non-superabundant part of the invariant (in analogy to residual intersection theory).

Mikhalkin suggested in \cite[\S3]{Mi2} that $\psi$-class conditions manifest in tropical geometry as higher-valence conditions at markings.  Markwig-Rau \cite{MR} and M. Gross \cite{GrP2} proved genus $0$ descendant correspondence theorems involving higher-valent vertices for ordinary GW invariants of $\bb{P}^2$, later generalized by P. Overholser \cite{Over} also for $\bb{P}^2$. Note also \cite{Rau}'s work for $\bb{P}^1$ and $\bb{P}^1\times \bb{P}^1$.
Notably, all these correspondence results are for ordinary Gromov-Witten invariants. We discuss the difference between these and the log invariants below.

Concerning genus $0$ log invariants, while our paper was in progress, \cite{Ran} used ideas from tropical intersection theory to give a new proof of the Nishinou-Siebert result, and then A. Gross \cite{AGr} extended these methods to allow for gravitational ancestors (i.e., pullbacks of $\psi$-classes from $\?{\s{M}}_{0,n}$, which by our Prop. \ref{psihat} turn out to coincide with the usual descendant $\psi$-classes). 
The Nishinou-Siebert degeneration approach we employ is quite different from these works, in particular making the connection between individual stable maps and tropical curves very explicit.

To our knowledge, the only previous work on tropical $\psi$-classes in higher genus has been for one-dimensional targets with zero-dimensional incidence conditions, cf. \cite[\S 3]{CJMR} and our discussion in \S \ref{section-hurwitz}.  In fact, even without $\psi$-classes, we have not seen the full details of the Nishinou-Siebert approach worked out for higher-genus curves anywhere, although the relevant log deformation theory was worked out in \cite{Ni}. 

We note that boundary incidence conditions were previously investigated from the Nishinou-Siebert degeneration perspective in dimension $2$ in the appendix of \cite{GPS}.  The non-affine tropical incidence conditions which we explain in \S \ref{GenInc} were, to our knowledge, previously not considered from the degeneration perspective but are easy to handle in genus $0$ from the tropical intersection theory perspective employed in \cite{Ran} and \cite{AGr}.

For our algebraic counts we use log Gromov-Witten (GW) invariants (cf. \cite{GSlog}, \cite{AC}), although for the cases we consider this will turn out to coincide with a naive algebraic count (possibly with some fractional multiplicities when $g>0$, cf. Theorem~\ref{mainthmintro}).

We follow the degeneration approach of \cite{NS}, but in place of their log deformation theory we take advantage of the newer technology regarding the existence of a virtual fundamental class for the moduli stack of basic stable log maps and 
the invariance of log GW invariants under log-smooth deformations.  We provide a proof of this invariance in the appendix (Theorem~\ref{Invariance}), both for convenience and to bring to attention why one needs Lemma~\ref{lem-reg-emb}.
We explicitly describe a smooth open subspace of the expected dimension in the moduli stack of basic stable log maps to the central fiber. 
The intersection of all incidence and $\psi$-conditions has support in this open set and we explicitly describe this effective zero-cycle and its stratification by tropical curves.
As applications, we prove two results about the relationship of the log GW invariants to the ordinary ones 
(Theorems~\ref{thm-forgetlog-equal} and \ref{main-comparison-thm}).
We then show how our theorem in dimension one specializes to the known result that double Hurwitz numbers are descendant log/relative GW invariants (\cite{CJM, CJMR}, Theorem~\ref{H-GW}). As discussed in \S\ref{motivation-mirror}, the authors are working on further applications to problems in mirror symmetry.

\begin{figure}
\resizebox{0.4\textwidth}{!}{
\includegraphics{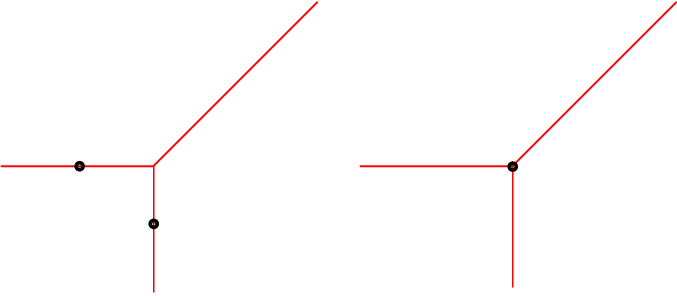}
}
\caption{Unique tropical line through two given points, respectively through a single point with a $\psi$-class condition.}
\label{trop-line}
\end{figure}
As a first simple example, consider the tropical analogue of a line in $\bb{P}^2$ meeting a point where it satisfies a $\psi$-class condition (equivalently in this case, where it has a specified tangent direction). This is the tropical line with incidence point in the trivalent vertex, see Figure~\ref{trop-line}. The reason from our point of view is as follows. Tropicalizing a stable log map yields the tropical curve as the dual intersection graph of the domain curve, and the valency of a vertex matches the number of special points (markings or nodes) of the corresponding curve component.  Higher valencies thus give curve components more moduli (we see in Lemma \ref{PsiGeomNew} that the $\psi$-classes can essentially be pulled back from the moduli of the corresponding components of the domain, see also Prop. \ref{psihat}), and since a $\psi$-condition cuts down such moduli, a minimal valency is necessary for such a curve with $\psi$-conditions imposed on markings on that component to have a non-zero contribution.

A significant difference between log and ordinary GW invariants is the fact that $\psi$-classes in ordinary GW theory can be much more complicated, leading for example to the possibility of negative counts.  A simple example is $\langle \ell\psi^2\rangle^{\bb{P}^2}_{0,\ell}$, i.e., the ordinary GW count of lines in $\bb{P}^2$ meeting a given generic line $\ell$ in a generic $\psi^2$-condition. 
Our tropical count ${}^{\operatorname{trop}}\langle \ell\psi^2\rangle^{\bb{P}^2}_{0,\ell}$ and hence our log invariant $\langle \ell\psi^2\rangle^{\bb{P}^2(\log D)}_{0,\ell}$ is $0$ here since satisfying the $\psi^2$ condition would require a four-valent vertex, and this is impossible for a tropical line in $\bb{P}^2$, see Fig.~\ref{trop-line}.  However, $\langle \ell\psi^2\rangle^{\bb{P}^2}_{0,\ell}=-3$. 

The discrepancy essentially arises because the log GW count considers extra markings at each point where the $\bb{P}^1$ meets the toric boundary $D$ of $\bb{P}^2$. By Thm.~\ref{main-comparison-thm}, this log GW invariant can be related to the usual one by removing these extra markings via the divisor equation. Using our correspondence theorem Thm~\ref{mainthmintro}, we obtain
$$0\underset{\hbox{\tiny valency}\atop{\hbox{\tiny argument}}}{=}
{}^{\operatorname{trop}}\langle \ell\psi^2\rangle^{\bb{P}^2}_{0,\ell}
\underset{\hbox{\tiny correspondence}\atop{\hbox{\tiny theorem \ref{mainthmintro}}}}{=}
 \langle \ell\psi^2\rangle^{\bb{P}^2(\log D)}_{0,\ell} 
\underset{\hbox{\tiny divisor}\atop{\hbox{\tiny equation}}}{=}
 \langle \ell\psi^2\rangle^{\bb{P}^2}_{0,\ell} + 3\underbrace{\langle [\pt]\psi^1\rangle^{\bb{P}^2}_{0,\ell}}_{=1}.$$
\cite{GrP2} and \cite{Over} describe more complicated tropical counts which do give the ordinary invariants for $\bb{P}^2$, but the geometric meaning of their counts was previously mysterious.  The $3\langle [\pt]\psi^1\rangle^{\bb{P}^2}_{0,\ell}$-term above illustrates why some of their tropical curves appear.  We plan to use our result to further illuminate their counts in a paper joint with N. Nabijou.  We encourage the reader to look at the further Examples~\ref{example-double-psi}, \ref{Boundary-Markings} and \ref{ex-hyperell} in \S\ref{sec-applications}.

\subsection{Statement of the main result}\label{MainResult}
Let $X=X_\Sigma$ be a smooth projective toric variety given by a fan $\Sigma$ in $N_\RR:=N\otimes_\bb{Z} \bb{R}$ for some $N\cong \mathbb{Z}^n$. 
A tropical degree $\Delta$ is a map $\Delta:I \rar N$ for a finite index set $I$.  For simplicity, let us assume here that each nonzero $\Delta(i)$ is contained in a ray of $\Sigma$.  Let $D_i$ denote the boundary divisor corresponding to the ray through $\Delta(i)$. Denote $I^{\circ}:=\Delta^{-1}(\{0\})$ and $I^{\partial}:=I\setminus I^{\circ}$.  Curves of degree $\Delta$ have marked points $(x_i)_{i\in I}$ labelled by $I$.  If $i\in I^{\partial}$, then $x_i$ is required to map to $D_i$ with intersection multiplicity equal to the index\footnote{A nonzero element $v\in N$ is called primitive if it is not a positive integer multiple of any other element of $N$.  An element $v$ is said to have index $|v|\in \bb{Z}_{>0}$ if $v$ is equal to $|v|$ times a primitive vector.} of $\Delta(i)$.  For each $i\in I$, let $Z_{A_i}$ denote a generic regularly embedded closed subvariety of $X$ with tropicalization $A_i$.  In particular, if $A_i\subset N_{\bb{R}}$ is an affine-linear subspace with rational slope, then $Z_{A_i}$ is the closure of a general orbit of the torus $\bb{G}(\LL(A_i)\cap N)\subset \bb{G}(N)$.  For $i\in I^{\partial}$, let $Z_{A_i,\Delta(i)}$ denote $Z_{A_i} \cap D_i$.

On the tropical side, corresponding to a degree $\Delta$ curve $C$ (with log structure) is a tropical curve $h:\Gamma \rar N_{\bb{R}}$.  Here, $\Gamma$ is the dual graph to $C$, with marked points $x_i$ of $C$ corresponding to half-edges $E_i$ of $\Gamma$.  The edges of $\Gamma$ are equipped with non-negative integer weights.  Weight $0$ edges are contracted by the map $h$, while the other edges are embedded into lines with rational slope.  Furthermore, $h$ satisfies a ``balancing condition'' and has degree $\Delta$, meaning that for each $i\in I$, $h(E_i)$ points in the direction $\Delta(i)$, and $E_i$ has weight equal to the index of $\Delta(i)$.  The genus of  $\Gamma$ is equal to its first Betti number (plus the sum of the genera of the vertices of $\Gamma$, but these are $0$ for non-superabundant cases).  We say $h:\Gamma\rar N_{\bb{R}}$ satisfies $\A:=(A_i)_{i\in I}$ if $h(E_i)\subset A_i$ for each $i$.  For $\Psi:=(s_i)_{i\in I^{\circ}}$ a tuple of non-negative integers $s_i$, we say $\Gamma$ satisfies $\Psi$ if \begin{align*}
    \val(V)-3\geq \sum_{\substack{i\in I^{\circ} \\E_i\ni V}} s_i
\end{align*} for each vertex $V$.  We say $\Gamma$ is non-superabundant if its vertices all have genus $0$ and if its space of deformations locally has dimension $|I|+(n-3)(1-g)$.

By \cite[Thm. 1.1.2]{ACGS}, a log Gromov-Witten invariant decomposes into types of tropical curves. In particular, given a degeneration, we can decompose into the contribution from superabundant and non-superabundant curves (see Def. \ref{superabundant}), in notation $\langle..\rangle={}^s\langle..\rangle+{}^{\ns}\langle..\rangle$.
It follows from Prop.~\ref{SuperabundantLemma} that ${}^s\langle..\rangle=0$ when $g=0$, or for any $g$ with $n\le 2$ and no $\psi$-conditions, and also in many $n=1$ cases with limited $\psi$-classes (including the Hurwitz counts of \S \ref{section-hurwitz}).  On the other hand, (3) and (4) of Remark \ref{SuperExamples} show that superabundant tropical curves probably appear in most other cases, although it was recently shown \cite[Thm. 3.9]{CJMR2} for stationary invariants of Hirzebruch surfaces  that even though superabundant curves might exists,  ${}^s\langle..\rangle$ still equals $0$.  A general statement about superabundant curves is beyond current knowledge.

\begin{thm} 
\label{mainthmintro}
Let $g\ge 0$ and  $\{s_i\ge 0\}_{i\in I^{\circ}}$ be given and assume that 
$$\sum_{i\in I} \codim A_i +\sum_{i\in I^{\circ}} s_i = |I|+(n-3)(1-g).$$
There exists a log smooth toric degeneration  $X_0(\log D_0)$ of $X(\log D)$ such that the following are non-negative rational numbers that coincide:
\begin{enumerate}
\item The non-superabundant part of the log Gromov-Witten invariant 
$${}^{\ns}\langle  \{[Z_{A_i}]\psi^{s_i}\}_{i\in I^{\circ}}, \{[Z_{A_i,\Delta(i)}]\}_{i\in I^{\partial}}  \rangle^{X_0(\log D_0)}_{g,\Delta},$$
\item The count with multiplicities (cf. \eqref{MultGamma}, or more generally \eqref{GeneralMultGamma}) of rigid marked tropical curves having genus $g$ and degree $\Delta$ satisfying $\A=(A_i)_{i\in I}$ and $\Psi=(s_i)_{i\in I^{\circ}}$.  
\end{enumerate}
If ${}^s\langle..\rangle=0$ then these numbers also coincide with $\langle  \{[Z_{A_i}]\psi^{s_i}\}_{i\in I^{\circ}}, \{[Z_{A_i,\Delta(i)}]\}_{i\in I^{\partial}}  \rangle^{X(\log D)}_{g,\Delta}$.
\end{thm}
\begin{proof} 
The equality (1)=(2) is Theorem~\ref{MainThm} and Theorem~\ref{MainThmExtended}.  (2) is easily non-negative rational. 
\end{proof}

A curve in $X$ is called torically transverse if it meets no toric strata of codimension greater than one.

\begin{thm}\label{ThmCount}
In the absence of automorphisms of the tropical curves in (2), (e.g. for $g=0$, see Remark~\ref{rmk-aut}) the count of (1) and (2) furthermore coincides with the non-negative integer-valued count of non-superabundant torically transverse curves in $X_0(\log D_0)$ (or in $X(\log D)$ if ${}^s\langle..\rangle=0$) of genus $g$ and degree $\Delta$ meeting $Z_{A_i}$ at a marked point with a generic $\psi^{s_i}$-condition for each $i\in I^{\circ}$, and meeting $D_i$ in $Z_{A_i}$ with tangency order equal to the index of $\Delta(i)$ for each $i\in I^{\partial}$.  In general, these naive algebraic counts must actually be weighted by $1$ over the numbers of automorphisms of the corresponding marked tropical curves.
\end{thm}

\begin{rmk}
We note that the log curves contributing to (1) in Theorem \ref{mainthmintro} have unobstructed deformations by Proposition \ref{VirtualActual}, so they deform to contribute the same amount to the log Gromov-Witten numbers in the nearby fibers.  In the absence of superabundant curves, this is also what leads to the equality with $\langle  \{[Z_{A_i}]\psi^{s_i}\}_{i\in I^{\circ}}, \{[Z_{A_i,\Delta(i)}]\}_{i\in I^{\partial}}  \rangle^{X(\log D)}_{g,\Delta}$.
\end{rmk}

\begin{myproof}[Proof of Thm. \ref{ThmCount}] 
The count in (1) arises via an intersection cycle $\gamma$ composed (in the case of trivial $\Aut(\Gamma)$) of reduced non-stacky points parametrizing torically transverse curves (Prop.~\ref{PreCount} combined with Lemma~\ref{LogStructureCounts}, plus Lemma~\ref{NoAut} for the non-stackiness). The assertion follows because this cycle is the Gysin restriction of a cycle of stable maps of the total space of the degeneration constructed in \S\ref{sec-tordeg}, and toric transversality is an open condition and stackiness a closed condition.
\end{myproof}

\subsection{Notation}\label{Notation}
Let \gls{kk} be an algebraically closed field of characteristic $0$. Fix an $n\in \bb{Z}_{> 0}$, and let \gls{N} denote a free Abelian group of rank $n$.  Denote $N_{\bb{Q}}:=N\otimes_{\bb{Z}} \bb{Q}$ and $N_{\bb{R}}:=N\otimes_{\bb{Z}} \bb{R}$.  
For a subset $A\subset N_{\bb{Q}}$, \gls{LA} denotes the linear subspace spanned by differences $u-v$ of points in $A$.  In particular, if $A$ is an affine subspace, $\LL(A)$ is the linear subspace parallel to $A$. We write \gls{LNA} for $\LL(A) \cap N$.  Denote $\gls{N'}:=N\oplus \bb{Z}$, $\gls{M}:=\Hom(N,\bb{Z})$, $\gls{M'}:=\Hom(N',\bb{Z})$.  Let $\gls{CA}:=\?{\{(ta,t)|a\in A,t\in \bb{Q}_{\geq 0}\}}\subset N'_{\bb{Q}}$ denote the closed cone over $A$.  In particular, $\gls{LCA}:=\LL(\C(A))$ denotes the linear closure of $A\times \{1\}$ in $N_{\bb{Q}}\times \bb{Q}$.
For a $\ZZ$-module $V$, we set $\GG(V)=\GG_m(\kk)\otimes_\ZZ V$.

For any scheme $Y$ we will write $Y^{\dagger}$ to denote $Y$ equipped with a log structure.  Similarly, a morphism of log schemes will be given a $\dagger$ in its superscript. 
We will sometimes use Witten's correlator notation for Gromov-Witten invariants $\langle insertions\rangle^{X}_{g,\beta}$ and similarly for log Gromov-Witten invariants 
$\langle insertions\rangle^{X(\log D)}_{g,\Delta}$
for the divisorial log structure of a divisor $D$ on $X$. 
For the latter, note that the refined degree $\Delta$ contains information about the orders of tangencies and numbers of markings that go to each component of $D$.

For the reader's convenience, we will keep track of important notation from \S \ref{TropCurves}-\S\ref{Sing} in the glossary below.
\setlength\columnseprule{.6pt}
\begin{multicols}{2}
\printglossary[title=Notation glossary] 
\end{multicols}

\subsection{Acknowledgements}
The authors are grateful to Mark Gross for suggesting the Nishinou-Siebert approach to descendant Gromov-Witten invariants. We thank Bernd Siebert for encouraging the use of log GW theory and answering questions.  We also thank Y.P. Lee in this context.  We thank Tony Yue Yu, Rahul Pandharipande and Bernd Siebert for inspiring us to view $\psi$-classes as being pulled back from the underlying moduli space of stable curves rather than our early approach of using tangency conditions.  We thank Peter Overholser for his repeated warnings about negative counts and useful examples he gave. We thank Mark Gross, Andreas Gross, Hannah Markwig and Renzo Cavalieri for explaining their related projects. David Rydh gave a useful answer to a question on Chow groups for stacks. We thank the anonymous referee for suggesting various improvements.

\section{Tropical curves}\label{TropCurves}
We follow \cite{Mi,NS}. While definitions in this section might appear ad hoc, we will later see in \S \ref{tropicalization} that the herein defined objects are natural because they are the output of \emph{tropicalization}. 

Let $\gls{Gammabar}$ denote the topological realization of a finite connected graph.  Let $\gls{Gamma}$ be the complement of some subset of the $1$-valent vertices of $\?{\Gamma}$.  Let $\gls{Gamma0}$, $\gls{Gamma1}$, $\gls{Gamma1inf}$, and $\gls{Gamma1c}$ denote the the sets of vertices, edges, non-compact edges, and compact edges of $\Gamma$, respectively.  We equip $\Gamma$ with a ``weight-function'' $\gls{w}:\Gamma^{[1]}\rar \bb{Z}_{\geq 0}$ and a ``genus-function'' $\gls{g}:\Gamma^{[0]} \rar \bb{Z}_{\geq 0}$ (typically writing $g(V)$ as $g_V$), and we require that univalent and bivalent vertices have positive genus.  
A {\bf marking} of $\Gamma$ is a bijection $\gls{eps}:I\rar \Gamma^{[1]}_{\infty}$ for some index set $I$.

Denote $E_i:=\epsilon(i)$.  We let $\gls{Icirc}:=\epsilon^{-1}(w^{-1}(0))\subset I$, i.e., the set of $i\in I$ for which $w(E_i)=0$, and we call these the \textbf{interior markings}. 
The complement $\gls{Ip} = I\setminus I^{\circ}$ forms the \textbf{boundary markings}.  Denote by $\gls{Gammaeps}$ the data of $\Gamma$, the weight-function $w$ and genus-function $g$ (left out of the notation), and the marking.  Let $\gls{einf}:=\# I$.  For each vertex $V$, we let $\gls{IVcirc}$ denote the set of $i\in I^{\circ}$ such that $V\in E_i$, and let $\gls{mV}:=\# I_V^{\circ}$.  We call $V$ \textbf{marked} if $m_V>0$ and unmarked if $m_V=0$.

\begin{dfn}\label{TropCurveDfn}
A {\bf parametrized marked tropical curve} \gls{ptc} is data $(\Gamma,\epsilon)$ as above, along with a continuous map $\gls{h}:\Gamma\rar N_{\bb{R}}$ such that 
\begin{enumerate}
\item For each edge $E\in \Gamma^{[1]}$, $h|_E$ is constant if and only if $w(E)=0$.  Otherwise,
$h|_E$ is a proper embedding into an affine line with rational slope.  We will refer to edges $E$ with $w(E)=0$ as {\bf contracted edges}.  In particular, {\bf self-adjacent} edges (those with the same vertex at both ends) must have weight zero as there is no way to embed a loop into a line.
\item For every $V\in \Gamma^{[0]}$, $h(V) \in N_{\bb{Q}}$, and the following {\bf balancing condition} holds.  For any edge $E\ni V$, denote by $\gls{uVE}$ the primitive integral vector emanating from $h(V)$ into $h(E)$ (or $u_{(V,E)}:=0$ if $h(E)$ is a point). Then
\begin{align*}
\sum_{E\ni V} w(E) u_{(V,E)} =0.
\end{align*}
\end{enumerate}
Furthermore, for each contracted edge $E$, we have the additional data of a length $\ell(E)\in \bb{Q}_{>0}$. 
For the purposes of this paper, we assume that $h$ is non-constant, or equivalently, that $e_\infty>0$. Also, to simplify the exposition, we assume throughout that $\Gamma^{[0]}\neq \emptyset$, and we handle this case separately in Remark \ref{NoVertices}.

For non-compact edges $E_i \ni V$, we may denote $u_{(V,E_i)}$ simply as $u_{E_i}$ or $\gls{ui}$ (with $u_i=0$ if $E_i$ is contracted).  
Similarly, for any edge $E$, we may simply write $\gls{uE}$ when the vertex is either clear from context or unimportant, e.g., as in $\bb{Z}u_{E}$.

An {\bf isomorphism} of marked parametrized tropical curves $(\Gamma,\epsilon,h)$ and $(\Gamma',\epsilon',h')$ is a homeomorphism $\Phi:\Gamma\rar \Gamma'$ respecting the weights, markings, and lengths such that $h=h'\circ \Phi$.  A {\bf marked tropical curve} (or, for short, a {\bf tropical curve}) is then defined to be an isomorphism class of parametrized marked tropical curves.  We will use $(\Gamma,\epsilon,h)$ to denote the isomorphism class it represents and will often abbreviate this as simply $h$ or $\Gamma$. We will denote the automorphism group of the tropical curve represented by $(\Gamma,\epsilon,h)$ as $\gls{AutG}$.

\begin{rmk} \label{rmk-aut}
Since unbounded edges are marked, we note that $\Aut(\Gamma)$ is always trivial for $g=0$. More generally, if no two distinct vertices map to the same point, then automorphisms can only act by permuting edges
that have the same vertices, and by Lemma \ref{lem-non-span-loops}, curves with such collections of edges are superabundant unless $n=1$.  Example \ref{ex-hyperell} features nontrival $\Aut(\Gamma)$.  Note that after the removal of boundary markings, automorphisms may occur also for $g=0$ and any $n$,  cf. \S\ref{OrdinaryGW}.
\end{rmk}

A {\bf tropical immersion} is a tropical curve with no higher-genus vertices, no self-adjacent edges, and no pairs of edges which share a vertex and point in the same direction.

If $b_1(\Gamma)$ denotes the first Betti number of $\Gamma$, the {\bf genus} of a tropical curve $\Gamma$ is defined as
$$
\gls{gGamma}= b_1(\Gamma)+\sum_{V\in\Gamma^{[0]}}g_V.
$$
\end{dfn}

Set $\gls{ebar}:=\# \Gamma^{[1]}_c$, so $\bar e+e_\infty$ is the total number of edges of $\Gamma$.  
Let $\gls{valV}$ denote the valence of a vertex $V$ (counting self-adjacent edges twice).  Motivated by considering a valency of three to be generic,\footnote{It will sometimes be useful to think of $\ov(V)$ geometrically as the dimension of $\?{\s{M}}_{0,\val(V)}$.} one defines the over-valence $\gls{ovV}:=\val(V)-3$, and \[\gls{ovG}:=\sum_{V\in \Gamma^{[0]}} \ov(V).\]  

Now assume $g_V=0$ for every $V\in \Gamma^{[0]}$.  Note that $\ov(V)\ge 0$ for all $V\in \Gamma^{[0]}$.
Let $\gls{flags}$ denote the set of {\bf flags} $(V,E)$, $V\in E$, of $\Gamma$ (with self-adjacent edges contributing twice). Computing $\#F\Gamma$ once via vertices, once via edges, yields
\begin{align}\label{v-e-equality}
3\#\Gamma^{[0]}+\ov(\Gamma) =  e_{\infty}+2\?{e}.
\end{align}
A computation of the Euler characteristic of $\Gamma$ yields $1-g = \#\Gamma^{[0]} - \?{e}$. Using this to eliminate $\#\Gamma^{[0]}$ in \eqref{v-e-equality} gives
\begin{align}\label{e-infty}
e_{\infty}=\?{e}-3g+3+\ov(\Gamma).
\end{align}

The {\bf type} of a marked tropical curve (possibly with positive $g_V$'s) is the data  $(\Gamma,\epsilon)$ (including the weights and $g_V$'s and up to homeomorphism respecting this data and the markings), along with the data of the map $u:F\Gamma\rar N$, $(V,E)\mapsto u_{(V,E)}$.  Let $\gls{typespace}$ denote the space of marked tropical curves of type $(\Gamma,\epsilon,u)$.  We may write $\f{T}_{\Gamma}$ for short.  If $\Gamma$ is genus $0$, we have
\begin{align}\label{Tshape}
\f{T}_{(\Gamma,\epsilon,u)}\cong N_{\bb{Q}} \times \bb{Q}_{>0}^{\?{e}}.
\end{align}
  Indeed, we can specify $h\in \f{T}_{(\Gamma,\epsilon,u)}$ by specifying the image in $N_{\bb{Q}}$ of some $V\in \Gamma^{[0]}$ and then specifying the lengths of the compact edges. With a similar approach, we obtain more generally for curves of any genus that
\begin{align}\label{Tshape-general}
\f{T}_{(\Gamma,\epsilon,u)}\cong N_{\bb{Q}} \times (\bb{Q}_{>0}^{\?{e}}\cap W),
\end{align}
where $W$ is the subspace of $\bb{Q}^{\?{e}}$ generated by vectors of the form  $(a_{E})_{E\in \Gamma^{[1]}_c}$ such that
for all closed loops in $\Gamma$, if $L$ is the set of edges of such a loop, we have
$$0=\sum_{E\in L} a_E u_E.$$
Here, a {\bf loop} in $\Gamma$ is defined to be a cyclically ordered sequence of edges $(E_1,...,E_r)$ in $\Gamma$, equipped with orientations with respect to which the starting point of $E_{i+1}$ is the ending point of $E_{i}$ for each $i=1,\ldots,r$.  The directions of the $u_E$'s above are chosen to respect this orientation.  $W$ is thus the constraint to have the loops close up.  As we will see, the dimension of $W$ and hence of $\f{T}_{(\Gamma,\epsilon,u)}$ can vary even for fixed $n$, $\?{e}$, and $g$.  The maximal codimension $W$ can have is $gn$, so \eqref{Tshape-general} motivates the following definition.
\begin{dfn} \label{superabundant}
A tropical curve $\Gamma$ is called {\bf superabundant} if it contains at least one vertex of positive genus,\footnote{Superabundance of a tropical curve should mean that the deformations of the corresponding log curves are obstructed.  In this sense, it is perhaps not appropriate to always say tropical curves with higher-genus vertices are superabundant.  However, Proposition \ref{No-gV} indicates that it is at least appropriate to call these curves superabundant whenever they also satisfy a generic and general collection of conditions.} or if every vertex is genus $0$ but the inequality $\dim \f{T}_{(\Gamma,\epsilon,u)}\ge n+\?{e}-ng$ is strict (i.e. not an equality).
\end{dfn}

Assume again that $g_V=0$ for each $V$.  The span of a loop is defined to be the span of the directions of its edges.  One easily sees the following sufficient condition for superabundance:
\begin{lem}\label{lem-non-span-loops}
If $\Gamma$ has a loop that does not span $N_\QQ$, then $\Gamma$ is superabundant.  In particular, non-superabundant curves have no self-adjacent edges.
\end{lem}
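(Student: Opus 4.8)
The plan is to translate superabundance into a statement about the rank of the linear map cutting out $W$, and then to use the non-spanning loop to exhibit a direction that this map must miss. First I would record that, by \eqref{Tshape-general}, $\dim \f{T}_{(\Gamma,\mu,\epsilon,u)} = n + \dim W$: the actual edge lengths $(\ell(E))$ of $\Gamma$ satisfy the loop-closing conditions and are positive, so $\bb{Q}_{>0}^{\?e}\cap W$ is a nonempty relatively open subcone of $W$ and hence has dimension $\dim W$. Since $g_V=0$ for all $V$, the genus equals $b_1(\Gamma)=\dim_\QQ H_1(\Gamma,\QQ)=g$. The loop conditions defining $W$ are linear in the $(a_E)$ and, after fixing reference orientations on the edges, the condition attached to a loop $L$ depends only on the cycle class $z_L\in H_1(\Gamma,\QQ)$ and is $\QQ$-linear in it; hence I would package all of them into a single map
\[
\Phi\colon \QQ^{\?e}\lra \Hom\bigl(H_1(\Gamma,\QQ),N_\QQ\bigr),\qquad \Phi\bigl((a_E)\bigr)(\gamma)=\sum_{E}\gamma_E\, a_E\, u_E,
\]
with $W=\ker\Phi$. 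The target has dimension $gn$, so $\codim W=\rank\Phi\le gn$ (recovering the bound quoted before Definition~\ref{superabundant}), and, unwinding Definition~\ref{superabundant}, $\Gamma$ is superabundant exactly when $\rank\Phi<gn$, i.e.\ when $\Phi$ fails to be surjective.

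The key observation is then that for every $(a_E)$ and every cycle $\gamma$, the vector $\Phi((a_E))(\gamma)$ is a $\QQ$-linear combination of the directions $u_E$ of the edges occurring in $\gamma$. In particular, evaluating at $\gamma=[L]$, every element of $\im\Phi$ sends $[L]$ into $U:=\operatorname{span}(L)$, the span of the directions of the edges of $L$. By hypothesis $U\subsetneq N_\QQ$. Since $L$ is a genuine loop, $[L]\neq 0$ in $H_1(\Gamma,\QQ)$, so I can choose $f\in\Hom(H_1(\Gamma,\QQ),N_\QQ)$ with $f([L])\notin U$; such an $f$ cannot lie in $\im\Phi$, so $\Phi$ is not surjective and $\Gamma$ is superabundant. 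For the ``in particular'' clause, a self-adjacent edge $E$ has $w(E)=0$ by Definition~\ref{TropCurveDfn}, hence is contracted with $u_E=0$; the one-edge loop $(E)$ then has $\operatorname{span}(E)=\{0\}\subsetneq N_\QQ$ (as $n\ge 1$) and nonzero class $[E]\neq 0$, so the first part applies.

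The main thing to get right is the bookkeeping that makes $\Phi$ well defined and identifies $\ker\Phi$ with $W$: namely that the orientation signs make the constraint attached to an arbitrary loop the $H_1$-linear image of the fundamental-cycle constraints, so that imposing the conditions for all loops is the same as imposing $\Phi=0$. Once this is set up on the chain complex of the graph, the remaining content is the elementary linear-algebra fact that a map into $\Hom(H_1,N_\QQ)$ whose image evaluates $[L]$ into a proper subspace $U$ cannot be surjective. I expect this orientation/homology bookkeeping to be the only genuinely delicate point; everything after it is routine.
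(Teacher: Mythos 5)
Your proof is correct. Note that the paper offers no proof of this lemma at all---it is introduced with ``one easily sees the following sufficient condition''---so your argument supplies the formalization the authors omit, and it is the natural one: identify $W$ with $\ker\Phi$ for $\Phi\colon\QQ^{\?{e}}\to\Hom(H_1(\Gamma,\QQ),N_\QQ)$, observe that superabundance (via \eqref{Tshape-general} and the fact that the actual edge lengths give a nonempty open subset of $W$ inside $\QQ_{>0}^{\?{e}}$) is exactly non-surjectivity of $\Phi$, and then note that every element of $\im\Phi$ evaluates $[L]$ into the proper subspace $\operatorname{span}(L)$, so $\Phi$ misses any $f$ with $f([L])\notin\operatorname{span}(L)$.

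One caveat worth flagging: your step ``$L$ is a genuine loop, so $[L]\neq 0$ in $H_1(\Gamma,\QQ)$'' silently fixes the reading of ``loop.'' Under the paper's literal definition (any closed edge-walk), a backtracking walk $(E,E^{-1})$ is a loop with trivial homology class and non-spanning support whenever $n\geq 2$, which would make the lemma false as stated; so the intended meaning must be an honest (simple) cycle, for which $[L]\neq 0$ indeed holds since each edge of $L$ appears with coefficient $\pm 1$ and $H_1$ of a graph equals its cycle space. With that (clearly intended) reading, both the main claim and the ``in particular'' clause for self-adjacent edges (which are contracted, so $u_E=0$ and the one-edge cycle has span $\{0\}\subsetneq N_\QQ$) go through exactly as you wrote.
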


See Fig.~\ref{trop-superabundant} for an example demonstrating that the converse of Lemma~\ref{lem-non-span-loops} is false.  We do, however, have the following partial converses:
\begin{prop}\label{SuperabundantLemma}  
For g=0, all tropical curves are non-superabundant.
If $n=1$, all tropical curves without self-adjacent edges are non-superabundant.
For $n=2$, all tropical immersions for which $\val(V)>3$ holds at most at one vertex are non-superabundant.
\end{prop}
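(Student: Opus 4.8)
The plan is to reduce all three statements to a single linear-algebra criterion and then treat the three cases with increasingly geometric inputs. By \eqref{Tshape-general} we have $\dim\f{T}_{\Gamma}=n+\dim W$, and since $W\subset\bb{Q}^{\?{e}}$ is cut out by the loop-closing conditions for a basis of $H_1(\Gamma,\bb{Q})$ — $g$ loops, each giving $n$ equations valued in $N_{\bb{Q}}$ — the space $W$ is the kernel of the period map $\Phi\colon\bb{Q}^{\?{e}}\to\Hom(H_1(\Gamma,\bb{Q}),N_{\bb{Q}})$, $\Phi((a_E))(\gamma)=\sum_E\gamma_E a_E u_E$, whose target has dimension $gn$. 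As the maximal codimension of $W$ is $gn$, the curve is non-superabundant exactly when $\Phi$ is surjective, equivalently when its transpose $\Phi^{*}\colon H_1(\Gamma,\bb{Q})\otimes M_{\bb{Q}}\to\bb{Q}^{\?{e}}$, $\omega\mapsto(\langle\omega_E,u_E\rangle)_E$, is injective. Thus I must show: the only $M_{\bb{Q}}$-valued $1$-cycle $\omega=(\omega_E)$ on $\Gamma$ (a flow, $\sum_{E\ni V}\pm\omega_E=0$ at each vertex) with $\langle\omega_E,u_E\rangle=0$ for all $E$ is $\omega=0$. For $g=0$ this is immediate since $H_1(\Gamma,\bb{Q})=0$ (and is anyway visible from \eqref{Tshape}).

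For $n=1$ we have $M_{\bb{Q}}=\bb{Q}$ and the orthogonality reads $\omega_E u_E=0$, forcing $\omega_E=0$ on every non-contracted edge. Hence $\omega$ is a $1$-cycle supported on the subgraph of contracted edges, and I would finish by arguing that this subgraph is a forest and so supports no nonzero cycle. The hypothesis excludes self-adjacent edges, which are precisely the contracted loops (self-adjacent edges have weight $0$); ruling out contracted cycles is the point where this hypothesis enters, after which $\omega=0$.

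The case $n=2$ is the substantial one. Fix a linear isomorphism $\iota\colon N_{\bb{Q}}\xrightarrow{\sim}M_{\bb{Q}}$ with $\langle\iota(x),x\rangle\equiv0$ (a quarter-turn). Then $\langle\omega_E,u_E\rangle=0$ forces $\omega_E\in\bb{Q}\,\iota(u_E)$, so $\omega_E=\lambda_E\iota(u_E)$, and the flow condition becomes the scalar balancing $\sum_{E\ni V}\lambda_E u_{(V,E)}=0$ at every $V$ (over compact edges). I must show $\lambda\equiv0$. Since $\Gamma$ is an immersion, each vertex is at least trivalent, and by hypothesis trivalent except at one vertex $V_0$. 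At a trivalent vertex the three primitive directions are pairwise non-parallel, so if some incident edge is unbounded the balancing forces $\lambda=0$ on the compact ones, while if all three are compact then the one-dimensional relation space of the directions is spanned by the positive weights $(w_{E_1},w_{E_2},w_{E_3})$, giving $\lambda_{E_i}=t_V w_{E_i}$ for a single scalar $t_V$; in particular the $\lambda_{E_i}$ there share a sign.

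The hard part is the final step, an extremal-vertex argument using the realization $h$ of $\Gamma$. Assuming $\lambda\neq0$, pick an edge $E_0=(V,V')$ with $\lambda_{E_0}\neq0$; as $E_0$ is non-contracted, $h(V)\neq h(V')$. For generic $\vf\in M_{\bb{Q}}$ separating the two endpoint-images of every edge, let $V^{*}$ maximize $\vf(h(\cdot))$ among vertices meeting some edge with $\lambda\neq0$. Such a vertex is $V_0$ or an all-compact trivalent vertex with $t_{V^{*}}\neq0$; in the latter case its three edges carry $\lambda$, point strictly $\vf$-downward, and have one sign, so $0=\vf\big(\sum\lambda_E u_{(V^{*},E)}\big)=t_{V^{*}}\sum w_E\vf(u_{(V^{*},E)})\neq0$, a contradiction; hence $V^{*}=V_0$, and symmetrically the $\vf$-minimizer is $V_0$, impossible since the endpoints of $E_0$ force distinct extremal values. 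This is exactly where ``$\val(V)>3$ at most once'' is essential: with two higher-valent vertices the extremal vertices need not inherit the sign-coherent form $\lambda_E=t_V w_E$, the contradiction fails, and superabundant curves appear. I would still need to check the degenerate possibilities (coincident vertex images, or $V_0$ meeting no $\lambda\neq0$ edge), but the same functional $\vf$ disposes of them without changing the argument.
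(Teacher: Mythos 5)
Your dual formulation of non-superabundance---surjectivity of $\Phi$ is equivalent to the nonexistence of a nonzero $M_{\bb{Q}}$-valued flow $\omega$ on $\Gamma$ with $\langle\omega_E,u_E\rangle=0$ on every edge---is correct, and your $n=2$ argument is a complete, self-contained proof of the only substantial case: the rotation trick turns such a flow into a scalar $\lambda$ obeying the balancing condition with the weights replaced by the $\lambda_E$'s, and the extremal-vertex argument with a generic $\vf$ kills it. This is a genuinely different route from the paper's. The paper treats $g=0$ and $n=1$ as ``easily checked'' and, for $n=2$, simply invokes \cite[Prop.~2.23]{Mi}, whose proof is an inductive dimension count along a maximal ordered tree of the graph; the paper's only added content is that the (at most one) vertex with $\val(V)>3$ should be taken as the root of that tree. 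Your route avoids the external citation and makes completely transparent where the valence hypothesis enters: with two vertices of valence $>3$, the $\vf$-maximum and $\vf$-minimum of the support of $\lambda$ can be absorbed by the two exceptional vertices and no contradiction arises---consistent with the superabundant immersions referenced in Remark~\ref{SuperExamples}(2), which by your own result must have at least two such vertices.

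However, your $n=1$ case has a genuine gap: ``no self-adjacent edges'' does \emph{not} rule out cycles of contracted edges, which is what your forest argument needs. A self-adjacent edge is a single edge with both endpoints at one vertex, but the paper's definitions allow two \emph{distinct} vertices with the same image in $N_{\bb{R}}$ to be joined by two contracted (weight-$0$) edges, and that is a contracted cycle containing no self-adjacent edge. Concretely, for $n=1$ take two $4$-valent vertices, each carrying unbounded edges of weight $1$ in both directions, joined to each other by two contracted edges: this is a legitimate genus-$1$ tropical curve in the sense of Definition~\ref{TropCurveDfn}, it has no self-adjacent edges, and it is superabundant, since by your own reduction the loop condition $\sum_{E\in L}a_Eu_E=0$ is vacuous and hence $\codim W=0<ng=1$ in \eqref{Tshape-general}. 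So the step ``the hypothesis rules out contracted cycles'' fails, and in fact this curve is a counterexample to the $n=1$ statement as literally written. The hypothesis your argument actually needs---and under which it closes immediately, since the support of a nonzero flow must contain a graph cycle---is that every loop of $\Gamma$ contains a non-contracted edge, i.e.\ that every loop spans $N_{\bb{Q}}$, precisely the condition singled out by Lemma~\ref{lem-non-span-loops}. This imprecision is in the paper's statement as much as in your proof, but since your write-up asserts that the stated hypothesis suffices, you should flag and repair this step rather than leave it as claimed.
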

\begin{proof}
All the statements are easily checked except the last one for $n=2$. This is essentially \cite[Prop. 2.23]{Mi}, noting that the vertex with $V$ with $\val(V)>3$ can be chosen as the start for Mikhalkin's inductive construction of a maximal ordered tree in the curve.
\end{proof}

\begin{rmk}\label{SuperExamples} 
\begin{enumerate}
\item A typical example for a superabundant $g=1$, $n=2$ curve with no contracted edges is the following curve supported on a line (a sequence of a weight $2$ edge, double edge, weight $2$ edge):    
\xymatrix{
\ar@{-}^2[r]^(-.15){}="a"^(1.15){}="b" \ar@{-} "a";"b"
&\bullet
\ar@{=}[r]^(-.1){}="a"^(1.1){}="b" \ar@{=} "a";"b"
&\bullet
\ar@{-}^2[r]^(-.15){}="a"^(1.15){}="b" \ar@{-} "a";"b"
&}

\item 
Mikhalkin provides an example for a superabundant tropical immersion for $n=2, g=22$, \cite[Rem. 2.25]{Mi} based on Pappus theorem. An $n=2$, $g=3$ superabundant immersion is given in \cite[Example~3.10]{GM07}.
\item Superabundant curves of any higher genus can be generated from genus zero curves by attaching self-adjacent edges (or similarly, by replacing an unmarked trivalent vertex by a triangle of edges, or by inserting curves like from (1) above into edges with weight greater than $1$).  Since we generally want to avoid superabundancy, this observation is particularly problematic if $n\ge 3$ because then the expected dimension $d^{\trop}_{g,\Delta}$ (see \eqref{GeneralDimension}) increases or (for $n=3$) stays the same as we decrease $g$.  Hence, given some fixed constraints to be met for some degree (constraints and degrees will be introduced below), it should be at least as easy to find curves satisfying these constraints in genus zero as in higher genus, and then from such curves we can attach self-adjacent edges (or apply the other modifications we just described) to increase the genus.  One therefore expects that for $n\geq 3$ and $g>0$, superabundant curves are abundant.
\item For $n=2$, increasing $g$ by $1$ only increases the expected dimension $d^{\trop}_{g,\Delta}$ by $1$.  So if one considers a rigid collection of conditions for given $g>0$ and $\Delta$ and then forgets a $\psi$-class (i.e., decreases one of the $s_i$'s in \eqref{PsiTropDfn} below by $1$), one would expect there to be a $0$-dimensional collection of curves of genus $g-1$ which satisfy the modified conditions.  Given such a genus $g-1$ curve, one could then insert a self-adjacent edge at the vertex where the psi-class was forgotten to get a superabundant curve of the desired genus which satisfies the original collection of conditions (these are not tropical immersions, so Prop. \ref{SuperabundantLemma} does not apply).  So even in dimension $2$, it seems that higher-genus counts with $\psi$-class insertions will typically involve the appearance of superabundant curves, as previously noted in \cite[Appx.~B]{Bou}.  However, in at least some cases, the contributions of the superabundant curves is trivial, e.g., for stationary invariants of Hirzebruch surfaces according to  \cite[Thm. 3.9]{CJMR2}.
\end{enumerate}
\end{rmk}

\begin{figure}
\resizebox{0.4\textwidth}{!}{
\includegraphics{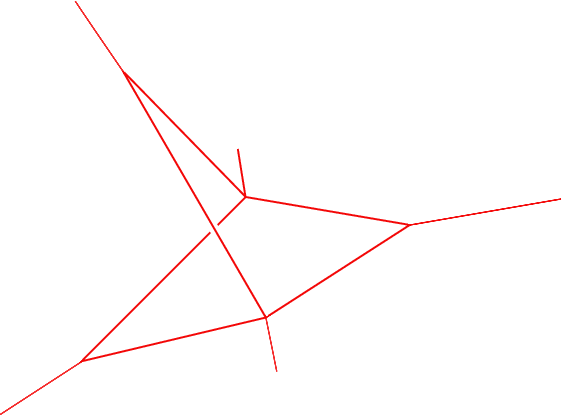}
}
\caption{A tropical genus two curve in $\RR^3$ that is superabundant even though all its loops span.}
\label{trop-superabundant}
\end{figure}

Allow $g_V>0$ again.  The {\bf degree} $\gls{Delta}$ of a type $(\Gamma,\epsilon,u)$ is the data of the index set $I$, along with the map
\begin{align}\label{Delta}
\Delta:I\rar N, \hspace{.5 in} \Delta(i):=w(E_i)u_i.
\end{align}
Note that the data of $I$ (hence of $e_{\infty}$) is part of the data of the degree $\Delta$ but is suppressed in the notation.

\begin{lem}[\cite{NS}, Proposition 2.1]\label{Types} The number of types of tropical curves of fixed degree, fixed number of markings, and fixed genus is finite.\footnote{Actually, \cite{NS} does not allow for contracted edges or higher-genus vertices like we do, but since the maximum possible number of such vertices and edges is bounded (chains of such edges eat up genus or markings), their Proposition 2.1 is easily generalized to our setup.}
\end{lem}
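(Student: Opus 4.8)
The plan is to separate the data constituting a type into its purely combinatorial part --- the decorated graph $(\Gamma,\mu,\epsilon)$ together with the genera $g_V$ and the weight function $w$ --- and the integral direction map $u\colon F\Gamma\to N$, and to bound each in turn. Throughout I fix the degree $\Delta$ (which fixes $e_\infty$ together with the weighted directions $w(E_j)u_{E_j}$ of all unbounded edges), the number of markings $m$, and the genus $g$. The claim then reduces to: (i) there are finitely many admissible decorated graphs, and (ii) for each such graph there are finitely many admissible $u$.

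For (i) I would bound $\#\Gamma^{[0]}$ and $\?{e}:=\#\Gamma^{[1]}_c$. Since $g=b_1(\Gamma)+\sum_V g_V$, we have $b_1(\Gamma)\le g$ and at most $g$ vertices carry $g_V>0$. The flag count $\sum_V\val(V)=e_\infty+2\?{e}$ underlying \eqref{v-e-equality}, and the Euler relation $\#\Gamma^{[0]}-\?{e}=1-b_1(\Gamma)$, are purely combinatorial and so hold for arbitrary $g_V$; combining them exactly as in \eqref{e-infty} gives $\?{e}=e_\infty+3b_1(\Gamma)-3+m-\ov(\Gamma)$. Every vertex of $\Gamma$ has $\val(V)\ge 2$ (one-valent vertices were deleted), a genus-zero bivalent vertex is forced to be marked, so $\ov(V)\ge 0$ whenever $g_V=0$, while $\ov(V)=\val(V)+m_V-3\ge-1$ in all cases. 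Hence $\ov(\Gamma)\ge-\#\{V:g_V>0\}\ge-g$, which bounds $\?{e}$ from above, and then $\#\Gamma^{[0]}=\?{e}+1-b_1(\Gamma)$ is bounded as well. Since there are only finitely many graphs with a bounded number of vertices and edges, finitely many markings $\mu,\epsilon$, and finitely many genus assignments with $g_V\le g$, part (i) follows. This simultaneously settles the footnote's generalization of \cite{NS}: contracted edges are compact and are already counted in the bounded $\?{e}$, and high-genus vertices are already accommodated by the estimate $\ov(\Gamma)\ge-g$ --- each such feature is paid for by a marking or by genus, so only finitely many occur.

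For (ii) I would work with the weighted vectors $\vec E:=w(E)u_E$ of the compact edges (bounding these pins down both the primitive directions and the bounded-edge weights). Balancing is a linear conservation law with the fixed leg vectors as sources, so after choosing a spanning tree the $\vec E$ are uniquely determined --- by inward propagation from the fixed unbounded directions --- once one specifies the $b_1(\Gamma)$ vectors in $N$ attached to the non-tree edges; for a tree this is unique and (ii) is immediate. The hard part, which is the real crux of the lemma and is already implicit in \cite{NS}, is to bound these $b_1(\Gamma)$ free cycle-vectors: balancing alone does \emph{not} suffice, as it genuinely leaves an $N^{b_1(\Gamma)}$-family (e.g.\ a triangle of edges with three fixed legs). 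Here I would invoke realizability. A genuine tropical curve has each loop $L$ close up, so $\sum_{E\in L}\pm\ell_E u_E=0$ with strictly positive lengths $\ell_E$; thus the primitive directions around $L$ admit a strictly positive vanishing dependence and cannot lie in any closed half-space. As a free cycle-vector is scaled up, the explicit tree-propagation formulas force the directions around the corresponding loop to align (they differ only by the bounded leg contributions), eventually confining them to a half-space and destroying this positive dependence; hence only finitely many cycle-vectors are realizable. Making this alignment-and-compactness estimate precise --- controlling all $b_1(\Gamma)$ loops at once, and in particular the interaction of a large cycle-vector with the bounded leg terms along the tree edges it shares with other loops --- is the main technical obstacle, whereas the counting in (i) is routine bookkeeping by comparison.
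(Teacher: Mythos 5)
Your part (i) is correct, and it is essentially everything the paper itself supplies beyond the citation: the paper's ``proof'' of this lemma is the reference to \cite[Prop.~2.1]{NS} plus the footnote that contracted edges and higher-genus vertices occur in bounded number, and your inequality $\ov(\Gamma)\ge -\#\{V: g_V>0\}\ge -g$ combined with $\?{e}=e_\infty+3b_1(\Gamma)-3+m-\ov(\Gamma)$ makes that footnote rigorous. The problem is part (ii), which is exactly the content the paper delegates to \cite{NS}, and there your argument has a genuine gap --- one you flag yourself. The alignment mechanism is sound only for $b_1(\Gamma)\le 1$: around a single loop, consistently oriented, every edge vector equals the free vector plus a bounded sum of leg vectors, so a large free vector pushes all directions into a common open half-space (note the criterion should be stated with \emph{open} half-spaces: vectors admitting a strictly positive dependence may well lie in a closed one, all inside its boundary hyperplane). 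For $b_1(\Gamma)\ge 2$ this breaks down: two free vectors can diverge with $x_1\approx -x_2$, so that the tree edges shared by the two fundamental cycles keep bounded vectors of essentially arbitrary direction; around each fundamental cycle one then sees a mixture of edges whose directions tend to $\pm\widehat{x}_1$ and edges whose directions are unconstrained, and no half-space confinement follows. The natural certificate of non-realizability in such a degeneration (obtained, e.g., by a flow-decomposition argument applied to the limiting circulation $(v_E/R)_E$) is a simple cycle supported on edges whose vectors actually diverge, and this cycle need not be fundamental for your spanning tree --- for a theta graph it is the cycle formed by the two non-tree edges. So the step ``some fundamental loop must fail'' is unproven, and it is precisely the crux of the lemma.

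The missing idea that closes (ii) uniformly in the number of loops is to abandon cycles and argue with level sets of a linear functional on a realization $h$ (this is also the spirit of the argument in \cite{NS}). Fix $\xi\in M$ nonvanishing on all non-contracted edge directions, orient a compact edge $E$ so that $\langle\xi,u_E\rangle>0$, choose $c$ in the interior of $\xi(h(E))$ different from all values $\xi(h(V))$, and let $C$ be the connected component of $(\xi\circ h)^{-1}\bigl((c,\infty)\bigr)$ containing the upper half of $E$. Since $\xi\circ h$ is affine on edges, each edge meets the level $c$ at most once; summing the balancing condition over the vertices of $C$ and pairing with $\xi$, the two flags of any edge lying entirely in $C$ cancel, and what remains is the conservation law $\sum_{E'} w(E')\,|\langle\xi,u_{E'}\rangle| \;=\; \sum_{j}\, w(E_j)\langle\xi,u_{E_j}\rangle$, where the left sum runs over edges crossing the level $c$ with upper end in $C$ and the right sum runs over unbounded edges in $C$ with $\langle\xi,u_{E_j}\rangle>0$. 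As $E$ occurs on the left, $w(E)\langle\xi,u_E\rangle\le\sum_{j:\langle\xi,u_{E_j}\rangle>0}\langle\xi,w(E_j)u_{E_j}\rangle$. Letting $\xi$ vary, every weighted compact edge vector $w(E)u_E$ lies in the zonotope generated by the vectors of $\Delta$, a bounded set depending only on $\Delta$; this bounds all compact-edge weights and primitive directions at once, for arbitrary $b_1(\Gamma)$, and together with your part (i) it finishes the proof.
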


\subsection{Incidence and $\psi$-class conditions}
Here we consider only ``affine constraints.''  See \S \ref{GenInc} for a discussion of how to generalize to other tropical constraints.
\begin{dfn}\label{Constraints}
An {\bf affine constraint} $\gls{AA}$ for the degree $\Delta$ is a tuple $(A_i)_{i\in I}$ of affine subspaces of $N_{\bb{Q}}$ such that $\Delta(i)\in \LL(A_i)$ for each $i$.  A marked tropical curve $(\Gamma,\epsilon,h)$ {\bf matches the constraints $\A$} if $h(E_i)\subset A_i$ for each $i\in I$.

Consider another tuple $\gls{Psi}:=(s_i)_{i\in I^{\circ}} \in \bb{Z}_{\geq 0}^{|I^{\circ}|}$.  
We say $(\Gamma,\epsilon,h)$ satisfies $\Psi$ if for each vertex $V$ with $g_V=0$, we have\footnote{Based on the proof of Proposition \ref{Log2Trop}, we expect the appropriate $\psi$-class condition for higher genus vertices to be $(3-n)g_V+\ov(V) \geq \sum_{i\in I_V^{\circ}} s_i$.  However, for our purposes we can say that $\Psi$ imposes no conditions on higher-genus vertices since the non-superabundance assumption will still prevent curves with such vertices from contributing.}
\begin{align}\label{PsiTropDfn}
    \ov(V) \geq \sum_{i\in I_V^{\circ}} s_i.
\end{align}

We are interested in the space
\[\gls{TAPsi}\]
of marked tropical curves of genus $g$ and degree $\Delta$, matching the constraints $\A$ and satisfying the $\psi$-class conditions $\Psi$.

For a marked vertex $V \in \Gamma^{[0]}$ such that $g_V=0$ and \eqref{PsiTropDfn} is an equality, let $I_V^{\circ} = \{i_1,\ldots,i_{m_V}\}$, and then let $\gls{<V>}$ denote the multinomial coefficient
\begin{align}\label{V}
\langle V \rangle :=\binom{\ov(V)}{s_{i_1},\ldots,s_{i_{m_V}}} := \frac{\ov(V)!}{\prod_{i\in I_V^{\circ}} s_i!}.
\end{align}
 For unmarked $V$, $\langle V\rangle:=1$.  We won't need an analog for higher-genus vertices in this paper.
\end{dfn}

The factor $\langle V \rangle$ will contribute to the multiplicity with which we will count tropical curves.  
Example~\ref{example-double-psi} demonstrates this.
Note that $\langle V\rangle = 1$ whenever $m_V=1$ and \eqref{PsiTropDfn} is an equality, and consequently, $\langle V\rangle = 1$ holds generically for all vertices when there are no $\psi$-classes.  
As we will see, the reason for this multiplicity $\langle V \rangle$ is that it is equal to $\int_{[\?{\s{M}}_{0,\val(V)}]} \prod_{i\in I_V^{\circ}} \psi_{i}^{s_i}$ (cf. \cite[Lemma 1.5.1]{Kock}). 

Recall that for any affine subspace $A$, we write $\LL(A)$ for the linear subspace parallel to $A$.  The following proposition describes the subspaces of $\f{T}_{g,\Delta}(\A,\Psi)$ corresponding to curves of a fixed type. We first state an elementary lemma that it uses.

\begin{lem} \label{affine_translates}
Given two affine subspaces $A_1,A_2\subset N_\bb{Q}$, if $A'_1$ is a general translate of $A_1$ then the intersection $A'_1\cap A_2$ is either empty or transverse (i.e. $\codim A'_1\cap A_2=\codim A_1+\codim A_2$). 
\end{lem}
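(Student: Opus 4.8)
The plan is to reduce the statement to linear algebra about the parallel linear subspaces $\LL(A_1)$ and $\LL(A_2)$, parametrizing the translates of $A_1$ by the vector $t\in N_{\bb{Q}}$ added to it. First I would record the elementary criterion for when a translate meets $A_2$: the intersection $(A_1+t)\cap A_2$ is non-empty precisely when $t$ lies in the set $A_2-A_1:=\{a_2-a_1\mid a_1\in A_1,\,a_2\in A_2\}$, which is an affine subspace of $N_{\bb{Q}}$ parallel to $\LL(A_1)+\LL(A_2)$. This is immediate upon unwinding the definitions, since a common point of $A_1+t$ and $A_2$ is the same datum as a solution of $a_1+t=a_2$ with $a_i\in A_i$.

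Next I would split into two cases according to whether $\LL(A_1)+\LL(A_2)=N_{\bb{Q}}$. In the spanning case, $A_2-A_1=N_{\bb{Q}}$, so every translate of $A_1$ meets $A_2$; fixing a point $p$ of the intersection, we have $(A_1+t)\cap A_2=p+(\LL(A_1)\cap\LL(A_2))$, whose dimension is $\dim\LL(A_1)+\dim\LL(A_2)-n$. Taking codimensions then gives $\codim((A_1+t)\cap A_2)=\codim A_1+\codim A_2$, so here every translate is transverse and no genericity is needed. In the non-spanning case, $A_2-A_1$ is a proper affine subspace of $N_{\bb{Q}}$, hence a general $t$ lies outside it and $(A_1+t)\cap A_2=\emptyset$. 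Either way, a general translate yields an intersection that is empty or transverse, which is the assertion.

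I do not anticipate a genuine obstacle here; the statement is essentially a packaging of the rank formula $\dim(\LL(A_1)\cap\LL(A_2))=\dim\LL(A_1)+\dim\LL(A_2)-\dim(\LL(A_1)+\LL(A_2))$ together with the observation that non-emptiness is governed by a single affine condition on $t$. The only points deserving a word of care are (i) the meaning of ``general translate,'' for which it suffices that a proper affine subspace of $N_{\bb{Q}}$ is avoided by a general point, and (ii) the fact that in the non-spanning case the non-empty intersections---occurring exactly for $t\in A_2-A_1$---really are non-transverse, so the ``empty'' alternative cannot be dropped and the lemma is sharp.
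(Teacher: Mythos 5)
Your proof is correct and complete. Note that the paper itself gives no proof of this lemma at all --- it is introduced with the words ``We first state an elementary lemma'' and left unproven --- so there is no argument of the paper to compare against; your two-case analysis (via the observation that $(A_1+t)\cap A_2\neq\emptyset$ exactly when $t$ lies in the affine subspace $A_2-A_1$, which is parallel to $\LL(A_1)+\LL(A_2)$) is exactly the standard argument the authors had in mind, and it correctly fills the gap, including the sharpness remark that in the non-spanning case the non-empty intersections are genuinely non-transverse.
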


\begin{prop}\label{TropDeform}
Let $(\Gamma,\epsilon,h)\in \f{T}_{g,\Delta}(\A,\Psi)$.  For each edge $E\in \Gamma^{[1]}_c$, fix a labeling of its two vertices as $\partial^+E$ and $\partial^-E$.  For $E\in \Gamma^{[1]}_{\infty}$, define $\partial E$ to be the unique vertex contained in $E$.  Define a map of vector spaces 
\begin{align}\label{D0}
\gls{PhiQ}:\prod_{V\in \Gamma^{[0]}} N_{\bb{Q}} &\rar \left(\prod_{E\in \Gamma^{[1]}_c} N_{\bb{Q}}/\bb{Q}u_{E}\right) \times \left( \prod_{i\in I} N_{\bb{Q}}/\LL(A_i) \right)  \\
H &\mapsto ((H_{\partial^+E}-H_{\partial^-E})_{E\in \Gamma^{[1]}_c},(H_{\partial E_i})_{i\in I}) \notag
\end{align}
Then the space of tropical curves in $\f{T}_{g,\Delta}(\A,\Psi)$ of the same combinatorial type $u$ as $(\Gamma,\epsilon,h)$ can be identified with a non-empty open convex polyhedron (i.e., an intersection of finitely many open affine half-spaces in $\ker(\Phi_{\bb{Q}})$.  Furthermore, assuming that the constraints are in general position (in the space of possible translations of the constraints), and that $g_V=0$ for each $V$, the dimension of this space is
\[\dim[\ker(\Phi_{\bb{Q}})] = \dim \f{T}_{(\Gamma,\epsilon,u)} - \sum_{i\in I} \codim(A_i). \]
If $\Gamma$ is not superabundant, this becomes $$\dim[\ker(\Phi_{\bb{Q}})] = n+\?{e} -ng -\sum_{i\in I} \codim(A_i).$$   
\end{prop}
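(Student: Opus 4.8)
The plan is to record a tropical curve of the fixed combinatorial type by the tuple $H=(H_V)_{V\in\Gamma^{[0]}}$ of its vertex positions $H_V=h(V)$, and to read each defining condition off of $\Phi_{\bb{Q}}$. First I would note that prescribing the direction of a compact edge $E$ means $H_{\partial^+E}-H_{\partial^-E}\in\bb{Q}u_E$, which modulo $\bb{Q}u_E$ is exactly the vanishing of the corresponding coordinate of $\Phi_{\bb{Q}}$, while the requirement that the associated length be strictly positive is one open half-space condition. Likewise $h(V_i)\in A_i$ and $h(E_j)\subset B_j$ say that the remaining coordinates of $\Phi_{\bb{Q}}(H)$ equal the (single) images of $A_i$ and $B_j$ in $N_{\bb{Q}}/\LL(A_i)$ and $N_{\bb{Q}}/\LL(B_j)$. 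Hence the curves of the given type matching $\A$ form the intersection of the affine fiber $\Phi_{\bb{Q}}^{-1}(\text{pt})$ with finitely many open half-spaces; since $h$ itself lies in this fiber, the fiber is a coset of $\ker\Phi_{\bb{Q}}$, and translating by $h$ identifies the space of such curves with a non-empty open convex polyhedron in $\ker\Phi_{\bb{Q}}$, proving the first assertion.

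For the dimension I would write $\Phi_{\bb{Q}}=(\Phi^0,\Phi^{\mathrm{inc}})$, where $\Phi^0$ is the edge-direction component with target $\prod_{E\in\Gamma^{[1]}_c}N_{\bb{Q}}/\bb{Q}u_E$ and $\Phi^{\mathrm{inc}}$ collects the incidence components. Running the previous paragraph with all constraints deleted shows that the unconstrained deformation space is open in $\ker\Phi^0$, so $\dim\ker\Phi^0=\dim\f{T}_{(\Gamma,\mu,\epsilon,u)}$ in agreement with \eqref{Tshape-general}. Since $\ker\Phi_{\bb{Q}}=\ker\Phi^0\cap\ker\Phi^{\mathrm{inc}}$, we obtain $\dim\ker\Phi_{\bb{Q}}=\dim\f{T}_{(\Gamma,\mu,\epsilon,u)}-\rank(\Phi^{\mathrm{inc}}|_{\ker\Phi^0})$, and everything reduces to showing $\Phi^{\mathrm{inc}}|_{\ker\Phi^0}$ is surjective onto its target, of dimension $\sum_i a_i+\sum_j b_j$.

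This surjectivity is the crux, and it is where general position enters. I would argue it by transversality: the marking-$i$ component of $\Phi^{\mathrm{inc}}$ factors through the linear evaluation $\ker\Phi^0\to N_{\bb{Q}}$, $H\mapsto H_{V_i}$, whose image is an affine subspace, so by Lemma~\ref{affine_translates} a generic translate of $A_i$ meets this image transversally and the condition $h(V_i)\in A_i$ cuts the dimension by exactly $a_i=\codim A_i$; the same holds for each $B_j$, using that the matching requirement only forces $B_j$ parallel to $u_{E_j}$, which is compatible with a generic translation. Imposing the constraints one at a time and reapplying Lemma~\ref{affine_translates} to the image of the evaluation into each successive factor shows the conditions are independent, so $\Phi^{\mathrm{inc}}|_{\ker\Phi^0}$ has full rank $\sum_i a_i+\sum_j b_j$ (the existence of $h$ rules out the empty alternative). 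This gives $\dim\ker\Phi_{\bb{Q}}=\dim\f{T}_{(\Gamma,\mu,\epsilon,u)}-\sum_i a_i-\sum_j b_j$. I expect this independence step to be the main obstacle, as one must check the incidence conditions remain transverse after the edge-direction equations have already been imposed, and must track how the genericity available in the translations of $\A$ feeds into Lemma~\ref{affine_translates}.

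Finally, when $\Gamma$ is not superabundant, Definition~\ref{superabundant} makes the general bound coming from \eqref{Tshape-general} (namely $\dim W\ge\?{e}-ng$) an equality, so $\dim\f{T}_{(\Gamma,\mu,\epsilon,u)}=n+\?{e}-ng$. Substituting into the previous display yields $\dim\ker\Phi_{\bb{Q}}=n+\?{e}-ng-\sum_i a_i-\sum_j b_j$, as claimed.
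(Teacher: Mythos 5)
Your proposal is correct and follows essentially the same route as the paper's proof: both identify curves of the fixed type with a non-empty open convex polyhedron in $\ker(\Phi_{\bb{Q}})$ (you use absolute vertex positions and translate by $h$, the paper uses deformation offsets, which is the same thing), and both obtain the dimension formula by imposing the incidence conditions one at a time and invoking Lemma~\ref{affine_translates} together with the non-emptiness supplied by the existence of $h$, finishing with Definition~\ref{superabundant}. The only cosmetic difference is that you check transversality in $N_{\bb{Q}}$ via images of the evaluation maps, while the paper intersects a generic translate of the partial kernel with the preimages of the translated constraints in $\prod_V N_{\bb{Q}}$.
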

\begin{proof}
Points $H=(H_V)_V$ on the left-hand side of (\ref{D0}) give deformations of $h$ by adding $H_V$ to $h(V)\in N_{\bb{Q}}$.  For $E\in \Gamma^{[1]}_c$, $h(E)$ is deformed to be the line segment connecting $h(\partial^{\pm}E) + H_{\partial^{\pm}E}$.  The image of an unbounded edge $E$ is deformed to $(h(E)+H_{\partial E})$.  By construction, these are the deformations of $h$ which still embed edges into affine lines of rational slope or contract them, but the balancing conditions and constraints may not hold.  The deformed curve corresponding to some $H$ will have the original combinatorial type (and in particular be balanced) if and only if $H$ is in the kernel of the first factor of $\Phi_{\bb{Q}}$ and, for each $E\in \Gamma^{[1]}_c$, the deformed edge vector $((h(\partial^+E)+H_{\partial^+E})-(h(\partial^-E)+H_{\partial^-E}))$ is a positive multiple of the original one
$h(\partial^+E)-h(\partial^-E)$ (this is an open affine half-space condition).  Similarly, it will satisfy the incidence conditions if and only if $H$ is in the kernel of the second factor.  This proves the first claim. 

We prove the second claim by induction.  If there are no incidence conditions (i.e., each $A_i$ equals $N_{\bb{Q}}$) then the claim is trivial.  Let $P$ be a small generic translate of $\ker(\Phi_{\bb{Q}})$  corresponding to the subspace of deformations of $h$ which satisfy all the conditions we have imposed so far (with the open half-space conditions removed since we are only interested in dimensions right now).  Suppose we add a constraint $A_i+\epsilon$, where $\epsilon$ is a small generic element of $N_{\bb{Q}}$.  This adds a factor $N_{\bb{Q}}/\LL(A_i)$ to the right-hand side, and an element $H$ in the domain of $\Phi_{\bb{Q}}$ maps to $0$ in this factor if and only if $H_i\in \LL(A_i)+\epsilon$.  This corresponds to intersecting $P$ with $(\LL(A_i)+\epsilon)$.  
By Lemma~\ref{affine_translates}, these intersections are transverse, so this proves the claim.

Finally, the claim for the non-superabundant case follows from Def.~\ref{superabundant}.
\end{proof}

\begin{lem} \label{phi-surjective}
For $\A$ generic and $(\Gamma,\epsilon,h)\in \f{T}_{g,\Delta}(\A,\Psi)$ non-superabundant, $\Phi_{\bb{Q}}$ is surjective.
\end{lem}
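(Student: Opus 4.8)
The plan is to obtain surjectivity purely from a dimension count, using that $\Phi_{\bb{Q}}$ is a $\bb{Q}$-linear map of finite-dimensional spaces: its image is automatically a linear subspace of the target, so it is enough to show that $\dim \im(\Phi_{\bb{Q}})$ equals the dimension of the target. By the rank--nullity theorem this reduces to computing the dimensions of the domain, the kernel, and the target, and checking they are compatible.

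First I would record the three dimensions. The domain $\prod_{V\in\Gamma^{[0]}} N_{\bb{Q}}$ has dimension $n\,\#\Gamma^{[0]}$. For the target, the hypothesis that $\Gamma$ has no contracted edges guarantees $u_E\neq 0$ for every $E\in\Gamma^{[1]}_c$, so each factor $N_{\bb{Q}}/\bb{Q}u_E$ has dimension $n-1$; combined with $\dim N_{\bb{Q}}/\LL(A_i)=a_i$ and $\dim N_{\bb{Q}}/\LL(B_j)=b_j$, the target has dimension $(n-1)\?{e}+\sum_i a_i+\sum_j b_j$. For the kernel, the hypotheses (generic $\A$, $\Gamma$ non-superabundant, every $g_V=0$, no contracted edges) are exactly those under which Proposition~\ref{TropDeform} applies, giving $\dim\ker(\Phi_{\bb{Q}})=n+\?{e}-ng-\sum_i a_i-\sum_j b_j$. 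Note that the open convex polyhedron produced there is non-empty precisely because $(\Gamma,\mu,\epsilon,h)$ is assumed to lie in $\f{T}_{g,m,\Delta}(\A,\Psi)$, and being open in $\ker(\Phi_{\bb{Q}})$ it has the same dimension as the kernel.

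Next I would combine these using the Euler characteristic identity $\#\Gamma^{[0]}-\?{e}=1-g$ established just before Definition~\ref{superabundant}. By rank--nullity, $\dim\im(\Phi_{\bb{Q}})=n\,\#\Gamma^{[0]}-\dim\ker(\Phi_{\bb{Q}})=n(\?{e}+1-g)-(n+\?{e}-ng)+\sum_i a_i+\sum_j b_j$, and the terms not involving the $a_i,b_j$ collapse to $(n-1)\?{e}$. Hence $\dim\im(\Phi_{\bb{Q}})=(n-1)\?{e}+\sum_i a_i+\sum_j b_j$, which is exactly the dimension of the target computed above. Since the image is a subspace of the target of full dimension, it is all of the target, so $\Phi_{\bb{Q}}$ is surjective.

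The point requiring the most care is not the arithmetic but recognizing where each hypothesis is used. Genericity of $\A$ is what licenses the transverse-intersection step in Proposition~\ref{TropDeform}, and non-superabundance is precisely the condition that makes $\dim\ker(\Phi_{\bb{Q}})$ as small as the stated formula (in the superabundant case the kernel is strictly larger, the image correspondingly smaller, and surjectivity fails); the conditions ``no contracted edges'' and $g_V=0$ are needed both to run Proposition~\ref{TropDeform} and to ensure each $N_{\bb{Q}}/\bb{Q}u_E$ drops dimension by exactly one. Thus the substance of the lemma is entirely contained in the kernel-dimension formula of Proposition~\ref{TropDeform}, and the remaining work is the bookkeeping that the incidence contributions $\sum_i a_i+\sum_j b_j$ cancel and that the Euler characteristic relation forces the image and target dimensions to agree.
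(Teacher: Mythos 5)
Your proof is correct and is essentially the paper's own argument: both rest on the kernel-dimension formula from Proposition~\ref{TropDeform}, rank--nullity, and the Euler characteristic relation $\#\Gamma^{[0]}=\?{e}+1-g$, the only cosmetic difference being that the paper phrases the count as $\dim\coker(\Phi_{\bb{Q}})=0$ while you phrase it as $\dim\im(\Phi_{\bb{Q}})=\dim\codomain(\Phi_{\bb{Q}})$. Your added bookkeeping (non-emptiness of the polyhedron, where each hypothesis enters) is accurate and consistent with the paper.
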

\begin{proof} Prop.~\ref{TropDeform} gives the rank of the kernel, so to show that the rank of the cokernel of $\Phi_{\bb{Q}}$ vanishes we may compute
$\dim\coker(\Phi_{\bb{Q}})=\dim\ker\Phi_{\bb{Q}}+\dim\codomain(\Phi_{\bb{Q}})-\dim \domain\Phi_{\bb{Q}}$ giving
\begin{align*} 
\left(n+\?{e} -ng -\sum_i \codim(A_i) \right) + \left((n-1)\?{e}  +\sum_i \codim(A_i) \right) - n\#\Gamma^{[0]}
\end{align*}
which is seen to vanish by inserting $\#\Gamma^{[0]}= \?{e}+1-g$.
\end{proof}

Note that for non-superabundant $(\Gamma,\epsilon,h)\in \f{T}_{g,\Delta}(\A,\Psi)$ with each $g_V=0$, it follows from \eqref{PsiTropDfn} that 
\begin{align} \label{ov-inequality}
\ov(\Gamma) \ge \sum_{i\in I^{\circ}} s_i.
\end{align}

\begin{dfn}\label{GeneralDfn}
Fix a genus $g$, a degree $\Delta$, and a tuple $\Psi$ as above.  
An affine constraint $\A$ of codimension ${\bf a}$ is {\bf general} for $\Delta$ and $\Psi$ if 
\begin{align}\label{GeneralDimension}
\sum_{i\in I^{\circ}} s_i  + \sum_{i\in I} \codim(A_i) = \gls{dtrop}:= e_{\infty}+(n-3)(1-g),
\end{align}
and for any non-superabundant $(\Gamma,\epsilon,h)\in \f{T}_{g,\Delta}(\A,\Psi)$ with each $g_V=0$, \eqref{ov-inequality} is an equality and $\Gamma$ has no contracted compact edges. 
\end{dfn}

\begin{rmk}\label{dtroprmk}
One should think of $d^{\trop}_{g,\Delta}$ as the expected dimension of the moduli space of genus $g$ degree $\Delta$ tropical curves in $N_{\bb{Q}}$.  Indeed, by \eqref{e-infty}, 
\begin{align}\label{dtrop}
d^{\trop}_{g,\Delta}= n+\ov(\Gamma)+\?{e}-ng
\end{align}  for any such tropical curve $\Gamma$.  From Lemma \ref{General} below, generic non-superabundant $\Gamma$ should be general, which in the absence of $\psi$-class conditions means that all vertices are trivalent (thus $\ov(\Gamma)=0$). 
 In this case, $d^{\trop}_{g,\Delta}$ reduces to $n+\?{e}-ng$, which by Definition \ref{superabundant} is the dimension of the tropical moduli space in a neighborhood of the non-superabundant $\Gamma$.
\end{rmk}

\begin{lem}\label{General}
Fix $\Delta$ and $\Psi$.  
Generic $\A$ satisfying (\ref{GeneralDimension}) are general, and furthermore, in this case, $\f{T}_{g,\Delta}(\A,\Psi)$ has at most one tropical curve of any given non-superabundant combinatorial type, hence the set of non-superabundant tropical curves is finite.
\end{lem}
\begin{proof}
Let $(\Gamma,\epsilon,h)\in \f{T}_{g,\Delta}(\A,\Psi)$ be non-superabundant with each $g_V=0$.  Assume for now that $\Gamma$ has no contracted compact edges.  Substituting \eqref{dtrop} into \eqref{GeneralDimension} and rearranging, we see that \eqref{GeneralDimension} is equivalent to
\begin{align}\label{GenDim}
\left(\sum_{i\in I^{\circ}} s_i \right) - \ov(\Gamma) = n+\?{e}-ng  - \sum_{i\in I} \codim(A_i).
\end{align}
Now, if $\A$ is generic and satisfies \eqref{GenDim}, Proposition \ref{TropDeform} tells us that the subspace of $\f{T}_{g,\Delta}(\A,\Psi)$ of curves with the same type as $(\Gamma,\epsilon,h)$ is a convex polyhedron of dimension $d:=\left(\sum s_i\right)-\ov(\Gamma)$.  This dimension is non-negative since there exists the given $(\Gamma,\epsilon,h)\in \f{T}_{g,\Delta}(\A,\Psi)$, so \eqref{ov-inequality} must be an equality, as desired. 

If $\Gamma$ did have contracted compact edges, none of these were (or would combine to) loops since $\Gamma$ is non-superabundant. We could imagine these edges were contracted before applying $h$, resulting in a modification $\Gamma'$ of $\Gamma$ without contracted edges.  Then we can apply the above reasoning to show that \eqref{ov-inequality} is an equality for $\Gamma'$. But since $\ov(\Gamma')>\ov(\Gamma)$, \eqref{ov-inequality} cannot hold for $\Gamma$.  Hence, non-superabundant curves with edge contractions do not occur generically, and so we proved the first assertion.

Since $d$ from above is $0$, the uniqueness of the combinatorial type follows since a nonempty $0$-dimensional convex polyhedron must be a single point.  The finiteness of non-superabundant curves in $\f{T}_{g,\Delta}(\A,\Psi)$ now follows from Lemma \ref{Types}.
\end{proof}

Let $\gls{Tns}\subset \f{T}_{g,\Delta}(\A,\Psi)$ denote the subset consisting of non-superabundant tropical curves.  If $\A$ is generic and general for $\Delta$ and $\Psi$, we will call $\Gamma\in \f{T}^{\ns}_{g,\Delta}(\A,\Psi)$ {\bf rigid} and we also then call $\f{T}^{\ns}_{g,\Delta}(\A,\Psi)$ itself rigid.

In \S \ref{OrdinaryGW}, we will want to use the following:
\begin{lem}\label{VerticesDistinct}
Suppose $\Gamma$ is in a rigid $\f{T}^{\ns}_{g,\Delta}(\A,\Psi)$ with $g=0$.  Then no two vertices of $\Gamma$ map to the same point of $N_{\bb{Q}}$.
\end{lem}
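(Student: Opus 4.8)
The plan is to argue by genericity of the constraint $\A$, reducing the statement to a dimension count carried out one combinatorial type at a time. First I would record what rigidity buys us in genus $0$. Since $g=0$ and every $g_V=0$, each $\Gamma\in\f{T}^{\circ}_{g,m,\Delta}(\A,\Psi)$ is a tree, and generality forces it to have no contracted edges. By Lemma~\ref{General} the set $\f{T}^{\circ}_{g,m,\Delta}(\A,\Psi)$ is finite, and for every realized type $\tau=(\Gamma,\mu,\epsilon,u)$ generality gives $\ov(\Gamma)=\sum_i s_i$, so by Prop.~\ref{TropDeform} its curves are isolated: $\dim\ker(\Phi_{\bb{Q}})=\dim\f{T}_{\tau}-\sum_i a_i-\sum_j b_j=0$. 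As $\dim\f{T}_{\tau}=n+\?{e}$ by \eqref{Tshape}, this yields the numerology $\sum_i a_i+\sum_j b_j=n+\?{e}$.

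Next I would fix such a type $\tau$ (there are only finitely many by Lemma~\ref{Types}) together with a pair of distinct vertices $V\ne V'$. Parametrizing $\f{T}_{\tau}\cong N_{\bb{Q}}\times\bb{Q}_{>0}^{\?{e}}$ by the image of a base vertex and the compact-edge lengths, I introduce the incidence-evaluation map $\ev\colon\f{T}_{\tau}\to\prod_i N_{\bb{Q}}/\LL(A_i)\times\prod_j N_{\bb{Q}}/\LL(B_j)$ sending a curve to the tuple of classes $\bigl(([h(V_i)])_i,([h(\partial E_j)])_j\bigr)$. Matching a translate of $\A$ is exactly the prescription of the value of $\ev$, and as the translation parameters of $\A$ vary this value sweeps out the whole target; by the numerology above the target has dimension $n+\?{e}=\dim\f{T}_{\tau}$, so a generic $\A$ induces a generic point of the target. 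Now I examine the coincidence locus $S_{V,V'}=\{x\in\f{T}_{\tau}:h(V)(x)=h(V')(x)\}$. Because $\Gamma$ is a tree there is a unique edge path $V=W_0,\dots,W_r=V'$ with $r\ge 1$, and on $\f{T}_{\tau}$ the difference $h(V')-h(V)=\sum_{i=1}^{r}\ell_i\,u_{(W_{i-1},E_i)}$ is an affine-linear function of the moduli whose coefficient in the length $\ell_i$ is the primitive direction $u_{(W_{i-1},E_i)}$, nonzero since there are no contracted edges. Hence $h(V')-h(V)$ is nonconstant, so $S_{V,V'}$ lies in a proper affine subspace of $\f{T}_{\tau}$ and $\dim S_{V,V'}<n+\?{e}$.

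Since $\ev$ is affine-linear, $\ev(S_{V,V'})$ is then contained in a proper affine subspace of the target; a generic point of the target, and therefore a generic $\A$, avoids it. Thus the curve of type $\tau$ matching $\A$ (if one exists) satisfies $h(V)\ne h(V')$, and taking the union of the finitely many excluded loci over all types $\tau$ and all pairs $V\ne V'$ finishes the argument. The main obstacle is the bookkeeping that identifies ``generic $\A$'' with ``generic value of $\ev$''; once the equality of $\dim\f{T}_{\tau}$ with the dimension of the target of $\ev$ coming from rigidity is in place, the remainder is a transversality count resting only on the fact that $h(V')-h(V)$ is a nonconstant linear function of the edge lengths, which is precisely where the tree structure ($g=0$) and the absence of contracted edges enter.
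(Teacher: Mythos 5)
Your proof is correct, but it takes a genuinely different route from the paper's. The paper argues by contradiction with a gluing trick: if two vertices of $\Gamma$ mapped to the same point, identifying them would produce a genus-$1$ tropical curve $\Gamma'$ matching the same constraints $\A$; since $\Gamma$ has no contracted edges, the resulting loop spans a positive-dimensional subspace of $N_{\bb{Q}}$, so by \eqref{Tshape-general} the deformation space $\f{T}_{\Gamma'}$ has dimension strictly less than $n+\?{e}$, and Proposition~\ref{TropDeform} then forces the space of type-$\Gamma'$ curves matching $\A$ to have negative dimension --- contradicting that it contains $\Gamma'$. You instead run a direct general-position argument: for each of the finitely many types and each pair of vertices, the coincidence locus $S_{V,V'}$ is cut out by the nonconstant linear function $h(V')-h(V)$ (here is where the tree structure and absence of contracted edges enter for you, just as they enter the paper's loop-spanning step), so its image under the affine-linear evaluation map lands in a proper affine subspace of a target whose dimension equals $\dim\f{T}_\tau$ by the rigidity numerology $\sum_i a_i+\sum_j b_j=n+\?{e}$; generic translates of $\A$ therefore avoid it. Both arguments rest on the same numerology, but the paper's proof is shorter because it recycles Proposition~\ref{TropDeform} wholesale (the transversality bookkeeping is hidden inside that proposition via Lemma~\ref{affine_translates}), whereas yours re-derives the transversality statement by hand for the specific coincidence condition; in exchange, your version makes completely explicit which lower-dimensional loci in the space of translations must be avoided, and it avoids introducing the auxiliary higher-genus curve $\Gamma'$, whose deformation theory (possibly superabundant) one must be slightly careful to note is still covered by the general-position clause of Proposition~\ref{TropDeform}.
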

\begin{proof}
If two distinct vertices do map to the same point of $N_{\bb{Q}}$, we can identify them to form a genus $1$ tropical curve $\Gamma'$ satisfying the same incidence conditions as $\Gamma$.  But since there are by genericity no contracted compact edges, the unique loop of $\Gamma'$ spans a positive dimensional subspace of $N_{\bb{Q}}$, hence decreases the dimension of $\f{T}_{\Gamma}$.  But then by Proposition \ref{TropDeform}, the dimension of the space of curves of the same type as $\Gamma'$ satisfying $\A$ has dimension $<0$, contradicting that it contains $\Gamma'$.
\end{proof}

Lemma \ref{General} implies that a rigid $\f{T}^{\ns}_{g,\Delta}(\A,\Psi)$ is finite, so we can hope to count its elements.  We now describe the multiplicity we will use to perform this count.  It is a generalization of the Nishinou-Siebert multiplicity \cite[Prop. 5.7]{NS}. See \cite{MRudMult} for alternative formulas for $\Mult(\Gamma)$ which are more practical for some applications.  Recall that we denote $\LL_N(A):=\LL(A)\cap N$.

\begin{lemdfn}\label{Ddfn}
Fix some $(\Gamma,\epsilon,h)$ in a rigid $\f{T}^{\ns}_{g,m,\Delta}(\A,\Psi)$.  The map 
\begin{align}\label{D}
\gls{Phi}:=\prod_{V\in \Gamma^{[0]}} N &\rar \left(\prod_{E\in \Gamma^{[1]}_c} N/\bb{Z}u_{\partial^-E,E}\right) \times \left(\prod_{i\in I} N/\LL_N(A_i) \right) \\
H &\mapsto ((H_{\partial^+E}-H_{\partial^-E})_{E\in \Gamma^{[1]}_c},(H_{\partial E_i})_{i\in I}) \notag
\end{align}
is an inclusion of lattices of finite index. 

Recall the definition of $\langle V\rangle$ from \eqref{V}.  We denote 
\begin{align}\label{DGamma}
   \gls{DGamma}:= \inde(\Phi) \prod_{V\in \Gamma^{[0]}} \langle V \rangle,
\end{align}
and we define 
\begin{align}\label{MultGamma}
\gls{Mult}:=\frac{\f{D}_{\Gamma}}{|\Aut(\Gamma)|} \prod_{E\in \Gamma^{[1]}_c} w(E).
\end{align}

\end{lemdfn}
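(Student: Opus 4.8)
The plan is to reduce the claim to the rational statements already in hand: $\Phi$ is simply the lattice-level refinement of the map $\Phi_{\bb{Q}}$ of \eqref{D0}, and a homomorphism of finite-rank free abelian groups is an inclusion of finite index exactly when its rationalization is an isomorphism. So I would first check that $\Phi\otimes_{\bb{Z}}\bb{Q}$ really is $\Phi_{\bb{Q}}$, then show $\Phi_{\bb{Q}}$ is an isomorphism, and then invoke the elementary fact about lattices.

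First I would verify that each summand of the target of $\Phi$ is torsion-free, hence a genuine lattice whose rationalization is the corresponding summand of the target of $\Phi_{\bb{Q}}$. Indeed $\bb{Z}u_E$ is saturated in $N$ because $u_E$ is primitive, so $N/\bb{Z}u_E$ is free; and $\LL_N(A_i)=\LL(A_i)\cap N$ (likewise $\LL_N(B_j)$) is saturated since $\LL(A_i)$ is a rational subspace, so $N/\LL_N(A_i)$ is free and tensors up to $N_{\bb{Q}}/\LL(A_i)$. Because the two maps are given by the same linear formula, this identifies $\Phi\otimes\bb{Q}$ with $\Phi_{\bb{Q}}$.

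Next I would use rigidity to see that $\Phi_{\bb{Q}}$ is an isomorphism. Since $\f{T}^{\circ}_{g,m,\Delta}(\A,\Psi)$ is rigid, $(\Gamma,\mu,\epsilon,h)$ has every $g_V=0$, is non-superabundant, and has no contracted edges, and by the generality of $\A$ (Definition~\ref{GeneralDfn} together with Lemma~\ref{General}) the inequality \eqref{ov-inequality} is an equality. Substituting this equality into \eqref{GenDim} yields $n+\?{e}-ng-\sum_i a_i-\sum_j b_j=0$, which by Proposition~\ref{TropDeform} is precisely $\dim\ker\Phi_{\bb{Q}}$; hence $\Phi_{\bb{Q}}$ is injective, and it is surjective by Lemma~\ref{phi-surjective}. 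A homomorphism between free $\bb{Z}$-modules whose rationalization is an isomorphism is automatically injective with finite cokernel (so in particular domain and target have equal rank), so $\Phi$ is an inclusion of lattices of finite index. The remaining content of the statement only introduces the notation $\f{D}_{\Gamma}$ and $\Mult(\Gamma)$, so nothing else requires proof.

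The argument is almost entirely bookkeeping, and I expect no serious obstacle, since the substantive inputs (the kernel computation and the surjectivity) were already carried out rationally in Proposition~\ref{TropDeform} and Lemma~\ref{phi-surjective}. The only point that needs genuine care is the torsion-freeness of the target summands in the first step, as this is exactly what guarantees that $\Phi\otimes\bb{Q}=\Phi_{\bb{Q}}$ and that ``finite cokernel'' is the right reading of ``finite index.''
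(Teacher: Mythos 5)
Your proposal is correct and follows essentially the same route as the paper: identify $\Phi\otimes\bb{Q}$ with $\Phi_{\bb{Q}}$, use rigidity (via Proposition~\ref{TropDeform} and the equality in \eqref{ov-inequality}) to get $\ker\Phi_{\bb{Q}}=0$, invoke Lemma~\ref{phi-surjective} for surjectivity, and conclude finite index. The only difference is that you spell out details the paper leaves implicit, namely the torsion-freeness of the target summands (primitivity of $u_E$, saturatedness of $\LL_N(A_i)$ and $\LL_N(B_j)$) and the explicit substitution into \eqref{GenDim}, which is fine.
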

\begin{proof}
Tensoring $\Phi$ with $\bb{Q}$ gives the map $\Phi_{\bb{Q}}$ from Proposition \ref{TropDeform}.  
Thus, rigidity implies that $\ker(\Phi)$ is trivial, and by Lemma~\ref{phi-surjective}, $\Phi_{\bb{Q}}$ is surjective, so $\Phi$ is indeed injective and finite index.
\end{proof}

We can now define our tropical counts:

\begin{dfn} For $\f{T}^{\ns}_{g,\Delta}(\A,\Psi)$ rigid, 
\begin{align*}
\gls{GWtrop}:=\sum_{(\Gamma,\epsilon,h)\in \f{T}^{\ns}_{g,\Delta}(\A,\Psi)} \Mult(\Gamma).
\end{align*}
If $\f{T}_{g,\Delta}(\A,\Psi)=\f{T}^{\ns}_{g,\Delta}(\A,\Psi)$, we may write this as simply $\GW_{g,N_{\bb{Q}},\Delta}^{\trop}(\A,\Psi)$.
\end{dfn}

\section{Descendant log Gromov-Witten invariants}

\subsection{Toric degenerations} \label{sec-tordeg}
 As in \S 3 of \cite{NS} (to which we refer for more details), we can produce from a polyhedral decomposition $\gls{P}$ of $N_{\bb{Q}}$ a toric degeneration $X\rar \bb{A}^1$ as follows.  Consider the lattice $N':=N\times \bb{Z}$.  We can embed $\s{P}$ into $N'_{\bb{Q}}$ by identifying $N_{\bb{Q}}$ with $(N_{\bb{Q}},1)\subset N'_{\bb{Q}}$, and we then produce a fan $\gls{SigmaPprime}$ by taking the cone $C(\s{P})$ over $\s{P}$.  That is, a $d$-dimensional cell $\Xi\in \s{P}^{[d]}$ yields a $(d+1)$-dimensional cone $C(\Xi)\in \Sigma'_{\s{P}}$ defined as in \S \ref{Notation}.
 Then $\gls{sX}$ is defined to be the toric variety $X(\Sigma'_{\s{P}})$ produced from the fan $\Sigma'_{\s{P}}$.

Recall that the fan $\Sigma_{\bb{A}^1}$ in $\bb{Q}$ with cones $\{0\}$ and $\bb{Q}_{\geq 0}$ corresponds to the toric variety $\bb{A}^1$.  The projection $N_{\bb{Q}}\times \bb{Q} \rar \bb{Q}$ induces a map of fans $\Sigma'_{\s{P}}\rar \Sigma_{\bb{A}^1}$, hence a map $f:\s{X} \rar \bb{A}^1$.   
We may assume that $f$ is projective by requiring the subdivision $\s{P}$ to be induced by a piecewise affine convex function.

Let $\gls{Xt}$ denote the fiber of $f$ over a closed point $t\in \bb{A}^1$.  For $t\neq 0$, $\s{X}_t$ is torically isomorphic to $X:=X(\Sigma_{\s{P}})$, where $\gls{SigmaP}=\{\lim_{\lambda\ra 0}\lambda\Xi\,\mid\,\Xi\in\s{P}\}$ denotes the asymptotic fan of $\s{P}$.  For a ray $\rho \in \Sigma_{\s{P}}$ generated by a vector $u$, we let $D_{\rho}$ or $D_u$ denote the corresponding divisor of $\s{X}$.

We now describe $\s{X}_0$.  For each pair of cells $\Xi\subseteq \Xi'$ of $\s{P}$, the rays from $\Xi$ through $\Xi'$ define a cone $\sigma_{\Xi,\Xi'} \subset N_{\bb{Q}}/\LL(\Xi)$ ($\LL(\Xi)$ denoting the linear subspace generated by vectors in $\Xi$).  The union over all $\Xi'$ containing a fixed $\Xi$ forms a complete fan $\Sigma_{\Xi}$ in $N_{\bb{Q}}/\LL(\Xi)$.  Furthermore, for each inclusion $\Xi\subseteq \Xi'$, we have a toric closed embedding $X(\Sigma_{\Xi'}) \hookrightarrow X(\Sigma_{\Xi})$, and compositions are compatible.  Proposition 3.5 of \cite{NS} says that if $\s{P}$ is integral (i.e., if $\s{P}^{[0]} \subset N$), then $\s{X}_0 \cong \lim_{\stackrel{\longrightarrow}{\Xi\in \s{P}}} X(\Sigma_{\Xi})$.

Now suppose we have the data for a rigid $\f{T}^{\ns}_{g,\Delta}(\A,\Psi)$ as in \S \ref{TropCurves}.  Recall this set is finite by Lemma \ref{General}.  Let $\Sigma$ be any fan whose rays include the rays generated by elements of $\Delta$.  
\begin{lem}
\label{lem-good-subd}
There exists a {\bf good} polyhedral decomposition $\s{P}=\s{P}_{g,\Delta}(\A,\Psi)$---i.e., one which satisfies the following:
\begin{enumerate}
\item The asymptotic fan $\Sigma_{\s{P}}$ is a refinement of $\Sigma$.
\item All the vertices and edges of all tropical curves in $\f{T}^{\ns}_{g,\Delta}(\A,\Psi)$ are contained in the $0$- and $1$-cells of $\s{P}$, respectively. Furthermore, for each compact edge $E$ of such a tropical curve, $\partial^+E-\partial^-E$ is an integral multiple of $w(E)u_E$.
\item Each $A_i\in \A$ is contained in the $\dim(A_i)$-skeleton of $\s{P}$.
\end{enumerate}
Furthermore, by refining $N$, we can assume $\s{P}$ is {\bf integral}, meaning that each vertex $V\in \s{P}^{[0]}$ is contained in $N$.
\end{lem}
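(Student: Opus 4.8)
The plan is to realize $\s{P}$ as the polyhedral complex cut out by a suitable finite arrangement of rational affine hyperplanes in $N_{\bb{Q}}$, chosen so that every piece of data we must respect becomes a union of faces of the arrangement. Since $\f{T}^{\circ}_{g,m,\Delta}(\A,\Psi)$ is finite by Lemma~\ref{General}, there are only finitely many tropical curves, each with finitely many vertices and edges; the constraint $\A=(A_1,\dots,A_m;B_1,\dots,B_{e_{\infty}})$ and the (complete) fan $\Sigma$ are finite as well. Concretely, I would take $\s{H}$ to be the finite set of rational affine hyperplanes consisting of: (a) the linear hyperplanes spanned by the walls (codimension-one cones) of $\Sigma$, which pass through the origin; (b) for each $A_i$ a set of $\codim(A_i)$ hyperplanes whose intersection is exactly $A_i$, and likewise for each $B_j$; (c) for each edge $E$ of each curve in $\f{T}^{\circ}_{g,m,\Delta}(\A,\Psi)$, a set of $n-1$ hyperplanes whose intersection is the affine line supporting $h(E)$; and (d) for each vertex $V$ of each such curve, a set of $n$ hyperplanes meeting exactly in $\{h(V)\}$. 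All of these exist over $\bb{Q}$ because the curves, the $A_i$, and the $B_j$ are rational. I then let $\s{P}$ be the complex of closed faces of the arrangement $\s{H}$; this is a rational polyhedral decomposition of $N_{\bb{Q}}$.

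Next I would check the three conditions from this construction. For (3), each $A_i$ is by (b) an intersection of members of $\s{H}$, hence a union of faces of $\s{P}$ of dimension at most $\dim(A_i)$, i.e. it lies in the $\dim(A_i)$-skeleton; the same applies to the $B_j$. For (2), clause (d) makes each $h(V)$ a $0$-cell, and clause (c) puts each $h(E)$ inside its supporting line, which (being an intersection of members of $\s{H}$) is a union of $0$- and $1$-cells of $\s{P}$; thus all tropical vertices lie in $0$-cells and all tropical edges in the $1$-skeleton. For (1), the key computation is that the recession cone of a face of the affine arrangement $\s{H}$ is the corresponding face of the \emph{central} arrangement of the linear parts $\s{L}:=\{\LL(H)\mid H\in\s{H}\}$, so that the asymptotic fan $\Sigma_{\s{P}}=\{\lim_{\lambda\to 0}\lambda\Xi\mid \Xi\in\s{P}\}$ is exactly the fan of that central arrangement. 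Because $\s{L}$ contains, by (a), the hyperplane spanned by every wall of $\Sigma$, each cone of $\Sigma$ is an intersection of closed half-spaces bounded by members of $\s{L}$, hence a union of faces of the central arrangement; therefore $\Sigma_{\s{P}}$ refines $\Sigma$, giving (1). (For $n=1$ the clauses degenerate harmlessly: ``$n-1=0$ hyperplanes'' cut out all of $N_{\bb{Q}}$, while a single hyperplane is a point.)

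Finally I would secure integrality together with the divisibility clause of (2) by refining $N$. The vertices of $\s{P}$, being intersections of rational hyperplanes, are finitely many rational points, and the finitely many vectors $\tfrac{1}{w(E)}(\partial^+E-\partial^-E)=\tfrac{\ell_E}{w(E)}u_E$, where $\ell_E\in\bb{Q}_{>0}$ is defined by $\partial^+E-\partial^-E=\ell_E u_E$, are rational as well; so I can pass to a finer lattice $\wt N\supseteq N$ with $\wt N_{\bb{Q}}=N_{\bb{Q}}$ containing all of them. Then every vertex of $\s{P}$ lies in $\wt N$, so $\s{P}$ is integral, and, writing $\wt u_E$ for the $\wt N$-primitive generator of $\bb{R}_{\ge 0}u_E$, the vector $\partial^+E-\partial^-E=w(E)\cdot\tfrac{\ell_E}{w(E)}u_E$ is a positive integer multiple of $w(E)\wt u_E$, since any lattice vector on a rational ray is an integer multiple of its primitive generator. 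Refining the lattice does not disturb (1)--(3), which are statements about the underlying polyhedra in the fixed space $N_{\bb{Q}}$. The step I expect to require the most care is condition (1): one must verify both that the recession cones of the faces of $\s{H}$ are precisely the faces of the central arrangement of $\s{L}$, and that including the wall-hyperplanes of $\Sigma$ in $\s{L}$ forces this central arrangement to refine $\Sigma$. The remaining verifications, and the concluding lattice refinement, are then routine bookkeeping.
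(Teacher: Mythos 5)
Your proposal is correct, and it reaches the lemma by a genuinely different route than the paper does. The paper's own proof is a short reduction: it cites \cite[Proposition 3.9]{NS}, which already produces, for any \emph{single} tropical curve, a decomposition satisfying (1) and (2) (the divisibility clause and integrality being inherited from loc.\ cit.\ after rescaling $N$); it observes that any affine subspace lies in the appropriate skeleton of \emph{some} decomposition; and it concludes using finiteness of $\f{T}^{\circ}_{g,m,\Delta}(\A,\Psi)$ (Lemma~\ref{General}) and of $\A$ together with the existence of a common refinement of finitely many polyhedral decompositions. You instead build $\s{P}$ in one step, as the face complex of a single rational hyperplane arrangement containing the wall-spans of $\Sigma$ and cutting out all the incidence data and all curves simultaneously, and then verify (1)--(3) directly. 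What your route buys is self-containedness: in particular, your recession-cone computation makes explicit exactly the content that the citation hides, namely why the asymptotic fan refines $\Sigma$. The three facts you flag as needing care are all true and provable by standard convexity arguments: (a) a nonempty closed sign set of an arrangement, with a sign prescribed for \emph{every} hyperplane, is a closed face of that arrangement---this is what makes each recession cone an actual cone of the central arrangement (note the recession cone of the face with sign vector $\sigma$ may be the central face carrying a \emph{different} sign vector, so ``the corresponding face'' should be read with this caveat); (b) every cone of $\Sigma$ is an intersection of half-spaces bounded by wall-spans---this uses that maximal cones are intersections of their facet-defining half-spaces and that lower-dimensional cones are faces of maximal ones, hence it needs $\Sigma$ complete, which holds here because $X=X_\Sigma$ is projective but would fail for a general fan; (c) your final lattice refinement correctly delivers both integrality and the divisibility in (2), since any point of the refined lattice on the ray $\bb{Q}_{\geq 0}u_E$ is an integer multiple of its primitive generator---and this part of your argument is in fact more explicit than the paper's, which defers it to \cite{NS}. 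In short: the paper's proof is shorter and leverages an existing result plus common refinements; yours is an explicit, essentially equivalent construction that would stand on its own without the external reference.
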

\begin{proof}
\cite[Proposition 3.9]{NS} gives us a polyhedral decomposition satisfying the first two properties for any single tropical curve.  Also, any affine subspace $A_i$ of dimension $d$ clearly lives in the $d$-skeleton of some polyhedral decomposition.  $\f{T}^{\ns}_{g,\Delta}(\A,\Psi)$ and $\A$ are finite, and any finite collection of polyhedral decompositions admits a common refinement, so the claim follows.
\end{proof}

We will assume from now on that $\s{P}$ is a good polyhedral decomposition as above.  
By replacing $N$ with $\frac{1}{d}N\subset N_{\bb{Q}}$ for some $d\in \bb{Z}_{> 0}$, we can assume that $\s{P}$ is integral.

\subsection{The moduli space}\label{Moduli}
Let $f^{\dagger}:Y^{\dagger}\rar S^{\dagger}$ be a log smooth morphism of fine saturated log schemes.  \cite{GSlog} and \cite{AC} show that the stack $\s{M}(Y/S)$ of basic\footnote{We will say a bit about the basicness condition at the start of \S \ref{tropicalization}, but we refer to \cite{GSlog} for the precise definition.  The idea is that a morphism $\iota$ of a scheme $T$ to the moduli space $\s{M}(Y^{\dagger}/S^{\dagger},\beta)$ should give a log smooth family of stable log maps over $T$, but there is a priori ambiguity about what log structure $T$ and $\iota$ should have.  Fortunately, there is a ``universal'' choice called the basic or minimal log structure.}  stable log maps to $Y^{\dagger}$ over $S^{\dagger}$ is a Deligne-Mumford log stack locally of finite type over $S$.  Assume $f$ is projective.  Imposing a \emph{combinatorially finite} (see \cite[Def. 3.3]{GSlog}) collection of conditions $\beta$, one obtains a substack $\gls{MYS}$ which is now proper over $S$.  
Furthermore, \cite[Theorem 0.3]{GSlog} says that $\s{M}(Y^{\dagger}/S^{\dagger},\beta)$ has a natural relative perfect obstruction theory and therefore admits a virtual fundamental class
\begin{equation}
\label{eq-vir-rank}
[\s{M}(Y^{\dagger}/S^{\dagger},\beta)]^{\vir}\in A_{\vdim(\s{M}(Y^{\dagger}/S^{\dagger},\beta))+\dim S}(\s{M}(Y^{\dagger}/S^{\dagger},\beta)) 
\end{equation}
which is compatible with base change and equals the usual fundamental class in unobstructed situations, i.e. when of the same degree as the usual fundamental class.  All this makes it possible to define log Gromov-Witten invariants for $Y^{\dagger}$.

Now, equip $\s{X}$ with the divisorial log structure corresponding to $\partial \s{X}$, and equip $\s{X}_t$ with the induced log structure (for $t\neq 0$, this is the divisorial log structure for $\partial X_t$).  
We are interested in the stacks \[\s{M}_{g}(Y^{\dagger},\Delta):=\s{M}(Y^{\dagger}/S^{\dagger},\beta)\] of basic stable log maps to $Y^{\dagger}$ over $S^{\dagger}$ of class $\beta$, where $Y^{\dagger}$, $S^{\dagger}$, and $\beta$ are as follows:  either $Y^{\dagger}=\s{X}^{\dagger}$ and $S^{\dagger}=(\bb{A}^1)^{\dagger}$ (having the divisorial log structure with respect to $\{0\}\subset \bb{A}^1$) or $Y^{\dagger}\ra S^{\dagger}$ is a base-change to a closed point of this: $Y^{\dagger} = \s{X}^{\dagger}_t$ and $S^{\dagger} = \Spec \kk$ if $t\neq 0$ or the standard log point $\gls{kdag}$ if $t=0$.  
The data $\beta$ consists of the following conditions on the basic stable log maps $\varphi^{\dagger}:C^{\dagger}\rar Y^{\dagger}$:
\begin{itemize}
\item $C$ has genus $g$,
\item $\varphi_*[C] = \gls{[Delta]}$ (the class in $A_1(Y,\bb{Z})$ determined by $\Delta$),
\item $C^{\dagger}$ has $e_{\infty}$ marked points $(x_i)_{i\in I^{\circ}},(y_i)_{i\in I^{\partial}}$,
\item For each $i\in I^{\partial}$, $\varphi(y_i)\in D_{u_{(V,E_i)}}$.  Furthermore, if $t_1$ is the generator for the vertical part of the ghost sheaf of $Y^{\dagger}$ at a generic point of $D_{u_{(V,E_i)}}$, and $t_2$ is the generator for the vertical part of the ghost sheaf of $C^{\dagger}$ at $y_i$, then $\varphi^{\flat}:t_1\mapsto w(E_i)t_2$.
\end{itemize}
Geometrically, if the component of $C$ containing $y_i$ is not mapped entirely to the toric boundary, then the last condition means that the $w(E_i)$ equals the intersection multiplicity of $\varphi(C)$ with $D_{u_{(V,E_i)}}$ at $y_i$, see \cite[Ex.7.1]{GSlog}. We call the $\gls{xi}$ {\bf interior marked points} and the $\gls{yi}$ {\bf boundary marked points}.  
The finiteness of types (see Lemma~\ref{Types}) implies that $\beta$ is combinatorially finite in the sense of \cite[Def. 3.3]{GSlog}.  Alternatively, this follows from \cite[Thm 3.8, Examples 3.6 (2) and (3)]{GSlog}.  

Recall $d^{\trop}_{g,\Delta}:=e_{\infty}+(n-3)(1-g)$ from Definition \ref{GeneralDfn}.  
\begin{lem}\label{vdim} If $Y^{\dagger}=\s{X}^{\dagger}$ or $Y^{\dagger}=\s{X}_t^{\dagger}$, we have
$d^{\trop}_{g,\Delta} = \vdim(\s{M}_{g}(Y^{\dagger},\Delta))$.
\end{lem}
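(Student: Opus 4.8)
The plan is to reduce everything to the general virtual dimension formula for basic stable log maps from \cite{GSlog} and then to evaluate each of its terms in our toric degeneration, where nearly everything collapses because a toric variety is log Calabi--Yau relative to its toric boundary.

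First I would recall the relevant formula. For a log smooth projective morphism $Y^\dagger \rar S^\dagger$, the relative virtual dimension of the stack of basic stable log maps of genus $g$, class $\beta$, and with $k$ marked points is
$$\vdim = (\dim(Y/S) - 3)(1-g) + \int_\beta c_1(\Theta_{Y^\dagger/S^\dagger}) + k,$$
where $\Theta_{Y^\dagger/S^\dagger}$ is the relative logarithmic tangent sheaf and $\dim(Y/S)$ is the fiber dimension; this $\vdim$ is exactly the quantity appearing in front of $\dim S$ in \eqref{eq-vir-rank}. Next I would read off the elementary inputs. In both cases, $Y^\dagger = \s{X}^\dagger$ over $(\bb{A}^1)^\dagger$ and $Y^\dagger = \s{X}_t^\dagger$ over a (possibly log) point, the fibers are $n$-dimensional, so $\dim(Y/S) = n$. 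The data $\beta$ from \S\ref{Moduli} prescribes exactly $m + e_\infty$ marked points (the interior $x_i$ and the boundary $y_j$), so $k = m + e_\infty$; the prescribed contact orders are discrete data recorded by $\beta$ and do not change this count.

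The heart of the matter, and the step I expect to be the main obstacle, is the vanishing $\int_\beta c_1(\Theta_{Y^\dagger/S^\dagger}) = 0$. Here I would use that $\s{X} = X(\Sigma'_{\s{P}})$ is toric and that a toric variety with its divisorial toric-boundary log structure has \emph{trivial} logarithmic tangent sheaf, $\Theta_{X(\Sigma)^\dagger} \cong \s{O} \otimes_\ZZ N$ (equivalently, the log Calabi--Yau condition $-K_X - \partial X = 0$). For the fibers $\s{X}_t^\dagger$, $t \neq 0$, which are torically isomorphic to $X(\Sigma_{\s{P}})^\dagger$, this directly gives $c_1(\Theta_{\s{X}_t^\dagger}) = 0$. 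For the relative statement and for the central fiber I would dualize the log cotangent sequence of the toric morphism $f : \s{X} \rar \bb{A}^1$ induced by the projection $N' = N \oplus \ZZ \rar \ZZ$, obtaining
$$0 \rar \Theta_{\s{X}^\dagger/(\bb{A}^1)^\dagger} \rar \Theta_{\s{X}^\dagger} \onto f^*\Theta_{(\bb{A}^1)^\dagger} \rar 0,$$
i.e. $\s{O}\otimes_\ZZ N \rar \s{O}\otimes_\ZZ N' \onto \s{O}\otimes_\ZZ \ZZ$. Thus $\Theta_{\s{X}^\dagger/(\bb{A}^1)^\dagger} \cong \s{O}\otimes_\ZZ N$ is trivial, and its restriction to $\s{X}_0^\dagger$ (over the standard log point) stays trivial. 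In every case the relative log tangent sheaf is a trivial bundle, so $c_1 = 0$ and $\int_\beta c_1 = 0$ regardless of $\beta$; the only genuine content is this uniform triviality across all fibers, which the total-space picture handles at once.

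Finally I would substitute to get
$$\vdim = (n-3)(1-g) + 0 + (m + e_\infty) = e_\infty + m + (n-3)(1-g) = d^{\trop}_{g,m,\Delta},$$
and remark that the outcome is manifestly the same for $Y^\dagger = \s{X}^\dagger$ and $Y^\dagger = \s{X}_t^\dagger$, as it must be by base-change invariance of the relative obstruction theory.
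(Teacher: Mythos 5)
Your proposal is correct and follows essentially the same route as the paper's proof: both invoke the standard virtual dimension formula for stable (log) maps and reduce everything to the vanishing of the degree term, which in both cases comes from the log Calabi--Yau property of the toric boundary (the paper phrases this as $\omega_{Y^\dagger/S^\dagger}\cong\s{O}_Y$ with $\Omega_{Y^\dagger/S^\dagger}$ globally free of rank $n$, which is exactly your $\Theta_{Y^\dagger/S^\dagger}\cong\s{O}\otimes_\ZZ N$). Your explicit verification of the triviality via the relative log tangent sequence for $\s{X}^\dagger\rar(\bb{A}^1)^\dagger$ is just a more detailed spelling-out of the paper's remark that the same holds ``similarly in $\s{X}_0$.''
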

\begin{proof}
Since the boundary in a toric variety is anticanonical, and similarly in $\s{X}_0$, 
we have $\omega_{Y^\dagger/S^\dagger}\cong\s{O}_Y$. Hence, $\deg(f^*\omega_{Y^\dagger/S^\dagger}) = 0$.
By log-smoothness, $\Omega_{Y^\dagger/S^\dagger}$ is locally free (in fact globally free).

Using stability and Hirzebruch-Riemann-Roch on curves, it is standard to reduce to 
$$\vdim(\s{M}_{g,e_{\infty}}(Y^{\dagger},\Delta))
=-\deg(f^*\omega_{Y^\dagger/S^\dagger})+\rank(\Omega_{Y^\dagger/S^\dagger})(1-g)+\dim(\?{\s{M}}_{g,e_{\infty}}).$$
We have $\rank(\Omega_{Y^\dagger/S^\dagger})=n$, and it is standard that $\dim(\?{\s{M}}_{g,e_{\infty}})=e_\infty+3g-3$.  The result follows.
\end{proof}

Let 
\[\s{M}^{tt}_{g}(\s{X}_t^{\dagger},\Delta) \subset \s{M}_{g}(\s{X}_t^{\dagger},\Delta)
\]
denote the open subspace of {\bf torically transverse} curves---i.e., curves which do not intersect toric strata of codimension $\geq 2$ in $\s{X}_t$.  For $t=0$, we mean that the intersection of the curve with any irreducible component of $\s{X}_0$ is torically transverse in that component.

\subsubsection{Incidence conditions} \label{Inc}
We continue to assume our incidence conditions have affine tropicalizations, and we defer the more general cases to \S \ref{GenInc}.  Let $A$ be an affine subspace from the data $\A$.  For $\s{P}=\s{P}_{g,\Delta}(\A,\Psi)$ as in Lemma \ref{lem-good-subd}, $A$ together with a choice of point $Q_{A}$ in the big torus orbit of $\s{X}$ determines a subvariety $\gls{ZA}\subset \s{X}$ (suppressing the dependence on $Q_A$ in the notation) defined as the orbit closure
\begin{align*}
Z_{A}:=\?{\bb{G}(\LC(A) \cap N') \cdot Q_A} \subset \s{X}.
\end{align*}
Equivalently, this is the subvariety consisting of the points $x$ such that $z^v(x)=z^v(Q_A)$ for all $v\in M':=\Hom(N',\bb{Z})$ with the property $v|_{\LC(A)}=0$.

Now consider a primitive vector $u\in N\cap \LC(A)$.  We define $\gls{ZAu}\subset D_{u} \subset \s{X}$ by:
\[Z_{A,u}:= Z_{A} \cap D_u.
\]
Note that the components of $D_{u,0}:=D_u\cap \s{X}_0$ correspond to 1-cells of $\s{P}$ which are unbounded in the $u$-direction. 
Similarly as for the $Z_{A}$, $Z_{A,u}$ meets the dense torus of a toric stratum given by a cone $\sigma\in \Sigma'_{\s{P}}$ if and only if $u\in\sigma\subset\LC(A)$.

Given affine incidence conditions $\A=(A_i)_{i\in I}$ for a degree $\Delta$, we specify the $Q_{A_i}$'s generically. 
Note that for each $i\in I^{\circ}$, $\Delta$ determines a primitive vector $u_i$ (with direction $\Delta(i)$) generating a ray of $\Sigma_\s{P}$ that is contained in $\LC(A_i)$.
We will need the regularity statements in the following for the purpose of intersection theory.
\begin{lem} 
\label{lem-reg-emb}
A subvariety $Z_{A_i}$ meets the dense torus of a toric stratum given by a cone $\sigma\in \Sigma'_{\s{P}}$ if and only if $\sigma\subset\LC(A_i)$.  Furthermore, the subvarieties $Z_{A_i}$ are regularly embedded in $\s{X}$ for each $i\in I^{\circ}$, and each $Z_{A_i,u_i}$ is regularly embedded in $D_{u_i}$ for each $i\in I^{\partial}$.  Hence, the analogous statement also holds also in $\s{X}_t$ for each $t$.
\end{lem}
\begin{proof}
By Property (3) of a good polyhedral decomposition (Lemma~\ref{lem-good-subd}), $A_i$ is a union of cells $\Xi\in\s{P}$, and therefore
$Z_{A_i}$ is covered by the toric open sets $U_\Xi:=\Spec\kk[\LC(\Xi)^\vee\cap M']$ for $\Xi\subset A_i$ a maximal cell of $A_i$.  The first statement follows.

Furthermore, since $\LC(\Xi)=\LC(A_i)$, $U_{\Xi}\cap Z_{A_i}$ is the closure of the $Q_{A_i}$-orbit of the stabilizer $\GG(\LC(A_i)\cap N')$ of the minimal toric stratum $O(\Xi):=\Spec(\kk[\LC(\Xi)^\perp\cap M'])$ in $U_\Xi$. 
By this identification, it follows that $\overline{\GG(\LC(A_i)\cap N')\cdot Q_{A_i}}\cap O(\Xi)$ is just a point given by $z^v(x)=z^v(Q_{A_i})$ for $v\in \LC(A_i)^\perp$.  Furthermore, since $V(\Xi)$ is regular (being just an algebraic torus), this point is regularly embedded by a sequence  $z^{v_k}-z^{v_k}(Q_{A_i})$, and this sequence canonically lifts to a regular sequence for $Z_{A_i}$ (we can use the same equations now interpreted as elements of $\kk[\LC(\Xi)^\vee\cap M']$ since this naturally contains $\kk[\LC(\Xi)^\perp\cap M']$ as a subring).
The situation for $Z_{A_i,u_i}$ is the same as for $Z_{A_i}$ except with further intersection with $D_{u_i}$, so the statement follows.
\end{proof}

\subsubsection{Psi-class conditions}
Recall that the degree $\Delta$ is really a map $\Delta:I\rar N$.  Let $\Delta'$ denote the degree $\Delta':I\cup \{0\}\rar N$ defined by $\Delta'(i)=\Delta(i)$ for $i\in I$, and $\Delta'(0)=0$ (i.e., we've added one interior marked point $x_0$).  Consider the map
\[\pi:\s{M}_{g}(Y^{\dagger},\Delta')\rar \s{M}_{g}(Y^{\dagger},\Delta)
\] 
forgetting $x_{0}$---i.e., this is the universal curve over our moduli space.  Let $\sigma_{x_i}$ denote the section corresponding to the marked point $x_i$, $i\in I^{\circ}$.  Let $\omega_{\pi}$ denote the relative cotangent bundle of $\pi$.  Define 
\[\gls{psii}:=c_1(\sigma_{x_i}^* \omega_{\pi}).
\]

 Let
\[\forget:\s{M}_{g}(Y^{\dagger},\Delta) \rar \?{\s{M}}_{g,e_{\infty}}
\]
denote the forgetful map which remembers only the stabilization of the underlying marked curve.  Let $\gls{psiibar}$ denote the $\psi$-class associated with $x_i$ on $\?{\s{M}}_{g,e_{\infty}}$, and define $\gls{psiihat}:=\forget^* \?{\psi}_{i}$.

\begin{prop}\label{psihat}
  $\psi_{i}=\wh{\psi}_{i}$.
\end{prop}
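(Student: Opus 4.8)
The plan is to prove the equality at the level of line bundles and only take $c_1$ at the very end. Write $\s{M}:=\s{M}_{g,{m},e_{\infty}}(Y^{\dagger},\Delta)$ and let $\s{C}\to\s{M}$ be its universal curve, i.e. the family $\pi$ above. Pulling back the universal curve over $\?{\s{M}}_{g,m+e_{\infty}}$ along $\forget$ yields a second family $\?{\s{C}}\to\s{M}$ of stable $(m+e_{\infty})$-pointed genus $g$ curves, and there is a canonical $\s{M}$-morphism $\mathrm{st}\colon\s{C}\to\?{\s{C}}$ that stabilizes each fibre and carries the $i$-th marked section $\sigma_{x_i}$ of $\s{C}$ to the pullback $\?\sigma_{x_i}$ of the $i$-th universal section of $\?{\s{C}}$. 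Because $\psi$-classes are compatible with base change, $\wh{\psi}_{x_i}=\forget^{*}\?\psi_{x_i}=c_1\big(\?\sigma_{x_i}^{*}\,\omega_{\?{\s{C}}/\s{M}}\big)$, while $\psi_{x_i}=c_1\big(\sigma_{x_i}^{*}\,\omega_{\s{C}/\s{M}}\big)$. First I would invoke the standard comparison of relative dualizing sheaves under a fibrewise contraction: $\omega_{\s{C}/\s{M}}$ and $\mathrm{st}^{*}\omega_{\?{\s{C}}/\s{M}}$ agree on the open locus where $\mathrm{st}$ is an isomorphism, the two differing only along components of fibres that $\mathrm{st}$ contracts. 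Thus the whole problem reduces to showing that each section $\sigma_{x_i}$ avoids every $\mathrm{st}$-contracted component.

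This last point is the heart of the argument and the step I expect to be the main obstacle. Let $C'$ be the component of a fibre carrying $x_i$; it is contracted by $\mathrm{st}$ exactly when $g_{C'}=0$ and $C'$ has at most two special points (nodes plus marked points). I would rule this out case by case. If $\vf$ restricts to a constant map on $C'$, then stability of the log stable map already forces a genus-zero contracted component to carry at least three special points, so $C'$ is $\mathrm{st}$-stable. If instead $\vf|_{C'}$ is non-constant, then (by the description of the central fibre in \S\ref{sec-tordeg}) $C'$ maps non-constantly into a toric variety; since its dense torus is affine and $C'$ is proper, $C'$ must meet the toric boundary, and the balancing relation $\sum_{P}\operatorname{ord}_{P}(\vf^{*}z^{\chi})=0$ for all characters $\chi$ --- the algebraic incarnation of the balancing condition of Definition~\ref{TropCurveDfn}(2) --- forbids it meeting the boundary in a single point, since one nonzero tangency vector cannot sum to zero. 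Hence $C'$ has at least two nodes or boundary markings in addition to $x_i$, so again at least three special points. In tropical language $C'$ corresponds to a vertex $V$ with $i\in\mu^{-1}(V)$, so $m_V\ge 1$; the non-constant case gives $\val(V)\ge 2$, whence $\val(V)+m_V\ge 3$, while a genuine valence-one vertex is impossible by balancing (its single edge would have to be contracted, returning us to the constant case). For $g_{C'}\ge 1$ the marking $x_i$ alone already makes $C'$ stable. In every case $C'$ is $\mathrm{st}$-stable, as required.

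With that in hand I would conclude quickly: $\sigma_{x_i}$ lands in the locus where $\mathrm{st}$ is an isomorphism, so $\sigma_{x_i}^{*}\omega_{\s{C}/\s{M}}\cong\sigma_{x_i}^{*}\mathrm{st}^{*}\omega_{\?{\s{C}}/\s{M}}=\?\sigma_{x_i}^{*}\omega_{\?{\s{C}}/\s{M}}$ as line bundles on $\s{M}$, and taking $c_1$ gives $\psi_{x_i}=\wh{\psi}_{x_i}$. Two remarks on what must be checked with care. First, the dualizing sheaves appearing here are those of the \emph{underlying} curves, so the log structures enter only through the definition of $\s{M}$; no log-cotangent subtlety arises. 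Second, one must know that $\pi$ really is the universal curve of $\s{M}$ and that $\mathrm{st}$ together with the sections is defined over all of $\s{M}$ (with its basic log structure), which is exactly where I would appeal to \cite{GSlog}. The conceptual content of the proposition is that, in contrast to ordinary stable maps --- where an interior point may sit on a rational tail contracted by stabilization, producing the familiar descendant--ancestor discrepancy --- the balancing condition forbids precisely the valence-one vertices that would carry such a tail, so no correction term appears.
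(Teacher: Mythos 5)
Your overall reduction---compare the two relative dualizing sheaves via the stabilization morphism $\mathrm{st}$ and show that the section $\sigma_{x_i}$ avoids the contracted locus---is sound, and it is essentially the same reduction the paper makes (the difference $\psi_{x_i}-\wh{\psi}_{x_i}$ is supported on the locus where the component carrying $x_i$ is destabilized). Your balancing/affineness argument for the component $C'$ itself also matches the paper's. But there is a genuine gap at the step you yourself flag as the heart of the argument: your criterion for contraction is false. It is \emph{not} true that a fibre component is contracted by $\mathrm{st}$ exactly when it has genus $0$ and at most two special points; that is only the criterion for being contracted at the first step. Stabilization is iterative: after unstable components are contracted, components that previously had three special points can become unstable. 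Concretely, a genus-zero component $C'$ carrying $x_i$ and two nodes $q_1,q_2$ is still contracted by $\mathrm{st}$ if the branches of $C$ hanging off $q_1$ and $q_2$ are trees of rational curves containing no marked points: those branches collapse first, after which $C'$ retains only the special point $x_i$ and collapses as well. So ``at least three special points''---which is all your case analysis establishes, in both the constant and the non-constant case---does not imply that $C'$ survives. A node counts toward stability of the image only if the branch attached to it is not itself entirely contracted, i.e.\ contains a marked point or positive genus (a loop or higher-genus component).

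This missing step is exactly the bulk of the paper's proof. Having produced the two points $q_1,q_2$, the paper runs an induction over the components of the branch attached at each node $q_j$: each such component, being a component of a stable log map into a (stratum of a) toric variety, must carry yet another node or marked point, by precisely the dichotomy you applied to $C'$ (constant $\Rightarrow$ at least three special points by stability of the map; non-constant $\Rightarrow$ at least two boundary points, which are nodes or markings); since $C$ has finitely many components, this terminates in a marked point or a loop, so no branch is a marking-free rational tree and the nodes $q_1,q_2$ genuinely persist in the stabilized curve. Your proof can be repaired by inserting this argument---equivalently, note that a marking-free rational tree branch would have a leaf component with a single special point, which your dichotomy forbids---but as written the decisive implication rests on an incorrect contraction criterion, and filling the hole requires exactly the induction that constitutes the paper's proof.
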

\begin{proof}
From the definitions, the difference $\psi_{i}-\wh{\psi}_{i}$ can be supported on the locus in $\s{M}_{g}(Y^{\dagger},\Delta)$ corresponding to curves $\varphi:C\rar Y^{\dagger}$ for which the component $C_i$ containing $x_i$ is destabilized when $\varphi$ is forgotten.  We claim that this locus is empty.  Indeed, if $C_i$ is destabilized by this forgetful map, then stability of $\varphi$ implies that the map cannot contract $C_i$.  So then $\varphi(C_i)$ must intersect at least two boundary components of $Y$ which do not contain $C_i$.  By Remark 1.9 of \cite{GSlog}, these intersection points $q_1,q_2$ must be nodes or boundary marked points of $C$.  If $q_j$ is a node, let $C^j$ be the component of $C$ intersecting $C_i$ at $q_j$.  If $C^j=C_i$, then $C_i$ is higher genus.  Otherwise, $C^j$ must similarly contain another node or some marked points.  If a node, we can say the same about the component attached to $C^j$ at this node.  Because $C$ can have only finitely many components, this terminates (possibly in a loop, which is stable).  Hence, after forgetting $\varphi$ and stabilizing all components except $C_i$, we see that $q_1$ and $q_2$ are still nodes or marked points or that $C_i$ is higher genus.  Since $C_i$ also contains the marked point $x_i$, we see that $C_i$ is in fact stable.
\end{proof}

\begin{rmk}
\cite{AGr}'s genus zero correspondence theorem is stated in terms of the $\wh{\psi}_{i}$-classes (the associated invariants are sometimes called gravitational {\it ancestors} instead of {\it descendants}).  Proposition \ref{psihat} above shows that for log GW invariants of toric varieties, this is equivalent to the usual version of $\psi$-classes. 
\end{rmk}

We record for later:

\begin{lem} \label{lem-psi-basepointfree}
For any $i$, $\?{\psi}_{i}$ on $\?{\s{M}}_{0,b}$ can be represented by a base-point free divisor, assuming $b>3$.
\end{lem}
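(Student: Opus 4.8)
The plan is to exhibit a family of effective divisors, all lying in the complete linear system of $\?{\psi}_{x_i}$ (i.e.\ $|L|$ for a line bundle $L$ with $c_1(L)=\?{\psi}_{x_i}$), whose common intersection is empty. Since the base locus of a complete linear system is contained in every one of its members, empty common intersection forces base-point-freeness. The representatives I would use come from the classical boundary expression of the $\psi$-class on $\?{\s{M}}_{0,b}$: for any two distinct indices $j,k\in\{1,\dots,b\}\setminus\{i\}$ one has the linear equivalence
\[
\?{\psi}_{x_i} \;=\; \sum_{\substack{i\in S,\; j,k\notin S\\ |S|\ge 2}} \delta_{0,S},
\]
where $\delta_{0,S}$ is the boundary divisor parametrizing curves with a node separating the markings indexed by $S$ from those indexed by $S^{c}$ (the condition $|S^{c}|\ge 2$ is automatic since $j,k\in S^{c}$). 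Each such sum, for a different choice of $(j,k)$, is an effective divisor representing $\?{\psi}_{x_i}$, so it suffices to prove that for every point $p\in\?{\s{M}}_{0,b}$ there is a choice of $(j,k)$ for which $p$ lies on none of the $\delta_{0,S}$ appearing in the corresponding sum.

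Fix $p$, corresponding to a stable genus-zero marked curve $C$, and let $C_{0}$ be the component carrying $x_i$. The key observation is that $p\in\delta_{0,S}$ exactly when $C$ has a node splitting the markings as $S\sqcup S^{c}$, so for a fixed $(j,k)$ the point $p$ meets some term of the sum precisely when some single node of $C$ separates $x_i$ from \emph{both} $x_j$ and $x_k$. I would then choose $(j,k)$ to rule this out, in two cases. If $C_{0}$ carries a second marking $x_j$, then because $x_i$ and $x_j$ lie on the same component no node can separate them, so no term with $j\notin S$ passes through $p$, and any admissible $k$ works. If instead $x_i$ is the only marking on $C_{0}$, then stability forces $C_{0}$ to carry at least two nodes; I would then select $x_j$ and $x_k$ in the branches hanging off $C_{0}$ at two \emph{different} nodes, so that no single node of $C$ lies below both of them, and again no term of the sum contains $p$.

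The main technical point to pin down is the stability input used in the second case, namely that each subtree of $C$ attached to the rest of the curve along a single node carries at least one marking (so the branches I want to drop $x_j,x_k$ into are nonempty). This holds because such a subtree contains a leaf component of $C$, which has a single node and hence, by stability, at least two markings; I would spell this out carefully, since it is exactly what rules out the degenerate possibility that $x_i$'s component is surrounded only by unmarked branches. Granting this, the two cases exhaust all configurations of $p$, the chosen representatives have empty common intersection, and base-point-freeness of $\?{\psi}_{x_i}$ follows. As a consistency check I would note the case $\?{\s{M}}_{0,4}\cong\bb{P}^1$, where the formula collapses to $\?{\psi}_{x_i}=\delta_{0,\{i,l\}}$ for the remaining index $l$, a single degree-one point, matching $\deg\?{\psi}_{x_i}=1$; the hypothesis $b>3$ is precisely what guarantees a third index $k$ is available (and that the space is positive-dimensional, where base-point-freeness is a genuine condition).
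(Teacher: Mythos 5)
Your proof is correct and takes essentially the same route as the paper's: both use the boundary relation $\?{\psi}_{x_i}\sim D_{i|jk}$ (i.e.\ the sum of the $\delta_{0,S}$ with $i\in S$ and $j,k\notin S$, cf.\ Kock, Prop.\ 5.1.8) and conclude base-point-freeness from the emptiness of the common intersection over all choices of $(j,k)$. Your two-case stability analysis merely spells out in detail the emptiness claim that the paper asserts in a single sentence.
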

\begin{proof}
This follows from the relations $\?{\psi}_i \sim D_{i|jk}$, where $D_{i|jk}$ denotes the boundary divisor corresponding to curves for which the forgetful map remembering only the marked points corresponding to $i,j,k$ (all distinct) destabilizes the component containing $x_i$ (cf.  \cite[Prop 5.1.8]{Kock}).  Indeed, the common intersection of all possible choices of $D_{i|jk}$ is empty. 
\end{proof}

\subsection{The algebraic curve counts}
\begin{dfn} \label{def-log-GW}
Setting
$
\ev_{x_i}: [\varphi^{\dagger}:(C^{\dagger},(x_i)_{i\in I^{\circ}},(y_i)_{i\in I^{\partial}})\rar Y^{\dagger}] \mapsto \varphi(x_i)
$
and similarly for $ev_{y_i}$, we obtain evaluation maps 
$$\ev_{x_i}:\s{M}_{g}(Y^{\dagger},\Delta)\ra \s{X},$$
$$\ev_{y_i}:\s{M}_{g}(Y^{\dagger},\Delta)\ra D_{u_i}.$$
We therefore obtain by Lemma~\ref{lem-reg-emb} via generalized Gysin maps (for $Z_{A_i}$ and $Z_{A_i,u_i}$) and capping with Chern classes (for the $\psi$-classes) a cycle
$$\gls{gamma}=\left(\prod_{i\in I^{\circ}} \psi_{i} \right) \cap \left(\prod_{i\in I^{\circ}} [Z_{A_i}] \right) \cap  \left(\prod_{i\in I^{\partial}} [Z_{A_i,u_i}]\right) \cap [\s{M}(Y^{\dagger}/S^{\dagger},\beta)]^{\vir}$$
that will be a one-cycle on $\s{X}$ and zero-cycle on $\s{X}_t$ by \eqref{eq-vir-rank} and Lemma~\ref{vdim} as we assume \eqref{GeneralDimension}.  
We are interested in its degree, which is the log Gromov-Witten invariant $$\gls{GWlog}:=\deg(\gamma).$$
Let ${}^{\ns}[\s{M}(X_0^{\dagger}/ \Spec \kk^{\dagger},\beta)]^{\vir}$ be the non-superabundant summand of $[\s{M}(X_0^{\dagger}/ \Spec \kk^{\dagger},\beta)]^{\vir}$ under the decomposition theorem \cite[Thm. 1.1.2]{ACGS}, i.e. the sum over those classes that are indexed by non-superabundant curves.
Then define $\gamma^{\ns}$ analogously to $\gamma$ upon replacing $[\s{M}(X_0^{\dagger}/ \Spec \kk^{\dagger},\beta)]^{\vir}$ by ${}^{\ns}[\s{M}(X_0^{\dagger}/ \Spec \kk^{\dagger},\beta)]^{\vir}$. We define
$$\gls{nsGWlog}:=\deg(\gamma^{\ns}).$$
\end{dfn}

As a special case of Theorem \ref{Invariance}, we have:
\begin{prop}\label{GWInv}
$\GW^{\log}_{g,\s{X}^{\dagger}_t,\Delta}(\A,\Psi)$ is independent of $t$.
\end{prop}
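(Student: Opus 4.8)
The plan is to deduce this from the deformation-invariance statement Theorem~\ref{Invariance}, applied to the total family $f^{\dagger}:\s{X}^{\dagger}\rar(\bb{A}^1)^{\dagger}$. First I would check the hypotheses of that theorem. The morphism $f$ is projective by our choice of $\s{P}$ (which we arranged to be induced by a convex piecewise-affine function), and $f^{\dagger}$ is log smooth because $\s{X}$ is toric, $f$ is a toric morphism, and the log structures in play are the toric/divisorial ones; thus log smoothness of $f^{\dagger}$ reduces to the combinatorial fact that the projection $\Sigma'_{\s{P}}\rar\Sigma_{\bb{A}^1}$ is a map of fans. Consequently $\s{M}(\s{X}^{\dagger}/(\bb{A}^1)^{\dagger},\beta)$ is proper over $\bb{A}^1$ and carries a virtual fundamental class compatible with base change by \eqref{eq-vir-rank}, so that for each closed point $t$ its Gysin restriction $i_t^{!}[\s{M}(\s{X}^{\dagger}/(\bb{A}^1)^{\dagger},\beta)]^{\vir}$ recovers $[\s{M}_{g,m,e_{\infty}}(\s{X}_t^{\dagger},\Delta)]^{\vir}$, uniformly in $t$ (including $t=0$, where $S^{\dagger}$ is the standard log point).

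Next I would upgrade the zero-cycle $\gamma$ of Definition~\ref{def-log-GW} to a relative one-cycle $\Gamma$ over $\bb{A}^1$. The point is that every ingredient of $\gamma$ is already defined on the total space: the incidence loci $Z_{A_i}\subset\s{X}$ and $Z_{B_j}\subset D_{u_j}$ are regularly embedded in $\s{X}$ (respectively $D_{u_j}$) by Lemma~\ref{lem-reg-emb}, so the evaluation maps $\ev_{x_i},\ev_{y_j}$ on the relative moduli space admit generalized Gysin maps for these loci, while the classes $\psi_{x_i}$ are first Chern classes of line bundles defined on the relative universal curve. Capping all of these with the relative virtual class produces a class $\Gamma$, and the dimension count of Lemma~\ref{vdim} together with the hypothesis \eqref{GeneralDimension} shows that $\Gamma$ is a one-cycle on $\s{M}(\s{X}^{\dagger}/(\bb{A}^1)^{\dagger},\beta)$, proper over $\bb{A}^1$.

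Because the generalized Gysin maps, the cap products with Chern classes, and the virtual class all commute with the base change $i_t$ (using that Lemma~\ref{lem-reg-emb} keeps $Z_{A_i},Z_{B_j}$ regularly embedded after restriction to $\s{X}_t$), restricting $\Gamma$ to the fiber over $t$ yields exactly the cycle $\gamma$ computing $\GW^{\log}_{g,m,e_{\infty},\s{X}_t^{\dagger},\Delta}(\A,\Psi)$. Pushing $\Gamma$ forward along the proper map $p$ to $\bb{A}^1$ gives $p_{*}\Gamma=d\,[\bb{A}^1]\in A_1(\bb{A}^1)\cong\ZZ$ for a single integer $d$, and compatibility of Gysin pullback with proper pushforward yields $\deg(i_t^{!}\Gamma)=\deg(i_t^{!}p_{*}\Gamma)=d$ for every $t$. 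This is precisely the asserted independence.

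I expect the only genuinely delicate step to be the bookkeeping of these base-change compatibilities simultaneously: confirming that the $\psi$-insertions and the Gysin pullbacks along $Z_{A_i}$ and $Z_{B_j}$ can be organized into a single refined intersection class on the relative moduli space that is stable under $i_t^{!}$. All of the substantive deformation-theoretic content is black-boxed inside Theorem~\ref{Invariance}, whose proof is deferred to the appendix, so the work here is essentially to match our concrete cycle $\gamma$ to that theorem's framework.
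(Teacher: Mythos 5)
Your proposal is correct and takes essentially the same route as the paper: the paper's entire proof of Proposition~\ref{GWInv} is to invoke Theorem~\ref{Invariance} as a special case, applied to the family $\s{X}^{\dagger}\rar(\bb{A}^1)^{\dagger}$ with the bivariant class $\gamma$ of Definition~\ref{def-log-GW}. The verifications you spell out (log smoothness and projectivity of $f$, base-change compatibility of the virtual class and of the incidence/$\psi$-insertions, and the degree-constancy argument via pushforward to $\bb{A}^1$) are precisely the content the paper either packages into Definition~\ref{def-log-GW} and \eqref{eq-vir-rank} or defers to the appendix proof of Theorem~\ref{Invariance} (in particular Lemma~\ref{lemma-number-constant-schemes}).
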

In the next section, we will use this proposition to compute the invariants (assuming $\f{T}_{g,\Delta}(\A,\Psi)$ contains no superabundant curves) for general $t$ by computing the invariant at $t=0$.  
More generally, if superabundant curves exist, we prove that the non-superabundant constituent of the log Gromov-Witten invariant for $t=0$ coincides with the non-superabundant tropical count.

\section{Proof of the correspondence theorem}\label{Sing}

\subsection{Log curves $\rar$ tropical curves}\label{tropicalization}

We briefly review \cite{GSlog}'s approach to tropicalizing a log curve in $\s{X}_0$ (cf. their Discussion 1.13).  For now we assume that $\s{X}^{\dagger} \rar (\bb{A}^1)^{\dagger}$ is defined as in \S \ref{sec-tordeg} with respect to an integral polyhedral decomposition $\s{P}$ (we will add goodness assumptions later).

Consider a stable log map $\varphi^{\dagger}:(C^{\dagger},{\bf x},{\bf y})\rar \s{X}_0^{\dagger}$ over the corresponding ``basic'' log point 
$\gls{kbas}\ra \Spec \kk^{\dagger}$.  The term {\bf basic} means in particular that the log structure of $\Spec \kk_{\varphi}^{\dagger}$ is determined by a chart $Q\rar \kk$, where $\gls{Q}$ is the monoid defined in Equation (1.14) in \cite{GSlog}.  This $Q$ is roughly defined as follows: let
\begin{align} \label{def-Q}
    \wt{Q}:=\prod_{\eta} P_{\eta} \times \prod_q \bb{N}.
\end{align}
Here, the first product is over the set of generic points $\eta$ of the irreducible components of $C$, and the second product is over the set of nodes $q$ of $C$.  If the minimal toric stratum of $\s{X}$ containing $\varphi(\eta)$ is the stratum corresponding to the cone $\sigma_{\eta}\in \s{P}$, then $\gls{Peta}=\sigma_{\eta}^{\vee}\cap M'$, so $P_{\eta}^{\vee}=\sigma_{\eta}\cap N'$. 
Now $Q:=[\iota(\wt{Q})/R]^{\sat}$, where $\iota$ is the inclusion of $\wt{Q}$ into its groupification, and $R$ is a certain saturated subgroup.  We refer to \cite[Construction 1.16]{GSlog} for the precise definition of $R$, but we won't need this here. 

We will use the description of the dual of $Q$ from \cite[Rem 1.18]{GSlog}:
\begin{equation} \label{def-Qdual}
Q^\vee = \left\{((V_\eta)_\eta,(e_q)_q)\in \bigoplus_\eta P_\eta^\vee\oplus\bigoplus_q \bb{N} \,\mid\, \forall q: V_{\eta_1}-V_{\eta_1} = e_qu_q\right\}.
\end{equation}
In the condition $V_{\eta_1}-V_{\eta_1} = e_qu_q$, by $\eta_1,\eta_2$ we refer to the generic points of components adjacent to a node $q$, and the $u_q\in N'$ are determined by the map on ghost sheaves obtained from $\varphi^{\dagger}:C^\dagger\ra\s{X}_0^\dagger$ at $q$.

Now, a choice of element $\tau\in Q^{\vee}$ that doesn't lie in a proper face of $Q^\vee$ corresponds to a local homomorphism $Q\ra \bb{N}$ and in turn to a choice of pullback of $\varphi^{\dagger}$ to the standard log point.
Such a $\tau$ determines a tropical curve $\Gamma$ in $N':=N\times \bb{Z}$ as follows.  For each $\eta$, $\tau$ determines an element $V_{\eta}\in P_{\eta}^{\vee} = \sigma_{\eta}\cap N'$.  The vertices of $\Gamma$ are the $V_{\eta}$'s.  If $C_{\eta_1}$ and $C_{\eta_2}$ are (not necessarily distinct) irreducible components intersecting at a node $q$, then $V_{\eta_1}$ and $V_{\eta_2}$ are connected by an edge $E_q$ given as $e_qu_q$, and the integral length $|u_q|$ of $u_q$ is the weight $w(E_q)$.
If $C_{\eta}$ has geometric genus $g_{\eta}$ and arithmetic genus $g_{\eta}+b_\eta$ (i.e., it has $b_{\eta}$ nodes), then $V_{\eta}$ will be contained in $b_\eta$ self-adjacent edges, and we define the genus of this vertex as $g_{V_\eta}:=g_{\eta}$. 

If $C_{\eta}$ contains a boundary marked point $y_i$ mapping to $D_u$ with multiplicity $w$, then there is a corresponding unbounded edge $E_i$ emanating from $V_{\eta}$ in the direction $u$ with weight $w$ and marking $\epsilon(i)=E_i$.  Similarly, if $x_i\in C_{\eta}$, then there is a contracted non-compact edge $E_i$ with $\partial E_i=V_{\eta}$.  
The definition of $R$ and the balancing conditions on stable log maps from \cite[\S 1.4]{GSlog} then imply the usual tropical balancing condition.
Finally, whenever a vertex $V_\eta$ is bivalent and of genus zero, we remove it and identify its adjacent edges.  Note that the genus $g$ and degree $\Delta$ of $\Gamma$ agrees with that of $C$.

The tropical curves obtained above are integral tropical curves in $N':=N\times \bb{Z}$ such that all vertices project to the same value $b\in\bb{Z}_{>0}$ in the last component, see \cite[Discussion 1.13]{GSlog}.  Dividing by $b$ brings the curve into $N_{\bb{Q}}=(N_{\bb{Q}},1)\subset N'_{\bb{Q}}$.  This makes $\Gamma$ into a tropical curve in the sense of Def.~\ref{TropCurveDfn}.  We call the original tropical curve {\bf primitive} if the corresponding element $\tau$ in the interior of $Q^\vee$ is primitive. We now see that the set of primitive tropical curves is naturally identified (via division by $b$) with the interior of 
$\QQ_{>0}Q^\vee\cap (N_\bb{Q},1)$.
The cells $\Xi\subset \s{P}$ correspond bijectively with the strata $\s{X}_{\Xi}\subset \s{X}_0$, and for a component $C_{\eta}$ of $C$ with generic element $\eta$ mapping to a stratum $\s{X}_\Xi$, the corresponding possibilities for $\frac1b V_{\eta}\in \QQ_{>0}P_{\eta}^{\vee} \cap (N_{\bb{Q}},1)$ are the rational points in the cell $\Xi$.

Summarizing the above description of tropicalization, we have:
\begin{obs}\label{BasicTrop}
The space $\gls{Tphi}$ of primitive tropical curves associated to $\varphi$ is naturally identified with the affine polyhedral subspace of $\bb{Q}_{\ge 0}Q^{\vee}$ whose component in $\bb{Q}_{\ge 0}P_{\eta}^{\vee}$ is contained in $(N_{\bb{Q}},1)$.
\end{obs}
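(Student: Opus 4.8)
The plan is to read the claimed identification directly off the explicit presentation of $Q^\vee$ in \eqref{def-Qdual} together with the tropicalization recipe developed in the preceding paragraphs. Since that recipe has already carried out the substantive work (constructing $\Gamma$ from a $\tau$, verifying balancing, and normalizing the height), the observation amounts to repackaging it as a statement about a polyhedral slice of $\bb{Q}_{\ge 0}Q^\vee$, and I would present it as such.

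First I would set up the forward map. Starting from a point $\tau=((V_\eta)_\eta,(e_q)_q)\in \bb{Q}_{\ge 0}Q^\vee$ whose each $V_\eta$-component lies in $(N_{\bb{Q}},1)$, I read off the parametrized tropical curve exactly as above: the vertices are the $V_\eta$, each node $q$ contributes a bounded edge between $V_{\eta_1}$ and $V_{\eta_2}$ in direction $u_q$ with length governed by $e_q$ and weight $|u_q|$, and each boundary marked point contributes an unbounded edge. The defining relation $V_{\eta_1}-V_{\eta_2}=e_qu_q$ of $Q^\vee$ is precisely the assertion that this edge data is internally consistent, i.e. that the edge really points in the direction $u_q$ with the prescribed length, while the \emph{tropical} balancing condition is supplied by the definition of the subgroup $R$ together with the constraints on stable log maps from \cite[\S 1.4]{GSlog}. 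Hence every such $\tau$ yields a genuine tropical curve in the sense of Definition~\ref{TropCurveDfn}.

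Conversely, from a primitive tropical curve in $\f{T}_\varphi$ I would record the vertex positions (normalized to height $1$) and the edge lengths to form a tuple, and check that the $Q^\vee$-relations hold; again this is just the consistency of the edge directions, so the tuple lands in the height-$1$ slice of $\bb{Q}_{\ge 0}Q^\vee$, and the two assignments are visibly mutually inverse. The one point requiring care is that this slice is well defined, i.e. that all vertices can be placed at height $1$ simultaneously: the fact that the $V_\eta$ share a common last coordinate $b\in\bb{Z}_{>0}$ is recorded in \cite[Discussion 1.13]{GSlog}, and dividing by $b$ realizes the normalization into $(N_{\bb{Q}},1)$. The main obstacle, to the extent there is one, is therefore bookkeeping rather than ideas: matching the combinatorial data indexing $Q^\vee$ (generic points $\eta$, nodes $q$, the lattice $R$) against the data of a marked tropical curve (vertices, weighted edges, and the markings $\mu$ and $\epsilon$), and confirming that passing from the cone $\bb{Q}_{\ge 0}Q^\vee$ to its affine slice corresponds exactly to fixing the height. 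Since each of these correspondences was established in the discussion preceding the statement, the identification, together with its naturality under rescaling and the division-by-$b$ normalization, follows at once.
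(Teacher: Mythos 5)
Your proposal is correct and follows essentially the same route as the paper: the Observation is stated there as a summary of the immediately preceding tropicalization discussion (the description of $Q^\vee$ in \eqref{def-Qdual}, the curve $\Gamma$ built from a choice of $\tau$, and the division-by-$b$ normalization into $(N_{\bb{Q}},1)$), and your forward/backward maps simply make that repackaging explicit. The only difference is presentational—you spell out the mutually inverse assignments that the paper leaves implicit—so there is nothing to add.
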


We obtain the following lemma about when $\varphi$ determines a unique primitive tropical curve.

\begin{lem}\label{BasicTT}
 Assume we are given a basic stable log map
\begin{equation}\label{Basic-logmap}
\begin{tikzcd}
(C^{\dagger},{\bf x},{\bf y}) \arrow{r}{\varphi^{\dagger}} \arrow[swap]{d}{\xi^{\dagger}} &  \s{X}_0^{\dagger} \arrow{d}{\pi^{\dagger}} \\
\Spec \kk^{\dagger}_\varphi  \arrow{r}{\kappa^{\dagger}} & \Spec \kk^{\dagger},
\end{tikzcd}
\end{equation}
where $\Spec \kk^{\dagger}_\varphi$ denotes the basic log point $Q\rar\kk$.  Furthermore, assume that no generic point of a component of $C$ maps to a toric stratum of codimension $\ge 1$ in $\s{X}_0$, and assume that nodes of $C$ only occur between curve components whose generic points map to different components of $\s{X}_0$.
Then the corresponding basic monoid $Q$ is isomorphic to $\bb{N}$. 
Then the map $\kappa^\dagger$ is an isomorphism if and only if for each node $q\in C$, the corresponding edge $h(E_q)$ of $h(\Gamma)$ is of integral lattice length divisible by $w(E_q)$.
\end{lem}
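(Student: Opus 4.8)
The plan is to analyze the basic monoid $Q$ under the two hypotheses and then characterize when the structure morphism $\kappa^\dagger$ is an isomorphism. First I would unpack what the hypotheses buy us. Since no generic point of a component of $C$ maps to a toric stratum of codimension $\ge 1$, each $\sigma_\eta$ is a maximal (top-dimensional) cone of the cone complex $C(\s{P})$, so $P_\eta^\vee = \sigma_\eta\cap N'$ is full-dimensional and each $V_\eta$ lives in the interior of a top cone, i.e. at a vertex of $\s{P}$ after dividing by $b$. Since nodes only occur between components mapping to different components of $\s{X}_0$, there are no self-adjacent edges in $\Gamma$, so $\Gamma$ has no higher-genus vertices forced by nodes and each node $q$ genuinely connects two distinct maximal cones. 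Using the description \eqref{def-Qdual}, an element of $Q^\vee$ is a tuple $((V_\eta)_\eta,(e_q)_q)$ with $V_{\eta_1}-V_{\eta_2}=e_q u_q$ at each node; I would argue that because the combinatorial type $\Gamma$ is fixed and connected, the $V_\eta$ are all determined (up to a global translation along $(N_\bb{Q},1)$, i.e. up to an overall scaling $b$) by the constraints, forcing $Q^\vee\cong\bb{N}$ and hence $Q\cong\bb{N}$.

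Next I would identify $\kappa^\dagger$ in monoid-theoretic terms. The log point $\Spec\kk_\varphi^\dagger$ has ghost sheaf $Q$, the standard log point $\Spec\kk^\dagger$ has ghost sheaf $\bb{N}$, and $\kappa^\dagger$ corresponds to the map $\bb{N}\to Q$ given on the level of the interior element $\tau\in Q^\vee$: the pullback to the standard log point is exactly the choice of $\tau$, and $\kappa^\dagger$ is an isomorphism precisely when this chosen $\tau$ is the primitive generator of the ray $Q^\vee\cong\bb{N}$. So the question becomes: \emph{when is the canonical tropical curve $\Gamma$ primitive?} By the tropicalization dictionary, the edge $E_q$ is recorded as $e_q u_q$ where $u_q$ is primitive in $N'$ and $e_q\in\bb{N}$; the integral lattice length of the edge $h(E_q)$ (after dividing by $b$ and interpreting in $N_\bb{Q}$) relates $e_q$, $b$, and the weight $w(E_q)=|u_q|$. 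I would compute that $\tau$ being the primitive generator is equivalent to $b$ taking its minimal possible value, which in turn is equivalent to each $e_q$ (equivalently each edge length) being exactly compatible with the lattice, i.e. each $h(E_q)$ having integral lattice length divisible by $w(E_q)$.

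Concretely, I would set up the bijection from Observation \ref{BasicTrop}: primitive tropical curves correspond to the interior of $\bb{Q}_{>0}Q^\vee\cap(N_\bb{Q},1)$, and the isomorphism $Q\cong\bb{N}$ means this ray meets the hyperplane $(N_\bb{Q},1)$ in a single point, namely the image of the primitive generator divided by its last coordinate $b$. The map $\kappa^\dagger$ is an isomorphism iff $b=1$ in the sense that $\tau$ itself lands at height one, which happens iff the vertices $V_\eta$ already lie in $(N,1)$ and each edge vector $e_qu_q$ is a genuine lattice vector realizing $h(E_q)$ as $w(E_q)$ times a primitive lattice vector an integral number of times — that is, iff the integral lattice length of each $h(E_q)$ is divisible by $w(E_q)$. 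I would finish by checking both directions: if divisibility holds for all $q$, the tuple with $b=1$ and the given integral edge data lies in $Q^\vee$ and is primitive, so $\kappa^\dagger$ is an isomorphism; conversely, if some $h(E_q)$ has length not divisible by $w(E_q)$, the minimal integral representative forces $b>1$, so the generator of $Q^\vee$ has last coordinate $b>1$ and $\kappa^\dagger$ is the nontrivial multiplication-by-$b$ map $\bb{N}\to\bb{N}$, not an isomorphism.

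The main obstacle I expect is the careful bookkeeping relating the three integers attached to each node — the coefficient $e_q$ in $Q^\vee$, the global height $b$, and the weight $w(E_q)=|u_q|$ — and translating the abstract primitivity of $\tau\in Q^\vee$ into the concrete lattice-length divisibility statement. In particular, one must be careful that $u_q\in N'$ is primitive there while $w(E_q)$ is the integral length of the image edge in $N$, so the interplay between the $N'$-primitivity and the $N$-lattice length under division by $b$ is exactly where the divisibility-by-$w(E_q)$ condition enters, and verifying that $Q^\vee$ really is rank-one (so that $Q\cong\bb{N}$ and the notion of "primitive generator" is unambiguous) is the step that uses both hypotheses essentially.
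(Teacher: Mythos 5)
Your outline does follow the same route as the paper's proof (rank-one $P_\eta^\vee$'s pin the vertices, \eqref{def-Qdual} then gives $Q^\vee\cong\bb{N}$, and $\kappa^\dagger$ is an isomorphism iff the generator of $Q^\vee$ sits at height one), but the execution has a reversed toric dictionary that, taken literally, breaks the first step. The hypothesis that no generic point of $C$ maps to a codimension $\geq 1$ stratum of $\s{X}_0$ means $\varphi(\eta)$ lies in the dense torus of an irreducible \emph{component} of $\s{X}_0$, and components of $\s{X}_0$ correspond to \emph{vertices} of $\s{P}$, i.e.\ to \emph{rays} of $\Sigma'_{\s{P}}$: the correspondence between cones and strata is order-reversing, so small cones give big strata. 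Hence $\sigma_\eta$ is a ray and $P_\eta^\vee=\sigma_\eta\cap N'\cong\bb{N}$, and it is precisely this rank-one-ness that pins each $V_\eta$ to the ray over a vertex of $\s{P}$, so that at each height $b$ the node equations in \eqref{def-Qdual} determine all the $e_q$ and force $Q^\vee\cong\bb{N}$. With your ``$\sigma_\eta$ top-dimensional, $P_\eta^\vee$ full-dimensional'' the $V_\eta$ could move freely inside their cones (the curve could translate) and $Q^\vee$ would typically have rank $>1$; moreover your phrase ``interior of a top cone, i.e.\ at a vertex of $\s{P}$'' is self-contradictory, since the interior of a top-dimensional cone meets $(N_\bb{Q},1)$ in the interior of a maximal cell, not in a vertex.

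For the second statement, the bookkeeping you defer as ``the main obstacle'' \emph{is} the proof, and your conventions make it unworkable. In the paper's conventions (following \cite{GSlog}), $u_q$ is \emph{not} primitive: its integral length $|u_q|$ is the weight $w(E_q)$, while the integral length $\ell(E_q)$ of the image edge $h(E_q)$ is a different number. By integrality of $\s{P}$, at height $b$ one has $V_{\eta_i}=b(v_i,1)$ with $v_i\in N$, so the node equation $V_{\eta_1}-V_{\eta_2}=e_qu_q$ becomes $e_q\,w(E_q)=b\,\ell(E_q)$; a height-one integral point of $Q^\vee$ therefore exists iff $w(E_q)\mid\ell(E_q)$ for every $q$. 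You instead declare $u_q$ primitive, then write $w(E_q)=|u_q|$, and then describe $w(E_q)$ as ``the integral length of the image edge in $N$'' (that is $\ell(E_q)$, not $w(E_q)$) --- three mutually inconsistent statements, under any one of which the divisibility criterion either trivializes or comes out false. Finally, you never actually identify the ghost map of $\kappa^\dagger$: the ``choice of $\tau$'' dictionary parametrizes maps \emph{from} the standard log point \emph{into} $\Spec\kk^\dagger_\varphi$, whereas $\kappa^\dagger$ goes the other way and is determined by the diagram, not chosen. The paper pins it down by observing that $\pi^\dagger$ and $\xi^\dagger$ are \emph{strict} at generic points of components --- this is where integrality of $\s{P}$ (reducedness of $\s{X}_0$) enters a second time --- so that $\kappa^\flat:\bb{N}\ra Q$ agrees with $\varphi^\flat_\eta:P_\eta\ra Q$, which is multiplication by the last coordinate $b$ of $V_\eta$. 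Without some such argument, ``$\kappa^\dagger$ is an isomorphism iff $\tau$ is the primitive generator at height one'' is an assertion, not a proof.
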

\begin{proof}
Let $\sigma_\eta$ be the cone corresponding to the toric stratum of $\s{X}$ that the generic point $\eta$ of a component of $C$ maps into. 
By the assumption, $\sigma_\eta$ is the cone over a vertex of $\s{P}$, and hence $P_\eta^\vee=\sigma_\eta\cap N'$ is isomorphic to $\bb{N}$, as is $P_\eta$. 
By Observation \ref{BasicTrop}, all primitive tropical curves associated to $\varphi$ have the same vertices, given by the vertices in $\s{P}$ that generate the corresponding $\sigma_\eta$. 
Also, since weights are fixed, the $e_q$ are determined for edges $E_q$ that connect vertices that don't map to the same point in $N$. 
By the assumption on the nodes, this is the case for every edge (i.e., there are no contracted edges). We find there is a unique primitive tropical curve associated to $\varphi$, and then from Observation \ref{BasicTrop} and saturatedness of $Q$ we deduce that $Q=\bb{N}$ 
is generated by this primitive curve, proving the first statement. 

For the second second statement we use the integrality of
$\s{P}$, which implies $\s{X}_0$ is reduced and $\pi^\dagger$ is strict at generic points of components. By definition, $C$ is also reduced and $\xi^\dagger$ is strict on generic points of components. Hence, at these points on ghost sheaf stalks, $\xi^\dagger$ and $\pi^\dagger$ are isomorphisms $\bb{N}\ra\bb{N}$. By commutativity, the map given by $\varphi^\dagger$ on ghost sheaf stalks $\bb{N}=P_\eta\ra Q=\bb{N}$ is isomorphic to that of $\kappa^\dagger:\bb{N}\ra\bb{N}$ which is multiplication by some $b>0$. 
We can compute this number from $P_\eta\ra Q$ as the second component of $V_\eta \in N\oplus\ZZ$. In view of \eqref{def-Qdual}, since the $V_\eta$ are divisible by $b$ using integrality of $\s{P}$, we find $b=1$ if and only if for all edges, $V_{\eta_1}-V_{\eta_2}$ is divisible by $w(E)$ whenever $E$ is an edge connecting $V_{\eta_1}$ and $V_{\eta_2}$.
\end{proof}

\begin{ass}
From now on, unless otherwise stated, we assume we are dealing with a fixed rigid $\f{T}^{\ns}_{g,\Delta}(\A,\Psi)$ with respect to which $\s{P}$ is a good integral polyhedral decomposition.
\end{ass}

We will want to use a forgetful map that remembers slightly more information than the map $\forget$ of Proposition \ref{psihat}.  It is well-known \cite[Prop.\,10.11,\,p.315]{ACG2} that $\?{\s{M}}_{g,n}$ is stratified by dual graphs.  Given a tropical curve $(\Gamma,\epsilon,h)$, recall that $\Aut(\Gamma)$ denotes its automorphisms.  Let $\Aut^{\circ}(\Gamma)$ denote automorphisms of the underlying marked graph $(\Gamma,\epsilon)$ which do not necessarily commute with the weights or the map $h$.  Let $\gls{wtMG}:=\prod_{V\in \Gamma^{[0]}} \?{\s{M}}_{g_V,\val(V)}$.  Then by loc.cit. there is a stratum $\?{\s{M}}(\Gamma)$ of $\?{\s{M}}_{g,e_{\infty}}$ corresponding to $\Gamma$ whose normalization is isomorphic to $[\wt{\s{M}}(\Gamma)/\Aut^{\circ}(\Gamma)]$.  We are interested in the in-between space $\gls{MGam}:=[\wt{\s{M}}(\Gamma)/\Aut(\Gamma)]$.  Note that $\Gamma$ determines a stratum $\s{M}_{g}(\s{X}^{\dagger}_0,\Delta,\Gamma)$ of $\s{M}_{g}(\s{X}_0^{\dagger},\Delta)$ and that we have a well-defined forgetful map $\forget_{\Gamma}$ from this stratum to $\s{M}(\Gamma)$.

\begin{lem}\label{PsiGeomNew}
Consider a rigid $\Gamma\in \f{T}^{\ns}_{g,\Delta}(\A,\Psi)$.  Let $[\pt]$ denote the class of a point in $\s{M}(\Gamma)$. Then
\begin{align*}
   \left.\left(\prod_{i\in I^{\circ}} \psi_{i}^{s_i}\right)\right|_{\s{M}_{g}(\s{X}^{\dagger}_0,\Delta,\Gamma)} = \frac{1}{|\Aut(\Gamma)|} \left(\prod_{V\in \Gamma^{[0]}} \langle V \rangle\right)\forget_{\Gamma}^* [\pt].
\end{align*}
\end{lem}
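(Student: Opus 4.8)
The plan is to reduce the whole computation to the product $\wt{M}(\Gamma)=\prod_V\?{\s{M}}_{g_V,m_V+e_V}$ by exploiting that each $\psi_{x_i}$ is pulled back from $\?{\s{M}}_{g,m+e_\infty}$. First I would invoke Proposition~\ref{psihat}, which gives $\psi_{x_i}=\forget^*\?{\psi}_{x_i}$. On the stratum $\s{M}_{g,m,e_{\infty}}(\s{X}^{\dagger}_0,\Delta,\Gamma)$ the map $\forget$ factors by construction as $\iota\circ\forget_{\Gamma}$, where $\iota\colon\s{M}(\Gamma)\to\?{\s{M}}_{g,m+e_\infty}$ is the tautological map of the stratum. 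Hence $\prod_i\psi_{x_i}^{s_i}|_{\s{M}_{g,m,e_{\infty}}(\s{X}^{\dagger}_0,\Delta,\Gamma)}=\forget_{\Gamma}^*\big(\iota^*\prod_i\?{\psi}_{x_i}^{s_i}\big)$, so it suffices to compute $\iota^*\prod_i\?{\psi}_{x_i}^{s_i}$ on $\s{M}(\Gamma)$ and then apply $\forget_{\Gamma}^*$.

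Next I would pull this class back along the quotient map $p\colon\wt{M}(\Gamma)\to\s{M}(\Gamma)=[\wt{M}(\Gamma)/\Aut(\Gamma)]$. The composite $\wt{M}(\Gamma)\to\?{\s{M}}_{g,m+e_\infty}$ is the clutching (gluing) map, and by the standard fact that the cotangent line at a smooth marked point is unaffected by gluing at the nodes, it pulls $\?{\psi}_{x_i}$ back to the psi-class of $x_i$ on the single factor $\?{\s{M}}_{g_V,m_V+e_V}$ with $V=\mu(i)$. Therefore $p^*\iota^*\prod_i\?{\psi}_{x_i}^{s_i}=\prod_{V\in\Gamma^{[0]}}\big(\prod_{i\in\mu^{-1}(V)}\psi_{x_i}^{s_i}\big)$, a product of classes on the separate factors with no boundary corrections.

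Now I would use rigidity. Since $\Gamma\in\f{T}^{\circ}_{g,m,\Delta}(\A,\Psi)$, every $g_V=0$, so the $V$-th factor is $\?{\s{M}}_{0,m_V+e_V}=\?{\s{M}}_{0,\ov(V)+3}$ because $m_V+e_V=\ov(V)+3$. Rigidity also makes $\A$ general, so \eqref{ov-inequality} is an equality; summing the per-vertex inequalities \eqref{PsiTropDfn} then forces $\ov(V)=\sum_{i\in\mu^{-1}(V)}s_i$ for every $V$. Hence $\prod_{i\in\mu^{-1}(V)}\psi_{x_i}^{s_i}$ has top degree $\ov(V)=\dim\?{\s{M}}_{0,\ov(V)+3}$ and so equals its own integral times the point class; by the identity recalled after \eqref{V} (cf.\ \cite[Lemma 1.5.1]{Kock}) that integral is exactly $\langle V\rangle$ (the empty product at an unmarked trivalent vertex giving $\langle V\rangle=1$ on $\?{\s{M}}_{0,3}$). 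Multiplying over all vertices yields $p^*\iota^*\prod_i\?{\psi}_{x_i}^{s_i}=\big(\prod_V\langle V\rangle\big)[\pt_{\wt{M}}]$, where $[\pt_{\wt{M}}]$ is the point class of the connected space $\wt{M}(\Gamma)$.

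Finally I would descend to $\s{M}(\Gamma)$. The map $p$ is finite flat of degree $|\Aut(\Gamma)|$, so $\deg_{\wt{M}}(p^*\alpha)=|\Aut(\Gamma)|\deg_{\s{M}(\Gamma)}(\alpha)$; since $\wt{M}(\Gamma)$ is a product of rational varieties, $A_0(-)_{\QQ}\cong\QQ$ via degree and hence $p^*[\pt]=|\Aut(\Gamma)|[\pt_{\wt{M}}]$. As $\tfrac{1}{|\Aut(\Gamma)|}p_*p^*=\id$, the map $p^*$ is injective on $A_0(-)_{\QQ}$, so the equality on $\wt{M}(\Gamma)$ descends to $\iota^*\prod_i\?{\psi}_{x_i}^{s_i}=\tfrac{1}{|\Aut(\Gamma)|}\big(\prod_V\langle V\rangle\big)[\pt]$ on $\s{M}(\Gamma)$; applying $\forget_{\Gamma}^*$ gives the lemma. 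The main obstacle will be this last bookkeeping on the quotient stack---correctly pinning the factor $1/|\Aut(\Gamma)|$ to the definition $\s{M}(\Gamma)=[\wt{M}(\Gamma)/\Aut(\Gamma)]$ and to the chosen normalization of $[\pt]$---together with the verification in the middle steps that restricting the $\psi$-product to the stratum produces no boundary corrections and lands in top degree on each genus-zero factor.
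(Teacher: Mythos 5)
Your proposal is correct and follows essentially the same route as the paper's proof: interpret the $\psi$-classes as pulled back from $\s{M}(\Gamma)$ (via Prop.~\ref{psihat}), compute vertex-by-vertex on $\wt{\s{M}}(\Gamma)$ using \cite[Lemma 1.5.1]{Kock} to get $\prod_{i\in\mu^{-1}(V)}\psi_{x_i}^{s_i}=\langle V\rangle[\pt]$ on each factor $\?{\s{M}}_{0,m_V+e_V}$, and then divide by $|\Aut(\Gamma)|$ when descending along the quotient $\wt{\s{M}}(\Gamma)\to\s{M}(\Gamma)$. You simply spell out steps the paper leaves implicit (the absence of boundary corrections under clutching, the per-vertex equality $\ov(V)=\sum_{i\in\mu^{-1}(V)}s_i$ forced by rigidity, and the $A_0$ bookkeeping on the quotient stack).
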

\begin{proof}
First note that $\psi_{i}=\forget_{\Gamma}^* \psi_{i}$ exactly as in the proof of Proposition \ref{psihat}, so we can interpret the $\psi$-classes as being on $\s{M}(\Gamma)$ rather than on $\s{M}_{g}(\s{X}^{\dagger}_0,\Delta,\Gamma)$.  Now, on $\?{\s{M}}_{0,\val(V)}$ it is known (cf. \cite[Lemma 1.5.1]{Kock}) that $\prod_{i\in I_V^{\circ}} \psi_{i}^{s_i} = \langle V \rangle [\pt]$, so on $\wt{\s{M}}(\Gamma)$ we have $\prod_i \psi_{i}^{s_i} = \left(\prod_{V\in \Gamma^{[0]}} \langle V \rangle\right) [\pt]$.  Since $\psi$-classes on $\wt{\s{M}}(\Gamma)$ are the pullbacks of those on $\s{M}(\Gamma)$, the corresponding intersection on $\s{M}(\Gamma)$ is obtained by dividing by $|\Aut(\Gamma)|$.
\end{proof}

\begin{prop}\label{Log2Trop}
For $\varphi \in \s{M}_{g}(\s{X}_0^{\dagger},\Delta)$ satisfying generic incidence and $\psi$-class conditions corresponding to $\A$ and $\Psi$, we have $\f{T}_{\varphi}\subseteq \f{T}_{g,\Delta}(\A,\Psi)$.  
\end{prop}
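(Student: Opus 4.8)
The plan is to fix an arbitrary primitive tropical curve $(\Gamma,\mu,\epsilon,h)\in\f{T}_\varphi$ and to verify, one at a time, the conditions defining membership in $\f{T}_{g,m,\Delta}(\A,\Psi)$: that $\Gamma$ has genus $g$, degree $\Delta$ and $m$ marked vertices; that it matches the affine constraints $\A$; and that it satisfies the tropical $\psi$-class inequalities \eqref{PsiTropDfn}. The first three are immediate from the tropicalization recipe of \S\ref{tropicalization}: the markings $\mu,\epsilon$ are read off from the interior and boundary marked points of $C$, while the genus and degree of $\Gamma$ were noted there to agree with those of $C$. Moreover the genus, degree and markings are constant on $\f{T}_\varphi$, and the incidence and $\psi$-verifications below depend only on the component-and-node data of $C$, so it suffices to carry them out for a single curve in $\f{T}_\varphi$.

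For an interior marked point $x_i\in C_\eta$ the associated vertex is $V_i=V_\eta$, which (after dividing by $b$) lies in the cell $\Xi_\eta\in\s{P}$ of the stratum that the generic point $\eta$ maps to; let $\sigma_\eta$ be the corresponding cone. First I would let $\sigma'$ be the cone of the stratum of $\s{X}_0$ whose dense torus contains $\varphi(x_i)$. Since $x_i$ lies on $C_\eta$, the point $\varphi(x_i)$ sits in the closure of the orbit $\s{X}_{\sigma_\eta}$, so $\sigma_\eta$ is a face of $\sigma'$ and in particular $\sigma_\eta\subseteq\sigma'$. On the other hand $\varphi(x_i)\in Z_{A_i}$ meets the dense torus of $\s{X}_{\sigma'}$, so the characterization of \S\ref{Inc} forces $\sigma'\subseteq\LC(A_i)$. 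Combining these gives $\sigma_\eta\subseteq\LC(A_i)$, and intersecting with the level-one slice, where $\LC(A_i)\cap(N_\bb{Q},1)=A_i$ and $\sigma_\eta\cap(N_\bb{Q},1)=\Xi_\eta$, yields $\Xi_\eta\subseteq A_i$; hence $h(V_i)\in A_i$. The boundary conditions are identical, using the companion characterization for $Z_{B_j}=Z_{B_j}\cap D_{u_j}$: there one reads off both $\sigma_\eta\subseteq\LC(B_j)$, whence $h(V_\eta)\in\Xi_\eta\subseteq B_j$, and $u_j\in\LC(B_j)$, whence $u_j\in\LL(B_j)$; together these place the whole ray $h(E_j)=h(V_\eta)+\bb{R}_{\ge0}u_j$ inside $B_j$.

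The step that might at first look like the obstacle is the possibility that $x_i$ maps into a stratum strictly deeper than the one containing the generic point of its component, so that it is not obvious why $h(V_i)$ should still land in $A_i$. This is exactly what the face relation takes care of: a deeper stratum corresponds to a \emph{larger} cone $\sigma'\supseteq\sigma_\eta$, so the constraint $\sigma'\subseteq\LC(A_i)$ coming from the deeper cone automatically descends to $\sigma_\eta$. In particular the incidence conditions require no genericity beyond the goodness of $\s{P}$.

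It remains to treat the $\psi$-class conditions, which I view as the conceptual heart of the statement. For a marked genus-zero vertex $V=V_i$ I would invoke the fact, established in the proof of Proposition \ref{psihat} and recorded in Lemma \ref{PsiGeomNew}, that on the stratum $\s{M}_{g,m,e_{\infty}}(\s{X}^{\dagger}_0,\Delta,\Gamma)$ containing $\varphi$ each class $\psi_{x_i}$ is the $\forget_\Gamma$-pullback of the corresponding $\psi$-class on the factor $\?{\s{M}}_{0,m_V+e_V}$ of $\wt{\s{M}}(\Gamma)$, whose dimension is $m_V+e_V-3=\ov(V)$. Thus $\prod_{i\in\mu^{-1}(V)}\psi_{x_i}^{s_i}$ is pulled back from a class of codimension $\sum_{i\in\mu^{-1}(V)}s_i$ on a space of dimension $\ov(V)$; were this codimension to exceed $\ov(V)$ the class would vanish on the stratum, and $\varphi$ could not lie in the support of the cycle cut out by the imposed $\psi$-conditions. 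Hence $\ov(V)\ge\sum_{i\in\mu^{-1}(V)}s_i$, which is precisely \eqref{PsiTropDfn}. Higher-genus vertices carry no tropical $\psi$-condition, so nothing more is needed, completing the verification that $(\Gamma,\mu,\epsilon,h)\in\f{T}_{g,m,\Delta}(\A,\Psi)$.
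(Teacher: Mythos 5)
Your proof is correct and takes essentially the same route as the paper's: for the incidence conditions you use the orbit-closure/face relation together with the \S\ref{Inc} characterization of when $Z_{A_i}$ (resp.\ $Z_{B_j}$) meets the dense torus of a stratum, and for the $\psi$-conditions you pull back from $\s{M}(\Gamma)$ as in Lemma \ref{PsiGeomNew} and conclude by a dimension count on $\?{\s{M}}_{0,m_V+e_V}$. Your write-up merely makes explicit a few points the paper leaves implicit (the boundary incidence case, the reduction to a single curve in $\f{T}_{\varphi}$, and the strictness of the vanishing inequality), which is fine.
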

\begin{proof}
Suppose that $C_{\eta}$ contains an interior marked point $x_i$ satisfying an incidence condition $Z_{A_i}$.  Then the minimal stratum $\s{X}_{\Xi}$ of $\s{X}_0$ (maximal $\dim\Xi$) that contains $\varphi(x_i)$ must meet $Z_{A_i}$ with its dense torus. By Lemma \ref{lem-reg-emb}, we get $A_i\supset \Xi$. 
Furthermore, $C_\eta$ maps to into a stratum $\s{X}_{\Xi'}$ that contains $\s{X}_{\Xi}$, so $\Xi'\subset\Xi$ and hence $A_i\supset \Xi'$, so $V_\eta\in A_i$.
Thus, if $\varphi$ satisfies the interior incidence conditions $A_i$, then so will all the tropical curves corresponding to points in $\f{T}_{\varphi}$.  Similarly with the boundary incidence conditions.

We now deal with the $\Psi$-conditions. As in the proof of Lemma \ref{PsiGeomNew} above, $\psi$-class conditions on the stratum $\s{M}_{g}(\s{X}^{\dagger}_0,\Delta,\Gamma)$ can be pulled back from $\s{M}(\Gamma)$, and here it is clear from dimension considerations that if $g_V=0$ and $\sum_{i\in I_V^{\circ}} s_i > \dim(\?{\s{M}}_{0,\val(V)}) = \ov(V)$, then $\prod_{i\in I_V^{\circ}}\psi_{i}^{s_i} =0$, as desired.
\end{proof}

\begin{prop}\label{TT}
Let $\s{P}=\s{P}_{g,\Delta}(\A,\Psi)$ be a corresponding good polyhedral decomposition as in Lemma \ref{lem-good-subd}, and assume $N$ is refined enough so that each vertex of $\s{P}$ is contained in $N$.  Then any $\varphi$ mapping to $\s{X}_0^{\dagger}$ satisfying corresponding generic incidence and $\psi$-class conditions and having non-superabundant tropicalization is torically transverse.
\end{prop}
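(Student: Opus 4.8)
The plan is to read toric transversality off the tropicalization of $\varphi$: goodness forces the reduced tropical curve into the $0$- and $1$-skeleton of $\s{P}$, and a separate stability argument controls the components that the reduction erases. First I would tropicalize $\varphi$ as in \S\ref{tropicalization} to obtain $\f{T}_\varphi$. By Proposition \ref{Log2Trop} each associated tropical curve lies in $\f{T}_{g,m,\Delta}(\A,\Psi)$, and by Lemma \ref{No-gV} it has no higher-genus vertices, so its reduction is a curve $\Gamma\in\f{T}^{\circ}_{g,m,\Delta}(\A,\Psi)$ (unique of its type by our standing rigidity assumption and Lemma \ref{General}). Condition (2) of goodness (Lemma \ref{lem-good-subd}) then places the vertices of $\Gamma$ at $0$-cells of $\s{P}$ and its edges in the $1$-skeleton.

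The crux is to show that every irreducible component $C_\eta$ of $C$ maps generically into a codimension-$0$ stratum, equivalently that its tropical vertex $V_\eta$ lies at a $0$-cell. If $V_\eta$ survives the reduction it is an honest vertex of $\Gamma$ and sits at a $0$-cell by goodness. The delicate case is an unmarked bivalent genus-$0$ vertex, which is deleted in the reduction and hence not constrained by goodness directly. Such a $V_\eta$ lies in the relative interior of an edge of $\Gamma$, so it lies on the $1$-skeleton and is either at a $0$-cell (and we are done) or in the relative interior of a $1$-cell $\Xi_E$. In the latter case the balancing condition forces the two edges at $V_\eta$ to be collinear, in a direction which must be parallel to $\Xi_E$ for the edge to stay in the $1$-skeleton; but then both edge directions lie in $\LL(\Xi_E)$ and project to zero in $N_\QQ/\LL(\Xi_E)$, so $\varphi|_{C_\eta}$ has no tangency with the boundary of the double locus $\s{X}_{\Xi_E}$ and, being a proper curve, is contracted. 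A contracted genus-$0$ component carrying only its two nodes violates stability, ruling out this case. I expect this to be the main obstacle, precisely because the reduced tropical curve is blind to these components and goodness cannot be applied to them; the exclusion must instead combine the $1$-skeleton constraint, balancing, and stability.

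With all components at $0$-cells the rest is routine. Each node joins two components at distinct adjacent $0$-cells---distinct because non-superabundance forbids self-adjacent edges (Lemma \ref{lem-non-span-loops})---and hence maps to the codimension-$1$ double locus of the connecting $1$-cell, while each boundary marked point $y_j$ maps to the codimension-$1$ boundary divisor $D_{u_j}$ of its ambient component. Finally I would argue toric transversality one component at a time: $C_V$ maps generically into the big torus orbit of $\s{X}_V$, so it is not contained in the toric boundary, and by Remark 1.9 of \cite{GSlog} it meets $\partial\s{X}_V$ only at its nodes and boundary marked points. Each such point tropicalizes to a single edge and hence carries contact with only one boundary divisor, so it lies on a codimension-$1$ stratum; a point on a codimension-$\ge 2$ stratum would instead carry contact along two independent ray directions. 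Hence $C_V$ avoids all codimension-$\ge 2$ strata and $\varphi$ is torically transverse.
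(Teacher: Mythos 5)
Your reduction to tropical geometry (Proposition \ref{Log2Trop}, Lemma \ref{No-gV}, rigidity, goodness of $\s{P}$) matches the paper's, and your stability argument for the unmarked bivalent genus-zero components --- the ones erased by the reduction, which the paper's written proof indeed does not discuss explicitly --- is a nice addition: balancing makes their two edges collinear and parallel to the ambient $1$-cell, all contact orders with the boundary of that stratum vanish, the component is contracted, and a contracted genus-zero component with only two special points is unstable. However, there is a genuine gap at the very first step, namely the word ``equivalently'' in ``maps generically into a codimension-$0$ stratum, equivalently its tropical vertex $V_\eta$ lies at a $0$-cell.'' By the definition of tropicalization in \S\ref{tropicalization}, $V_\eta$ only lies in the \emph{closed} cell $\Xi_\eta$ dual to the minimal stratum containing $\varphi(\eta)$; nothing definitional prevents a component from mapping generically into the double locus of a $1$-cell $\Xi$ (or a deeper stratum) while its vertex sits at a $0$-cell on the boundary of $\Xi$. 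So your branches ``if $V_\eta$ survives the reduction it sits at a $0$-cell by goodness'' and ``either at a $0$-cell (and we are done)'' do not establish toric transversality, and in this corner scenario the edge directions at $V_\eta$ need not project to zero in $N_{\bb{Q}}/\LL(\Xi)$, so your contraction argument does not apply either. Closing this gap is exactly the content of the paper's proof: rigidity plus Observation \ref{BasicTrop} force the basic monoid to be $Q\cong\bb{N}$, and then \emph{locality} of the ghost-sheaf homomorphism $P_\eta=\sigma^\vee\cap M'\rar Q$, which is evaluation at $V_\eta$ up to scaling, forces $V_\eta$ into the \emph{relative interior} of $\Xi_\eta$; only then does ``$V_\eta$ at a $0$-cell'' imply ``$\varphi(\eta)$ in a codimension-$0$ stratum.''

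The same omission recurs for the nodes. You assert that a node joining components over distinct adjacent $0$-cells ``hence maps to the codimension-$1$ double locus of the connecting $1$-cell,'' but the intersection $\s{X}_{V_1}\cap\s{X}_{V_2}$ also contains every deeper stratum $\s{X}_{\Xi'}$ with $\Xi'\supseteq[V_1,V_2]$ of dimension $\ge 2$, and a priori the node may land there. Your closing remark that a point on a codimension-$\ge 2$ stratum ``would instead carry contact along two independent ray directions'' is not an argument: the contact datum at a node is a single vector $u_q$ no matter which stratum the node hits. The paper rules this out by a second locality argument: the ghost-sheaf map $P_q\rar P_{\eta_1}\oplus P_{\eta_2}\cong\bb{N}^2$ is local, yet it factors through the localization $P_q\rar\sigma_q^\vee\cap M'$, so the nonzero elements of $\sigma_q^{\perp}\cap\sigma^{\vee}$ (non-units of $P_q$) map to $0$, a contradiction. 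In short, your tropical-combinatorial skeleton coincides with the paper's, but at both places where the conclusion must be transported from the tropicalization back to the actual log map --- generic points of components and nodes --- the necessary log-geometric mechanism ($Q\cong\bb{N}$ together with locality of induced maps on ghost sheaves) is missing, and those are precisely the steps that constitute the paper's proof.
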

\begin{proof}
By Proposition \ref{Log2Trop}, 
every possible tropicalization of such a $\varphi$ lives in $\f{T}^{\ns}_{g,\Delta}(\A,\Psi)$.  Since $\f{T}^{\ns}_{g,\Delta}(\A,\Psi)$ is finite, $\f{T}_{\varphi}$ must consist of a single tropical curve $\Gamma$ (which we view as living in $(N_\bb{Q},1)\subset N'$), and $Q$ must be isomorphic to $\bb{N}$ by Observation \ref{BasicTrop}.   
If the generic point $\eta$ of $C_V$ for some $V\in\Gamma^{[0]}$ maps to a higher-codimension stratum of $\s{X}_0$ corresponding to the cone $\sigma$ over $\Xi\in \s{P}$, then $\dim \Xi \ge 1$.
Then because the map $P_\eta=\sigma^\vee\cap M' \ra Q=\bb{N}$ on stalks of ghost sheaves is local and is given (up to scaling in $N'$) by $V_\eta\in P_\eta^\vee$, we see $V=V_\eta$ is in the relative interior of $\Xi$, hence not a vertex in $\s{P}$, contradicting the goodness of $\s{P}$.

Suppose a node $q$ between components 
$C_{V_1},C_{V_2}$ with generic points $\eta_1,\eta_2$ mapped into a stratum of $\s{X}_0$ of codimension larger than one.  Then $P_q:=(\varphi^{-1}\overline\shM_{\s{X}_0})_q=\sigma^\vee\cap N'$
for $\sigma$ a cone of dimension at least three containing $P_{\eta_1}^{\vee}$ and $P_{\eta_2}^{\vee}$.  
Since the edge connecting $V_1,V_2$ is part of $\s{P}$ by goodness, so is the two-dimensional cone $\sigma_q$ generated by this edge, and we have a that $\sigma_q$ is a proper face of $\sigma$ (spanned by the rays $P_{\eta_1}^{\vee}$ and $P_{\eta_2}^{\vee}$).

The map $\varphi$ on ghost sheaf stalks together with generization maps from nodes into adjacent components induce a commutative diagram (see also \cite[Discussion 1.8, p.459]{GS})
\begin{equation} \label{butterfly}
\begin{split}
\xymatrix@C=20pt
{&P_{\eta_1}\ar^{\varphi_{\eta_1}}[rr]&&Q\\
P_q\ar^{\varphi_{ q}}[rr]\ar^{\chi_1}[ru]\ar_{\chi_2}[rd]
&&Q\oplus_\NN\NN^2 \ar[ru]\ar[rd]\ar^{\iota}@{^{(}->}[r]
&\ar[u]_{\op{pr}_1} \ar[d]^{\op{pr}_2} Q\times Q\\
&P_{\eta_2}\ar_{\varphi_{\eta_2}}[rr]&&Q.
}
\end{split}
\end{equation}
The map $\varphi_{ q}$ is local and the top and bottom horizontal maps are isomorphisms, so we get a local map
$\iota\circ \varphi_{ q} : P_q\ra P_{\eta_1}\oplus P_{\eta_2}\cong\bb{N}^2$. 
But this map factors through the localization $P_q\ra\sigma_q^\vee\cap N'$, and this contradicts locality since the elements of $\sigma_q^{\perp}\cap \sigma^{\vee}$ map to $0$.
\end{proof}

\subsection{Tropical curves $\rar$ log curves}\label{Trop2Log}

As in \cite{NS}, a {\bf marked pre-log curve} is a stable map $\varphi:(C,{\bf x},{\bf y})\rar \s{X}_0$ such that:
\begin{enumerate}
\item Each irreducible component of $C$ is torically transverse, and
\item For $P\in C$, we have $\varphi(P) \in D\subset \Sing(\s{X}_0)$ for some irreducible component $D$ of $\Sing(\s{X}_0)$ if and only if $P$ is a node of $C$.  Furthermore, the two branches of $C$ at $P$ map to different irreducible components of $\s{X}_0$ and have the same intersection multiplicity with $D$ at $P$.
\end{enumerate}
By Proposition \ref{TT}, any $\varphi^{\dagger}$ satisfying our generic incidence and $\psi$-class conditions with non-superabundant tropicalization must be torically transverse.  Condition (2) is necessary for torically transverse $\varphi$ to admit a log structure over $\s{X}_0$.  We therefore focus on counting marked pre-log curves.

Recall the number $\f{D}_{\Gamma}:=\inde(\Phi) \prod_{V\in \Gamma^{[0]}} \langle V \rangle$ as defined in \eqref{DGamma}.  We want to show that $\f{D}_{\Gamma}$ equals the number of marked pre-log curves in $\s{X}_0$ which satisfy the desired incidence and psi-class conditions and have tropicalization $\Gamma$. We will need the following lemma about torically transverse stable log maps to a toric variety $Y^{\dagger}$ (with divisorial log structure corresponding to its toric boundary).  Here, we will let $N$ be the co-character lattice of $Y$ and let $\GG(N)$ be the big torus orbit of $Y$.  Interior marked points will be denoted by $x_i$, $i\in I^{\circ}$, and boundary marked points by $y_i$ with $\Delta(i)=w_iu_i$.  We assume the fan for $Y$ is refined enough to include the rays generated by these $u_i$'s.  Recall that the maximal torus orbit of the divisor to which $y_i$ maps is identified with $\GG(N/\bb{Z}u_i)$.

\begin{lem}\label{SingleMark} 
For a degree $\Delta$ with $\#I^{\circ} \geq 0$, let $\s{M}$ denote the space of irreducible torically transverse stable maps $\varphi:\bb{P}^1\rar Y$ which can be obtained by forgetting the log structure of a map corresponding to a point of $\s{M}^{tt}_{0}(Y^{\dagger},\Delta)$.  Then $\s{M}\cong \GG(N)\times \s{M}_{0,e_{\infty}}$, with the map to the $\GG(N)$-factor being given by evaluation at $x_{i_0}$ (for any fixed choice of $i_0$), and the map to the $\s{M}_{0,e_{\infty}}$-factor being the forgetful map remembering only the stabilization of the underlying domain curve.  Furthermore, for all $\varphi \in \s{M}$ with the same fixed projection to $\s{M}_{0,e_{\infty}}$, there is a $\tau_j\in\GG(N/u_j)$ such that
the map $\varphi\mapsto\varphi(y_j)$ is given by the image of $\varphi$ under the composition of the projections $\s{M} \rar \GG(N) \rar \GG(N/u_j)$
with multiplication by $\tau_j$. Similarly, for each $i\in I^{\circ}$, there are $\tau_{x_{i}}\in\GG(N)$ such that $\ev_{x_{i}}$ is given as 
$\GG(N)\stackrel{\cdot\tau_{x_{i}}}\longrightarrow\GG(N)$.
\end{lem}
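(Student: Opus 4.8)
The plan is to give an explicit toric normal form for an irreducible torically transverse map $\varphi\colon\bb{P}^1\to Y$ of degree $\Delta$ and then read off every evaluation map from it. First I would note that torical transversality forces $\varphi^{-1}(\partial Y)=\{y_1,\dots,y_s\}$, so $\varphi$ restricts to a map $U:=\bb{P}^1\setminus\{y_1,\dots,y_s\}\to\GG(N)=\Spec\kk[M]$; in particular each interior point, and the chosen $x_{i_0}$ (here I use $m>0$), lands in the big torus. Such a map is the same datum as a group homomorphism $\chi_\varphi\colon M\to\s{O}^*(U)$, $m'\mapsto\varphi^*(z^{m'})$. Composing with the divisor map $\s{O}^*(U)\to\bigoplus_j\ZZ[y_j]$ sends $m'$ to $\sum_j\langle m',w_ju_j\rangle[y_j]$, a quantity fixed entirely by $\Delta$ (this is precisely how the refined degree records the tangency orders, cf. the boundary condition in $\beta$), and the balancing condition $\sum_j w_ju_j=0$ guarantees that each such divisor has degree zero, hence is principal.

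Next I would fix the projection to $\s{M}_{0,m+s}$, i.e. fix the pointed domain $(\bb{P}^1,{\bf x},{\bf y})$, and analyze the fiber. If $\chi_\varphi$ and $\chi_{\varphi'}$ induce the same divisor for every $m'$, then $\chi_\varphi(m')/\chi_{\varphi'}(m')$ is a unit with trivial divisor, hence a nonzero constant, so $m'\mapsto\chi_\varphi(m')/\chi_{\varphi'}(m')$ is a homomorphism $M\to\kk^*$, i.e. an element $t\in\GG(N)$; thus $\varphi=t\cdot\varphi'$ and the fiber is a $\GG(N)$-torsor under translation. Since $\varphi(x_{i_0})=t\cdot\varphi'(x_{i_0})$, the translation $t$ is recovered from its effect at $x_{i_0}$, so $\ev_{x_{i_0}}$ trivializes the torsor. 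To make this global I would construct, over the universal curve on $\s{M}_{0,m+s}$, the canonical section $\varphi_0$ normalized by $\varphi_0(x_{i_0})=e$: for each $m'$ take the relative rational function with divisor $\sum_j\langle m',w_ju_j\rangle\sigma_{y_j}$ scaled to value $1$ along $\sigma_{x_{i_0}}$. This depends multiplicatively on $m'$, so it defines a homomorphism $M\to\s{O}^*(\s{U})$ and hence a family of torus-valued maps; verifying that $\varphi_0$ genuinely extends to a torically transverse stable map into $Y$ with the prescribed tangencies then yields the isomorphism $\s{M}\cong\GG(N)\times\s{M}_{0,m+s}$ via $(\ev_{x_{i_0}},\forget)$, with inverse $(Q,[\text{domain}])\mapsto Q\cdot\varphi_0$.

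The formulas for the remaining evaluations then fall out of this normal form. For a fixed domain write an arbitrary $\varphi$ as $t\cdot\varphi_0$ with $t=\ev_{x_{i_0}}(\varphi)$. Then $\ev_{x_i}(\varphi)=t\cdot\varphi_0(x_i)=\tau_{x_i}\cdot\ev_{x_{i_0}}(\varphi)$ with $\tau_{x_i}:=\varphi_0(x_i)\in\GG(N)$, which is the asserted description of $\ev_{x_i}$ as multiplication by $\tau_{x_i}$. Passing to the boundary, $\varphi(y_j)$ equals the image of $t$ in $\GG(N/u_j)$ times the limiting value $\tau_j:=\varphi_0(y_j)\in\GG(N/\bb{Z}u_j)$; here one uses that the $z^{m'}$ with $m'\perp u_j$ are exactly the regular nonvanishing functions on the big orbit of $D_{u_j}$, and that $\varphi^*(z^{m'})$ has neither zero nor pole at $y_j$ for such $m'$ since $\langle m',w_ju_j\rangle=0$. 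This is precisely the factorization $\s{M}\to\GG(N)\to\GG(N/u_j)$ followed by multiplication by $\tau_j$.

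I expect the main obstacle to be the globalization in the second paragraph: upgrading the fiberwise torsor trivialization to an honest isomorphism of moduli spaces in families, and, relatedly, checking that the torus-valued $\varphi_0$ extends across each $y_j$ to a torically transverse map into $Y$ with the correct multiplicities and no extra boundary intersections, while remaining stable. The torsor and homomorphism bookkeeping is routine, but making the relative construction (via $\pi_*$ of the degree-zero line bundle and the value along $\sigma_{x_{i_0}}$) precise is where the real work lies; this is essentially the toric description of rational curves underlying the Nishinou--Siebert multiplicity.
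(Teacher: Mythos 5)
Your proof is correct, but it takes a genuinely different route from the paper's. The paper's proof uses the Cox quotient construction: after choosing auxiliary primitive vectors $u_{s+1},\ldots,u_k$ so that $u_1,\ldots,u_k$ generate $N$, it writes any $\varphi$ explicitly in homogeneous coordinates as $z^{v_j}\mapsto c_j(X+a_jY)^{w_j}$ for $j\le s$ (and $z^{v_j}\mapsto c_j$ for $j>s$), observes that the $a_j$'s are fixed by the projection to $\s{M}_{0,m+s}$ while the $c_j$'s modulo $\GG(\ker\pi)$ sweep out $\GG(\wt{N})/\GG(\ker\pi)\cong\GG(N)$, and then reads off the evaluation statements by substituting $y_j=(-a_j:1)$ and noting that $\varphi_i(y_j)=c_i(a_i-a_j)^{w_j}$ is linear in the $c_i$'s. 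You instead argue intrinsically: the restriction of $\varphi$ to the torus is a homomorphism $M\to\s{O}^*(U)$ whose divisor data is pinned down by $\Delta$, so two maps with the same marked domain differ by a constant character, making the fiber of the forgetful map a $\GG(N)$-torsor which $\ev_{x_{i_0}}$ trivializes; the translations $\tau_{x_i}$ and $\tau_j$ are then simply the values of your normalized section $\varphi_0$ at $x_i$ and $y_j$. Your route avoids the auxiliary choices (the extra $u_j$'s and the Cox presentation) and makes the torsor structure and the meaning of the $\tau$'s conceptually transparent; the price is that you must separately verify that the torus-valued $\varphi_0$ extends across each $y_j$ to a torically transverse map with the prescribed tangencies, a point you correctly flag. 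That verification is a routine valuation computation (for $m'\in\rho_j^\vee\cap M$ one has $\operatorname{ord}_{y_j}\varphi_0^*(z^{m'})=\langle m',w_ju_j\rangle\ge 0$, so the map extends into the chart of the ray $\bb{Q}_{\geq 0}u_j$, landing in the big orbit of $D_{u_j}$ because the pullbacks of $z^{m'}$ with $m'\perp u_j$ are units there), whereas in the paper's approach the extension and the tangency orders are manifest from the explicit polynomial formula. Both proofs treat the globalization over $\s{M}_{0,m+s}$ at the same informal level, so nothing is lost on that front.
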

We note that the first statement is \cite[Prop. 3.3.3]{Ran}.  We repeat the proof of this part to set up for the latter statement.

\begin{proof}
We use a slight modification of the quotient construction of toric varieties.  Choose primitive vectors $\{u_j\}_{j\in I'} \in N$ ($I'$ some finite index set) such that $\{u_i|i\in I'\sqcup I^{\partial}\}$ generates $N$.  Let $\wt{I} = I' \sqcup I^{\partial}$.  Define a lattice $\wt{N}\cong \bb{Z}^{\# \wt{I}}$ with a basis denoted by $\{\wt{u_i}|i\in \wt{I}\}$.  Consider the map $\pi:\wt{N}\rar N$, $\wt{u_i}\mapsto u_i$.  Let $\Sigma$ denote the fan in $N$ consisting of the rays generated by the $u_i$'s, $i=1,\ldots,s$, and let $\wt{\Sigma}$ be the fan in $\wt{N}$ consisting of the rays generated by the corresponding $\wt{u_i}$'s.  Then $\pi$ induces a map of fans and hence a map of the corresponding toric varieties $\TV(\wt{\Sigma}) \rar \TV(\Sigma)$.  This can be viewed as a quotient by $\bb{G}(\ker(\pi))$ onto a dense open subset of $Y$ that contains all images of curves in $\s{M}$.

Let $\{v_i|i\in \wt{I}\}$ denote the dual basis to $\{\wt{u_i}|i\in \wt{I}\}$.  The coordinates $z^{v_i}$ on $\TV(\wt{\Sigma})$ can be viewed as homogeneous coordinates on $\TV(\Sigma)$.  In terms of these coordinates, any $\varphi \in \s{M}^{tt}_{0}(Y^{\dagger},\Delta)$ can (similarly to in the proof of \cite[Prop. 3.2]{GPS}) be expressed as 
\begin{align*}
z^{v_i}\mapsto \varphi_i:=\begin{cases}
c_i(X+a_i Y)^{w_i} &\mbox{if } i\in I^{\partial} \\
c_i &\mbox{if } i\in I'
\end{cases}
\end{align*}
where $X$ and $Y$ are the homogeneous coordinates on our domain $\bb{P}^1$ and $c_j=\varphi_j(x_{k})\in \kk^*$ for a fixed choice of $k\in I^{\circ}$.  Here, $x_{k}=(1:0)$, and $y_i=(-a_i:1)$ for $i\in I^{\partial}$.  Since the underlying marked curve $C$ is irreducible by assumption, the $x_i$'s and $y_i$'s are all distinct.  Furthermore, all the $x_i$'s and $y_i$'s, and hence also the $a_i$'s, are uniquely determined by the projection of $\s{M}$ to $\s{M}_{0,e_{\infty}}$, up to a reparametrization of $\bb{P}^1$ which we can fix by, say, fixing $a_1$ and $a_2$. The $c_i$'s are now the only information still needed to specify $\varphi$.  Since two different choices of the $c_i$'s determine the same $\varphi$ exactly if they differ by an element of $\bb{G}(\ker \pi)$, we find that the locus in $\s{M}$ of stable maps with fixed underlying marked curve is $\bb{G}(\wt{N})/\GG(\ker \pi) \cong \bb{G}(\wt{N}/\ker \pi) \cong \GG(N)$.  This proves the first claim.

Next, note that $\varphi_i(y_j) = c_i(-a_j+a_i)^{w_j}$ for $i\in I^{\partial}$, and $\varphi_i(y_j)=c_i$ for $i\in I'$.  Hence, for specified $a_i$'s (determined by specifying the projection of $\varphi$ to $\s{M}_{0,e_{\infty}}$), $\varphi(y_j)$ is linear in the $c_i$'s for $i\neq j$ and uniquely determined modulo the $\bb{G}(\ker(\pi))$-action.  Hence, specifying $\varphi(y_j)\in \GG(N/\bb{Z}u_j)$ is equivalent to specifying the image of $\varphi$ under the projection of $\bb{G}(\wt{N})/\GG(\ker \pi) \cong \GG(N)$ to $\bb{G}(N/\bb{Z}u_j)$, and linearity gives that the two $\bb{G}(N/\bb{Z}u_j)$'s are related by multiplication by some $\tau_j$, as claimed. This is similar for the $\tau_{x_{k}}$.
\end{proof}

We will also need the following analogous statement (essentially \cite[Proposition 5.5]{NS}) for curves with no interior marked points:\newpage
\begin{lem}\label{NoMarks}
Suppose $I^{\circ}=\emptyset$. 
\begin{enumerate}
\item If $e_{\infty}=3$, then $\s{M}^{tt}_{0}(Y^{\dagger},\Delta)$ is a torsor over $\bb{G}(N)$, and, after choosing a trivialization, the map $\s{M}^{tt}_{0}(Y^{\dagger},\Delta)\stackrel{\ev_i}{\rar} \bb{G}(N/\bb{Z}u_i)$,
$\varphi\mapsto\varphi(y_i)$ is given by the projection $\bb{G}(N) \rar \bb{G}(N/\bb{Z}u_i)$ up to composing with the action of some $\tau_i\in \bb{G}(N/\bb{Z}u_i)$.
\item If $e_{\infty}=2$, then the locus of irreducible curves in $\s{M}^{tt}_{0}(Y^{\dagger},\Delta)$ is naturally identified with $\bb{G}(N/\bb{Z}u_i)$ ($i$ can be either $1$ or $2$).  Furthermore, under this identification, $\s{M}^{tt}_{0}(Y^{\dagger},\Delta)\stackrel{\ev_i}{\rar} \bb{G}(N/\bb{Z}u_i)$,
$\varphi\mapsto\varphi(y_i)$ is the identity map.  
\end{enumerate}
\end{lem}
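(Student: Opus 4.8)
The plan is to regard both parts as the ``no interior marking'' specializations of Lemma~\ref{SingleMark}, where the loss of a canonical basepoint for evaluation is exactly what degrades the product $\GG(N)\times \s{M}_{0,m+s}$ of that lemma into a torsor, the balancing condition doing the rest. For part (1) I would specialize the homogeneous-coordinate description in the proof of Lemma~\ref{SingleMark} to $m=0$ and $s=3$. Since $\s{M}_{0,3}$ is a point, the three boundary markings $y_1,y_2,y_3$ rigidify the domain $\bb{P}^1$, so once a parametrization is fixed the only remaining data are the coefficients $c_j$, and the identification $\GG(\wt N)/\GG(\ker\pi)\cong\GG(N)$ presents the locus of irreducible curves as a copy of $\GG(N)$. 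The big torus $T:=\GG(N)$ acts on $Y$ torically, hence on the moduli by post-composition, and in these coordinates this action is the tautological multiplication on $\GG(\wt N)/\GG(\ker\pi)$. Torical transversality forces every $c_j\neq 0$, so the action is free, and it is transitive since every class of nonzero coefficients is attained; thus the locus is a $\GG(N)$-torsor. The absence of an interior marked point, whose evaluation trivialized the torus factor in Lemma~\ref{SingleMark}, is exactly why no canonical trivialization exists.

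After choosing a trivialization, i.e.\ a basepoint $\varphi_0$ identifying the torsor with $T$, I would note that $\ev_j$ is $T$-equivariant: $\ev_j(t\circ\varphi)=t\cdot\varphi(y_j)$, where $T$ acts on the big orbit $\GG(N/\bb{Z}u_j)$ of $D_{u_j}$ through the projection $\GG(N)\to\GG(N/\bb{Z}u_j)$. Any $T$-equivariant morphism between the homogeneous spaces $\GG(N)$ and $\GG(N/\bb{Z}u_j)$ must be this projection followed by translation by the image $\tau_j:=\ev_j(\varphi_0)$ of the basepoint, which is exactly the asserted form.

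For part (2), balancing gives $w_1u_1+w_2u_2=0$, forcing $u_2=-u_1=:-u$ and $w_1=w_2$. The image of an irreducible torically transverse such curve is then the closure $\?{\GG(\bb{Z}u)\cdot p}$ of a single translated one-parameter-subgroup orbit, which depends only on $\bar p\in\GG(N)/\GG(\bb{Z}u)\cong\GG(N/\bb{Z}u)$ (using that $u$ is primitive). The two boundary markings are forced to be the unique intersection points of this image with $D_u$ and $D_{-u}$, so the marked stable map is recovered from its image, hence from $\bar p$; this yields the identification $\s{M}^{tt}_{0,0,2}(Y^{\dagger},\Delta)\cong\GG(N/\bb{Z}u)$. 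Under it, $\ev_j(\varphi)=\varphi(y_j)$ is the limit of the orbit as it degenerates onto $D_{u_j}$, which in $\GG(N/\bb{Z}u_j)=\GG(N/\bb{Z}u)$ equals $\bar p$; hence $\ev_j$ is the identity for either $j$.

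The steps I expect to require the most care are the torsor verification and the automorphism bookkeeping. In part (1), I want freeness and transitivity established directly from the quotient presentation rather than by a bare dimension count. In part (2), the domain $\bb{P}^1$ with two marked points carries a residual $\GG_m$ of reparametrizations, so I must check that this moves $p$ only within its $\GG(\bb{Z}u)$-coset, leaving $\bar p$ and the stable map unchanged, and that the remaining finite $\mu_w$ automorphisms of the weight-$w$ cover affect only the stacky structure, not the coarse identification or the evaluation map.
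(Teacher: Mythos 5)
Your proposal is correct. Note, though, that the paper's own ``proof'' of Lemma~\ref{NoMarks} is essentially a one-line citation: it says the statement ``is essentially \cite[Proposition 5.5]{NS}'' and merely remarks that it can also be proved using the methods of Lemma~\ref{SingleMark}. What you wrote is exactly that second, unexecuted route, carried out in detail: in (1), the specialization of the homogeneous-coordinate presentation to $m=0$, $s=3$ (with $\s{M}_{0,3}$ a point rigidifying the domain), the simply transitive post-composition action of $\GG(N)\cong\GG(\wt{N})/\GG(\ker\pi)$ on coefficient classes, and the $\GG(N)$-equivariance of $\ev_j$ pinning it down as the projection up to the translation $\tau_j=\ev_j(\varphi_0)$; in (2), balancing forcing $u_2=-u_1$ and $w_1=w_2=w$, the image being a translated one-parameter-subgroup closure determined by $\bar p\in\GG(N/\bb{Z}u)$, the residual $\GG_m$ of domain reparametrizations moving $p$ exactly within its $\GG(\bb{Z}u)$-coset (where you correctly use that $\lambda\mapsto\lambda^w$ is surjective on $\kk^*$), and the leftover $\mu_w$ contributing only stackiness --- all consistent with Remark~\ref{rmk-pre-log-aut} and the later use of these automorphisms in Lemma~\ref{NoAut} and Remark~\ref{NoVertices}. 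The only cosmetic caveat is in (1): freeness of the action is most cleanly phrased by noting that a torus element fixing the map must fix a point of the big orbit (irreducibility plus toric transversality), or equivalently by observing that the nonzero coefficient tuples form a $\GG(\wt{N})$-torsor whose quotient by $\GG(\ker\pi)$ is then a $\GG(N)$-torsor; your appeal to $c_j\neq 0$ is implicitly this argument. In sum, the citation route buys brevity, while your route buys a self-contained argument in the same coordinates that the gluing argument of Proposition~\ref{PreCount} actually consumes.
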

\begin{proof}
We want to deduce (1) from Lemma~\ref{SingleMark} by first considering the case of $e_\infty=4$ with $|\partial I|=3$ and $|I^\circ|=1$, so that Lemma~\ref{SingleMark} gives $\s{M}^{tt}_{0}(Y^{\dagger},\Delta)\cong \bb{G}(N)\times \s{M}_{0,4}$. The subspace inside $\s{M}^{tt}_{0}(Y^{\dagger},\Delta)$ where we fix the four markings (for some fixed choice of ordering) to be, say, $0$, $1$, $2$ and $\infty$ is identified with the space of only three markings and also with $\bb{G}(N)$ under the isomorphism which is what was claimed.

For case (2), we must have $u_1=-u_2$, say each with index $w$.  For $\Sigma$ the fan whose only rays are generated by $u_1,u_2$, torically transverse $\varphi$ must map to $\TV(\Sigma)\subset Y^{\dagger}$. In fact, $\TV(\Sigma)\cong \bb{P}^1\times \bb{G}(N/\bb{Z}u_i)$, and $\varphi$ must be a degree $w$ cover for one of the $\bb{P}^1$-fibers of the projection $\pi$ to $\bb{G}(N/\bb{Z}u_i)$.  Furthermore, this cover of $\bb{P}^1$ must be totally ramified at $0$ and $\infty$ over the two points meeting the toric boundary, and this completely determines the cover up to isomorphism.  So such maps $\varphi:C\rar \TV(\Sigma)$ are uniquely determined by the point $\pi(\varphi(C))\in \bb{G}(N/\bb{Z}u_i)$, and this point is clearly equal to the image under either evaluation map $\ev_i$.
\end{proof}

We are ready to describe the space of pre-log curves in $\s{X}_0^{\dagger}$ associated to a specified rigid tropical curve.  Given $\Gamma$, let $\Phi^{\circ}$ denote the map of \eqref{D} in the case where each condition $A_i$ is trivial (i.e., all of $N$).  That is,
\begin{align*}
\Phi^{\circ}:=\prod_{V\in \Gamma^{[0]}} N &\rar \prod_{E\in \Gamma^{[1]}_c} N/\bb{Z}u_{\partial^-E,E} \\
H &\mapsto ((H_{\partial^+E}-H_{\partial^-E})_{E\in \Gamma^{[1]}_c}). \notag
\end{align*}
We denote $\Phi^{\circ}_{\bb{Q}}:=\Phi^{\circ} \otimes \bb{Q}$ as in Lemma \ref{TropDeform},  $\Phi^{\circ}_{\kk^*}:=\Phi^{\circ}\otimes \kk^*$, and $\Phi_{\kk^*}:=\Phi \otimes \kk^*$.

\begin{lem} \label{lemma-G-tensor}
For a homomorphism $\Phi:L\ra L'$ of free Abelian groups, consider the map $\Phi_{\kk^*}:\GG(L)\ra\GG(L')$ obtained by tensoring $\Phi$ with $\kk^*$. One has an exact sequence
$$ 0\ra \GG(\ker\Phi)\ra \ker(\Phi_{\kk^*}) \ra \op{Tor}^1_\ZZ(\coker\Phi,\kk^*)\ra 0.$$
\end{lem}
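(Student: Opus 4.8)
The plan is to reduce the statement to standard homological algebra by splitting the four-term exact sequence
$$0 \to \ker\Phi \to L \xrightarrow{\Phi} L' \to \coker\Phi \to 0$$
into the two short exact sequences
$$0 \to \ker\Phi \to L \to \im\Phi \to 0, \qquad 0 \to \im\Phi \to L' \to \coker\Phi \to 0,$$
and then applying the functor $-\otimes_\ZZ\kk^*$ (recall $\GG(V)=\kk^*\otimes_\ZZ V$), tracking the first Tor groups through the resulting long exact sequences.

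First I would tensor the left short exact sequence with $\kk^*$. Since $\im\Phi$ is a subgroup of the free Abelian group $L'$, it is itself free, hence flat, so $\op{Tor}^1_\ZZ(\im\Phi,\kk^*)=0$; the long exact sequence therefore collapses to a short exact sequence
$$0 \to \GG(\ker\Phi) \to \GG(L) \xrightarrow{q} \GG(\im\Phi) \to 0,$$
with $q$ surjective. Next I would tensor the right short exact sequence with $\kk^*$. As $L'$ is free, $\op{Tor}^1_\ZZ(L',\kk^*)=0$, and the long exact sequence begins
$$0 \to \op{Tor}^1_\ZZ(\coker\Phi,\kk^*) \to \GG(\im\Phi) \xrightarrow{j} \GG(L'),$$
identifying $\ker j$ with $\op{Tor}^1_\ZZ(\coker\Phi,\kk^*)$.

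To finish, I would use the factorization $\Phi_{\kk^*}=j\circ q$, which is immediate from the way the two short exact sequences reassemble the original map. Because $q$ is surjective, $\ker(\Phi_{\kk^*})=q^{-1}(\ker j)$, and restricting $q$ to this preimage gives a surjection onto $\ker j$ with kernel $\ker q=\GG(\ker\Phi)$. This yields exactly
$$0 \to \GG(\ker\Phi) \to \ker(\Phi_{\kk^*}) \xrightarrow{q} \op{Tor}^1_\ZZ(\coker\Phi,\kk^*) \to 0,$$
which is the asserted sequence.

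I do not expect any serious obstacle: the argument is a diagram chase once the two Tor-vanishing observations (freeness of $\im\Phi$ and of $L'$) are in place. The only point that requires a moment of care is the final combination---verifying that $q$ restricts to a surjection $\ker(\Phi_{\kk^*})\twoheadrightarrow\ker j$ and computing its kernel---but this is routine given surjectivity of $q$. No divisibility or other special property of $\kk^*$ beyond its being an Abelian group is needed.
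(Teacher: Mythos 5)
Your proof is correct, but it takes a more elementary route than the paper. The paper's own proof is a one-liner: it invokes the spectral sequence of the right exact functor $\kk^*\otimes\,\cdot\,$ applied to the four-term exact sequence $0\ra \ker\Phi\ra L\ra L'\ra \coker\Phi \ra 0$, leaving the reader to unwind it. What you have done is, in effect, carry out that unwinding by hand: splitting the four-term sequence at $\im\Phi$, using freeness of $\im\Phi$ (as a subgroup of the free group $L'$) to get the short exact sequence $0\ra\GG(\ker\Phi)\ra\GG(L)\stackrel{q}{\ra}\GG(\im\Phi)\ra 0$, using freeness of $L'$ to identify $\ker j$ with $\op{Tor}^1_\ZZ(\coker\Phi,\kk^*)$, and then exploiting the factorization $\Phi_{\kk^*}=j\circ q$ together with surjectivity of $q$ to assemble the stated sequence. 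All steps check out, including the final diagram chase ($\ker(\Phi_{\kk^*})=q^{-1}(\ker j)$, with $q$ restricting to a surjection onto $\ker j$ whose kernel is exactly $\ker q=\GG(\ker\Phi)$). What your version buys is self-containedness and explicitness: only the Tor long exact sequence and the fact that subgroups of free Abelian groups are free are needed, and the maps in the resulting sequence are concretely identified, whereas the paper's formulation is shorter but presumes familiarity with the relevant spectral sequence. Your closing remark is also accurate and worth noting: no special property of $\kk^*$ is used here, so the lemma holds with any Abelian group in place of $\kk^*$; the divisibility and characteristic-zero hypotheses on $\kk$ enter only later (in Proposition \ref{PreCount}, where $\op{Tor}^1_\ZZ(\ZZ/p,\kk^*)\cong\ZZ/p$ is used).
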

\begin{proof}
This follows from applying the spectral sequence of the right exact functor $\kk^*\otimes\cdot$ to the exact sequence 
$0\ra \ker\Phi\ra L\ra L'\ra \coker\Phi \ra 0$.
\end{proof}

Recall the notation $\wt{\s{M}}(\Gamma)$ and $\s{M}(\Gamma)$ of Lemma \ref{PsiGeomNew}. 

\begin{prop}\label{PreCount}
Let $\gls{Mprelog}$ denote the space of pre-log curves corresponding to stable log maps in $\s{M}^{tt}_{g}(\s{X}_0^{\dagger},\Delta,\Gamma)$ with tropicalization equal to the rigid tropical curve $\Gamma$.  Then $\s{M}^{\mathrm{pre-log}}_{g}(\s{X}_0^{\dagger},\Delta,\Gamma)$ is a $\ker(\Phi^{\circ}_{\kk^*})$-torsor over $\s{M}(\Gamma)$.  In particular, $\s{M}^{\mathrm{pre-log}}_{g}(\s{X}_0^{\dagger},\Delta,\Gamma)$ is a smooth Deligne-Mumford stack of dimension $d^{\trop}_{g,\Delta} = \vdim(\s{M}_{g}(\s{X}^{\dagger}_t,\Delta))$. 
Furthermore, 
the intersection of the $[Z_{A_i}]$'s, $[Z_{A_i,u_i}]$'s, and general representatives of the $\psi$-conditions in $\s{M}^{\mathrm{pre-log}}_{g}(\s{X}_0^{\dagger},\Delta,\Gamma)$ associated to $\A$ and $\Psi$ consists of $\f{D}_{\Gamma}$ points, each with multiplicity $\frac{1}{|\Aut(\Gamma)|}$.  
\end{prop}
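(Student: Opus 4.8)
The plan is to realize $\s{M}^{\mathrm{pre-log}}_{g,{m},e_{\infty}}(\s{X}_0^{\dagger},\Delta,\Gamma)$ as a torsor over $\s{M}(\Gamma)$ via $\forget_{\Gamma}$, and then reduce the enumerative statement to the degree of an isogeny of tori. First I would fix a point of $\s{M}(\Gamma)=[\wt{\s{M}}(\Gamma)/\Aut(\Gamma)]$, i.e.\ the domain components $C_V$ (with their marked points) for each $V\in\Gamma^{[0]}$, and analyze the fiber of $\forget_{\Gamma}$ over it. Each $C_V$ maps torically transversely into the stratum $\s{X}_V$, so Lemma~\ref{SingleMark} (when $m_V>0$) and Lemma~\ref{NoMarks} (when $m_V=0$, where rigidity forces $V$ trivalent) identify the space of such maps with a $\GG(N)$-torsor $\s{P}_V$, on which the boundary evaluation $\ev_E\colon\s{P}_V\to\GG(N/\bb{Z}u_E)$ at each edge $E\ni V$ becomes, after trivializing, the projection $\GG(N)\to\GG(N/\bb{Z}u_E)$ composed with multiplication by a fixed $\tau_{V,E}$. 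Gluing the components at the nodes of $\Gamma$ requires $\ev_E(\varphi_{\partial^+E})=\ev_E(\varphi_{\partial^-E})$ for every $E\in\Gamma^{[1]}_c$, which in these trivializations is exactly the equation $\Phi^{\circ}_{\kk^*}((P_V)_V)=c$ for a fixed $c$ assembled from the $\tau_{V,E}$.

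Next I would establish nonemptiness and the torsor structure. The fiber is a coset of $\ker(\Phi^{\circ}_{\kk^*})$, hence a $\ker(\Phi^{\circ}_{\kk^*})$-torsor as soon as it is nonempty, and nonemptiness is precisely the surjectivity of $\Phi^{\circ}_{\kk^*}$. Running the dimension count of Lemma~\ref{phi-surjective} in the case of trivial incidence shows $\Phi^{\circ}_{\bb{Q}}$ is surjective, so $\coker(\Phi^{\circ})$ is finite; since $\kk^*$ is divisible, $\coker(\Phi^{\circ})\otimes_{\ZZ}\kk^*=0$, giving surjectivity of $\Phi^{\circ}_{\kk^*}$. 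Therefore $\s{M}^{\mathrm{pre-log}}$ is a $\ker(\Phi^{\circ}_{\kk^*})$-torsor over the smooth Deligne--Mumford stack $\s{M}(\Gamma)$, hence itself smooth Deligne--Mumford of dimension $\dim\ker(\Phi^{\circ}_{\kk^*})+\dim\s{M}(\Gamma)=(n+\?{e}-ng)+\ov(\Gamma)=d^{\trop}_{g,m,\Delta}$ by \eqref{dtrop}, which equals $\vdim$ by Lemma~\ref{vdim}.

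For the count I would use Lemma~\ref{PsiGeomNew} to replace the $\psi$-insertions by $\tfrac{1}{|\Aut(\Gamma)|}\bigl(\prod_{V}\langle V\rangle\bigr)\forget_{\Gamma}^*[\pt]$; choosing base-point-free representatives of the $\?{\psi}$-classes (Lemma~\ref{lem-psi-basepointfree}) realizes $\bigl(\prod_V\langle V\rangle\bigr)[\pt]$ as $\prod_V\langle V\rangle$ reduced general points of $\s{M}(\Gamma)$, each pulling back to a single torsor fiber $\s{P}:=\forget_{\Gamma}^{-1}([\pt])$. On such a fiber the interior and boundary incidence conditions impose, via the evaluation descriptions above and the orbit-closure presentation of $Z_{A_i},Z_{B_j}$ from \S\ref{Inc}, that $\ev_{x_i}$ (resp.\ $\ev_{y_j}$) project to a fixed point of $\GG(N/\LL_N(A_i))$ (resp.\ $\GG(N/\LL_N(B_j))$). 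Combining these with the gluing trivializations identifies the combined ``(gluing data, incidence data)'' map with $\Phi_{\kk^*}$ for the lattice map $\Phi$ of Lemma/Definition~\ref{Ddfn}, so the curves meeting all conditions are the solutions of $\Phi_{\kk^*}((P_V)_V)=(\text{generic point})$. As $\Phi$ is an injective finite-index inclusion of lattices, $\Phi_{\kk^*}$ is an isogeny whose kernel is $\op{Tor}^1_{\ZZ}(\coker\Phi,\kk^*)\cong\coker\Phi$ by Lemma~\ref{lemma-G-tensor}, of order $\inde(\Phi)$; thus each of the $\prod_V\langle V\rangle$ general points carries $\inde(\Phi)$ pre-log curves, for a total of $\inde(\Phi)\prod_V\langle V\rangle=\f{D}_{\Gamma}$ points by \eqref{DGamma}, each weighted by $\tfrac{1}{|\Aut(\Gamma)|}$.

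The hardest part will be the careful bookkeeping in the identification of the combined gluing-plus-evaluation map with $\Phi_{\kk^*}$: tracking the translation elements $\tau_{V,E}$ and $\tau_{x_i}$ through the trivializations, verifying that genericity of the $Q_{A_i},Q_{B_j}$ makes the target point generic (so the solution set is reduced and stays in the torically transverse locus), and ensuring the quotient by $\Aut(\Gamma)$ is accounted for exactly once so that the factor $\tfrac{1}{|\Aut(\Gamma)|}$ from Lemma~\ref{PsiGeomNew} is neither dropped nor double-counted.
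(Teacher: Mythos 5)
Your proposal follows the paper's own strategy almost step for step: the same torsor description of the fiber of $\forget_{\Gamma}$ via Lemmas~\ref{SingleMark} and \ref{NoMarks}, the same dimension count, the same use of Lemmas~\ref{PsiGeomNew}, \ref{lem-psi-basepointfree} and \ref{lemma-G-tensor} to reduce the count to the degree $\inde(\Phi)$ of the isogeny $\Phi_{\kk^*}$, with the factor $\tfrac{1}{|\Aut(\Gamma)|}$ entering exactly as in the paper. Your surjectivity argument for $\Phi^{\circ}_{\kk^*}$ (finiteness of $\coker(\Phi^{\circ})$ plus divisibility of $\kk^*$) is in fact a slightly cleaner justification than the paper's terse assertion, and is correct.

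There is, however, one genuine gap. You treat the domain components of a pre-log curve as indexed by $\Gamma^{[0]}$ and its nodes as indexed by $\Gamma^{[1]}_c$, but this is not what pre-log curves in $\s{X}_0$ look like: their dual graph is the subdivision $\wt{\Gamma}$ of $\Gamma$ obtained by inserting an unmarked bivalent vertex at every point of $\Gamma$ meeting a vertex of the polyhedral decomposition $\s{P}$. An edge $E\in\Gamma^{[1]}_c$ that crosses interior vertices of $\s{P}$ corresponds to a \emph{chain} of multiple-cover components (these are exactly the curves classified by Lemma~\ref{NoMarks}(2), each contributing a factor $\GG(N/\bb{Z}u_V)$), and the two components $\varphi_{\partial^+E}$, $\varphi_{\partial^-E}$ do not share a node, so your statement ``gluing the components at the nodes of $\Gamma$ requires $\ev_E(\varphi_{\partial^+E})=\ev_E(\varphi_{\partial^-E})$'' is not literally the gluing condition. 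One must first show that each such chain is uniquely determined by either endpoint -- in the paper this is the telescoping identity $\tau_1\?{\varphi}_1=\varphi_2=\cdots=\varphi_{k-1}=\tau_k\?{\varphi}_k$ -- which eliminates the intermediate $\GG(N/\bb{Z}u_V)$ factors, contributes no extra moduli to the fiber of $\forget_{\Gamma}$, and collapses the chain of conditions to the single per-edge condition defining $\Phi^{\circ}_{\kk^*}$. Only after this step is the identification of the fiber with a coset of $\ker(\Phi^{\circ}_{\kk^*})$ justified; with it inserted, the rest of your argument goes through exactly as in the paper.
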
 
\begin{proof}
Recall that we have a projection $\forget_{\Gamma}: \s{M}^{\mathrm{pre-log}}_{g}(\s{X}_0^{\dagger},\Delta,\Gamma) \rar \s{M}(\Gamma)$.  Consider the pullback  $\forget'_{\Gamma}$ of $\forget_{\Gamma}$ to $\wt{\s{M}}(\Gamma)$.  We will describe the torsor structure over this pullback.

Let $\wt{\Gamma}$ be the modification of $\Gamma$ obtained by inserting a vertex at every point of $\Gamma$ mapping to a vertex of our polyhedral decomposition $\s{P}$.  Note that these new vertices are unmarked and bivalent.  
Consider the ``unglued'' space 
\begin{align}\label{NoGlue}
\s{M}^{\circ}(\Gamma) :=\prod_{V\in \Gamma^{[0]}} \left(\GG(N)\times\s{M}_{0,\val(V)}\right)\ \times \prod_{V\in \wt{\Gamma}^{[0]}\setminus \Gamma^{[0]}} \GG(N/\bb{Z}u_V)
\end{align}
where $u_V$ denotes the direction of the edge of $\Gamma$ passing through $V$. By means of 
Lemmas \ref{SingleMark} and \ref{NoMarks}, we find that $\s{M}^{\circ}(\Gamma)$ is isomorphic to the product of the moduli spaces of torically transverse maps $(\varphi^V:C_V\rar \s{X}_0)_{V\in\Gamma^{[0]}}$ to the respective components in $X_0$ as prescribed by $\Gamma$. 

We now want to understand when the components $\varphi^V:C_V\rar \s{X}_0$ glue together to form a pre-log curve.  
For this purpose, we fix the projection of $\s{M}^{\circ}(\Gamma)$ to $\wt{\s{M}}(\Gamma)\cong\prod_{V\in \Gamma^{[0]}} \s{M}_{0,\val(V)}$

Suppose that $V_1,V_2\in \Gamma^{[0]}$ are connected by an edge $E\in \wt{\Gamma}^{[1]}$.  By Lemma \ref{SingleMark}, the component $\varphi^{V_i}$ corresponding to $V_i$ corresponds to a point in $\bb{G}(N_i)$, where $N_i:=N$.  We use this subscript $i$ in $N_i$ to distinguish the two copies of $\bb{G}(N)$.
By Lemmas~\ref{SingleMark} and \ref{NoMarks}(1) the evaluation maps to $\bb{G}(N/\bb{Z}u_{E})$ are given by
$\bb{G}(N_i)\rar\bb{G}(N_i/\bb{Z}u_{E})\stackrel{\cdot\tau_i}\rar\bb{G}(N/\bb{Z}u_{E})$
for $\tau_1,\tau_2\in\bb{G}(N/\bb{Z}u_{E})$. Hence, $\varphi_{V_1}$ and $\varphi_{V_2}$ glue if and only if $\tau_1 \?{\varphi}_{V_1} = \tau_2 \?{\varphi}_{V_2}$, or equivalently, if and only if $(\varphi_{V_1},\varphi_{V_2})$ lives in the fiber over $\tau_1^{-1}\tau_2$ for the map 
\begin{align} \label{twistkernel}
\bb{G}(N_1)\oplus \bb{G}(N_2)    &\rar \bb{G}(N/\bb{Z}u_{E})\\
           (\varphi_1,\varphi_2) &\mapsto \?{\varphi}_1\?{\varphi}_2^{-1}. \nonumber
\end{align}

More generally, suppose $V_1,V_k\in \Gamma^{[0]}$ are connected by an edge $E\in \Gamma^{[1]}$, but there is some sequence $V_2,\ldots,V_{k-1} \in \wt{\Gamma}^{[0]}$ on $E$ between $V_1$ and $V_k$.
Similarly to before, we now have that the chain of curves $C_{V_i}$ glues if the corresponding vector $(\varphi_1,...,\varphi_k)$ in the first row of

\resizebox{0.95\textwidth}{!}{
\xymatrix{
\bb{G}(N_1)\ar[rd]^{\tau_1\cdot}&& \ar[ld]_{\id}\GG(N_2/\bb{Z}u_{(V_2,E)})\ar[rd]^{\id} &&\ldots&& \ar[dl]_{\id}\GG(N_{k-1}/\bb{Z}u_{(V_{k-1},E)})\ar[rd]^{\id} &&\ar[ld]_{\tau_k\cdot}\bb{G}(N_k)\\
&\GG(N/\bb{Z}u_{E})&&\ldots&&\GG(N/\bb{Z}u_{E})&&\GG(N/\bb{Z}u_{E})
}
}

\noindent has the property that its images under the two possible maps to each factor in the bottom row agree. Equivalently, this means that $\tau_1 \?{\varphi}_1 = \varphi_2 = \ldots = \varphi_{k-1} = \tau_k \?{\varphi}_k$. 
This allows us to remove the $\GG(N/\bb{Z}u_V)$-terms from \eqref{NoGlue}, instead just directly imposing the condition that $(\varphi_1,\varphi_k)$ is in the fiber over $\tau_1^{-1}\tau_k$ for the map  $\bb{G}(N_1)\oplus \bb{G}(N_k) \rar \bb{G}(N/\bb{Z}u_{E})$, $(\varphi_1,\varphi_k) \mapsto \?{\varphi}_1\?{\varphi}_k^{-1}$  as with \eqref{twistkernel} before.

With these $\GG(N/\bb{Z}u_V)$-terms removed and for fixed 
projection to $\wt{\s{M}}(\Gamma)$,
the right-hand side of \eqref{NoGlue} reduces to precisely the domain of $\Phi^{\circ}_{\kk^*}$.  Furthermore, we see that the gluing conditions collectively are precisely the condition that $\varphi$ lives in a specific fiber of $\Phi^{\circ}_{\kk^*}$.  By rigidity and Lemma~\ref{phi-surjective}, $\Phi^{\circ}_{\kk^*} = \Phi^{\circ}_{\bb{Q}} \otimes \kk^*$ is surjective, so each fiber of $\Phi^{\circ}_{\kk^*}$ is a coset of $\ker(\Phi^{\circ}_{\kk^*})$.  This reasoning works for all fixed curves in $\wt{\s{M}}(\Gamma)$, so this proves the first claim.  In particular, $\forget'_{\Gamma}$ is onto.

For the second claim, Proposition \ref{TropDeform} says that the kernel of $\Phi^{\circ}_{\kk^*}$ has dimension $n+\?{e}-ng$.  The target of $\forget'_{\Gamma}$ has dimension $\sum_{V\in \Gamma^{[0]}} \dim(\s{M}_{0,\val(V)}) = \sum_V \ov(V) = \ov(\Gamma)$.   Hence, the total dimension for $\s{M}^{\mathrm{pre-log}}_{g}(\s{X}_0^{\dagger},\Delta,\Gamma)$ is $n+\ov(\Gamma)+\?{e}-ng$.  This indeed equals $d^{\trop}_{g,\Delta}$ by \eqref{dtrop}, and this, in turn, equals $\vdim(\s{M}_{g}(\s{X}^{\dagger}_t,\Delta))$ by Lemma \ref{vdim}.

Finally, we impose the incidence and $\psi$-class conditions.  From Lemma \ref{PsiGeomNew}, $|\Aut(\Gamma)|$ times the intersection product of the $\psi$-class conditions determines the image of $\forget_{\Gamma}$ up to choosing one of $\prod_{V\in \Gamma^{[0]}} \langle V\rangle$ possible points (we can use genericity of the $\psi$-conditions, the base-point-freeness from Lemma~\ref{lem-psi-basepointfree}, and Bertini's theorem to say we have $\langle V\rangle$ distinct reduced points).  
Once an image for $\forget_{\Gamma}$ is fixed, we have established that the space of possible pre-log curves without the incidence conditions is $\ker\bb{G}(\Phi^{\circ})$. 
Recall that we have maps 
$\GG(N) \stackrel{\tau_{x_i}}\longrightarrow\GG(N)$ and 
$\GG(N)\ra\GG(N/\bb{Z}u_j)\stackrel{\tau_j}\longrightarrow\GG(N/\bb{Z}u_j)$ giving 
$\ev_{x_i}$ and $\ev_{y_j}$, respectively, when applied to the component $\varphi_V\in\GG(N)$ with $C_V$ containing $x_i$ or $y_j$, respectively.  So meeting the incidence conditions is equivalent to having 
$\tau_i\cdot \varphi_{\partial E_i}\in Z_{A_i}$ and
$\tau_j\cdot [\varphi_{\partial E_j}~(\mathrm{mod\, } \bb{G}(\bb{Z}u_j))]\in Z_{A_j,u_j}$ for each $i$ and $j$.  
 These conditions are equivalent to 
$\varphi_{\partial E_i}\in \tau_{x_i}^{-1}Z_{A_i}$
and
$\varphi_{\partial E_j}~(\mathrm{mod\, }\bb{G}(\bb{Z}u_j))\in \tau_j^{-1}Z_{A_j,u_j}$. 

The set of tuples $(\varphi_V)_{V\in \Gamma^{[0]}}\in \domain(\Phi_{\kk^*}^{\circ}) = \domain(\Phi_{\kk^*})$ satisfying this and the gluing conditions is the intersection of a set of orbits under the translation actions of the subgroups  
$\ker \Phi^{\circ}_{\kk^*}$, $\{(\varphi_V)\in \domain(\Phi_{\kk^*}): \varphi_{\partial E_i}\in \LL(A_i)\}$ for $i\in I^{\circ}$, and $\{(\varphi_V)\in \domain(\Phi_{\kk^*}): \varphi_{\partial E_j}\in \LL(A_j)\}$ for $j\in I^{\partial}$. 
The intersection of all these subgroups is just $\ker(\Phi_{\kk^*})$, and we may now apply
Lemma~\ref{lemma-G-tensor} 
using the injectivity of $\Phi$ from Lemma~\ref{Ddfn} to identify this in turn with
$$\op{Tor}_\ZZ(\coker(\Phi),\kk^*).$$
In particular, the intersection of the subgroups is a union of $|\coker \Phi|$ reduced points using that $\kk$ is algebraically closed of characteristic zero to have $\op{Tor}(\ZZ/p,\kk^*)\cong \ZZ/p$.

Using the homogeneity of coset intersections, we indeed obtain that
$|\Aut(\Gamma)|\cdot\bigcap_i\psi_i^{s_i}[Z_{A_i}]\cap\bigcap_j[Z_{A_i,u_i}]$
consists of $\mathrm{index}(\Phi)\prod_V\langle V\rangle=\f{D}_{\Gamma}$ many reduced points.
\end{proof}

We next describe the number of log structures that we can put on each of our pre-log curves.

\begin{lem}\label{LogStructureCounts}
Let $(\varphi:C\rar \s{X}_0,\bf{x},\bf{y})$ be a marked pre-log curve with associated tropical curve $\Gamma$.  Assume that for every compact edge $E\in \wt{\Gamma}^{[1]}_c$, the integral length $\ell(E)$ of $h(E)$ is a multiple of its weight $w(E)$ (always achievable by rescaling $N$).  Then there are exactly $\prod_{E\in \Gamma^{[1]}_c} w(E)$ isomorphism classes of diagrams
\begin{equation}\label{Basic}
\begin{tikzcd}
C^{\dagger} \arrow{r}{\varphi^{\dagger}} \arrow[swap]{d}{\xi^{\dagger}} &  \s{X}_0^{\dagger} \arrow{d}{\pi^{\dagger}} \\
\Spec \kk^{\dagger}_\varphi  \arrow{r}{\kappa^{\dagger}} & \Spec \kk^{\dagger}
\end{tikzcd}
\end{equation}
where $\kappa^{\dagger}:\Spec \kk^{\dagger}_\varphi \rar \Spec \kk^{\dagger}$ has the basic log structure associated to $\varphi$.
\end{lem}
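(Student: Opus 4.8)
The plan is to fix the underlying pre-log map $\varphi\colon (C,\mathbf{x},\mathbf{y})\to \s{X}_0$, the divisorial log structure on $\s{X}_0^{\dagger}$, and the standard structure on $\Spec\kk^{\dagger}$, and to count the enhancements to a diagram \eqref{Basic} up to isomorphisms inducing the identity on the underlying schemes and on $\varphi$. Because $\s{P}$ is integral, $\s{X}_0$ is reduced and $\pi^{\dagger}$ is strict away from the double locus; the same holds for $\xi^{\dagger}$ at generic points of components, so the ghost sheaves are already pinned down by the tropicalization of \S\ref{tropicalization}, and the only freedom is in lifting this ghost data to honest log structures. On the smooth locus of $C$ the structure is a rigid pullback, and the boundary marked points $y_j$ are also rigid: there $C$ is smooth with a single branch, $\varphi^{\flat}$ sends the boundary generator to $a_j m_{y_j}^{w(E_j)}$ with $a_j\in\kk^{*}$ forced by compatibility with the structure map, and there is no second branch to impose a further relation. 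Hence all the freedom sits at the nodes, i.e.\ the compact edges of $\wt{\Gamma}$, and the problem becomes local at nodes glued globally, exactly as in \cite[\S 7]{NS}, which I follow and generalize.

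The local model at a node $q$ over a component of $\Sing(\s{X}_0)$ corresponding to a $1$-cell $\Xi\in\s{P}$ of integral length $\ell_{\Xi}$ is as follows. Writing the target relation as $m_x m_y=\lambda_X m_{t_X}^{\ell_{\Xi}}$ and the node relation on $C$ as $m_u m_v=\lambda\,m_t^{\rho}$, the pre-log condition (equal contact orders $w:=w(E_q)$) forces $\varphi^{\flat}m_x=a\,m_u^{w}$ and $\varphi^{\flat}m_y=b\,m_v^{w}$ with $a,b\in\kk^{*}$ determined by the leading coefficients of $\varphi^{*}x,\varphi^{*}y$. Matching ghost sheaves gives $w\rho=\ell_{\Xi}$, so the hypothesis $w\mid\ell(E)$, together with Lemma~\ref{BasicTT} giving $Q\cong\bb{N}$, makes $\rho$ integral and the basic structure the standard log point. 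Comparing the two expressions for $\varphi^{\flat}(m_xm_y)$ then yields the single equation $\lambda^{w}=\lambda_X\kappa_0^{\ell_{\Xi}}/(ab)=:c$, where $\kappa_0\in\kk^{*}$ records $\kappa^{\flat}$. Since $c\in\kk^{*}$ and $\kk$ is algebraically closed of characteristic $0$, this has exactly $w$ solutions.

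Next I would assemble the nodes into edges of $\Gamma$. A compact edge $E$ of $\Gamma$ is realized by a chain of $\bb{P}^1$'s whose interior members are the bivalent unmarked vertices of $\wt{\Gamma}$; each such interior component is rigid (its map is a torically transverse $\bb{P}^1$ straightened in the direction $u_E$, as in the proof of Lemma~\ref{No-gV} using a monomial $m$ with $\langle m,u_E\rangle\neq0$), and the resulting consistency of log functions on it identifies the smoothing parameters at its two nodes. Thus the $\#\wt{\Gamma}^{[1]}_c$ node parameters collapse to one free parameter per edge of $\Gamma$, satisfying $\lambda^{w(E)}=c_E$ with $c_E\in\kk^{*}$, giving $w(E)$ choices per edge. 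That distinct values give non-isomorphic diagrams is the rigidity step: an isomorphism of log structures over $\s{O}_C$ must commute with the structure map, which forces any unit-rescaling of $m_u,m_v$ to equal $1$ at the node, while the base automorphism is killed by commutativity with the fixed $\kappa^{\dagger}$ over the fixed target; so $\lambda$ is left unchanged.

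Finally, the per-edge choices are independent and glue without obstruction, since the difference of two admissible log structures is a torsor under a sheaf supported at the nodes, which is a skyscraper, so its $H^{0}$ is the product of the local $\mu_{w(E)}$-torsors and there is no $H^{1}$ (in particular loops impose no further constraint, loop consistency being already encoded in $Q\cong\bb{N}$). Solvability of each $\lambda^{w(E)}=c_E$ gives nonemptiness, so the torsor has exactly $\prod_{E\in\Gamma^{[1]}_c}w(E)$ elements. The main obstacle is the edge-reduction step: correctly showing that the rigid interior bivalent components link adjacent smoothing parameters, so that a whole chain over an edge $E$ contributes a single $\mu_{w(E)}$-torsor rather than a higher power indexed by the number of nodes on $E$, and pinning down the exponent in $\lambda^{w(E)}=c_E$. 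The per-node root count and the isomorphism-rigidity are comparatively routine once the local models are written out.
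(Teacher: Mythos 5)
Your local analysis at a single node is correct and matches the computation underlying \cite[Prop.~7.1]{NS} (which is what the paper actually cites, after using Lemma~\ref{BasicTT} to reduce to the standard log point): the compatibility $\varphi^{\flat}(m_xm_y)=\varphi^{\flat}(m_t^{\ell_\Xi})$ forces $\lambda^{w}=c$ with $c\in\kk^*$, giving exactly $w$ choices per node. The genuine gap is in your edge-reduction step. There is no ``consistency of log functions'' on an interior bivalent component $C_V$ that identifies the smoothing parameters at its two nodes: away from the nodes the log structure on $C_V$ is strict over the base (no markings there), and $\varphi^{\flat}$ is determined by pullback of functions, so nothing relates $\lambda_{q_1}$ and $\lambda_{q_2}$. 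Consequently, the set of log enhancements of the \emph{fixed} underlying pre-log map is a torsor under one copy of $\mu_{w}$ \emph{per node}, i.e.\ it has $\prod_{E\in\wt{\Gamma}^{[1]}_c}w(E)$ elements --- strictly more than the asserted $\prod_{E\in\Gamma^{[1]}_c}w(E)$ whenever an edge of $\Gamma$ crosses a vertex of $\s{P}$. The collapse to one factor of $w(E)$ per edge of $\Gamma$ happens only at the level of \emph{isomorphism classes}: each interior component $C_V\rar \s{X}_V$ is the cover $z\mapsto z^{w}$, which has the automorphisms $z\mapsto\zeta z$, $\zeta\in\mu_w$, commuting with $\varphi$; such an automorphism sends $(\lambda_{q_1},\lambda_{q_2})\mapsto(\zeta^{-1}\lambda_{q_1},\zeta\lambda_{q_2})$ and hence acts freely on the torsor, so the orbit count is $w^{\#\mathrm{nodes}}/w^{\#\mathrm{interior\ vertices}}=w(E)$ per edge. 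This is exactly the mechanism the paper records elsewhere (proof of Lemma~\ref{NoAut} and Remark~\ref{rmk-pre-log-aut}: the $\mu_w$-automorphisms ``permute the $w$ non-isomorphic choices for the log structure at the nodes of $C_V$'').

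This error infects your ``rigidity step'' as well: you only consider isomorphisms lying over the identity of $C$ (``over $\s{O}_C$''), and for those your conclusion that $\lambda$ is an invariant is correct --- but isomorphisms of diagrams \eqref{Basic} are allowed to have a nontrivial underlying automorphism of $(C,\mathbf{x},\mathbf{y})$ commuting with $\varphi$, and the rotations above are precisely such. Taken literally, your rigidity step proves that all $\prod_{E\in\wt{\Gamma}^{[1]}_c}w(E)$ structures are pairwise non-isomorphic, which contradicts the lemma; combined with the (false) collapse claim the two errors cancel numerically, but the argument is not valid. A correct blind proof along your lines would keep your per-node count, drop the consistency claim, and instead compute the quotient of the resulting $\prod_{\mathrm{nodes}}\mu_{w}$-torsor by the free action of $\prod_{V\ \mathrm{interior}}\mu_{w(E)}$; alternatively, follow the paper, which disposes of the whole lemma by Lemma~\ref{BasicTT} plus a citation of \cite[Prop.~7.1]{NS}, noting that higher genus does not affect that argument.
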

\begin{proof}
By Lemma \ref{BasicTT}, $\Spec \kk^{\dagger}_{\varphi}$ is actually the standard log point $\Spec \kk^{\dagger}$, and $\kappa^{\dagger}$ is an isomorphism.  Given this, the Lemma becomes a restatement of \cite[Prop. 7.1]{NS} (possibly in higher genus, but this has no effect on the argument in loc.cit.).
\end{proof}

In order to relate our curve-counting to the log Gromov-Witten numbers, we need the following:
\begin{prop}\label{VirtualActual}
$[\s{M}_{g}(\s{X}_0^{\dagger},\Delta)]$ and $[\s{M}_{g}(\s{X}_0^{\dagger},\Delta)]^{\vir}$ agree in an open neighborhood of any torically transverse basic  stable log map $C$ corresponding to a non-superabundant $\Gamma$.
\end{prop}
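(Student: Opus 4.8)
The plan is to prove that the two classes agree by showing that $\s{M}_{g,m,e_{\infty}}(\s{X}_0^{\dagger},\Delta)$ is unobstructed at such a $\varphi$, i.e.\ that the obstruction sheaf of its perfect obstruction theory vanishes in a neighborhood. This suffices: for any perfect obstruction theory $E^{\bullet}\to L_{\s{M}}$ the tangent sheaf is $h^{0}((E^{\bullet})^{\vee})$, so vanishing of the obstruction sheaf $h^{1}((E^{\bullet})^{\vee})$ together with the identity $\vdim=h^{0}-h^{1}$ forces $\dim T_{\varphi}\s{M}=\vdim$ and hence smoothness; the intrinsic normal cone is then the zero section of a trivial obstruction bundle and the virtual class coincides with the fundamental class locally.

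To compute the obstruction I would work relative to the stack $\f{M}$ of log curves, where the relative obstruction is $H^{1}(C,\varphi^{*}\Theta_{\s{X}_0^{\dagger}/\Spec\kk^{\dagger}})$. Because $\s{X}_0$ is log smooth of toric type, its relative log tangent sheaf is the trivial bundle $N\otimes\s{O}$ (as in Lemma \ref{vdim}), and by Lemma \ref{No-gV} every component $C_V$ is rational. The normalization sequence for $N\otimes\s{O}_C$ then identifies this $H^{1}$ with $N\otimes H^{1}(C,\s{O}_C)$, i.e.\ with the space recording the failure of the loops of $\Gamma$ to close up. Since $\f{M}$ is smooth, the absolute obstruction is the cokernel of the Kodaira--Spencer map $T\f{M}\to H^{1}(C,\varphi^{*}\Theta)$, under which the node-smoothing (edge-length) deformation attached to a compact edge $E$ maps to the class of its direction $u_E$.

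Thus the obstruction vanishes exactly when these edge directions span the loop space of $\Gamma$, which by Lemma \ref{lem-non-span-loops} and Definition \ref{superabundant} is precisely non-superabundance. This is consistent with, and can be read off directly from, Proposition \ref{PreCount}: there the pre-log locus is exhibited as a $\ker(\Phi^{\circ}_{\kk^*})$-torsor over $\s{M}(\Gamma)$, smooth of dimension $d^{\trop}=\vdim$, while Lemmas \ref{BasicTT} and \ref{LogStructureCounts} give $Q\cong\bb{N}$ with exactly $\prod_{E}w(E)$ log enhancements of each pre-log curve. Hence the map forgetting the log structure is finite \'etale onto the pre-log locus, and $\s{M}$ is smooth of dimension $\vdim$ near $\varphi$.

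The main obstacle I anticipate is the precise deformation-theoretic bookkeeping in the second paragraph: correctly setting up the log obstruction theory of $\varphi^{\dagger}$ over the standard log point, isolating the node-smoothing summand of $T\f{M}$, and matching the Kodaira--Spencer map with the loop-closing constraint $W$ of \eqref{Tshape-general} so that its cokernel is literally the superabundance defect. With that identification the vanishing is immediate for non-superabundant $\Gamma$. Should the explicit log deformation theory prove delicate, the alternative is to take the smoothness of dimension $\vdim$ directly from Proposition \ref{PreCount} together with Lemmas \ref{BasicTT} and \ref{LogStructureCounts}, and feed it into the formal argument of the first paragraph, thereby sidestepping the obstruction computation while hiding the conceptual role of superabundance.
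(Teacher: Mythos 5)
Your proposal is correct, and its relationship to the paper is worth spelling out: the ``fallback'' in your last paragraph \emph{is} the paper's proof, essentially verbatim --- Proposition \ref{PreCount} gives that the pre-log locus near $C$ is smooth of dimension $d^{\trop}_{g,m,\Delta}=\vdim(\s{M}_{g,m,e_{\infty}}(\s{X}_0^{\dagger},\Delta))$, Lemma \ref{LogStructureCounts} gives that the log moduli space is finite over it near $C$, and the criterion recalled in \S\ref{Moduli} (virtual class equals fundamental class when the degrees agree) finishes the argument. Your primary route --- computing the obstruction relative to the stack of log curves, identifying $H^1(C,\varphi^*\Theta_{\s{X}_0^{\dagger}/\Spec\kk^{\dagger}})\cong N\otimes H^1(C,\s{O}_C)$ via triviality of the log tangent sheaf and rationality of the components, and exhibiting the absolute obstruction as the cokernel of the node-smoothing Kodaira--Spencer map --- is genuinely different from what the paper does; it is essentially the log deformation theory of \cite{Ni}, which the paper deliberately bypasses in favor of the virtual-class machinery. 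What it buys is a conceptual explanation of \emph{why} non-superabundance is exactly the unobstructedness hypothesis; what the paper's route buys is brevity, since the torsor description of Proposition \ref{PreCount} is already in hand. Two cautions. First, the crux of your primary route --- that the smoothing parameter of the node corresponding to a compact edge $E$ maps to the class of $u_E$, so that the cokernel of Kodaira--Spencer is literally the cokernel of the tropical loop-closing map dual to $W$ in \eqref{Tshape-general} --- is the entire content of that argument and is left as ``bookkeeping''; note also that the equivalence you want follows from Definition \ref{superabundant} and the dimension count \eqref{Tshape-general}, not from Lemma \ref{lem-non-span-loops}, which is only a sufficient criterion for superabundance (its converse fails, cf.\ Figure \ref{trop-superabundant}). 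Second, in the fallback you upgrade ``finite'' to ``finite \'etale'': Lemma \ref{LogStructureCounts} only counts isomorphism classes of log enhancements pointwise, so it yields finiteness but not \'etaleness; finiteness is all the paper's dimension argument uses, so either drop the \'etaleness claim or justify it separately (it is what one would prove to make the smoothness of $\s{M}$ near $\varphi$, rather than just the equality of classes, rigorous).
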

This is equivalent to saying that if $\Gamma$ is non-superabundant, then the corresponding torically transverse basic stable log map is unobstructed.  As pointed out to us by Dhruv Ranganathan, this (along with the converse) is the content of \cite[Prop. 4.2]{CFPU}.  Given our setup, we offer the following very different proof:
\begin{proof}
We saw in Proposition \ref{PreCount} that a neighborhood of $C$ in the space of pre-log curves is smooth of dimension equal to $\vdim(\s{M}_{g}(\s{X}_0^{\dagger},\Delta))$.  The claim now follows because, as we saw in Lemma \ref{LogStructureCounts}, a neighborhood of the point corresponding to $C$ in the log moduli space is finite over this space of pre-log curves.
\end{proof}

\begin{lem}\label{NoAut}
Let $C$ be a basic stable log map obtained from equipping a pre-log curve cut out by generic rigid incidence and $\psi$-class conditions as in Lemma \ref{PreCount} with a log structure as in Lemma \ref{LogStructureCounts} (suppressing the map, markings, and log structure from the notation). 
Then $C$ admits no non-trivial automorphisms (using from Def.~\ref{TropCurveDfn} that $\Gamma^{[0]}\neq \emptyset$).
\end{lem}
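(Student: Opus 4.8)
The plan is to show that every automorphism $\alpha$ of the basic stable log map $C^\dagger$ is trivial, arguing component by component on the underlying stable map. First I would reduce to the underlying stable map: by Lemma~\ref{BasicTT} the base is the standard log point and $\kappa^\dagger$ is an isomorphism, and by Lemma~\ref{LogStructureCounts} the log structure on $C$ is the rigid one determined by the pre-log curve, so any automorphism of $C^\dagger$ inducing the identity on the underlying stable map is itself trivial. Thus it suffices to rule out a nontrivial $\alpha\colon C\to C$ fixing each labelled marked point $x_i,y_j$ and satisfying $\varphi\circ\alpha=\varphi$. By Lemma~\ref{No-gV} every $g_V=0$, so each component $C_V$ is a $\bb{P}^1$ carrying $\val(V)+m_V=\ov(V)+3\ge 3$ special points (markings and nodes); in particular each component is stable.

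Next I would record the decisive elementary observation: if $\alpha$ preserves every component and every node, then on each $C_V$ it is an automorphism of $\bb{P}^1$ fixing at least three special points, hence the identity, so $\alpha=\id$. Consequently a nontrivial $\alpha$ must induce a nontrivial automorphism $\sigma$ of the marked graph underlying $\Gamma$; because $\varphi\circ\alpha=\varphi$ forces permuted components to have equal image and permuted nodes to map to the same toric divisor, $\sigma$ respects the weights and the map $h$, i.e.\ $\sigma\in\Aut(\Gamma)$. For $n\ge 2$ and $g=0$ this already finishes the argument, since by Remark~\ref{rmk-aut} together with Lemma~\ref{VerticesDistinct} and Lemma~\ref{lem-non-span-loops} a non-superabundant $\Gamma$ then has $\Aut(\Gamma)$ trivial (no two vertices coincide, and parallel edges sharing both endpoints would form a non-spanning loop).

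The substantive case is $n=1$ (and, more generally, any non-superabundant $\Gamma$ with $\Aut(\Gamma)\neq 1$, cf.\ Example~\ref{ex-hyperell}). Here I would show that a nontrivial $\sigma\in\Aut(\Gamma)$ cannot be realized on the \emph{generic} curve cut out in Proposition~\ref{PreCount}. If $\alpha$ realizes $\sigma$, then for every vertex with $\sigma(V)=V$ on which $\sigma$ nontrivially permutes the nodes lying over a boundary divisor, $\alpha|_{C_V}$ is a nontrivial deck transformation of $\varphi^V$ (it satisfies $\varphi^V\circ\alpha|_{C_V}=\varphi^V$), and for every swapped pair $\sigma(V)=V'\neq V$ it is an isomorphism identifying the two component maps. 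By Proposition~\ref{PreCount} together with Lemmas~\ref{SingleMark} and~\ref{NoMarks}, each $\varphi^V\colon C_V\cong\bb{P}^1\to\s{X}_V$ is a generic torically transverse map of its prescribed degree; but a generic cover $\bb{P}^1\to\bb{P}^1$ of degree $\ge 2$ has trivial deck group (and in degree $1$ it is an isomorphism with no nontrivial deck transformation), and two independently generic such maps are not isomorphic. Hence for each of the finitely many nontrivial $\sigma\in\Aut(\Gamma)$ the curves realizing $\sigma$ form a proper closed locus in the smooth moduli space of pre-log curves, so the curve cut out by generic incidence and $\psi$-conditions avoids all of them, giving $\alpha=\id$.

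The main obstacle is precisely this last case: for $n\ge 2$ non-superabundance kills the tropical symmetry outright, but for $n=1$ the group $\Aut(\Gamma)$ is genuinely nontrivial, and the entire content of the lemma is that this symmetry fails to lift to the algebraic curve. The crux is therefore the fact that a generic finite cover of $\bb{P}^1$ admits no nontrivial deck transformation, together with the bookkeeping needed to confirm that the genericity exploited here is the same genericity of the incidence data that rigidifies the curve in Proposition~\ref{PreCount}; since the component maps vary in the families of Lemmas~\ref{SingleMark}--\ref{NoMarks} and the bad locus is proper closed in each, generic conditions suffice, and the hypothesis $\Gamma^{[0]}\neq\emptyset$ guarantees there is at least one genuine component to which the rigidity argument applies.
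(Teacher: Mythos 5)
Your opening reduction—"it suffices to rule out a nontrivial $\alpha\colon C\to C$ fixing the marked points with $\varphi\circ\alpha=\varphi$"—aims at a statement that is actually false, and the paper flags this explicitly in Remark \ref{rmk-pre-log-aut}: the underlying stable map (the pre-log curve) does in general admit nontrivial automorphisms. The source of these is a class of components your component count misses entirely. The components of a torically transverse pre-log curve are indexed by the vertices of the subdivided graph $\wt{\Gamma}$ (every point where an edge of $\Gamma$ crosses a vertex of $\s{P}$ contributes its own component, as in the proof of Proposition \ref{PreCount}), not by $\Gamma^{[0]}$. A component over such an unmarked bivalent vertex has only \emph{two} special points and maps onto its target as $z\mapsto z^{w}$ with $w=w(E)$; its deck transformations $z\mapsto \zeta_w^k z$ fix both special points, glue with the identity on neighboring components, and commute with $\varphi$, so they are honest automorphisms of the marked stable map whenever $w\geq 2$. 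Thus your "decisive elementary observation" ($\geq 3$ special points on every component) fails precisely where the lemma has content, and no genericity removes these automorphisms: the families in Lemmas \ref{SingleMark} and \ref{NoMarks} vary only the torus translation, never the ramification profile, and a cyclic cover totally ramified at two points has deck group $\bb{Z}/w$ identically. For the same reason your $n=1$ argument is wrong: "a generic cover $\bb{P}^1\to\bb{P}^1$ of degree $\geq 2$ has trivial deck group" is false for the covers at hand (and false for every degree-$2$ cover). In Example \ref{ex-hyperell} the pre-log curve genuinely carries automorphisms realizing $\Aut(\Gamma)$; what kills them on marked components is genericity of the $\psi$-class \emph{representatives} (a marking cannot sit at one of the finitely many fixed points of a deck transformation), not genericity of the covers, and what kills the unmarked trivalent configurations with two special points over the same target point is rigidity (such a vertex could be slid along its edges).

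The missing idea is that the lemma is a statement about the \emph{log} curve, and the proof must let automorphisms act on the log structures. By Lemma \ref{LogStructureCounts} there are $\prod_E w(E)$ non-isomorphic log enhancements (your paraphrase that the log structure is "the rigid one determined by the pre-log curve" misreads that lemma), and the deck transformations above permute the $w(E)$ choices at the adjacent nodes. An automorphism of $C^{\dagger}$ must preserve the chosen log structure, so a deck transformation on a bivalent component lifts only if it is matched by a compatible automorphism on each neighboring component; since components carrying markings or with three special points over distinct target points admit none, the matching propagates along $\wt{\Gamma}$ and forces $\wt{\Gamma}$ to be a chain of unmarked bivalent vertices, i.e. $\Gamma^{[0]}=\emptyset$, which is excluded. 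Your proposal never engages with this action on log structures—it treats the log structure as automatically rigid and then tries to prove triviality of the automorphism group downstairs, which is unattainable—so the argument cannot be repaired within the strategy you set up.
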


\begin{proof}
Let $\Gamma$ be the rigid tropical curve corresponding to $C$. First note that, being a pre-log curve, $C$ has no contracted components and each component is a rational curve without self-intersections. We look at possible automorphisms for each component.

First assume $C_V$ is a component with a $\psi$-condition, i.e. $C_V$ has at least four special points (marked points and points where it forms a node with adjacent components) and at least one of these is a marking.  A non-trivial automorphism of $C_V$ needs to permute the special points and, since $C_V\cong\bb{P}^1$, specifying where three of these points go determines the entire automorphism already, i.e. the permutation of the points that aren't markings determines the automorphism.  But then since the markings need to be fixed, these need to be among the finite set of fixed points of such an automorphism.  By the genericity of the $\psi$-conditions, this can't happen, so $C_V\ra\s{X}_V$ has no non-trivial automorphisms.

Now suppose $V$ is an unmarked trivalent vertex of $\Gamma$. The corresponding $C_V$ has three special points, now all nodes or boundary marked points. If these map to different points in the target, then $C_V$ has no non-trivial automorphisms because an automorphism of $\bb{P}^1$ that fixes three points is the identity.
If $\varphi_V:C_V\ra\s{X}_V$ is not injective on these points, then $V$ must have the form 
\xymatrix{
\ar@{-}[r]^(-.15){}="a"^(1.15){}="b" \ar@{-} "a";"b"
&\bullet
\ar@{=}[r]^(-.1){}="a"^(1.1){}="b" \ar@{=} "a";"b"
&}
in $\Gamma$ with some suitable weights, and such an unmarked vertex can of course be slid along the edges in either direction, meaning that such a $\Gamma$ would not be rigid.

Similarly, when $V$ is a marked bivalent vertex, there can be no nontrivial automorphisms because $\varphi_V:C_V\ra\s{X}_V$ is injective on the three special points.

Finally, consider an unmarked bivalent vertex $V$ of $\wt{\Gamma}$ as in the proof of Lemma \ref{PreCount}.  Let $w$ be the weight of the edge $E$ of $\Gamma$ containing $V$.  Then the corresponding component $C_V$ of the pre-log curve has $w$ automorphisms $z\mapsto \zeta_w^k z$, $k=0,\ldots,w-1$, which commute with the map $C_V\rar \s{X}_V$, $z\mapsto z^w$. 
These automorphisms permute the $w$ non-isomorphic choices for the log structure at the nodes of $C_V$, cf. proof of \cite[Prop. 7.1]{NS}, unless this automorphism is matched with a corresponding one on the neighboring component of the node (uniquely determined near the node). Thus, for such an automorphism to lift to one on all of $C^\dagger$, since all higher-valent or marked vertex components have no automorphisms, $\wt{\Gamma}$ needs to be a straight line whose only vertices are unmarked bivalent vertices.  But then $\Gamma^{[0]}=\emptyset$, and we excluded such cases in Definition \ref{TropCurveDfn} (cf. Remark \ref{NoVertices}).  
\end{proof}

\begin{rmk} \label{rmk-pre-log-aut}
Note that the statement of Lemma~\ref{NoAut} doesn't hold for pre-log curves.  As we saw in the proof, an unmarked bivalent vertex $V$ of $\Gamma$ contained in an edge $E$ contributes $1/w(E)$ automorphisms.  
\end{rmk}

We are now ready to prove our main theorem:
\begin{thm}\label{MainThm}
Assuming rigidity, we have
$${}^{\ns}\GW_{g,N_{\bb{Q}},\Delta}^{\trop}(\A,\Psi) = {}^{\ns}\GW^{\log}_{g,\s{X}^{\dagger}_0,\Delta}(\A,\Psi).$$
If the superabundant part of this invariant vanishes, then it also coincides with $\GW^{\log}_{g,\s{X}^{\dagger}_t,\Delta}(\A,\Psi)$ for any $t$.
\end{thm}
\begin{proof}
The right-hand side is by Definition~\ref{def-log-GW} the degree of a zero-cycle $\gamma^{\ns}$ built from a generic choice of the incidence and $\psi$-class conditions. The virtual fundamental class involved in the definition of $\gamma^{\ns}$ is by Proposition~\ref{VirtualActual} the usual fundamental class.  Let $\forget_{\log}$ denote the forgetful map to the coarse moduli space of pre-log curves. 
The cycle $\gamma^{\ns}$ equals $\forget_{\log}^* \gamma'$ for some zero-cycle $\gamma'$ on the space of pre-log curves, and Proposition~\ref{Log2Trop} shows that the tropicalization map $f^{\trop}$ takes pre-log curves in the support of $\gamma'$ to tropical curves in $\f{T}^{\ns}_{g,\Delta}(\A,\Psi)$.  Proposition~\ref{PreCount} tells us that for $\Gamma \in \f{T}^{\ns}_{g,\Delta}(\A,\Psi)$, $\gamma' \cap (f^{\trop})^{-1}(\Gamma)$ is the class of $\f{D}_{\Gamma}$ points, each with multiplicity $\frac{1}{|\Aut(\Gamma)|}$, and by Lemma \ref{NoAut} the pre-images of these points under $\forget_{\log}$ are not stacky.  Lemma~\ref{LogStructureCounts} tells us that the degree of $\forget_{\log}$ over a point with tropicalization $\Gamma$ is $\prod_{E\in \Gamma^{[1]}_c} w(E)$.  Hence, the degree of $\gamma^{\ns}$ is given by $\sum_{\Gamma\in \f{T}^{\ns}_{g,\Delta}(\A,\Psi)} \frac{\f{D}_{\Gamma}}{|\Aut(\Gamma)|} \prod_{E\in \Gamma^{[1]}_c} w(E)$, which by definition is ${}^{\ns}\GW_{g,N_{\bb{Q}},\Delta}^{\trop}(\A,\Psi)$, as desired.  Finally, the invariance of $\GW^{\log}_{g,\s{X}^{\dagger}_t,\Delta}(\A,\Psi)$ in the second statement is just Proposition \ref{GWInv}.
\end{proof}

\begin{rmk}\label{NoVertices}
In Definition \ref{TropCurveDfn}, we made the assumption that $\Gamma^{[0]}\neq \emptyset$.  We deal with this case here.  If we do have $\Gamma^{[0]}=\emptyset$, then $\Gamma$ necessarily consists of a single edge $E$ which is open at both ends.  The degree $[\Delta]$ is necessarily the class of a line.  As in Lemma \ref{NoMarks}(2), the corresponding log curve $C$ is uniquely determined by where it intersects the boundary.  The only conditions we can impose in these cases are boundary conditions $A_i$, $i=1,2$, and so the number of pre-log curves satisfying the conditions is just the number of points cut out by the $Z_{A_i,u_i}$'s.  That is, the number of pre-log curves corresponding to some $\Gamma$ with $\Gamma^{[0]}=\emptyset$ and satisfying conditions corresponding to rigid $A_1$ and $A_2$ is the index of $N/\bb{Z}u_E \rar N/(A_1\cap N) \oplus N/(A_2\cap N)$.  Rigid here means that $A_1$ and $A_2$ are transverse and their intersection is a line parallel to $\bb{Q} u_E$.  One easily checks that the virtual and actual dimensions agree in these cases and that these pre-log curves admit a unique basic log structure. At the end of the proof of Lemma \ref{NoAut}, we saw that these resulting log curves admit $w(E)$ distinct automorphisms, and we must divide by this to account for stackiness.  We thus find that \[\GW^{\log}_{0,\s{X}^{\dagger}_t,\Delta}(\A,\Psi) = \frac{1}{w}\inde(N/\bb{Z}u_E \rar N/(A_1\cap N) \oplus N/(A_2\cap N)),
\]
where $[\Delta]$ is $w$ times the class of a line, $\A=(A_1,A_2)$, and $\Psi=\emptyset$.

We note that it is possible to modify our framework to treat these $\Gamma^{[0]}=0$ cases simultaneously with the other cases by allowing unmarked bivalent vertices, saying that two tropical curves are equivalent if they can be related by adding and removing such vertices.  One then represents every tropical curve by a $\Gamma$ with $\Gamma^{[0]}\neq \emptyset$.  This results in minor changes through the proof, e.g., in the domain of $\Phi$, unmarked bivalent vertices $V$ correspond to factors of the form $N/\bb{Z}u_E$ ($E$ an edge containing $V$), and when defining $\Mult(\Gamma)$, in addition to multiplying by $w(E)$ for each compact edge $E$, one also divides by $w(E)$ for $E\ni V$ once for every unmarked bivalent vertex $V$.
\end{rmk}

\subsection{More general incidence conditions}\label{GenInc}
So far, we have assumed that our tropical incidence conditions correspond to affine linear spaces $A_i$.  However, most algebraic cycles do not have affine linear tropicalizations.  We describe here how to generalize to allow for arbitrary algebraic cycles. 

In Definition \ref{Constraints}, if $i\in I^{\circ}$, then in place of codimension $a_i$ affine subspace $A_i\subseteq N_{\bb{Q}}$, allow $A_i$ to be any tropical polyhedral complex in the sense of \cite{AR}.  For $i\in I^{\partial}$, in place of the codimension $a_i$ affine subspaces $A_i\subseteq N_{\bb{Q}}$, we consider codimension $a_i$ tropical cycles in $N_{\bb{Q}}/\bb{Q}u_{i}$.  As before, our tropical incidence conditions on interior points are the requirements $h(E_i)\in A_i$.  For the boundary points, we now require $\?{h(E_i)}\in A_i$, where the bar indicates that we have taken $h(E_i)+\bb{Q}u_i$ modulo $\bb{Q} u_{i}$.  Equivalently, we require that $h(E_i)$ is in the preimage of $A_i$ under the projection $N_{\bb{Q}}\rar N_{\bb{Q}}/\bb{Q}u_{i}$.

Note that the support of each $A_i$ is contained in a union of affine subspaces.  Hence, $\f{T}_{g,\Delta}(\A,\Psi)$ is contained in a union of spaces $\f{T}_{g,\Delta}(\A',\Psi)$ for $\A'$ an affine constraint as before.  We easily see by dimension counts that for generic elements of  $\f{T}_{g,\Delta}(\A,\Psi)$, the $h(E_i)$'s and $\?{h(E_i)}$'s are not contained in the higher-codimension strata of the $A_i$'s.  Thus, for a given generic element of $\f{T}_{g,\Delta}(\A,\Psi)$, we can define $A_{E_i}\subseteq N_{\bb{Q}}$ as the codimension $a_i$ affine space obtained by extending the cell of $A_i$ containing $h(E_i)$, and then define $\LL(E_i):=\LL(A_{E_i})$.  Similarly, for $i\in I^{\partial}$, we define $A_{E_i}\subseteq N_{\bb{Q}}/\bb{Q}u_{E_i}$ to be the affine subspace obtained by extending the cell of $A_j$ containing $\?{h(E_i)}$, and define $\LL(E_i):=\LL(B_{E_i})$.

With this setup, Proposition \ref{TropDeform} immediately generalizes to our situation, except that now we must restrict to some neighborhood of $(\Gamma,\epsilon,h)$ in the space of tropical curves in $\f{T}_{g,\Delta}(\A,\Psi)$ of the same combinatorial type as $(\Gamma,\epsilon,h)$, and this gets identified with a nonempty open subset of an open convex polyhedron in $\ker(\Phi_{\bb{Q}})$.  This modification is to ensure that the cells of the $A_i$'s which the tropical curves hit do not change.

Definition \ref{GeneralDfn} and Lemma \ref{General} also generalize easily, as does the map $\Phi$ from Equation \ref{D} and the fact that it has finite index.  We denote $\f{D}_{\Gamma}:= \inde(\Phi) \prod_{V\in \Gamma^{[0]}} \langle V \rangle$ as in \eqref{DGamma}.  We now define our multiplicities as follows:
\begin{dfn}
Fix some $(\Gamma,\epsilon,h)$ in a rigid $\f{T}^{\ns}_{g,\Delta}(\A,\Psi)$.  Denote by $w(A_{E_i})$ the weight of the cell of $A_i$ containing $h(E_i)$ (or $\?{h(E_i)}$ if $i\in I^{\partial}$).  Then define:
\begin{align}\label{GeneralMultGamma}
\Mult(\Gamma):=\frac{\f{D}_{\Gamma}}{|\Aut(\Gamma)|}\left(\prod_{E\in \Gamma^{[1]}_c} w(E)\right)\left(\prod_{i\in I} w(A_{E_i})\right).
\end{align}
${}^{\ns}\GW_{g,N_{\bb{Q}},\Delta}^{\trop}(\A,\Psi)$ is then defined as before, but using this generalized version of multiplicity.
\end{dfn}

On the algebraic side, we easily generalize the existence of a good polyhedral decomposition $\s{P}$ by again viewing the $A_i$'s and $B_i$'s as contained in unions of affine subspaces.  We can no longer guarantee that our tropical cycles $A_i$ are the tropicalizations of algebraic cycles $Z_{A_i}$ and $Z_{A_i,u_i}$ of $\s{X}$, but if they are, the generalization of $\GW^{\log}_{g,\s{X}^{\dagger}_t,\Delta}(\A,\Psi)$ and ${}^{\ns}\GW^{\log}_{g,\s{X}^{\dagger}_t,\Delta}(\A,\Psi)$ is obvious.

When the tropical cycles are realizable (as tropicalizations of algebraic cycles), then generically, we must have $[Z_{A_i}].[\s{X}_{\partial E_i}]=w(A_{\partial E_i})[Z_{A_{\partial E_i}}\cap \s{X}_{\partial E_i}]$, and similarly, $[Z_{A_i,u_i}].[\s{X}_{E_i}] = w(A_{E_i})[Z_{A_{E_i},u_{i}}\cap \s{X}_{E_i}]$.  The methods of \S \ref{tropicalization}-\ref{Trop2Log} now apply. 
Most importantly, while the intersection of $Z_{A_i}$ with a component of $\s{X}_0$ in general won't always be an orbit closure of a subtorus, it will be a subtorus orbit closure in the components where the incidence conditions are achieved, and similarly for the $Z_{A_i,u_i}$.  We thus find: 
\begin{thm}\label{MainThmExtended}
Theorem \ref{MainThm} holds for $\A$ consisting of arbitrary tropicalizations of algebraic cycles.
\end{thm}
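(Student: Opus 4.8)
The plan is to reduce the statement to the affine case already proved in Theorem~\ref{MainThm} by localizing both counts around each contributing tropical curve. On the tropical side, the support of each $A_i$ and $B_j$ is a polyhedral complex, hence contained in a finite union of affine subspaces, so $\f{T}_{g,m,\Delta}(\A,\Psi)$ decomposes into finitely many pieces $\f{T}_{g,m,\Delta}(\A',\Psi)$ indexed by choices of cells. For generic $\A$, every $(\Gamma,\mu,\epsilon,h)$ contributing to the count meets each $A_i$ (resp.\ $B_j$) in the relative interior of a unique codimension-$a_i$ (resp.\ codimension-$b_j$) cell, whose affine span is the subspace $A_{V_i}$ (resp.\ $B_{E_j}$) of \S\ref{GenInc}. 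Thus near each such $\Gamma$ the analysis of \S\ref{TropCurves} applies verbatim with $\A$ replaced by the affine constraint built from the $A_{V_i}$'s and $B_{E_j}$'s, and Proposition~\ref{TropDeform}, Lemma~\ref{General}, and Lemma/Definition~\ref{Ddfn} carry over as already noted there.

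First I would record that a good integral polyhedral decomposition $\s{P}$ still exists: Lemma~\ref{lem-good-subd} generalizes immediately once we require that each individual cell of each $A_i$ and $B_j$ lie in the appropriate skeleton, since there are only finitely many such cells and any finite collection of decompositions admits a common refinement. With $\s{P}$ fixed, the tropicalization machinery of \S\ref{tropicalization} is untouched, so Proposition~\ref{Log2Trop}, Lemma~\ref{No-gV}, and Proposition~\ref{TT} continue to hold: any $\varphi$ meeting the generic conditions has no higher-genus vertices, is torically transverse, and tropicalizes into $\f{T}^{\circ}_{g,m,\Delta}(\A,\Psi)$.

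The genuinely new ingredient is the behavior of the incidence cycles on the components of $\s{X}_0$ where incidence is achieved. Although $Z_{A_i}$ is no longer a subtorus orbit closure globally, on the stratum $\s{X}_{V_i}$ containing $\varphi(x_i)$ one has $[Z_{A_i}].[\s{X}_{V_i}] = w(A_{V_i})[Z_{A_{V_i}}\cap \s{X}_{V_i}]$ with $Z_{A_{V_i}}\cap\s{X}_{V_i}$ a subtorus orbit closure as in \S\ref{Inc}, and likewise $[Z_{B_j,u_{E_j}}].[\s{X}_{E_j}] = w(B_{E_j})[Z_{B_{E_j},u_{E_j}}\cap \s{X}_{E_j}]$. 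Granting this, the count in Proposition~\ref{PreCount} runs unchanged with $Z_{A_i}$ and $Z_{B_j}$ replaced on the relevant components by the weighted subtorus orbit closures: the affine computation via Lemma~\ref{lemma-G-tensor} again produces $\f{D}_{\Gamma}$ reduced points, each of multiplicity $1/|\Aut(\Gamma)|$, and the cell-weights contribute the extra factor $\prod_i w(A_{V_i})\prod_j w(B_{E_j})$ of \eqref{GeneralMultGamma}. Because Lemma~\ref{LogStructureCounts} and Lemma~\ref{NoAut} depend only on the underlying pre-log curve and not on the incidence conditions, they apply verbatim, and summing over $\Gamma$ reproduces $\GW^{\trop}$ with the generalized multiplicity exactly as in the proof of Theorem~\ref{MainThm}.

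The hard part will be the local intersection identity $[Z_{A_i}].[\s{X}_{V_i}] = w(A_{V_i})[Z_{A_{V_i}}\cap \s{X}_{V_i}]$ together with reducedness of the resulting points. This is where realizability of the tropical cycle and the genericity of its position relative to $\s{P}$ enter: one must check that the tropical weight $w(A_{V_i})$ of the cell through $h(V_i)$ equals the algebraic multiplicity with which $Z_{A_i}$ meets $\s{X}_{V_i}$, and that after the generic translations by the $\tau_{x_i}$'s and $\tau_j$'s of Proposition~\ref{PreCount} the intersection with the pre-log torsor remains transverse, so that $\op{Tor}_\ZZ(\coker\Phi,\kk^*)$ still contributes $|\coker\Phi|$ reduced points.
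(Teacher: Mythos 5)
Your proposal is correct and follows essentially the same route as the paper: decompose the tropical cycles into affine pieces, note that generic contributing curves meet only the relative interiors of maximal cells so that the affine analysis (Proposition~\ref{TropDeform}, Lemma~\ref{General}, Lemma/Definition~\ref{Ddfn}) and the degeneration machinery of \S\ref{tropicalization}--\ref{Trop2Log} apply with the extended cells $A_{V_i}$, $B_{E_j}$, and isolate the key algebraic input $[Z_{A_i}].[\s{X}_{V_i}]=w(A_{V_i})[Z_{A_{V_i}}\cap \s{X}_{V_i}]$ (and its analogue for the $B_j$'s), which accounts for the extra weight factors in \eqref{GeneralMultGamma}. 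The paper likewise treats this local subtorus-orbit-closure identity on the components where incidence is achieved as the essential new point, asserting it holds generically for realizable cycles, so your flagging it as the remaining step to verify matches the level of detail given there.
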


\section{Applications: Ordinary GW invariants, Hurwitz numbers, and mirror symmetry}
\label{sec-applications}

\subsection{Ordinary descendant Gromov-Witten numbers}\label{OrdinaryGW}

Our interest so far has been in invariants $\GW^{\log}_{g,X^{\dagger},\Delta}(\A,\Psi)$, where $X^{\dagger}$ has the divisorial log structure with respect to its toric boundary. 
For $g=0$ and no boundary incidence conditions, we would now like to relate these to the ordinary (i.e., non-log) invariants $$\GW_{0,X,\Delta}(\A,\Psi):=\int_{[\s{M}_{0,I^{\circ}}(X,[\Delta])]^{\vir}} \left(\prod_{i\in I^{\circ}} \left(\psi_i^{s_i} \cup \ev_{x_i}^*[Z_{A_i}] \right)\right).$$
While the following discussion starts off general, the results we prove will be for when $X$ is a product of projective spaces. Set $e=e_\infty$.
The moduli space of basic stable log maps to a target $X$ with trivial log structure gives a log stack that becomes the stack of ordinary stable maps when forgetting its log structure. Furthermore, its log structure is simply the pull-back from the moduli space of pre-stable curves (the ghost sheaf stalk is $\bb{N}^{\#\hbox{\tiny nodes}}$). Gromov-Witten invariants for the trivial log structure on $X$ coincide with the ordinary Gromov-Witten invariants.
Composing a basic stable log map $C^\dagger\ra X^\dagger$ with the forgetful map $X^\dagger\ra X$ defines a functor 
$\shM_{g}(X^\dagger,\Delta)\ra \shM_{g,I}(X,[\Delta])$.
In view of \eqref{def-Q}, it is given at the level of ghost sheaf stalks by the composition
$\prod_q\bb{N} \hookrightarrow \wt{Q}\ra Q$. 
Recall that $\shM^{tt}_{g}(X^\dagger,\Delta)\subset \shM_{g}(X^\dagger,\Delta)$ denotes the open substack of non-singular torically transverse curves.  
We call $\Delta$ {\bf primitive} if all unbounded edges have weight $1$. Let $G=\Aut(\Delta)$ denote the finite group that permutes marking labels of the same type, meaning markings mapping to the same boundary divisor that have the same order of tangency (alias weights).  
That is, for $\Sym(S)$ denoting the group of permutations of a set $S$, 
 \begin{equation}\label{eq-aut}
\Aut(\Delta)=\prod_{{u\in N\setminus \{0\}}} \Sym(\{i\in I | \Delta(i)=u\}).   
\end{equation}
We have a commutative diagram
\begin{equation} 
\label{log-forget-diagram}
\xymatrix{
\shM^{tt}_g(X^{\dagger},\Delta)
\ar_{\mathrm{\tiny open}}@{^{(}->}[d]\ar^{\sim}[r]& 
\shM^{D,tt}_{g,I}(X,[\Delta])
\ar_{\mathrm{\tiny open}}@{^{(}->}[d]\ar[rrd]^{G-\hbox{\tiny torsor}}& 
\\ 
\shM_g(X^{\dagger},\Delta) \ar[r]&
\shM^D_{g,I}(X,[\Delta]) \ar[d]\ar@{^{(}->}[r]&
\shM_{g,I}(X,[\Delta]) \ar[r]\ar^{(\op{ev}_{i})_{i\in I^{\partial}}}[d] & \shM_{g,I^{\circ}}(X,[\Delta])
\\
&\prod_{i\in I^{\partial}} D_i \ar@{^{(}->}[r]&{\prod_{i\in I^{\partial}} X}
}
\end{equation}
where the bottom square is Cartesian and defines its top left corner $\shM^D_{g,I}(X,[\Delta])$.
The log-forget functor 
$\shM_{g}(X^\dagger,\Delta)\ra \shM_{g,I}(X,[\Delta])$ (from the middle row)
factors through $\shM^D_{g,I}(X,[\Delta])$ because the boundary markings $i\in I^{\partial}$ are automatically mapping to the corresponding toric boundary divisors $D_i$. The right-most horizontal map is the map that forgets these boundary markings.  Note that the log structure on $\shM^{tt}_{g}(X^\dagger,\Delta)$ is trivial and the one on its universal curve is the pull-back from $X^\dagger$, so in particular $\shM^{tt}_{g}(X^\dagger,\Delta)$ maps isomorphically onto its image in $\shM^D_{g,I}(X,[\Delta])$, which we denote
$\shM^{D,tt}_{g,I}(X,[\Delta])$ giving the top left Cartesian square.  Finally, the diagonal arrow is indeed a $G$-torsor (onto a subset of the target that is open if $\Delta$ is primitive) 
because of the $|G|$ choices to recover markings after forgetting them.
Note that all spaces except for those in the top row are proper.
By absence of automorphisms, the two spaces in the top row are actually schemes.
If all of our $\psi$-classes are associated with point conditions (as is the case for the situations considered in \cite{MR} and \cite{Rau}), then the correspondence theorem for products of projective spaces reads as follow.

\begin{thm}
\label{thm-forgetlog-equal}
We assume that $X=\prod_{i=1}^k \bb{P}^{r_i}$ and that $\Delta$ is primitive.
Suppose there are no $\psi$-conditions, i.e. $s_i=0$ for each $i$, or more generally that each $\psi$-condition is attached to a point condition, or yet more generally that each $\psi$-condition is attached to an incidence condition whose intersection with  the boundary component $D_i$ is $0$ whenever $[\Delta].D_i\neq 0$.  Then 
$$\GW^{\log}_{0,X^{\dagger},\Delta}(\A,\Psi)
=
|G|\cdot\GW_{0,X,[\Delta]}(\A,\Psi).$$
\end{thm}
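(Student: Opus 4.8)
The plan is to read both sides as integrals of the same insertion $\prod_{i}\psi_{x_i}^{s_i}\,\ev_{x_i}^*[Z_{A_i}]$ and to transport the log integral down to the moduli of maps $\shM_{g,m}(X,[\Delta])$ along the forgetful map of diagram \eqref{log-forget-diagram}. First I would fix the virtual classes: since $X=\prod_i\bb{P}^{r_i}$ is convex and $g=0$, the space $\shM_{g,m+e}(X,[\Delta])$ is unobstructed, so its virtual class is the fundamental class, and near torically transverse maps the same holds on the log side by Proposition~\ref{VirtualActual}. The bottom Cartesian square of \eqref{log-forget-diagram} then identifies $[\shM^{D}_{g,m+e}(X)]$ with the refined Gysin pullback of $[\shM_{g,m+e}(X)]$ along the regular embedding $D_1\times\dots\times D_e\hookrightarrow X^{e}$.

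Next I would localize the log integral to the open substack $\shM^{tt}$ of non-singular torically transverse maps. By Theorem~\ref{MainThm} (and Proposition~\ref{GWInv}) the invariant is enumerative and computed by torically transverse curves, so for generic incidence data the defining zero-cycle is supported in $\shM^{tt}$; here $\Delta$ primitive forces $\prod_{E}w(E)=1$ in Lemma~\ref{LogStructureCounts}, so the log structure is unique and the top isomorphism $\shM^{tt}_{g,m,e}(X^\dagger)\xrightarrow{\sim}\shM^{D,tt}_{g,m+e}(X)$ of \eqref{log-forget-diagram} applies verbatim. Moreover, by Proposition~\ref{psihat} the log classes equal the ancestors $\wh\psi_{x_i}$, and on the non-singular (hence irreducible, for $g=0$) locus these restrict to the ordinary descendant $\psi_{x_i}$ of $\shM_{g,m+e}(X)$. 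Thus $\GW^{\log}_{g,m,e_\infty,X^\dagger,\Delta}(\A,\Psi)=\int_{\shM^{D,tt}_{g,m+e}(X)}\prod_i\psi_{x_i}^{s_i}\,\ev_{x_i}^*[Z_{A_i}]$.

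The last step is to push this forward along the map $p\colon\shM^{D,tt}_{g,m+e}(X)\to\shM_{g,m}(X,[\Delta])$ that drops the $e$ boundary markings. By the top portion of \eqref{log-forget-diagram} this is a $G$-torsor onto its open image: a torically transverse genus-$0$ map meets each toric divisor $D_u$ transversely in exactly $[\Delta]\cdot[D_u]$ distinct points, and the $\prod_u([\Delta]\cdot[D_u])!=|G|$ labellings of these points are precisely its sheets. Since $\ev_{x_i}$ commutes with $p$ and $\psi_{x_i}=p^*\psi_{x_i}$ on the non-singular locus, the integrand is $p^*$ of the corresponding integrand on $\shM_{g,m}(X,[\Delta])$, and the degree-$|G|$ pushforward yields $|G|\cdot\GW_{g,m,X,[\Delta]}(\A,\Psi)$.

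What remains --- and what I expect to be the real obstacle --- is to see that nothing is lost in passing between the two integrals off the non-singular locus, i.e. that the ancestor insertion produced on the log side by Proposition~\ref{psihat} agrees with the descendant insertion defining $\GW_{g,m,X,[\Delta]}(\A,\Psi)$. The ancestor--descendant discrepancy is supported on strata where the component carrying $x_i$ acquires positive degree $\beta_0$ and is contracted by the stabilization to $\?{\s{M}}_{g,m}$, and each such contribution is proportional to an intersection $[Z_{A_i}]\cdot\beta_0$ with $0<\beta_0\le[\Delta]$ effective. For $X=\prod_i\bb{P}^{r_i}$ the hypothesis $[Z_{A_i}]\cdot[\Delta]=0$ --- automatic once $\codim A_i\ge 2$, in particular for a point condition, and imposed by hand for divisorial conditions --- forces $[Z_{A_i}]\cdot\beta_0=0$ for every such effective $\beta_0$, so all these corrections vanish and the descendant invariant is itself enumerative. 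Making this vanishing rigorous (via the ancestor--descendant relations, together with the effectivity of sub-degrees in a product of projective spaces) is the crux; the virtual-class identifications and the $|G|$-bookkeeping are routine given diagram \eqref{log-forget-diagram} and convexity.
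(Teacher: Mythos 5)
Your reduction steps --- convexity to identify virtual with actual fundamental classes, localization to the torically transverse locus, and the $|G|$-bookkeeping along the torsor in diagram \eqref{log-forget-diagram} --- match the paper's proof in substance (the paper gets the localization from density of $\shM^{tt}$ plus Bertini--Kleiman rather than from Theorem \ref{MainThm}; also note that primitivity of $\Delta$ is not what makes the log structure unique on $\shM^{tt}$, since there are no nodes there and Lemma \ref{LogStructureCounts} is vacuous --- its actual role is to make the image of the torsor map \emph{open}). The genuine gap is in the step you yourself single out as the crux, and your mechanism there is not merely unjustified but false. First, the comparison actually needed is between the descendant $\psi_{x_i}$ on the log space and $F^*$ of the descendant $\psi_{x_i}$ on $\shM_{0,m}(X)$, where $F$ forgets the boundary markings; since stability of a \emph{map} imposes no condition on components of positive degree, forgetting markings can only destabilize \emph{contracted} components, so this discrepancy is supported where $x_i$ lies on a degree-zero component that also carries a forgotten boundary marking. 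The strata you describe (positive degree $\beta_0$, contracted by stabilization to $\?{\s{M}}_{0,m}$) enter only because you route the comparison through ancestor classes on the target, and that ancestor--descendant discrepancy genuinely does not vanish. Second, the claim that each correction is ``proportional to $[Z_{A_i}]\cdot\beta_0$'' is wrong: such corrections are Gromov--Witten integrals of $\ev_{x_i}^*[Z_{A_i}]$ over maps of class $\beta_0$, not Chow intersection numbers, and they survive when the Chow product vanishes. Concretely, take $X=\bb{P}^2$, $[\Delta]=\ell$, one point condition with $\psi^1$, so your hypothesis $[\pt]\cdot\ell=0$ holds: the descendant invariant is $\langle[\pt]\psi^1\rangle^{\bb{P}^2}_{0,\ell}=1$, while the ancestor class pulled back from $\?{\s{M}}_{0,1}$ is zero. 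Were your vanishing claim true, the ordinary descendant invariant would equal the ancestor one, i.e.\ $0$, contradicting the very equality you are proving (here both sides are $1$, with $|G|=1$).

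What the paper does instead is compare descendants to descendants directly along $F$: the difference $\psi_{x_i}-F^*\psi_{x_i}$ is supported on stable maps where $x_i$ sits on a contracted component $C_0$ together with some boundary marking $y_j$, so that once the incidence condition at $x_i$ is imposed one has $\varphi(C_0)\in Z_{A_i}\cap D_j$; under the stated hypothesis and genericity this configuration is excluded --- for a point condition this is immediate, since a general point misses the toric boundary --- so the discrepancy locus never meets the cut-down zero-cycle. It is the interplay with the \emph{boundary markings}, not the class $\beta_0$ of the component, that makes the correction terms disappear. Together with base-point-freeness of representatives of the $\psi$-conditions (Proposition \ref{psihat} plus Lemma \ref{lem-psi-basepointfree}) and Bertini--Kleiman, this is what shows the target zero-cycle lies in $F(\shM^{tt})$ and pulls back to the source zero-cycle, which is the statement your pushforward step silently assumes. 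Your proof needs this geometric argument (or a correct substitute) in place of the Chow-product vanishing; as written, the crux step proves a false statement.
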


\begin{proof} 
Since $X$ is convex, the virtual fundamental classes of all moduli spaces in 
\eqref{log-forget-diagram}
are the usual fundamental classes (by \cite[Prop.  3.3.6]{Ran} $\shM^{tt}_{0}(X^\dagger,\Delta)$ is dense in $\shM_{0}(X^\dagger,\Delta)$).
We focus on the map 
$F:\shM_{0}(X^\dagger,\Delta)\ra \shM_{0}(X,[\Delta])$ that forgets log structure and boundary markings. The source and target have the same dimension, so by means of this map, $\shM^{tt}_{0}(X^\dagger,\Delta)$ is a $G$-torsor over a dense open subset of the target.
Note that the assertion follows if we prove that the zero-cycle giving the incidence and $\psi$-conditions on 
$\shM_{0}(X,[\Delta])$ is contained in this open set in the target of $F$ and pulls back under $F$ to the analogously defined zero-cycle in $\shM_{0}(X^\dagger,\Delta)$. 
Each incidence condition is defined by intersecting general hyperplanes in the projective space factors of $X$.
They clearly pull back under $F$ and by Bertini-Kleiman's theorem intersect in the open subset $F(\shM_{0}(X^\dagger,\Delta))$, respectively in $\shM_{0}^{tt}(X^\dagger,\Delta)$, and we are done if we can make a similar statement about the $\psi$-classes.
Note that the difference between $\psi$-classes on source and target of $F$ is given by stable maps that have the $\psi$-marking on a domain component $C_0$ that destabilizes under $F$. 
We claim such stable maps do not exists in the locus of curves that satisfy the incidence condition of this marking. 
If $C_0$ maps non-constantly into $X$ before applying $F$, it will also do so after applying $F$, so this case is clear. If $C_0$ is contracted, then it is contracted to the incidence condition associated to the marking of the $\psi$-class, and by the $0$-intersection assumption this will generically be disjoint from the boundary divisors to which the boundary marked points map.  Hence, forgetting boundary marked points will not affect the stability of $C_0$, so the $\psi$-classes and pullback $\psi$-classes will indeed agree locally wherever the incidence conditions are satisfied.

It remains to say that the $\psi$-conditions are given by basepoint free divisors:
by Prop.~\ref{psihat}, the $\psi$-classes pull back from the underlying moduli space of curves, and by Lemma~\ref{lem-psi-basepointfree} these are base point-free.
\end{proof}

\begin{rmk}
Note that a zero-cycle pulled back from $\shM_{0,I^{\circ}}(X,[\Delta])$ as used in the proof is automatically $G$-invariant. However, the zero-cycle obtained in the proof of the main Theorem~\ref{MainThm} (comparing log and tropical counts) is generally \emph{not} $G$-invariant if $\psi$-conditions are present. Indeed, permuting boundary markings on a stable map domain component that also has a $\psi$-marking generally doesn't preserve the $\psi$-condition as we represented it by fixing the underlying curve component (e.g. the cross-ratio of $4$ points on $\bb{P}^1$ isn't preserved when changing the point order).
\end{rmk}

When we have $\psi$-conditions attached to more general cycles than those allowed in Thm~\ref{thm-forgetlog-equal}, 
it is no longer true that the Gromov-Witten zero cycle on $\shM_{0,I^{\circ}}(X,[\Delta])$ pulls back to the corresponding one on
$\shM_{0}(X^\dagger,\Delta)$. 
We expect a more sophisticated relationship between the log- and ordinary invariants that needs to deal with the fact that $\shM^D_{0,I}(X,[\Delta])$ in general is not irreducible, in fact not even pure-dimensional. Log curves map into only one of its components despite there possibly being additional curves in other components which still satisfy incidence and $\psi$-conditions. 
We include here an easy case where this relationship reduces to the ordinary divisor equation.
We first record a lemma. 
Since $\shM_{0}(X^\dagger,\Delta)\ra \shM_{0}(X,[\Delta])$ preserves the underlying pre-stable curve (the universal curve pulls back under this map), we have the following result.

\begin{lem} \label{forgetlog-psipullback}
The $\psi$-classes on $\shM_{0}(X,[\Delta])$ pull back to the $\psi$-classes on $\shM_{0}(X^\dagger,\Delta)$.
\end{lem}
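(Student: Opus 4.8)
The plan is to unwind the two definitions of the $\psi$-classes and observe that each is built solely from the relative dualizing sheaf of the universal curve pulled back along the tautological sections, so that the statement follows once one knows the universal curve is carried along by the map. Write $p\colon\shM_{0,m,e}(X^\dagger)\ra \shM_{0,m+e}(X)$ for the map in question. On the source, $\psi_{x_i}=c_1(\sigma_{x_i}^*\omega_\pi)$, where $\pi$ is the universal curve and $\sigma_{x_i}$ is the section of the $i$-th marked point, exactly as in the definition of the $\psi$-classes above; on the target, the class carrying the same name is defined by the identical recipe applied to the universal curve of $\shM_{0,m+e}(X)$. Thus the entire content of the lemma is to match these two constructions under $p$.

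First I would invoke the structural fact recorded just before the statement: $p$ remembers the underlying marked pre-stable curve, so by the universal property the universal curve over $\shM_{0,m,e}(X^\dagger)$, together with its $m+e$ tautological sections, is the base change along $p$ of the universal curve over $\shM_{0,m+e}(X)$ with its sections. In particular the section $\sigma_{x_i}$ on the source is compatible under this base change with the corresponding section on the target.

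Second, since the universal curve is a flat family of nodal curves and the formation of the relative dualizing sheaf commutes with arbitrary base change for such families, $\omega_\pi$ on the source is canonically the pullback of $\omega_\pi$ on the target. Combined with the compatibility of the sections, this produces a canonical isomorphism $\s{L}_{x_i}\cong p^*\s{L}_{x_i}$ of line bundles, and taking first Chern classes together with functoriality of $c_1$ gives $\psi_{x_i}=p^*\psi_{x_i}$, which is the assertion (the same reasoning applies verbatim to all $m+e$ marked points).

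The only point requiring genuine care is that $p$ not disturb the underlying marked curve near $x_i$: if the component carrying $x_i$ were stabilized when the log structure is forgotten, the cotangent lines on the two spaces could differ by a boundary correction. Here no marking is forgotten by $p$ (all $m+e$ points are retained) and the underlying map of a stable log map is already stable, so no such stabilization occurs and the base-change compatibility of $\omega_\pi$ applies without modification. In effect this lemma is the $\shM_{0,m+e}(X)$-analogue of Proposition \ref{psihat}, and it follows by the same argument, the essential input being that $p$ leaves the underlying pre-stable curve and its sections unchanged.
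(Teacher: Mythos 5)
Your proposal is correct and follows exactly the paper's reasoning: the paper justifies this lemma with the single observation that the log-forgetting map preserves the underlying pre-stable curve (so the universal curve and its sections pull back), which is precisely the structural fact you invoke before concluding via base-change compatibility of $\omega_\pi$ and functoriality of $c_1$. Your added check that no stabilization occurs (since all $m+e$ markings are retained and the underlying map of a stable log map is already stable) is a useful elaboration of what the paper leaves implicit, but it is the same argument.
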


\begin{thm} \label{main-comparison-thm}
Assume $X=\prod_{i=1}^k \bb{P}^{r_i}$,
and $[D].[\Delta]\le 1$ for every prime boundary divisor $D\subset X$.  Then for every set of incidence and $\psi$-conditions $(\A,\Psi)$, the log Gromov-Witten invariant coincides with the usual one under replacing boundary markings 
with divisor incidence conditions.  In symbols, letting $D_j$ denote the boundary divisor to which the boundary marked point $y_j$ maps, we have:
$$\GW^{\log}_{0,X^{\dagger},\Delta}(\A,\Psi)
=
\GW_{0,X,[\Delta]}((\A,(D_i)_{i\in I^{\partial}}),\Psi).$$
\end{thm}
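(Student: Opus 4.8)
The plan is to run the argument of Theorem~\ref{thm-forgetlog-equal} while \emph{keeping} the $e:=e_\infty$ boundary markings, and to show that the hypothesis $[D].[\Delta]\le 1$ is exactly what makes the discrepancy with the ordinary theory vanish. I work on $X$ directly (no degeneration is needed) and use the diagram~\eqref{log-forget-diagram}. As in the proof of Theorem~\ref{thm-forgetlog-equal}, convexity of $X=\prod_{i=1}^k\bb{P}^{r_i}$ together with $g=0$ guarantees that every moduli space in \eqref{log-forget-diagram} is smooth of the expected dimension, so all virtual classes are ordinary fundamental classes and $\shM^{tt}$ is dense.

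First I would record the consequences of $[D].[\Delta]\le 1$. Since $\Delta$ prescribes a boundary marking mapping to each $D_j$ with tangency $w(E_j)\ge 1$, the total intersection multiplicity gives $[D_j].[\Delta]\ge w(E_j)$, whence $[D_j].[\Delta]=1$, every $w(E_j)=1$, and distinct boundary markings map to distinct prime boundary divisors. Thus $\Delta$ is primitive and, by \eqref{eq-aut}, $G=\Aut(\Delta)$ is trivial. Consequently the diagonal $G$-torsor in \eqref{log-forget-diagram} becomes an isomorphism, and the log-forget functor identifies $\shM^{tt}_{g,m,e}(X^\dagger)$ with the open substack $\shM^{D,tt}_{g,m+e}(X)\subset\shM^D_{g,m+e}(X)$.

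The key algebraic bridge is the bottom Cartesian square of \eqref{log-forget-diagram}. Each $D_j$ is a coordinate hyperplane in one of the factors, so $D_1\times\cdots\times D_e$ is regularly embedded in $X^{\times e}$, and the refined Gysin pullback yields
\[
\Big(\prod_{j=1}^{e}\ev_{m+j}^*[D_j]\Big)\cap[\shM_{g,m+e}(X)]=[\shM^D_{g,m+e}(X)].
\]
Hence the ordinary cycle $\prod_i\psi_{x_i}^{s_i}[Z_{A_i}]\cap\prod_j\ev_{m+j}^*[D_j]\cap[\shM_{g,m+e}(X)]$ equals the restriction to $\shM^D_{g,m+e}(X)$ of the interior incidence and $\psi$-conditions. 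On the log side, $\GW^{\log}$ is computed by intersecting the \emph{same} interior conditions on $\shM_{g,m,e}(X^\dagger)$: the interior evaluation maps commute with forgetting the log structure, and the $\psi$-classes agree by Lemma~\ref{forgetlog-psipullback} (equivalently Proposition~\ref{psihat}). So once both cycles are restricted to the common open locus $\shM^{tt}_{g,m,e}(X^\dagger)\cong\shM^{D,tt}_{g,m+e}(X)$ they coincide; since $G$ is trivial no factor $|G|$ appears, and since the target is smooth no compact-edge weights intervene.

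The hard part will be showing that the whole intersection is supported in this torically transverse open locus. This is precisely the failure flagged before the theorem, where $\shM^D_{g,m+e}(X)$ is in general reducible and not pure-dimensional, with log curves occupying only one component, and it is exactly here that $[D].[\Delta]\le 1$ rescues the comparison: because each $D_j$ is met in a single reduced point, the map forgetting the $e$ boundary markings $\shM^D_{g,m+e}(X)\to\shM_{g,m}(X)$ is generically injective, so $\shM^D_{g,m+e}(X)$ is irreducible of the expected dimension and equals $\overline{\shM^{D,tt}_{g,m+e}(X)}$. Combined with the base-point-freeness of the $\psi$-classes (Lemma~\ref{lem-psi-basepointfree}) and Kleiman--Bertini transversality for the generic incidence conditions, this forces the zero-cycle into the dense torically transverse locus and rules out spurious contributions from curves meeting a codimension-two toric stratum or having a component inside $D$. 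Granting this, the two zero-cycles have equal degree, which is the asserted identity.
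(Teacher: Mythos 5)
Your route is, in outline, the same as the paper's: work directly on $X$, use diagram \eqref{log-forget-diagram}, use convexity to replace virtual by actual fundamental classes, pull back the interior incidence and $\psi$-conditions via Lemma~\ref{forgetlog-psipullback} and Prop.~\ref{psihat}, and use base-point-freeness (Lemma~\ref{lem-psi-basepointfree}) plus Bertini to confine the resulting zero-cycle to $\shM^{D,tt}_{0,m+e}(X)\cong\shM^{tt}_{0,m,e}(X^\dagger)$, where the two theories visibly agree. Your preliminary reductions (all $w(E_j)=1$, pairwise distinct $D_j$, trivial $\Aut(\Delta)$) are correct, though no torsor factor is ever needed here: unlike in Theorem~\ref{thm-forgetlog-equal}, both invariants being compared carry all $m+e_\infty$ markings.

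The genuine gap is the step you yourself flag as "the hard part": proving that $\shM^D_{0,m+e}(X)$ is irreducible and of the expected dimension (indeed smooth, or at least generically reduced), which is what makes your displayed Gysin formula $\prod_j\ev_{m+j}^*[D_j]\cap[\shM_{0,m+e}(X)]=[\shM^D_{0,m+e}(X)]$ hold with multiplicity one and makes $\shM^{D,tt}_{0,m+e}(X)$ dense. Your argument --- "each $D_j$ is met in a single reduced point, so the forgetful map $\shM^D_{0,m+e}(X)\to\shM_{0,m}(X)$ is generically injective, so $\shM^D_{0,m+e}(X)$ is irreducible of expected dimension" --- is a non sequitur: generic injectivity is only a statement about the component containing torically transverse maps and cannot exclude \emph{other} irreducible components. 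The candidates for such components are precisely the degenerate stable maps at issue, e.g.\ maps with a component lying inside some $D_j$; there the marking $y_j$ slides freely along that component, the forgetful map has positive-dimensional fibers (so injectivity fails on such loci anyway), and $[D].[\Delta]\le 1$ does not make such maps nonexistent. Whether these loci form separate components of possibly excess dimension, contributing to the Gysin class or meeting the interior conditions, is exactly the failure mode described in the paragraph preceding the theorem. Moreover, even granting irreducibility and dimension, you never address the multiplicity (generic transversality) in the Gysin formula. The paper settles all of this in one stroke by applying Kleiman--Bertini to the bottom Cartesian square of \eqref{log-forget-diagram}: $\shM_{0,m+e}(X)$ is smooth, proper and irreducible ($X$ convex, $g=0$), $X$ is a homogeneous space, and $[D].[\Delta]\le 1$ forces $D_1,\ldots,D_{e_\infty}$ to be pairwise distinct prime divisors, hence a configuration in general position for the group action; the conclusion is that $\shM^D_{0,m+e}(X)$ itself is smooth, proper and irreducible. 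With that established, the rest of your argument goes through as written.
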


\begin{proof} 
In view of the bottom Cartesian square in \eqref{log-forget-diagram}, we can use Bertini-Kleiman's theorem to deduce that
$\shM^{D}_{0,I}(X,[\Delta])$ is smooth, proper and irreducible. Indeed this follows, because 
$\shM_{0,I}(X,[\Delta])$ is smooth, proper and irreducible and the $D_i$'s are in general position in the homogeneous space $X$ (using that each $[D].[\Delta]\leq 1$). 
Since $X$ is convex, the virtual fundamental classes of all moduli spaces in 
\eqref{log-forget-diagram}
are the usual fundamental classes.
Incidence and $\psi$-conditions pull back (as generalized Gysin maps) from $\shM_{g,I}(X,[\Delta])$ by Lemma~\ref{forgetlog-psipullback}.

As in the proof of the previous theorem: by Prop.~\ref{psihat}, the $\psi$-classes pull back from the underlying moduli space of curves, and by Lemma~\ref{lem-psi-basepointfree} these are base-point free.  Also, the incidence conditions (being multiples of intersections of hyperplane sections) are base-point free. Hence, by another application of Bertini, the cycle
$\gamma=\psi_1^{s_1}\cap [Z_{A_1}]\cap...\cap \psi_m^{s_m}\cap [Z_{A_m}]\cap[\shM^D_{0,I}(X,[\Delta])]$
is contained in the dense open set $\shM^{D,tt}_{0,I}(X,[\Delta])$. The isomorphism to $\shM^{tt}_{0}(X^\dagger,\Delta)$ and compatibility of 
$\gamma$ with pull-back and the degree map yields the assertion.
\end{proof}

\begin{eg} \label{example-double-psi}
Let us compute the log Gromov-Witten invariant of lines in $\bb{P}^2$ satisfying a $\psi^1$-condition at a point and a $\psi^1$-condition at the fundamental class. There is a unique tropical line satisfying these conditions. It looks like the line on the right in Fig.~\ref{trop-line} and has both markings in the vertex $V$. Its multiplicity is $\langle V\rangle = \frac{\ov(V)!}{s_1!s_2!}=2$ which is thus also the log Gromov-Witten invariant. 
Let $D_1,D_2,D_3$ be the toric boundary components in $\bb{P}^2$ and $\ell$ the class of a line.  Then
we can verify this log invariant by using Thm.~\ref{main-comparison-thm} as follows:
\begin{align*}
\langle\psi[pt],\psi[\bb{P}^2]\rangle_{0,\ell}^{\bb{P}^2(\log (D_1+D_2+D_3))} 
&\underset{\hbox{\tiny Thm.~\ref{main-comparison-thm}}}{=} \langle\psi[pt],\psi[\bb{P}^2],D_1,D_2,D_3\rangle_{0,\ell}^{\bb{P}^2}\\
&\underset{\hbox{\tiny dil.eq.}}{=}
(4-2)\langle\psi[pt],D_1,D_2,D_3\rangle_{0,\ell}^{\bb{P}^2}
\underset{\hbox{\tiny (div.eq.$)^3$}}{=}
2\underbrace{\langle\psi[pt]\rangle_{0,\ell}^{\bb{P}^2}}_{=1}
\end{align*}
where the second equality is the dilaton equation \cite[\S4.3.1]{Kock} and the third is the divisor equation \cite[\S4.3.2]{Kock}. Note this differs from the ordinary invariant $\langle\psi[pt],\psi[\bb{P}^2]\rangle_{0,\ell}^{\bb{P}^2}=-1$.
\end{eg}

We end this section by introducing a tropical multiplicity giving the ordinary Gromov-Witten invariant in the situations covered by Theorem~\ref{thm-forgetlog-equal}.

Let $\f{T}'_{g,\Delta}(\A,\Psi)$ denote the modification of $\f{T}_{g,\Delta}(\A,\Psi)$ obtained by forgetting all the boundary markings---i.e., identifying curves related by the boundary relabeling group $\Aut(\Delta)$. 
Let $\Stab_{\Gamma}(\Delta)$ denote the subgroup of $\Aut(\Delta)$ acting trivially on $\Gamma$. 
Let $\rho_1,\ldots,\rho_s$ denote the rays of the fan corresponding to $X$.  For each vertex $V$ of a tropical curve $\Gamma$, let $n^w_i(V)$ denote the number of unbounded rays of weight $w$ emanating from $V$ in the direction of $\rho_i$.  By Lemma~\ref{VerticesDistinct}, distinct vertices of $\Gamma$ cannot map to the same point of $N_{\QQ}$, so any automorphism of $\Gamma$ must fix $\Gamma^{[0]}$.  It follows that any element of $\Aut(\Delta)$ stabilizing $\Gamma$ can only act by permutations of the sets of $n^w_i(V)$ edges for each $V$, $i$, and w.  We have
\[|\Stab_{\Gamma}(\Delta)| = \prod_{V,i}\prod_{w\ge 1} n^w_i(V)!.\]
For $\Gamma \in  \f{T}_{g,\Delta}(\A,\Psi)$ and $\Gamma'$ the corresponding curve in $\f{T}'_{g,\Delta}(\A,\Psi)$, define $\Mult(\Gamma')$ exactly as we have done for $\Mult(\Gamma)$, but note that $\Aut(\Gamma')$ has changed because automorphisms no longer have to respect boundary markings.  Hence,
\begin{align*}
    \Mult(\Gamma')=\frac{\Mult(\Gamma)}{|\Stab_{\Gamma}(\Delta)|}.
\end{align*}

Define
\begin{align}\label{def-usual-tropical}
\GW_{0,N_{\bb{Q}},\Delta}^{\trop'}(\A,\Psi)&:=\sum_{\Gamma'\in \f{T}'_{0,\Delta}(\A,\Psi)} \Mult(\Gamma').
\end{align}

Note that $\Gamma' \in \f{T}'_{0,\Delta}(\A,\Psi)$ has $\frac{|\Aut(\Delta)|}{\Stab_{\Gamma}(\Delta)}$ lifts to $\f{T}_{0,m,\Delta}(\A,\Psi)$.  It follows that 
\[
\GW_{0,N_{\bb{Q}},\Delta}^{\trop'}(\A,\Psi)= \GW_{0,N_{\bb{Q}},\Delta}^{\trop}(\A,\Psi)/|\Aut(\Delta)|.
\]

This and Theorem \ref{thm-forgetlog-equal} imply the following corollary:
\begin{cor} \label{tropical-matches-usual}
Suppose $g=0$, $X=\bb{P}^{r_1}\times \cdots \times \bb{P}^{r_k}$ and $\Delta$ primitive. If all $\psi$-classes are attached at point conditions, i.e. $\dim A_i = 0$ whenever $s_i >0$, (or more generally if each $\psi$-condition is attached to an incidence condition whose intersection with $[\Delta]$ in the Chow ring is $0$) then 
\begin{align*}\GW_{0,X,\Delta}(\A,\Psi) =\GW^{\log}_{0,X^{\dagger},\Delta}(\A,\Psi)/|\Aut(\Delta)| 
                                            =\GW_{0,N_{\bb{Q}},\Delta}^{\trop'}(\A,\Psi).\end{align*}
\end{cor}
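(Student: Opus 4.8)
The plan is to observe that the corollary is a formal consequence of three facts already in hand, namely the main correspondence Theorem~\ref{MainThm}, the comparison Theorem~\ref{thm-forgetlog-equal}, and the combinatorial identity $\GW^{\trop'}=\GW^{\trop}/|\Aut(\Delta)|$ recorded immediately above the statement; the proof amounts to chaining these together and checking that the two factors of $|\Aut(\Delta)|$ cancel. First I would invoke Theorem~\ref{MainThm}, whose rigidity and non-superabundance hypotheses hold automatically here since $g=0$ (Proposition~\ref{SuperabundantLemma}), to get the equality of the tropical and log counts
$$\GW_{0,{m},e_{\infty},N_{\bb{Q}},\Delta}^{\trop}(\A,\Psi) = \GW^{\log}_{0,{m},e_{\infty},X^{\dagger},\Delta}(\A,\Psi).$$
Dividing both sides by $|\Aut(\Delta)|$ and applying the identity $\GW^{\trop'}_{0,{m},e_{\infty},N_{\bb{Q}},\Delta}(\A,\Psi)=\GW^{\trop}_{0,{m},e_{\infty},N_{\bb{Q}},\Delta}(\A,\Psi)/|\Aut(\Delta)|$ yields the rightmost claimed equality,
$$\GW^{\trop'}_{0,{m},e_{\infty},N_{\bb{Q}},\Delta}(\A,\Psi) = \GW^{\log}_{0,{m},e_{\infty},X^{\dagger},\Delta}(\A,\Psi)/|\Aut(\Delta)|.$$

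Next I would apply Theorem~\ref{thm-forgetlog-equal}, whose hypotheses are exactly those assumed in the corollary: $g=0$, $X=\bb{P}^{r_1}\times\cdots\times\bb{P}^{r_k}$, $\Delta$ primitive, and each $\psi$-condition attached to an incidence condition whose intersection with $[\Delta]$ vanishes (the point-condition case $\dim A_i=0$ being a special case). Since the group $G$ of that theorem is by definition $\Aut(\Delta)$, it reads $\GW^{\log}_{0,{m},e_{\infty},X^{\dagger},\Delta}(\A,\Psi)=|\Aut(\Delta)|\cdot\GW_{0,{m},X,[\Delta]}(\A,\Psi)$, so dividing by $|\Aut(\Delta)|$ identifies $\GW^{\log}/|\Aut(\Delta)|$ with the ordinary invariant $\GW_{0,{m},X,\Delta}(\A,\Psi)$. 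Combining this with the display from the previous paragraph gives the full chain of three equalities, as desired.

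I do not expect a genuine obstacle here, since all the substantive work is done in the cited results: the degeneration and tropicalization argument underlying Theorem~\ref{MainThm}, the analysis of the $G$-torsor structure of $\shM^{tt}$ over a dense open subset of $\shM_{0,m,[\Delta]}(X)$ together with the Bertini--Kleiman transversality and base-point-freeness of the $\psi$- and incidence conditions behind Theorem~\ref{thm-forgetlog-equal}, and the index count $|\Stab_{\Gamma}(\Delta)|=\prod_{V,i}\prod_{w\ge 1}n^w_i(V)!$ yielding $\GW^{\trop'}=\GW^{\trop}/|\Aut(\Delta)|$. The only point I would take care to verify is the bookkeeping: that the automorphism group $G=\Aut(\Delta)$ of Theorem~\ref{thm-forgetlog-equal} is \emph{literally} the same group by which the tropical count is quotiented in passing to $\GW^{\trop'}$, so that the two factors of $|\Aut(\Delta)|$ cancel consistently. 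This is immediate from the definition \eqref{eq-aut}.
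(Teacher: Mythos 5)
Your proposal is correct and follows essentially the same route as the paper: the paper derives the corollary by combining the identity $\GW^{\trop'}=\GW^{\trop}/|\Aut(\Delta)|$ (established immediately before the statement) with Theorem~\ref{thm-forgetlog-equal} and the correspondence Theorem~\ref{MainThm}, exactly the chain you describe. Your additional checks (rigidity via Proposition~\ref{SuperabundantLemma} for $g=0$, and that $G=\Aut(\Delta)$ is literally the same group in both cancellations) are left implicit in the paper but are accurate.
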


\begin{eg}\label{Boundary-Markings}
Let us compute the log Gromov-Witten invariant\\ 
\begin{minipage}[b]{0.60\textwidth} 
$$\langle[pt]\psi,[pt]\psi,[pt]\psi,[pt]\psi\rangle_{0,3[\bb{P}^1]}^{\bb{P}^1(\log(0\cup\infty))}=16\cdot 3\cdot 3$$
of degree three maps $\bb{P}^1\ra\bb{P}^1$ that are unramified over $0$ and $\infty$ with four single $\psi$-classes at point conditions. Precisely the two tropical curves depicted on the right contribute to this
count. Indeed, $\Mult(\Gamma_1)=4$ and $\Mult(\Gamma_2)=12$ are just 
\end{minipage}\qquad
\begin{minipage}[t]{0.40\textwidth}
\resizebox{0.9\textwidth}{!}{
\includegraphics{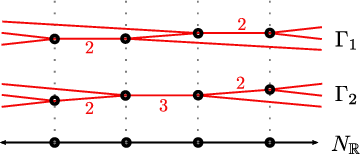}
}
\end{minipage}\\
products of weights and there are $3\cdot 3$ of each of these when considering different labelings of the unbounded edges (note that $|\Stab_{\Gamma}(\Delta)|=4$).  Since $|\Aut(\Delta)|=(3!)^2$, from Theorem~\ref{thm-forgetlog-equal} we obtain for the ordinary Gromov-Witten invariant
$$\langle[pt]\psi,[pt]\psi,[pt]\psi,[pt]\psi\rangle_{0,3[\bb{P}^1]}^{\bb{P}^1}=4.$$
\end{eg}

\subsection{Tropical Hurwitz numbers}  \label{section-hurwitz}
As an example in dimension one, we next interpret Hurwitz numbers \cite{Hu} for target $\bb{P}^1$ in terms of descendant log/tropical Gromov-Witten counts similar to \cite{CJM,CJMR}. A descendant Gromov-Witten theoretic interpretation for cases not ramified over $0$ and $\infty$ was previously given in \cite{Pan2}.

\begin{dfn}
\label{def-Hurwitz}
Let $\alpha=[a_1+\ldots+a_{\ell_1}=d]$ and $\beta=[b_1+\ldots+b_{\ell_2}=d]$ be two ordered partitions of $d>0$.  The {\bf double Hurwitz number} $H_{g,d}(m,\alpha,\beta)$ is defined as follows: Let $\{p_1,\ldots,p_m\}$ be a generically specified collection of points in $\bb{P}^1$.  Then $H_{g,d}(m,\alpha,\beta)$ is defined to be the number of degree $d$ marked covers $\pi:(C,y_1^0,\ldots,y_{\ell_1}^0,y_1^{\infty},\ldots,y_{\ell_2}^{\infty})\rar \bb{P}^1$, up to isomorphism, such that:
\begin{itemize}
\item $C$ is a smooth connected curve of genus $g$,
\item $\pi$ is unramified over $\bb{P}^1 \setminus \{0,\infty,p_1,\ldots,p_m\}$,
\item $\pi$ is simply ramified over $p_i$ for each $i=1,\ldots,m$.
\item For each $i=1,\ldots,\ell_1$, $\pi$ is ramified over $0$ at $y_i^0$ to order $a_i$.  Similarly, for each $j=1,\ldots,\ell_2$, $\pi$ is ramified over $\infty$ at $y^{\infty}_j$ to order $b_j$.
\end{itemize} 
The count is weighted by $\frac1{\Aut(\pi)}$.
Note that $H_{g,d}(m,\alpha,\beta)\neq 0$ implies by the Riemann-Hurwitz theorem that
\begin{equation}\label{RH-m}
m = 2g-2+\ell_1+\ell_2,
\end{equation} 
so we let the notation $H_{g,d}(\alpha,\beta)$ (or even just $H_{g,d}$) refer to $H_{g,d}(m,\alpha,\beta)$ for $m$ given by \eqref{RH-m}.
\end{dfn}

Hurwitz \cite{Hu} originally studied these numbers by factorizations in the symmetric group $S_d$. Indeed, let
$\hat H_{g,d}$ denote the number of solutions to
\begin{equation} \label{eq-Hurwitz}
\sigma_\alpha\tau_1...\tau_m\sigma_\beta=1
\end{equation}
by $\sigma_\alpha,\sigma_\beta,\tau_1,...,\tau_m\in S_d$ where $\tau_i$ are transpositions, $\sigma_\alpha,\sigma_\beta$ have cycle type $\alpha, \beta$ respectively and all these together act transitively on $\{1,...,d\}$. The relationship to Def.\,\ref{def-Hurwitz} is that a cover $C\ra\bb{P}^1$ is uniquely determined by its monodromy representation and such is given by \eqref{eq-Hurwitz}, cf. \cite[\S1]{Hu}.
Note that for the count $\hat H_{g,d}$ all $d$ sheets are labeled whereas for $H_{g,d}$ instead the points over $0$ and $\infty$ are labeled. 
Define a third count $\bar H_{g,d}$ with no labeling at all, i.e. by the analogue of Def.\ref{def-Hurwitz} without the marking by $y_i^0, y_j^\infty$.
Let $\Aut(\alpha)$ be the group of permutations of the labels $y_i^0$ that preserve branch order, so
$|\Aut(\alpha)|=\prod_i \mathfrak{a}_i!$
where $\mathfrak{a}_i$ denotes the number of summands in $\alpha$ equal to $i$. Define $\Aut(\beta)$ similarly.
That we divide all counts by automorphisms respecting the appropriate labelings enables the following clean comparison result.

\begin{lem} \label{lem-label-versions}
$\bar H_{g,d}\ =\ \frac1{d!}\hat H_{g,d}$ and
$H_{g,d}=\frac{|\Aut(\alpha)|\cdot |\Aut(\beta)|}{d!} \hat H_{g,d}$.
\end{lem}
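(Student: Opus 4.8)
The plan is to realize all three numbers as \emph{weighted} counts, each of the form $\sum 1/|\Aut|$ over the relevant isomorphism classes, and to pass between them using two finite group actions: the action of $S_d$ relabeling the $d$ sheets of a cover, and the action of $\Aut(\alpha)\times\Aut(\beta)$ relabeling the ramification points lying over $0$ and $\infty$. In each case the bridge is the elementary orbit-counting identity $\sum_{\text{orbits }O}1/|\Stab_O| = |S|/|G|$ for a finite group $G$ acting on a finite set $S$.

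For the equality $\bar H_{g,d}=\frac1{d!}\hat H_{g,d}$, I would invoke the standard dictionary between covers and monodromy. A connected degree-$d$ cover $\pi:C\ra\bb{P}^1$ branched only over $\{0,\infty,p_1,\dots,p_m\}$ and equipped with an identification of the fiber over a fixed base point with $\{1,\dots,d\}$ is exactly a \emph{transitive} homomorphism $\rho:\pi_1(\bb{P}^1\setminus\{0,\infty,p_1,\dots,p_m\})\ra S_d$ with the prescribed local monodromy types, i.e. a solution of \eqref{eq-Hurwitz}; thus $\hat H_{g,d}$ is the number of such labeled $\rho$, with transitivity matching connectedness of $C$. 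The group $S_d$ acts on this finite set by simultaneous conjugation, two $\rho$'s yield isomorphic covers iff they lie in one orbit, and the stabilizer of $\rho$ is the centralizer in $S_d$ of the image $\rho(\pi_1)$, which is canonically the deck group $\Aut(\pi)$. Orbit–stabilizer then gives $\hat H_{g,d}=\sum_{[\pi]}|S_d\cdot\rho|=\sum_{[\pi]}d!/|\Aut(\pi)|=d!\,\bar H_{g,d}$.

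For the equality $H_{g,d}=|\Aut(\alpha)||\Aut(\beta)|\,\bar H_{g,d}$, I would compare $H_{g,d}$ and $\bar H_{g,d}$ via the functor forgetting the labels $y_i^0,y_j^\infty$. Fix an unlabeled cover $\pi$ counted by $\bar H_{g,d}$; its ramification profile over $0$ (resp. $\infty$) is the partition $\alpha$ (resp. $\beta$). A labeling compatible with the \emph{ordered} partitions $\alpha,\beta$ amounts to choosing, for each value $k$, a bijection between the indices $i$ with $a_i=k$ and the preimages of $0$ of ramification $k$ (and likewise over $\infty$); the number of such labelings is precisely $|\Aut(\alpha)|\cdot|\Aut(\beta)|$. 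Now $\Aut(\pi)$ acts on this set of labelings, its orbits are the isomorphism classes of labeled covers over $\pi$, and the stabilizer of a labeling is the automorphism group of the corresponding labeled cover. The orbit-counting identity gives that the labeled covers over $\pi$ contribute $|\Aut(\alpha)||\Aut(\beta)|/|\Aut(\pi)|$ to $H_{g,d}$; summing over $[\pi]$ yields $H_{g,d}=|\Aut(\alpha)||\Aut(\beta)|\,\bar H_{g,d}$. Combining the two equalities proves the lemma.

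The only genuinely delicate step, and the one I would write out most carefully, is the identification of stabilizers with automorphism groups: that the $S_d$-stabilizer of $\rho$ is the centralizer of its image and hence the deck group $\Aut(\pi)$, and that the $\Aut(\pi)$-stabilizer of a labeling is $\Aut$ of the labeled cover. Once these identifications are in place, both equalities are immediate consequences of the single identity $\sum_O 1/|\Stab_O|=|S|/|G|$, with no further computation required. A secondary point worth checking explicitly is that each cover counted by $\bar H_{g,d}$ really does have ramification profiles $\alpha$ and $\beta$, so that the count of compatible labelings is uniformly $|\Aut(\alpha)||\Aut(\beta)|$ independent of $\pi$.
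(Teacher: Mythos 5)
Your proposal is correct, and the first equality is handled essentially as in the paper: the paper's proof also uses the sheet-relabeling (conjugation) action of $S_d$ on solutions of \eqref{eq-Hurwitz}, with the key observation --- which you rightly flag as the delicate point --- that a permutation conjugating a solution to itself is precisely a deck transformation of the cover, so that fixed points of the action are absorbed by the weights $1/|\Aut(\pi)|$ in $\bar H_{g,d}$. Where you genuinely diverge is the second equality: the paper simply cites \cite{GJV05} (Prop.\,1.1) for the direct relation between $H_{g,d}$ and $\hat H_{g,d}$, whereas you instead prove the relation $H_{g,d}=|\Aut(\alpha)|\,|\Aut(\beta)|\,\bar H_{g,d}$ by a second, independent orbit count --- the action of the deck group $\Aut(\pi)$ on the $|\Aut(\alpha)|\,|\Aut(\beta)|$ labelings of the fibers over $0$ and $\infty$ --- and then compose with the first equality. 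This buys self-containedness (no external citation) and makes transparent that both equalities are instances of the single identity $\sum_{O}1/|\Stab_O|=|S|/|G|$, applied to two different group actions; the cost is that you must additionally verify that orbits of $\Aut(\pi)$ on labelings are isomorphism classes of labeled covers and that stabilizers are the labeled automorphism groups, a point the citation to \cite{GJV05} lets the paper bypass. Both routes are valid, and your uniform weighted-orbit-counting formulation arguably explains better why dividing by automorphisms in all three counts (as Definition \ref{def-Hurwitz} and the surrounding discussion insist) is exactly what makes the comparison clean.
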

\begin{proof}
The second equality is \cite[Prop.\,1.1]{GJV05} and the first works similar, namely by the sheet relabeling action of $S_d$ via conjugation on the solutions to \eqref{eq-Hurwitz}: if a solution happens to be conjugated to itself, this permutation actually gives an automorphism of the cover that hence is also divided out by definition.
\end{proof}

\begin{dfn}
Let $\alpha$ and $\beta$ be two ordered partitions of $d>0$.  Let $n=1$, so $N\cong \bb{Z}$.  Let $\Delta=\Delta(\alpha,\beta)$ be the degree associated to $\alpha$ and $\beta$ (so tropical curves of degree $\Delta$ have $\ell_1$ marked unbounded edges in the negative direction of weights $a_1,\ldots,a_{\ell_1}$, and $\ell_2$ marked unbounded edges in the positive direction of weights $b_1,\ldots,b_{\ell_2}$). 
Let $\A$ consist of $m:=2g-2+\ell_1+\ell_2$ generic interior point conditions, no boundary conditions and $\Psi=(1,\ldots,1)$ ($m$ entries, all equal to $1$). 
It is not hard to see that the corresponding $\f{T}^{\ns}_{g,\Delta}(\A,\Psi)$ is rigid.  The {\bf tropical double Hurwitz number} is then defined as
\begin{equation} \label{def-Htrop}
H_{g,d}^{\trop}(\alpha,\beta):=\GW^{\trop}_{g,\Delta}(\A,\Psi)
\end{equation}
Using the notation of \eqref{def-usual-tropical}, we similarly define 
$$\bar H_{g,d}^{\trop}(\alpha,\beta):=\GW^{\trop'}_{g,\Delta}(\A,\Psi)= H_{g,d}^{\trop}(\alpha,\beta)/|\Aut(\Delta)|$$
\end{dfn}

We note that $\f{D}_{\Gamma}$ always equals $1$ for $n=1$, so for $\Gamma \in \f{T}^{\ns}_{g,\Delta}(\A,\Psi)$ as above, we always have $\Mult(\Gamma)= \frac{\prod_{E\in \Gamma^{[1]}_c} w(E)}{|\Aut(\Gamma)|}$. Cavalieri, Johnson, and Markwig prove the following.

\begin{thm}[\cite{CJM}] \label{thm-CJM}
$\bar H_{g,d}(\alpha,\beta) = \bar H_{g,d}^{\trop}(\alpha,\beta)$.
\end{thm}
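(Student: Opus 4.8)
The plan is to reduce the statement to the tropical--algebraic correspondence for double Hurwitz numbers established in \cite{CJM}, by verifying that the count $\bar H^{\trop}_{g,d}(\alpha,\beta)$ as we have defined it coincides verbatim with the tropical double Hurwitz number of \cite{CJM}. First I would unpack the objects of $\f{T}^{\circ}_{g,m,\Delta}(\A,\Psi)$ in the case $n=1$: these are genus $g$ balanced graphs mapping to $N_\RR\cong\RR$ with $\ell_1$ marked unbounded edges of weights $a_1,\dots,a_{\ell_1}$ pointing in the negative direction and $\ell_2$ marked unbounded edges of weights $b_1,\dots,b_{\ell_2}$ pointing in the positive direction. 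Interpreting $\RR$ as a tropical $\bb{P}^1$ with $\pm\infty$ playing the roles of $0$ and $\infty$, such a graph is exactly a degree $d$ tropical cover with ramification profiles $\alpha$ over $0$ and $\beta$ over $\infty$, so the combinatorial types already match those of \cite{CJM}.

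Next I would match the interior conditions. By Lemma \ref{No-gV} every contributing curve has $g_V=0$ at all vertices, so its genus equals $b_1(\Gamma)$. The $m$ generic point conditions in $\A$ fix the images $h(V_i)\in\RR$ of the marked vertices, i.e. the positions of the $m$ branch points $p_1,\dots,p_m$. The $\psi^1$-conditions $\Psi=(1,\dots,1)$ impose $\ov(V_i)\ge 1$ at each marked vertex by \eqref{PsiTropDfn}; since $m_{V_i}=1$ this reads $\val(V_i)\ge 3$, and rigidity together with Lemma \ref{General} forces equality, so each marked vertex is trivalent. A counting of vertices and edges then shows $\#\Gamma^{[0]}=m$, so by injectivity of $\mu$ every vertex is a marked trivalent vertex, i.e. a simple ramification point. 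A short check that the dimension constraint \eqref{GeneralDimension} with $n=1$, $\sum_i s_i=\sum_i a_i=m$ and $\sum_j b_j=0$ rearranges to $m=2g-2+\ell_1+\ell_2$ confirms agreement with the Riemann--Hurwitz relation \eqref{RH-m}.

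Then I would compare multiplicities. Since $n=1$ we have $\f{D}_\Gamma=\inde(\Phi)\prod_V\langle V\rangle=1$: the quotients $N/\ZZ u_E$ vanish and every vertex is marked trivalent, so $\Phi$ is an isomorphism and each $\langle V\rangle=1$. Hence $\Mult(\Gamma)=\prod_{E\in\Gamma^{[1]}_c}w(E)/|\Aut(\Gamma)|$, which is exactly the multiplicity assigned to a tropical cover in \cite{CJM}. Finally, passing from $\f{T}_{g,m,\Delta}(\A,\Psi)$ to $\f{T}'_{g,m,\Delta}(\A,\Psi)$ by forgetting boundary labels (equivalently dividing by $|\Aut(\Delta)|$) corresponds to passing from labeled to \emph{unlabeled} tropical covers, matching the unlabeled normalization on the algebraic side defining $\bar H_{g,d}$. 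Assembling these identifications shows $\bar H^{\trop}_{g,d}(\alpha,\beta)$ equals the tropical double Hurwitz number of \cite{CJM}, whence $\bar H_{g,d}(\alpha,\beta)=\bar H^{\trop}_{g,d}(\alpha,\beta)$ follows from their correspondence theorem.

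The main obstacle I anticipate is purely bookkeeping: reconciling our automorphism and edge-weight conventions precisely with those of \cite{CJM}, in particular how $|\Aut|$ is normalized for covers versus for the weighted graphs, and confirming that the bijection of combinatorial types is exact. The latter is guaranteed by rigidity, Lemma \ref{General}, and Lemma \ref{No-gV}, which jointly exclude higher-valent vertices, contracted edges, and vertices of positive genus; the former reduces to the standard matching between monodromy factorizations and their graph-theoretic encodings, which is precisely the input \cite{CJM} supply.
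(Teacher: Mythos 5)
Your proposal is correct and takes essentially the same route as the paper: the paper states this result as a direct citation of \cite{CJM}, leaving implicit exactly the definitional matching you spell out (rigidity forces all contributing curves to have only marked trivalent vertices at the branch points, $\f{D}_\Gamma=\inde(\Phi)\prod_V\langle V\rangle=1$ when $n=1$, so $\Mult(\Gamma)=\prod_{E\in\Gamma^{[1]}_c}w(E)/|\Aut(\Gamma)|$ is the \cite{CJM} multiplicity, and forgetting end-labels divides by $|\Aut(\Delta)|=|\Aut(\alpha)|\cdot|\Aut(\beta)|$, matching the unlabeled normalization of $\bar H_{g,d}$). The only residual bookkeeping is the one you flag---which labeling convention \cite{CJM} adopt---and by Lemma \ref{lem-label-versions} together with \eqref{eq-aut} the labeled and unlabeled statements are equivalent, so this does not affect correctness.
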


\begin{cor} \label{cor-CJM}
$H_{g,d}(\alpha,\beta) =  H_{g,d}^{\trop}(\alpha,\beta)$.
\end{cor}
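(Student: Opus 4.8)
The plan is to chain the two comparison results we already have, Lemma~\ref{lem-label-versions} on the algebraic side and Theorem~\ref{thm-CJM} on the tropical side, once we have matched up the automorphism factors relating the various labeled counts. The only substantive point beyond bookkeeping is the identification $|\Aut(\Delta)|=|\Aut(\alpha)|\cdot|\Aut(\beta)|$, so I would establish that first.

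First I would note that in the present $n=1$ setting ($N\cong\ZZ$) the only primitive directions available for unbounded edges are $\pm 1$. By the definition of $\Delta=\Delta(\alpha,\beta)$, the edges in the negative direction have weights $a_1,\ldots,a_{\ell_1}$ and those in the positive direction have weights $b_1,\ldots,b_{\ell_2}$; since a primitive direction together with a weight determines the full vector $w(E_j)u_{E_j}$ and vice versa, grouping by full vector agrees with grouping by the pair (direction, weight). Feeding this into the definition \eqref{eq-aut}, the factor indexed by direction $-1$ and weight $w$ is the symmetric group on the parts of $\alpha$ equal to $w$, while the factor indexed by direction $+1$ and weight $w$ is the symmetric group on the parts of $\beta$ equal to $w$. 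Taking the product over all $w\ge 1$ gives
\[
|\Aut(\Delta)|=\Big(\prod_{w\ge 1}\mathfrak{a}_w!\Big)\Big(\prod_{w\ge 1}\mathfrak{b}_w!\Big)=|\Aut(\alpha)|\cdot|\Aut(\beta)|,
\]
where $\mathfrak{a}_w$ (resp. $\mathfrak{b}_w$) denotes the number of parts of $\alpha$ (resp. $\beta$) equal to $w$.

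With this in hand I would assemble the equalities. Eliminating $\hat H_{g,d}$ between the two formulas of Lemma~\ref{lem-label-versions} gives $H_{g,d}(\alpha,\beta)=|\Aut(\alpha)|\cdot|\Aut(\beta)|\cdot\bar H_{g,d}(\alpha,\beta)$ (valid even when $\hat H_{g,d}=0$). Theorem~\ref{thm-CJM} then replaces $\bar H_{g,d}(\alpha,\beta)$ by $\bar H^{\trop}_{g,d}(\alpha,\beta)$, and the defining relation $\bar H^{\trop}_{g,d}(\alpha,\beta)=H^{\trop}_{g,d}(\alpha,\beta)/|\Aut(\Delta)|$ yields
\[
H_{g,d}(\alpha,\beta)=\frac{|\Aut(\alpha)|\cdot|\Aut(\beta)|}{|\Aut(\Delta)|}\,H^{\trop}_{g,d}(\alpha,\beta).
\]
Substituting $|\Aut(\Delta)|=|\Aut(\alpha)|\cdot|\Aut(\beta)|$ from the first step collapses the prefactor to $1$, which is exactly the claimed identity $H_{g,d}(\alpha,\beta)=H^{\trop}_{g,d}(\alpha,\beta)$.

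There is no real obstacle here; the point requiring care is keeping the three labeling conventions straight, namely that $\hat H_{g,d}$ labels all $d$ sheets, $H_{g,d}$ labels the ramification points over $0$ and $\infty$ (equivalently the boundary-marked unbounded edges of the tropical curve), and $\bar H_{g,d}$ forgets all labels. The content is entirely in recognizing that the boundary-relabeling group $\Aut(\Delta)$ governing the normalization $\GW^{\trop'}=\GW^{\trop}/|\Aut(\Delta)|$ is precisely the product $\Aut(\alpha)\times\Aut(\beta)$ that appears in passing from sheet-labeled to ramification-labeled covers; once this is verified, no further geometry is needed.
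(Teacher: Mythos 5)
Your proposal is correct and takes essentially the same route as the paper: the paper's proof of Corollary~\ref{cor-CJM} simply cites the same three ingredients (Theorem~\ref{thm-CJM}, Lemma~\ref{lem-label-versions}, and the description of $\Aut(\Delta)$ in \eqref{eq-aut}), and your explicit verification that $|\Aut(\Delta)|=|\Aut(\alpha)|\cdot|\Aut(\beta)|$ for $n=1$ is exactly the bookkeeping the paper leaves implicit.
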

\begin{proof}
This follows immediately from Theorem \ref{thm-CJM} combined with Lemma \ref{lem-label-versions} and the description of $\Aut(\Delta)$ in \eqref{eq-aut}.
\end{proof}

This and our Main Theorem \ref{MainThm} immediately imply the following:
\begin{thm}\label{H-GW}
$H_{g,d}(\alpha,\beta) =\GW^{\log}_{g,(\bb{P}^1)^{\dagger},\Delta}(\A,\Psi)$. 
\end{thm}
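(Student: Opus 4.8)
The plan is to chain together two correspondences that are already in hand, so that essentially no new work is required. First I would invoke Corollary~\ref{cor-CJM}, which (through the Cavalieri--Johnson--Markwig Theorem~\ref{thm-CJM} together with the relabeling bookkeeping of Lemma~\ref{lem-label-versions} and the description of $\Aut(\Delta)$ in \eqref{eq-aut}) identifies the classical double Hurwitz number with its tropical counterpart, $H_{g,d}(\alpha,\beta) = H_{g,d}^{\trop}(\alpha,\beta)$. By the defining equation \eqref{def-Htrop}, the right-hand side is by definition $\GW^{\trop}_{g,m,\Delta}(\A,\Psi)$ for the one-dimensional data $n=1$, $N\cong\bb{Z}$, the degree $\Delta=\Delta(\alpha,\beta)$, the $m=2g-2+\ell_1+\ell_2$ generic point conditions assembled into $\A$, and $\Psi=(1,\ldots,1)$.

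Second I would apply the main correspondence Theorem~\ref{MainThm} with target $X=\bb{P}^1$ and $t=1$ to rewrite this tropical count as the descendant log Gromov--Witten invariant $\GW^{\log}_{g,m,e_{\infty},(\bb{P}^1)^{\dagger},\Delta}(\A,\Psi)$. Before doing so I must confirm the standing rigidity hypothesis of Theorem~\ref{MainThm}, i.e.\ that $\f{T}^{\circ}_{g,m,\Delta}(\A,\Psi)$ is rigid. This was already asserted in the definition of the tropical Hurwitz number, and it is easy to verify: since $n=1$, Proposition~\ref{SuperabundantLemma} guarantees that every tropical curve without self-adjacent edges is non-superabundant (and such curves have no self-adjacent edges by Lemma~\ref{lem-non-span-loops}), while Lemma~\ref{No-gV} rules out higher-genus vertices in the support of the invariant. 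Because $\A$ consists only of generic point conditions, each carrying a single $\psi$-class, the generality requirements of Definition~\ref{GeneralDfn} hold, so Lemma~\ref{General} makes $\f{T}^{\circ}_{g,m,\Delta}(\A,\Psi)$ finite and rigid.

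Combining the two steps gives
$$H_{g,d}(\alpha,\beta) = H_{g,d}^{\trop}(\alpha,\beta) = \GW^{\trop}_{g,m,\Delta}(\A,\Psi) = \GW^{\log}_{g,m,e_{\infty},(\bb{P}^1)^{\dagger},\Delta}(\A,\Psi),$$
which is the claim. The main point to note is that there is no genuine obstacle here: all of the geometric content has been absorbed into Corollary~\ref{cor-CJM} and Theorem~\ref{MainThm}, and the only care needed is the verification of rigidity and non-superabundance in the $n=1$ setting, which is why I would devote a sentence to confirming those hypotheses explicitly rather than treating the chain of equalities as wholly formal.
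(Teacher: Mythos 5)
Your proposal is correct and follows exactly the paper's own route: the paper deduces Theorem~\ref{H-GW} immediately from Corollary~\ref{cor-CJM} (itself Theorem~\ref{thm-CJM} plus Lemma~\ref{lem-label-versions} and \eqref{eq-aut}) together with the definition \eqref{def-Htrop} and the Main Theorem~\ref{MainThm}, just as you do. Your extra paragraph confirming rigidity corresponds to the paper's remark ``it is not hard to see that the corresponding $\f{T}^{\circ}_{g,m,\Delta}(\A,\Psi)$ is rigid'' in the definition of the tropical Hurwitz number, so no substantive difference remains (though note your parenthetical citing Lemma~\ref{lem-non-span-loops} to exclude self-adjacent edges is stated slightly circularly, since that lemma presupposes non-superabundance).
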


We note that this relationship was also recently observed in \cite{CJMR}.  In particular, the one-dimensional case of our correspondence theorem with all incidence conditions being point conditions is essentially their Theorem 3.1.2.

For a better algebro-geometric understanding, one may prove Theorem \ref{H-GW} directly, thus reproving Theorem \ref{thm-CJM} as a consequence of our main theorem. The case with no ramification over $0$ and $\infty$ is \cite[Prop. 3]{Pan2}.  The argument used there works identically for our setup (including the unobstructedness argument from the proof of \cite[Prop. 2]{FP}).  We note that the argument shows that the correspondence is explicit---i.e., for certain special representatives of the $\psi$-classes, the curves cut out by $\gamma$ are precisely the curves counted by the Hurwitz numbers. The main idea is as follows: for $s$ a meromorphic section of $T^*\bb{P}^1$ with simple poles at $0$ and $\infty$ and no zeroes or poles anywhere else, $\ev_{x_i}^*s$ is a meromorphic section of the $\psi_{i}$-class line bundle.  Furthermore, a smooth curve is cut out by $\ev_{x_i}^*s\cap\ev_{x_i}^*[\pt]$ if and only if it is ramified at the point $x_i$ mapping to the generically specified representative of $[\pt]$.  That is, satisfying the point and $\psi$-class conditions is equivalent to satisfying the interior simple ramification conditions.  Of course, the ramification conditions at the boundary are satisfied for all log curves with smooth domain and degree $\Delta$.

\begin{rmk}
We note that the no-automorphism-Lemma \ref{NoAut} does not hold in these non-general situations.  In the map $\wt{\s{M}}(\Gamma)\rar \s{M}(\Gamma):= \wt{\s{M}}(\Gamma)/\Aut(\Gamma)$, points satisfying these special conditions are where the map is maximally ramified, i.e., where it looks like $\Spec \kk \rar \Spec \kk /\Aut(\Gamma)$, as opposed to the generic fiber $(\Spec \kk) \times \Aut(\Gamma) \rar \Spec \kk$.  So the special conditions cut out points in the central fiber with multiplicity $1$ but stabilizer $\Aut(\Gamma)$, as opposed to non-stacky points with multiplicity $\frac{1}{|\Aut(\Gamma)|}$ as happens generically.  Of course, this does not affect the contribution to the Gromov-Witten numbers as the two versions are rationally equivalent as zero-cycles.  
\end{rmk}

\begin{eg} \label{ex-hyperell}
Consider the tropical curve $\Gamma= \xymatrix{
\ar^2@{-}[r]^(-.15){}="a"^(1.15){}="b" \ar@{-} "a";"b"
&\bullet
\ar@{=}[r]^(-.1){}="a"^(1.1){}="b" \ar@{=} "a";"b"
&\bullet \ar^2@{-}[r]^(-.15){}="a"^(1.15){}="b" \ar@{-} "a";"b"
&\bullet
\ar@{=}[r]^(-.1){}="a"^(1.1){}="b" \ar@{=} "a";"b"
&\bullet \ar^2@{-}[r]^(-.15){}="a"^(1.15){}="b" \ar@{-} "a";"b"
&
}$ for the target $\PP^1$.
One checks that this $\Gamma$ is the only element of the corresponding $\f{T}^{\ns}_{g,\Delta}(\A,\Psi)$. Since $\prod_{E\in \Gamma^{[1]}_c} w(E)=2$ and $\Aut(\Gamma)\cong(\bb{Z}/2\bb{Z})^2$, this gives $H^{\trop}_{2,2}(2,2)$ as 
$\Mult(\Gamma)=\frac{2}{2^2}=\frac12$. 
Note that this calculation reflects the situation with generic $\psi$-conditions as for the proof of Theorem~\ref{MainThm} where indeed there is a unique pre-log curve giving two distinct stable log maps \emph{without} automorphisms (due to Lemma~\ref{NoAut}) and
each contributes $\frac14$ to the log invariant.
However, with the special $\psi$-classes given by the section $s$ we get a unique stable log map with an automorphism group of order two reflecting the geometry of the Hurwitz cover. In particular, the $\psi$-conditions via $s$ are non-generic.
\end{eg} 

\begin{eg}
It follows from Example \ref{Boundary-Markings} that for $\alpha=[1+1+1]$ and $\beta=[1+1+1]$, $H_{0,3}(\alpha,\beta) = 16\cdot 3 \cdot 3$ and  $\bar H_{g,d}(\alpha,\beta)=4$.
\end{eg}

\begin{rmk}
We can easily modify our main theorem to work for elliptic curves, and then we recover the correspondence theorem of \cite{BBBM} as a corollary of that.  This is essentially \cite[Thm. 3.2.1]{CJMR}.  See also their Def. 3.2.2 for a similar setup allowing for even more general target curves.
\end{rmk}

\subsection{Applications motivated by mirror symmetry} 
\label{motivation-mirror}
Our correspondence theorem is particularly useful for the Gross-Siebert program, in which one approaches mirror symmetry by first describing the $A$- and $B$-models tropically.  In particular, the authors of \cite[{\bf arxiv version 1}, \S 0.4]{GHK1v1} made the ``Frobenius structure conjecture'' that states that the mirror dual to a log Calabi-Yau variety is obtained as $\Spec R$ for $R$ a certain ring with a canonical basis of so-called theta functions.  The conjectured multiplication rule for these basis elements is expressed in terms of certain descendant log Gromov-Witten invariants.  Tropically, the descendant conditions show up because a product of $s$ theta functions should involve gluing $s$-tuples of tropical disks together at a marked vertex, resulting in an $(s+1)$-valent vertex, hence curves satisfying a $\psi^{s-2}$-condition.  In \cite{Man3}, the first author showed that theta functions on cluster varieties as in \cite{GHKK} can indeed be expressed in terms of descendant tropical curve counts.  Then in \cite{ManFrob}, the first author uses this tropical description along with our descendant tropical correspondence theorem and degeneration techniques to verify the Frobenius structure conjecture for cluster varieties.

As another application, the calculation of the 2875 lines on a quintic threefold by means of degeneration involves counting straight lines in $\bb{P}^3$ that meet four quintic curves in the four boundary planes (and don't meet the coordinate lines). 
By our main theorem \ref{MainThm}, this count is tropical. Using the resulting tropical curves in \cite{MakRud}, Cheuk Yu Mak and the second author construct for each tropical line of multiplicity $p$ a Lagrangian graph manifold $L$ with $|H_1(L)| =p$ in the mirror dual quintic (to be thought of as the mirror dual object to the complex line).
By generalized Dehn-Seidel twists around these lens spaces, we produce a large Abelian subgroup of the symplectic automorphism group of the mirror quintic.


\appendix

\section{Log Gromov-Witten invariants are constant in log smooth families}
Gromov-Witten invariants are known to be constant in smooth families, but they are typically not constant under degenerations.  One of the main motivations behind the definition of log Gromov-Witten invariants was to obtain a theory which is invariant not only in smooth families, but also in log smooth families.  We give a proof of this invariance here.

We must first recall some facts about intersection theory on Deligne-Mumford stacks (DM stacks).  
If $f:F\ra G$ is a representable morphism of DM stacks, we obtain as in \cite[\S5]{Vistoli} bivariant Chow groups $A^\bullet(F\ra G)$, cf. \cite[\S17]{Fult2}. An element of this is by definition a collection of homomorphisms $A^\bullet(Y)\ra A^\bullet(F\times_G Y)$ one for each $G$-scheme $Y\ra G$ and this collection commutes with flat pullback, proper pushforward and Gysin maps. 
 It is explained in \cite[middle of p.652]{Vistoli} how by using a presentation and the degree formula, an element of $A^\bullet(F\ra G)$ naturally also gives homomorphisms $A^\bullet(Y)\ra A^\bullet(F\times_G Y)$ for all $Y$ that are a DM stack over $G$.
In particular by \cite[Lemma 5.3]{Vistoli}, we have
\begin{lem} 
For $i:F\ra G$ a regular embedding of DM stacks and $f:G'\ra G$ a proper morphism of DM stacks, we have for any $\alpha\in A_m(G')$,
$$f'_*i^!\alpha =  i^!f_*\alpha$$
for $f'$ the base change of $f$ to $F$.
\end{lem}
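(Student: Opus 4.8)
The plan is to realize $i^!$ as the action of the bivariant orientation class attached to the regular embedding $i$, and then to obtain the asserted identity as the compatibility of that class with proper pushforward, reducing the stacky statement to the known statement for schemes. First I would recall that a regular embedding $i:F\ra G$ of codimension $c$ determines a canonical orientation class $[i]\in A^c(F\ra G)$ whose value on $A_k(Y)$, for any morphism $Y\ra G$, is the refined Gysin homomorphism $i^!:A_k(Y)\ra A_{k-c}(F\times_G Y)$ built by deformation to the normal bundle. For schemes this is \cite[Ch.~6]{Fult2}, and its extension to DM stacks is exactly the content of \cite[\S5]{Vistoli} summarized in the discussion preceding the lemma.

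Next I would check that both sides of the claimed equality live in $A_{m-c}(F)$ and identify the equality with the pushforward axiom. On the left, applying $i^!$ with $Y=G'$ sends $\alpha\in A_m(G')$ to a class in $A_{m-c}(F\times_G G')$, which is then pushed forward along the proper projection $f':F\times_G G'\ra F$; on the right, $f_*\alpha\in A_m(G)$ is hit by $i^!$ with $Y=G$, landing in $A_{m-c}(F)$. Thus $f'_*i^!\alpha=i^!f_*\alpha$ is precisely the statement that the bivariant class $[i]$ commutes with the proper pushforward $f_*$ under base change along $i$. This is one of the defining compatibilities of a bivariant class \cite[\S17]{Fult2}, and for schemes it is \cite[Thm.~6.2(a)]{Fult2}.

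The only genuine work is therefore the passage from schemes to DM stacks, which is where I expect the main obstacle to lie. I would choose compatible \'etale atlases of $G$, $G'$, $F$ and of the relevant fiber products, reduce the identity to the scheme-theoretic statement on these presentations, and then descend using the degree formula together with the fact that Chow groups of DM stacks are taken with $\QQ$-coefficients, exactly as in the setup recalled above; the properness of $f$ ensures that $f_*$ and $f'_*$ are defined. The subtle point is the independence of the refined Gysin homomorphism from the chosen presentation and the compatibility of the projection formula with descent, and this is precisely what is packaged in \cite[Lemma~5.3]{Vistoli}; so the cleanest route is to invoke that lemma directly, the sketch above explaining why it takes the stated form.
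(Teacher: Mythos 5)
Your proposal is correct and follows essentially the same route as the paper: the paper likewise treats $i^!$ as the action of a bivariant class in $A^\bullet(F\ra G)$ in the sense of \cite[\S17]{Fult2}, extends its action to DM stacks over $G$ via presentations and the degree formula as in \cite[middle of p.652]{Vistoli}, and then obtains the commutation with proper pushforward directly from \cite[Lemma 5.3]{Vistoli}. The only difference is that you sketch the atlas/descent mechanism behind that citation, whereas the paper simply invokes it.
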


This Lemma is the key ingredient for \cite[Prop. 10.1(a)]{Fult2} which in turn is needed for \cite[Prop. 10.2]{Fult2}, so we obtain a generalization to DM stacks that looks as follows.
\begin{lem}
\label{lemma-number-constant-schemes}
Let $f:M\ra S$ be a proper morphism of DM stacks with $S$ regular and irreducible of dimension $d$, $\alpha\in A_d(M)$, 
$s:*\ra S$ a point, $M_s=f^{-1}(s(*))$ then the degree of $\alpha_s:=s^!\alpha\in A_0(M_t)$ is independent of $s$ and coincides with the coefficient of $\alpha$ at $[S]$.
\end{lem}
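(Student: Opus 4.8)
The plan is to imitate Fulton's derivation of \cite[Prop. 10.2]{Fult2} from \cite[Prop. 10.1(a)]{Fult2}, with the latter replaced by the preceding Lemma, which is precisely its DM-stack analogue. First I would record the two structural inputs. Since $S$ is regular, any point $s\:t\ra S$ (with $t=\Spec$ of a field) is a regular embedding of codimension $d$, so the refined Gysin homomorphism $s^!\:A_d(M)\ra A_0(M_t)$ defining $\alpha_t$ is available in the stacky setting by Vistoli's formalism \cite[Lemma 5.3]{Vistoli}. Second, since $S$ is irreducible of dimension $d$, the top Chow group satisfies $A_d(S)=\QQ\cdot[S]$: the only $d$-dimensional integral substack is $S$ itself, and there is nothing of dimension $d+1$ to produce a rational equivalence in top degree. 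As $f$ is proper, $f_*\alpha\in A_d(S)$, so we may write $f_*\alpha=c\,[S]$ for a unique $c\in\QQ$; this $c$ is what the statement calls the coefficient of $\alpha$ at $[S]$, and it is manifestly independent of $t$.

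Next I would run the compatibility through the fibre square obtained by base-changing $f\:M\ra S$ along $s\:t\ra S$, which yields $f'\:M_t\ra t$. Applying the preceding Lemma with $i=s$, $G=S$, $G'=M$, and $\alpha\in A_d(M)$ gives
$$f'_*\,s^!\alpha \;=\; s^!\,f_*\alpha.$$
I then compute both sides. On the right, $s^!f_*\alpha=c\,s^![S]$, and $s^![S]=[t]$ because the Gysin map of the regular embedding $s$ carries the fundamental class of the ambient stack to the reduced, length-one fibre $t$; hence the right-hand side equals $c\,[t]$. On the left, $M_t$ is proper over the point $t$, so under the identification $A_0(t)=\QQ\cdot[t]$ the pushforward $f'_*$ is the degree map, i.e. $f'_*\,s^!\alpha=\deg(\alpha_t)\,[t]$. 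Comparing coefficients of $[t]$ yields $\deg(\alpha_t)=c$ for every $t$, which simultaneously establishes independence of $t$ and the identification of the common value with the coefficient of $\alpha$ at $[S]$.

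The only genuine content beyond bookkeeping is the preceding Lemma (the stacky form of $f'_*i^!=i^!f_*$), which I am assuming; once it is in hand the argument is formal. The points I would take care over are therefore purely foundational: that Gysin pullback along a point, proper pushforward, and the degree map behave on DM stacks as in the scheme case; that one works with $\QQ$-coefficients throughout, so that $\deg(\alpha_t)$ is the expected rational number and $A_d(S)=\QQ\,[S]$ holds despite stacky automorphisms; and the elementary identity $s^![S]=[t]$. I expect no real obstacle here, precisely because the difficulty has already been isolated into the preceding Lemma via Vistoli's bivariant formalism.
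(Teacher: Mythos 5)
Your proposal is correct and follows essentially the same route as the paper: the paper's own justification is precisely to transpose Fulton's derivation of \cite[Prop.~10.2]{Fult2} from \cite[Prop.~10.1(a)]{Fult2} to DM stacks, with the preceding lemma (Vistoli's $f'_*i^!=i^!f_*$) as the key input, and your write-up just makes that argument explicit ($f_*\alpha=c[S]$ since $A_d(S)=\QQ\cdot[S]$, then $\deg(\alpha_t)[t]=f'_*s^!\alpha=s^!f_*\alpha=c\,s^![S]=c[t]$). There is no gap.
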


Now, as in the start of \S \ref{Moduli}, suppose we have a projective, log smooth morphism of fine saturated log schemes $f^{\dagger}:Y^{\dagger}\rar S^{\dagger}$, as well as a combinatorially finite conditions $\beta$.  By \cite[Thm 0.2 and 0.3]{GSlog}, the stack of basic stable log maps $M:=\s{M}(Y^{\dagger}/S^{\dagger},\beta)$ is a Deligne-Mumford stack that is proper over $S$ and comes with a natural virtual fundamental class $[M]^{\vir}\in A_{\vdim(M)+\dim S}(M)$. Let $\pi:M\ra S$ denote this proper map.
Now let $\gamma\in A^{\vdim(M)}(M)=A^{\vdim(M)}(M \ra M)$ be a bivariant class and assume $S$ is integral, so its fundamental class $[S]$ is well-defined.
We think of $\gamma$ as incidence and $\psi$-class conditions as in Def.\,\ref{def-log-GW} that cut down the moduli of stable maps to something virtually finite over $S$. 
We define the log Gromov-Witten invariants with respect to $\beta$ and $\gamma$ as
$$N_\beta(\gamma):=\hbox{coefficient of }\pi_*(\gamma\cap [M]^{\vir}) \hbox{ at }[S].$$
\begin{thm}\label{Invariance} 
Let $s:*\hookrightarrow S$ be a regular point and $Y_s:=Y\times_S s(*)$.  Let $\gamma_s:=s^!\gamma$. Then 
$$N_\beta(\gamma_s)=N_\beta(\gamma).$$
I.e. the log Gromov-Witten invariants of the fibers of $f$ with respect to $\gamma$ and $\beta$ are constant over regular points of $S$.
\end{thm}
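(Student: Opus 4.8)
The plan is to deduce the theorem directly from Lemma~\ref{lemma-number-constant-schemes} applied to the cycle class $\alpha:=\gamma\cap[M]^{\vir}$, the only genuine work beyond that lemma being the identification of the restriction of $\alpha$ to a fiber with the fiberwise invariant $N_\beta(\gamma_t)$. First I would record the dimension of $\alpha$: since $[M]^{\vir}\in A_{\vdim(M)+\dim S}(M)$ and $\gamma\in A^{\vdim(M)}(M)$, capping lowers the dimension by $\vdim(M)$, so $\alpha\in A_d(M)$ with $d=\dim S$. This is exactly the top-dimensional input demanded by Lemma~\ref{lemma-number-constant-schemes}; here $S$ is regular and integral, so $[S]$ is defined and $A_d(S)=\ZZ[S]$. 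That lemma then asserts that $\deg(s^!\alpha)$ is independent of the point $t$ and equals the coefficient of $\alpha$ at $[S]$, i.e.\ the coefficient of $\pi_*\alpha=\pi_*(\gamma\cap[M]^{\vir})$ at $[S]$, which is by definition $N_\beta(\gamma)$.

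It then remains to identify $\deg(s^!\alpha)$ with $N_\beta(\gamma_t)$, for which I would invoke three base-change compatibilities, all of which are available. First, the moduli stack base-changes: because $t$ is a regular point, $s\colon t\to S$ is a regular embedding of codimension $d$, so the refined Gysin map $s^!$ is defined, and the fiber $M_t=\pi^{-1}(t)=M\times_S t$ is the moduli stack $\s{M}(Y_t^\dagger/t^\dagger,\beta)$ of the log smooth morphism $Y_t^\dagger\to t^\dagger$ obtained by base change (log smoothness and combinatorial finiteness of $\beta$ being stable under base change, cf.\ \cite{GSlog,AC}). Second, the virtual class is compatible with base change, as recorded after \eqref{eq-vir-rank}, so $s^![M]^{\vir}=[M_t]^{\vir}$. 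Third, $\gamma_t$ is the restriction of the bivariant class $\gamma$ to $M_t$ along the Cartesian square over $s$, and bivariant classes commute with Gysin homomorphisms; hence
$$s^!\alpha=s^!(\gamma\cap[M]^{\vir})=\gamma_t\cap s^![M]^{\vir}=\gamma_t\cap[M_t]^{\vir}.$$
As $t$ is a point, $A_0(t)=\ZZ[t]$ and $N_\beta(\gamma_t)$ is simply the degree of the zero-cycle $\gamma_t\cap[M_t]^{\vir}$, namely $\deg(s^!\alpha)$. Combining with the previous step gives $N_\beta(\gamma_t)=\deg(s^!\alpha)=N_\beta(\gamma)$, as claimed.

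The essential difficulty has already been isolated in Lemma~\ref{lemma-number-constant-schemes}, the Deligne--Mumford-stack version of \cite[Prop.~10.2]{Fult2}; granting it, the argument is bookkeeping with Gysin pullbacks within the representable bivariant framework set up via \cite{Vistoli}. The point I expect to require the most care is the pair of base-change identities $M_t=\s{M}(Y_t^\dagger/t^\dagger,\beta)$ and $s^![M]^{\vir}=[M_t]^{\vir}$: one must ensure that imposing the conditions $\beta$ and then restricting to the fiber over $t$ agrees with restricting first and then imposing $\beta$, and that the relative perfect obstruction theory restricts compatibly so that the virtual class commutes with $s^!$. Both are precisely the base-change statements of \cite{GSlog}, so no new obstruction arises beyond verifying that all the operations are legitimate for the representable morphisms of DM stacks at hand.
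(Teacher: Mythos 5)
Your proposal is correct and follows essentially the same route as the paper: both rest on the base-change compatibility of the virtual fundamental class (giving $s^![M]^{\vir}=[\s{M}(Y_t^\dagger/t^\dagger,\beta)]^{\vir}$) followed by an application of Lemma~\ref{lemma-number-constant-schemes}. The paper's proof is just a terser version of yours, leaving implicit the dimension bookkeeping and the commutation of the bivariant class $\gamma$ with the Gysin map $s^!$ that you spell out.
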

\begin{proof} Equip $*$ and $Y_s$ with the pulled back log structures to define $*^{\dagger}$ and $Y^{\dagger}$.  The construction of the virtual fundamental class is compatible with base change, so $[\shM(Y^{\dagger}_s/*^{\dagger},\beta)]^{\vir}=s^![\shM(Y^{\dagger}/S^{\dagger},\beta)]^{\vir}$. The result now follows from Lemma~\ref{lemma-number-constant-schemes}.
\end{proof}

\section{Higher-genus vertices deform to superabundant genus zero vertices}
We prove that, among superabundant tropical curves which satisfy some rigid collection of conditions, those with higher-genus vertices are not isolated.
This justifies the fact that we call tropical curves with higher-genus vertices superabundant. We assume $n\ge 1$ as before.
\begin{prop}\label{No-gV}
Let $\Gamma$ be a tropicalization\footnote{We say ``a tropicalization'' here because a single basic stable log map has in general a parameter space worth of tropicalizations.} of a basic stable log map $\varphi$ to $\s{X}_0$ of genus $g$ with degree $\Delta$, in the support of the Gromov-Witten invariant cycle $\gamma$ associated to some generically specified algebraic incidence and $\psi$-class conditions corresponding to rigid $\A$ and $\Psi$. If $\Gamma$ contains any higher genus vertices, then  $\f{T}_{g,\Delta}(\A,\Psi)$ also contains a superabundant curve supported on the image of $\Gamma$ in $N_{\bb{R}}$ which has $g_V=0$ for all $V$.
\end{prop}
\noindent
\begin{minipage}[b]{0.72\textwidth} 
\proof Let $\varphi:C^\dagger\ra X^\dagger_0$ be a stable log map that tropicalizes to $\Gamma$. Let $\bar \Gamma $ denote the 
``collapse'' of $\Gamma$ where we replace every maximal connected subgraph $\Gamma'$ that maps to the same point in $N_\RR$ by a single vertex $V$, replacing all (resulting and previous) self-adjacent edges at $V$ with pairs of non-compact edges, while attaching at $V$ the edges that go from $\Gamma'$ to other parts of $\Gamma$, along with any half-edges which were already contained in $\Gamma'$.  We give $V$ the genus that reflects the genus of the part of the stable map $\varphi$ which it represents, i.e., $$b_1(\Gamma')+\sum_{V\in {\Gamma'}^{[0]}} g_V.$$
We also reattach all decorations in the obvious way to $\bar\Gamma$.
\end{minipage}\qquad
\begin{minipage}[t]{0.28\textwidth}
\resizebox{0.9\textwidth}{!}{
\includegraphics{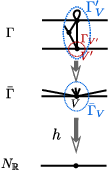}
}\\
\end{minipage}\\
For later reference, we denote the subgraph of $\Gamma$ that collapses to $V$ by $\Gamma'_V$ and we let $\bar\Gamma_V$ be the vertex $V$ with all adjacent edges in $\bar \Gamma$ and the induced decorations.

Being a tropicalization, $\Gamma$ has vertices in $N_{\bb{Q}}$ and so does $\bar \Gamma$. 
We may thus assume that $\bar\Gamma$ has been included in the $1$-skeleton of $\s{P}_{g,\Delta}(\A,\Psi)$ as in Lemma \ref{lem-good-subd}. 
Consequently, the nodes of the log curve that correspond to non-contracted edges of $\Gamma$ map into the purely codimension one strata, i.e. away from the complement of all strata of codimension $\ge 2$ in $\s{X}_0$. 
In \cite[\S5,\,\S6]{KLR},  the splitting and gluing for stable log maps with the property of having nodes in the purely codimension one strata is given. 
Consequently, $\varphi$ is in the open\footnote{The collapse from $\Gamma$ to $\bar\Gamma$ had the purpose to achieve openness here.} image of the gluing functor, in particular, by \cite[Theorem 1.4]{KLR} the virtual fundamental class $[\s{M}_{g}(\s{X}_0^{\dagger},\Delta,\bar \Gamma)]^{\vir}$ of stable maps with collapsed tropicalization $\bar \Gamma$ locally near $\varphi$ is a rational multiple of the Gysin-pullback of the product class
\[
\prod_{V\in\bar\Gamma^{[0]}} [\s{M}_{g_V}(\s{X}_V,D_V,\bar\Gamma_V)]^{\vir}
\]
where $\s{M}_{g_V}(\s{X}_V,D_V,\bar\Gamma_V)$ denotes the (proper) moduli space of genus $g_V$ stable log maps to the component $\s{X}_V$ of $\s{X}_0$ indexed by $V$, with log structure of $\s{X}_V$ induced by its toric boundary $D_V$, and with degree $\bar\Gamma_V$.  The collapse $\Gamma'_V\ra\bar\Gamma_V$ defines a stratum in $\s{M}_{g_V}(\s{X}_V,D_V,\bar\Gamma_V)$ and by \cite[Lemma 10]{behrend-manin}, the Gysin-pullback of $[\s{M}_{g_V}(\s{X}_V,D_V,\bar\Gamma_V)]^{\vir}$ to this stratum is (a rational multiple of)
$$\prod_{V'\in{\Gamma'_V}^{[0]}} [\s{M}_{g_{V'}}(\s{X}_V,D_V,\Gamma_{V'})]^{\vir}$$
where $\Gamma_{V'}$ refers to the vertex $V'$ in $\Gamma$ with all its adjacent edges (self-adjacent edges being replaced by pairs of non-compact edges).
We want to show that (in a neighbourhood of $\varphi$), for each $V\in\Gamma^{[0]}$,
\[
\prod_{i\in I_{V}^{\circ}} \psi_i^{s_i}\cap [\s{M}_{g_V}(\s{X}_V,D_V,\Gamma_V)]^{\vir} = 0
\]
unless $\sum_{i\in I_{V}^{\circ}} s_i \leq 2g_V+\ov(V)$.  Then replacing $V$ with a genus $0$ vertex with $g_V$ additional self-adjacent edges will give a tropical curve satisfying the tropical $\psi$-class conditions at $V$, and doing this for every vertex will result in a superabundant tropical curve in $\f{T}_{g,\Delta}(\A,\Psi)$ with each $g_V$ equal to $0$ and supported on $\Gamma$.

Let $V\in \Gamma^{[0]}$ be a vertex with $g_V>0$, and let $\varphi_V^{\dagger}:C_V^{\dagger}\rar \s{X}^{\dagger}_V$ be the stable map in $\s{M}_V:=\s{M}_{g_V}(\s{X}_V,D_V,\Gamma_V)$ obtained from splitting $\varphi$ along all edges as above.
Here, $C_V$ is a curve of geometric genus $g_V$ with $\val(V)$ many marked points (self-adjacent edges give rise to two markings after splitting the node).
As in the Proposition \ref{psihat} and Lemma~\ref{PsiGeomNew}, $\psi$-classes on $\s{M}_V$ associated to marked points on $C_V$ can be pulled back from the corresponding $\?{\psi}$-classes on $\?{\s{M}}_{g_V,\val(V)}$.  It thus suffices to show that 
\begin{align}\label{PsiVir}
\forget^*\Bigg(\prod_{i\in I_V^{\circ}} \?{\psi}_i^{s_i}\Bigg)\cap [\s{M}_V]^{\vir} = 0,
\end{align} where $\forget$ is now the forgetful map $\s{M}_V\rar \?{\s{M}}_{g_V,\val(V)}$.  Since $\dim(\?{\s{M}}_{g_V,\val(V)})=3g_V+\ov(V)$, it suffices to show that $\forget_*[\s{M}_V]^{\vir}$ has support in codimension at least $g_V$.  This is what we will do.

Case I: suppose $C_V$ is contracted, i.e., all edges of $\Gamma_V$ are contracted to $V$.
It is standard that
$$[\s{M}_V]^{\vir}=-c_{\mathrm{top}}(\bb{E} \otimes \Omega_{\s{X}_V^\dagger/\Spec \kk^\dagger})\cap [\s{M}_V]$$ 
where $\bb{E}$ is the Hodge bundle, and since $\Omega_{\s{X}_V^\dagger/\Spec \kk^\dagger}$ is free as in the proof of Lemma \ref{vdim}, this is just $-c_{\mathrm{top}}(\bb{E})^n\cap [\s{M}_V]$. 
Since $C_V$ is contracted, the underlying marked curve is stable, so $\bb{E}$ is the pullback of the Hodge bundle on $\?{\s{M}}_{g_V,\val(V)}$, and so 
$\forget_*[\s{M}_V]^{\vir}$ is supported in codimension $ng$.  The left-hand side of \eqref{PsiVir} will vanish whenever $ng+\sum_{i\in I_V^{\circ}} s_i >  \dim(\?{\s{M}}_{g_V,\val(V)})=3g_V+\ov(V)$.  In particular, since $n\geq 1$, it will vanish whenever $\sum_{i\in I_V^{\circ}} s_i > 2g_V+\ov(V)$.  

Case II: now suppose $C_V$ is not contracted, i.e., some edges of $\Gamma_V$ are not contracted.  Choose an $m\in M$ such that $\langle m,u_E\rangle \neq 0$ for some $E\ni V$.  Then $\varphi_V^*(z^m)$ is a rational function on $C_V$ whose zeroes and poles are at the nodes and marked points $y_E$ corresponding to edges $E\ni V$, with the order of the zeroes and poles being given by $\langle m,w(E)u_{(V,E)} \rangle$.  Hence, $\sum_{E\ni V} \langle m,w(E)u_{(V,E)} \rangle y_E$ determines a degree $0$ divisor on $C_V$ whose corresponding line bundle must be trivial.  Since $\Pic_0(C_V)$ is $g_V$-dimensional, this determines a codimension $g_V$ condition on $\?{\s{M}}_{g_V,\val(V)}$ that $C_V$ must satisfy.  Hence, the left-hand side of \eqref{PsiVir} will indeed vanish whenever $\sum_{i\in I_V^{\circ}} s_i > 2g_V+\ov(V)$.
\qed

\bibliographystyle{amsalpha}  
\bibliography{biblio}        

\providecommand{\bysame}{\leavevmode\hbox to3em{\hrulefill}\thinspace}
\providecommand{\MR}{\relax\ifhmode\unskip\space\fi MR }
\providecommand{\MRhref}[2]{%
  \href{http://www.ams.org/mathscinet-getitem?mr=#1}{#2}
}
\providecommand{\href}[2]{#2}
\begin{thebibliography}{GHKK14}

\bibitem[AC14]{AC}
D.~{Abramovich} and Q.~{Chen}, \emph{Stable logarithmic maps to
  {D}eligne-{F}altings pairs {II}}, Asian J. Math. \textbf{18} (2014), no.~3,
  465--488.

\bibitem[ACG11]{ACG2}
E.~{Arbarello}, M.~{Cornalba}, and P.A. {Griffiths}, \emph{Geometry of
  algebraic curves. {V}olume {II}}, Grundlehren der Mathematischen
  Wissenschaften [Fundamental Principles of Mathematical Sciences], vol. 268,
  Springer, Heidelberg, 2011, With a contribution by Joseph Daniel Harris.

\bibitem[ACGS17]{ACGS}
D.~{Abramovich}, Q.~{Chen}, M.~{Gross}, and B.~{Siebert}, \emph{Decomposition
  of degenerate gromov-witten invariants}, arXiv:1709.09864 [math.AG] (2017),
  63p.

\bibitem[AR10]{AR}
L.~{Allermann} and J.~{Rau}, \emph{First steps in tropical intersection
  theory}, Math. Z. \textbf{264} (2010), no.~3, 633--670.

\bibitem[BBBM17]{BBBM}
J.~{B\"ohm}, K.~{Bringmann}, A.~{Buchholz}, and H.~{Markwig}, \emph{Tropical
  mirror symmetry for elliptic curves}, J. Reine Angew. Math. \textbf{732}
  (2017), 211--246.

\bibitem[BM96]{behrend-manin}
K.~Behrend and Yu. Manin, \emph{Stacks of stable maps and {G}romov-{W}itten
  invariants}, Duke Math. J. \textbf{85} (1996), no.~1, 1--60. \MR{1412436}

\bibitem[{Bou}17]{Bou}
P.~{Bousseau}, \emph{{Tropical refined curve counting from higher genera and
  lambda classes}}, arXiv:1706.07762, 2017.

\bibitem[CFPU16]{CFPU}
M.-W. {Cheung}, L.~{Fantini}, J.~{Park}, and M.~{Ulirsch}, \emph{Faithful
  realizability of tropical curves}, Int. Math. Res. Not. IMRN (2016), no.~15,
  4706--4727.

\bibitem[CJM10]{CJM}
R.~{Cavalieri}, P.~{Johnson}, and H.~{Markwig}, \emph{Tropical {H}urwitz
  numbers}, J. Algebraic Combin. \textbf{32} (2010), no.~2, 241--265.

\bibitem[CJMRa]{CJMR}
R.~{Cavalieri}, P.~{Johnson}, H.~{Markwig}, and D.~{Ranganathan}, \emph{{A
  graphical interface for the Gromov--Witten theory of curves}},
  arXiv:1604.07250.

\bibitem[CJMRb]{CJMR2}
\bysame, \emph{{Counting curves on toric surfaces: tropical geometry and the
  Fock space}}, arXiv:1706.05401.

\bibitem[FP02]{FP}
B.~{Fantechi} and R.~{Pandharipande}, \emph{Stable maps and branch divisors},
  Compositio Math. \textbf{130} (2002), no.~3, 345--364.

\bibitem[{Ful}98]{Fult2}
W.~{Fulton}, \emph{Intersection theory}, second ed., Ergebnisse der Mathematik
  und ihrer Grenzgebiete, Springer, Berlin, 1998.

\bibitem[GHK11]{GHK1v1}
M.~{Gross}, P.~{Hacking}, and S.~{Keel}, \emph{{Mirror symmetry for log
  Calabi-Yau surfaces I}}, arXiv:1106.4977v1, 2011.

\bibitem[GHKK14]{GHKK}
M.~{Gross}, P.~{Hacking}, S.~{Keel}, and M.~{Kontsevich}, \emph{{Canonical
  bases for cluster algebras}}, arXiv:1411.1394, 2014.

\bibitem[GJV05]{GJV05}
I.~P. Goulden, D.~M. Jackson, and R.~Vakil, \emph{Towards the geometry of
  double {H}urwitz numbers}, Adv. Math. \textbf{198} (2005), no.~1, 43--92.

\bibitem[GM07]{GM07}
A.~{Gathmann} and H.~{Markwig}, \emph{The numbers of tropical plane curves
  through points in general position}, Journal f\"ur die reine und angewandte
  Mathematik \textbf{602} (2007), 155--177.

\bibitem[GPS10]{GPS}
M.~Gross, R.~Pandharipande, and B.~Siebert, \emph{The tropical vertex}, Duke
  Math. J. \textbf{153} (2010), no.~2, 297--362.

\bibitem[{Gro}10]{GrP2}
M.~{Gross}, \emph{Mirror symmetry for {$\bb P\sp 2$} and tropical geometry},
  Adv. Math. \textbf{224} (2010), no.~1, 169--245.

\bibitem[{Gro}18]{AGr}
A.~{Gross}, \emph{{Intersection Theory on Tropicalizations of Toroidal
  Embeddings}}, Proc. London Math. Soc. \textbf{116} (2018), no.~6, 1365--1405.

\bibitem[GS]{GS}
L.~{G\"ottsche} and F.~{Schroeter}, \emph{{Refined broccoli invariants}},
  arXiv:1606.09631.

\bibitem[GS13]{GSlog}
M.~{Gross} and B.~{Siebert}, \emph{{Logarithmic Gromov-Witten invariants}},
  arXiv:1102.4322v2, 2013.

\bibitem[Hur02]{Hu}
A.~Hurwitz, \emph{{\"U}ber die {A}nzahl der {R}iemann'schen {F}l\"achen mit
  gegebenen {V}erzweigungspunkten.}, Math. Annalen \textbf{55} (1902), 53--66
  (ger).

\bibitem[KLR]{KLR}
B.~Kim, H.~Lho, and H.~{Ruddat}, \emph{The degeneration formula for stable log
  maps}, https://arxiv.org/abs/1803.04210.

\bibitem[{Koc}01]{Kock}
J.~{Kock}, \emph{{Notes on Psi classes}},
  \url{http://mat.uab.cat/~kock/GW/notes/psi-notes.pdf}, 2001.

\bibitem[{Man}a]{Man3}
T.~{Mandel}, \emph{{Refined tropical curve counts and canonical bases for
  quantum cluster algebras}}, arXiv:1503.06183.

\bibitem[{Man}b]{ManFrob}
\bysame, \emph{{Theta bases and log Gromov-Witten invariants of cluster
  varieties}}, arXiv:1903.03042.

\bibitem[{Mik}05]{Mi}
G.~{Mikhalkin}, \emph{{Enumerative tropical algebraic geometry in $\bb{R}^2$}},
  J. Amer. Math. Soc. \textbf{18} (2005), 313--377.

\bibitem[{Mik}07]{Mi2}
\bysame, \emph{{Moduli spaces of rational tropical curves}}, Proceedings of
  13th G\"okova Geometry-Topology Conference 2006 (2007), 39--51.

\bibitem[MRa]{MakRud}
C.Y. {Mak} and H.~{Ruddat}, \emph{{{T}ropically constructed {L}agrangians in
  mirror quintic threefolds}}, arXiv:1904.11780.

\bibitem[MRb]{MRudMult}
T.~{Mandel} and H.~{Ruddat}, \emph{{Tropical quantum field theory, mirror
  polyvector fields, and multiplicities of tropical curves}}, arXiv:1902.07183.

\bibitem[MR09]{MR}
H.~{Markwig} and J.~{Rau}, \emph{{Tropical descendant Gromov-Witten
  invariants}}, Manuscr. Math. \textbf{129} (2009), no.~3, 293--335.

\bibitem[{Nis}10]{Ni}
T.~{Nishinou}, \emph{{Correspondence theorems for tropical curves}},
  arXiv:0912.5090, 2010.

\bibitem[NS06]{NS}
T.~{Nishinou} and B.~{Siebert}, \emph{{Toric degenerations of toric varieties
  and tropical curves}}, Duke Math. J. \textbf{135} (2006), 1--51.

\bibitem[{Ove}15]{Over}
D.P. {Overholser}, \emph{{Descendent Tropical Mirror Symmetry for $\bb{P}^2$}},
  arXiv:1504.06138, 2015.

\bibitem[{Pan}00]{Pan2}
R.~{Pandharipande}, \emph{The {T}oda equations and the {G}romov-{W}itten theory
  of the {R}iemann sphere}, Lett. Math. Phys. \textbf{53} (2000), no.~1,
  59--74.

\bibitem[{Ran}17]{Ran}
D.~{Ranganathan}, \emph{Skeletons of stable maps {I}: rational curves in toric
  varieties}, J. Lond. Math. Soc. (2) \textbf{95} (2017), no.~3, 804--832.

\bibitem[{Rau}16]{Rau}
J.~{Rau}, \emph{Intersections on tropical moduli spaces}, Rocky Mountain J.
  Math. \textbf{46} (2016), no.~2, 581--662.

\bibitem[{Vis}89]{Vistoli}
A.~{Vistoli}, \emph{Intersection theory on algebraic stacks and on their moduli
  spaces}, Inventiones mathematicae \textbf{97} (1989), no.~3, 613--670.

\end{thebibliography}
\index{Bibliography@\emph{Bibliography}}%

\end{document}